 \newtheorem{Theorem}{Theorem}[section]
 \newtheorem{Corollary}[Theorem]{Corollary}
 \newtheorem{Lemma}[Theorem]{Lemma}
 \newtheorem{Proposition}[Theorem]{Proposition}
 \newtheorem{Question}[Theorem]{Question}
 \newtheorem{Definition}[Theorem]{Definition}
 \newtheorem{Remark}[Theorem]{Remark}
 \numberwithin{equation}{section}
\begin{document}

\title[Optimal $L^2$ extension for singular metric on vector bundle]
 {Optimal $L^2$ extension for holomorphic vector bundles with singular hermitian metrics}

\author{Qi'an Guan}
\address{Qi'an Guan: School of
Mathematical Sciences, Peking University, Beijing 100871, China.}
\email{guanqian@math.pku.edu.cn}

\author{Zhitong Mi}
\address{Zhitong Mi: Institute of Mathematics, Academy of Mathematics
and Systems Science, Chinese Academy of Sciences, Beijing, China
}
\email{zhitongmi@amss.ac.cn}
\author{Zheng Yuan}
\address{Zheng Yuan: School of
Mathematical Sciences, Peking University, Beijing 100871, China.}
\email{zyuan@pku.edu.cn}

\thanks{}

\subjclass[2020]{14F18, 32D15, 32U05, 32Q15}

\keywords{Singular hermitian metric, Holomorphic vector bundles, Nakano positivity, Optimal $L^2$ extension theorem, Weakly pseudoconvex manifolds}

\date{\today}

\dedicatory{}

\commby{}


\begin{abstract}
In the present paper, we study the properties of singular Nakano positivity of singular hermitian metrics on holomorphic vector bundles, and establish an optimal $L^2$ extension theorem for holomorphic vector bundles with singular hermitian metrics on weakly pseudoconvex K\"{a}hler manifolds, which is a unified version of the optimal $L^2$ extension theorems for holomorphic line bundles with singular hermitian metrics of Guan-Zhou  and Zhou-Zhu. As applications, we give a necessary condition for the holding of the equality in optimal $L^2$ extension theorem, and present singular hermitian holomorphic vector bundle  versions of some $L^2$ extension theorems with optimal estimate.
\end{abstract}

\maketitle

\section{Introduction}

We recall the $L^2$ extension problem (see \cite{DemaillyAG}, see also \cite{guan-zhou13ap}) as follows: let $Y$ be a complex subvariety of a complex manifold $M$; given a holomorphic object $f$ on $Y$ satisfying certain $L^2$ estimate on $Y$, finding a holomorphic extension $F$ of $f$ from $Y$ to $M$, together with a good or even optimal $L^2$ estimate of $F$ on $M$.

The existence part of $L^2$ extension problem was firstly solved by Ohsawa-Takegoshi \cite{OT87} and their result is called Ohsawa-Takegoshi $L^2$ extension theorem now. Since then, many mathematicians made contributions to generalizations of $L^2$ extension theorem and applications of the theorem in the study of several complex variables and complex geometry, e.g.  Berndtsson, Demailly, Ohsawa,  Siu, et al (see \cite{berndtsson1996,berndtsson annals,Demaillyshm,DemaillyManivel, D2016,Ohsawa2,Ohsawa3,Ohsawa4,Ohsawa5,berndtsson paun,DHP,HPS}).

The second part of $L^2$ extension problem was called the $L^2$ extension problem with optimal estimate or sharp $L^2$ extension problem (see \cite{zhou-abel}). Guan-Zhou-Zhu (see \cite{ZGZ}, see also \cite{GZZCRMATH}) firstly introduced a method of undetermined functions to study the sharp $L^2$ extension problem. For bounded
pseudoconvex domains in $\mathbb{C}^n$, Blocki \cite{Blocki-inv} developed the equation of undetermined functions, and obtained the optimal version of Ohsawa-Takegoshi's $L^2$ extension theorem in \cite{OT87}. As an application, Blocki \cite{Blocki-inv} got the inequality part of Suita conjecture for planar domains. Using undetermined functions method, Guan-Zhou (see \cite{GZsci}, see also \cite{guan-zhou CRMATH2012}) proved the optimal $L^2$ extension theorem with negligible weight on Stein manifolds, and obtained the inequality part of Suita conjecture for open Riemann surfaces, which is the original form of the inequality part of Suita conjecture in \cite{Suita1972}. In \cite{guan-zhou13ap}, Guan-Zhou established an $L^2$ extension theorem with optimal estimate in a general setting on Stein manifolds, which gave various optimal versions of $L^2$ extension theorem. As an application, Guan-Zhou \cite{guan-zhou13ap} proved the equality part of Suita conjecture, which finished the proof of Suita conjecture. Guan-Zhou \cite{guan-zhou13ap} also found a relation between the optimal $L^2$ extension theorem  and Berndtsson's log-plurisubharmonicity of fiberwise Bergman kernels, which was called Guan-Zhou method in Ohsawa's book \cite{Ohsawabook}. In \cite{ZhouZhu-jdg} and \cite{ZZ2019}, Zhou-Zhu proved the optimal $L^2$ extension theorem on weakly pseudoconvex K\"{a}hler  manifolds.
Using optimal $L^2$ extension theorem and Guan-Zhou method, Bao-Guan \cite{BG} generalized Berndtsson's log-plurisubharmonicity of fiberwise Bergman kernels, and gave a new approach to the sharp effectiveness result of strong openness property (for further research, see \cite{BG2},\cite{BGBoundary1} and \cite{BGBoundary2}).

Recall that $L^2$ extension theorems for  holomorphic line bundles with singular hermitian metrics  were mainly studied in the previous work and the results in \cite{ZZ2019} (see also \cite{ZhouZhu-jdg}) did not fully generalize the results in \cite{guan-zhou13ap}. It is natural to ask
\begin{Question}
\label{que1.1}Can one give a unified version of the optimal $L^2$ extension theorems (for holomorphic line bundles with singular hermitian metrics) of Guan-Zhou \cite{guan-zhou13ap} and Zhou-Zhu \cite{ZZ2019} (see also \cite{ZhouZhu-jdg}).
\end{Question}

To do this, we would like to consider holomorphic vector bundles  with singular hermitian metrics  in the present paper.
For vector bundles, the notation of  singular hermitian metrics and its corresponding positivity were introduced in many different ways
(see \cite{berndtsson paun}, \cite{CA},\cite{paun takayama}, \cite{Raufi1}, \cite{DNWZ}, \cite{inayama}).

In \cite{GMY-boundary5}, we modified the definition of singular hermitian metric on holomorphic vector bundles in \cite{CA}, and established the concavity property of minimal $L^2$ integrals of holomorphic vector bundles with singular hermitian metrics on weakly pseudoconvex K\"{a}hler manifolds.

In the present paper, following the notations in \cite{GMY-boundary5}, we present some properties of singular Nakano positivity of singular hermitian metrics on holomorphic vector bundles, and establish an optimal $L^2$ extension theorem for holomorphic vector bundles with singular hermitian metrics on weakly pseudoconvex K\"{a}hler manifolds which is a unified version of the optimal $L^2$ extension theorems (for holomorphic line bundles with singular hermitian metrics) of Guan-Zhou \cite{guan-zhou13ap} and Zhou-Zhu \cite{ZZ2019}, i.e., a positive answer to the Question \ref{que1.1}.

\subsection {Singular hermitian metrics on vector bundles}
Let $M$ be an $n-$dimensional complex manifold. Let $E$ be a rank $r$ holomorphic vector bundle over $M$ and $\bar{E}$ be the conjugate of $E$.

\begin{Definition}[see \cite{CA}, see also \cite{Raufi1}]\label{measurable metric}
Let $h$ be a section of the vector bundle $E^*\otimes {\bar{E}}^*$ with measurable coefficients, such that $h$ is an almost everywhere positive definite hermitian form on $E$; we call such an $h$ a \textit{measurable metric} on $E$.
\end{Definition}

We would like to use the following definition for singular hermitian metrics on vector bundles in this article which is a modified version of the definition in \cite{CA}.
\begin{Definition} Let $M$, $E$ and $h$ be as in Definition \ref{measurable metric} and $\Sigma\subset M$ be a closed set of measure zero. Let $\{M_j\}_{j=1}^{+\infty}$ be a sequence of relatively compact subsets of $M$ such that $M_1 \Subset  M_2\Subset  ...\Subset
M_j\Subset  M_{j+1}\Subset  ...$ and $\cup_{j=1}^{+\infty} M_j=M$. Assume that for each $M_j$, there exists a sequence of hermitian metrics $\{h_{j,s}\}_{s=1}^{+\infty}$ on $M_j$ of class $C^2$ such that

\centerline{$\lim\limits_{s\to+\infty}h_{j,s}=h$\ \ \ point-wisely on $M_j\backslash \Sigma$.}

We call the collection of data $(M,E,\Sigma,M_j,h,h_{j,s})$ a singular hermitian metric (s.h.m. for short) on $E$.

\label{singular metric}
\end{Definition}

 We use the following definition of singular version of Griffiths positivity in this article.

\begin{Definition}[see \cite{Raufi1}]\label{singular gri}Let $h$ be  a \textit{measurable metric} on $E$ satisfying that $h$ is everywhere positive definite hermitian form on $E$.

(1) $h$ is called singular Griffiths semi-negative if $|u|^2_h$ is plurisubharmonic for any local holomorphic section $u$ of $E$.

(2) A singular hermitian metric $h$ is Griffiths semi-positive if the dual metric $h^*$
is singular Griffiths semi-negative on $E^*$.
\end{Definition}

We use the following definition of singular version of Nakano positivity in this article. Let $\omega$ be a hermitian metric on $M$, $\theta$ be a hermitian form on $TM$ with continuous coefficients.
\begin{Definition}[see \cite{GMY-boundary5}]Let $(M,E,\Sigma,M_j,h,h_{j,s})$ be a s.h.m on $E$. We write:
$$\Theta_h(E)\ge^s_{Nak} \theta \otimes Id_E$$
if the following requirements are met.

For each $M_j$, there exist a sequence of continuous functions $\lambda_{j,s}$ on $\overline{M_j}$ and a continuous function $\lambda_j$ on $\overline{M_j}$ subject to the following requirements:
\par
(1.2.1) for any $x\in M_j:\ |e_x|_{h_{j,s}}\le |e_x|_{h_{j,s+1}},$ for any $s\in \mathbb{N}$ and any $e_x\in E_x$;
\par
(1.2.2) $\Theta_{h_{j,s}}(E)\ge_{Nak} \theta-\lambda_{j,s}\omega\otimes Id_E$ on $M_j$;
\par
(1.2.3)  $\lambda_{j,s}\to 0$ a.e. on $M_j$;
\par
(1.2.4) $0\le \lambda_{j,s}\le \lambda_j$ on $M_j$, for any $s$.

Especially, when $\theta=0$, i.e. $\Theta_h(E)\ge^s_{Nak} 0$, we call that $h$ is singular Nakano semi-positive.
\label{singular nak}
\end{Definition}

\begin{Remark}We prove that if $h$ is singular Nakano semi-positive in the sense of Definition \ref{singular nak}, then $h$ is Griffiths semi-positive in the sense of Definition \ref{singular gri}, see Proposition \ref{singular nak implies singular gri}.
\end{Remark}

\subsection {Optimal $L^2$ extension theorem on holomorphic vector bundles with singular hermitian metrics }

\begin{Definition}
	\label{def:neat}A function $\psi:M\rightarrow[-\infty,+\infty)$ on a complex manifold $M$ is said to be quasi-plurisubharmonic if $\psi$ is locally the sum of a plurisubharmonic function and a smooth function (or equivalently, if $i\partial\bar\partial\psi$ is locally bounded from below). In addition, we say that $\psi$ has neat analytic singularities if every point $z\in M$ possesses an open neighborhood $U$ on which $\psi$ can be written as
	$$\psi=c\log\sum_{1\le j\le N}|g_j|^2+v,$$
	where $c\ge0$ is a constant, $g_j\in\mathcal{O}(U)$ and $v\in C^{\infty}(U)$.
\end{Definition}

\begin{Definition}\label{locally lower bound}Let $M$ be a complex manifold and $E$ be a holomorphic vector bundle on $M$. Let $(M,E,\Sigma,M_j,h,h_{j,s})$ be a singular hermitian metric on $E$. We call $h$ is locally lower bounded if for every point $x\in M$, there exists an open set $\Omega_x\subset M$ such that $h$ can be written as $$h=h_x\eta_x$$
on $\Omega_x$, where
 $h_x$ is a singular hermitian metric on $E|_{\Omega_x}$ which is Nakano semi-positive in the sense of Definition \ref{singular nak} and $\eta_x$ is a smooth function on $\Omega_x$
\end{Definition}

\begin{Definition}
	If $\psi$ is a quasi-plurisubharmonic function on an $n$-dimensional complex manifold $M$, the multiplier ideal sheaf $\mathcal{I}(\psi)$ is the coherent analytic subsheaf of $\mathcal{O}_M$ defined by
	\begin{displaymath}
		\mathcal{I}(\psi)_z=\left\{f\in\mathcal{O}_{M,z}:\exists U\ni z,\,\int_U|f|^2e^{-\psi}d\lambda<+\infty \right\},
	\end{displaymath}
			where $U$ is an open coordinate neighborhood of $z$ and $d\lambda$ is the Lebesgue measure in the corresponding open chart of $\mathbb{C}^n$.
			
			We say that the singularities of $\psi$ are log canonical along the zero variety $Y=V(I(\psi))$ if $\mathcal{I}((1-\epsilon)\psi)|_Y=\mathcal{O}_M|_Y$ for any $\epsilon>0$.
\end{Definition}

Let $(M,\omega)$ be an $n$-dimensional K\"ahler manifold, and let $dV_{M,\omega}=\frac{1}{n!}\omega^n$ be the corresponding K\"ahler volume element.

\begin{Definition}
	\label{def:oshawa measure}Let $\psi$ be a quasi-plurisubharmonic function on $M$ with neat analytic singularities. Assume that the singularities of $\psi$ are log canonical along the zero variety $Y=V(I(\psi))$. Denote $Y^0=Y_{\rm{reg}}$ the regular point set of $Y$. If $g\in C_c(Y^0)$  and $\hat g\in C_c(M)$ satisfy $\hat g|_{Y^0}=g$ and $(supp\,\hat{g})\cap Y=Y^0$, we set
	\begin{equation}
\label{eq:0627e}
		\int_{Y^0}gdV_{M,\omega}[\psi]=\limsup_{t\rightarrow+\infty}\int_{\{-t-1<\psi<-t\}}\hat ge^{-\psi}dV_{M,\omega}.
	\end{equation}
\end{Definition}
\begin{Remark}[see \cite{D2016}]\label{r:measure}
	By Hironaka's desingularization theorem, it is not hard to see that the limit in the right of equality \eqref{eq:0627e} does not depend on the continuous extension $\hat g$ and $dV_{M,\omega}[\psi]$ is well defined on $Y^0$ .
\end{Remark}

We would like to introduce a class of functions before introducing our main result.

\begin{Definition}\label{class gTdelta}Let $T\in (-\infty,+\infty)$ and $\delta\in (0,+\infty)$. Let $\mathcal{G}_{T,\delta}$ be the class of functions $c(t)$ which satisfies the following statements,\\
(1) $c(t)$ is a continuous positive function on $[T,+\infty)$,\\
(2)  $\int_{T}^{+\infty}c(t)e^{-t}dt<+\infty$,\\
(3) for any $t> T$, the following equality holds,
\begin{equation}\label{integarl condition 1}
\begin{split}
    &{\left(\frac{1}{\delta}c(T)e^{-T}+\int_{T}^{t}c(t_1)e^{-t_1}dt_1\right)}^2 > \\
     &c(t)e^{-t}\bigg(\int_{T}^{t}(\frac{1}{\delta}c(T)e^{-T}+\int_{T}^{t_2}c(t_1)e^{-t_1}dt_1)dt_2
     +\frac{1}{{\delta}^2}c(T)e^{-T}\bigg).
\end{split}
\end{equation}
\end{Definition}

\begin{Theorem} [Main theorem] \label{main result}
Let $c(t)\in \mathcal{G}_{T,\delta}$, where $\delta<+\infty$. Let $(M,\omega)$ be a weakly pseudoconvex K\"ahler manifold. Let $\psi<-T$ be a quasi-plurisubharmonic function on $M$ with neat analytic singularities. Let $Y:=V(\mathcal{I}(\psi))$ and assume that $\psi$ has log canonical singularities along $Y$. Let $\varphi$ be a Lebesgue measurable function on $M$ such that $\varphi+\psi$ is a quasi-plurisubharmonic function. Let $E$ be a holomorphic vector bundle on $M$ with rank $r$. Let $(M,E,\Sigma,M_k,h,h_{k,s})$ be a singular metric on $E$. Assume that \\
(1) $\Theta_{h}(E)\ge^s_{Nak} 0$ on $M$ in the sense of Definition \ref{singular nak} and $he^{-\varphi}$ is locally lower bounded;\\
(2) $\sqrt{-1}\partial\bar{\partial}\varphi+\sqrt{-1}\partial\bar{\partial}\psi\ge 0$ on $M\backslash\{\psi=-\infty\}$ in the sense of currents;\\
(3)  $s(-\psi)\big(\sqrt{-1}\partial\bar{\partial}\varphi+\sqrt{-1}\partial\bar{\partial}\psi\big)
+\sqrt{-1}\partial\bar{\partial}\psi\ge 0$ on $M\backslash\{\psi=-\infty\}$ in the sense of currents, where
$$s(t):=\frac{\int^t_T\bigg(\frac{1}{\delta}c(T)e^{-T}+\int^{t_2}_T c(t_1)e^{-t_1}dt_1\bigg)dt_2+\frac{1}{\delta^2}c(T)e^{-T}}{\frac{1}{\delta}c(T)e^{-T}+\int^t_T
c(t_1)e^{-t_1}dt_1}.$$

Then for every section $f \in H^0(Y^0,(K_M\otimes E)|_{Y^0})$ on $Y^0=Y_{\text{reg}}$ such that
\begin{equation}\label{mainth:ohsawa measure finite}
  \int_{Y^0}|f|^2_{\omega,h}e^{-\varphi}dV_{M,\omega}[\psi]<+\infty,
\end{equation}
there exists a section $F\in H^0(M,K_M\otimes E)$ such that $F|_{Y_0}=f$ and
\begin{equation}\label{mainth:l2 estimate}
 \int_M c(-\psi)|F|^2_{\omega,h}e^{-\varphi}dV_{M,\omega}\le \left(\frac{1}{\delta}c(T)e^{-T}+\int_{T}^{+\infty}c(t_1)e^{-t_1}dt_1\right)\int_{Y^0}|f|^2_{\omega,h}e^{-\varphi}dV_{M,\omega}[\psi].
\end{equation}
\end{Theorem}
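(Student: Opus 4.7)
The plan is to implement the Guan--Zhou / Zhou--Zhu undetermined-functions method in the vector bundle setting, using the approximation framework for singular hermitian metrics developed in \cite{GMY-boundary5}. I would exhaust $M$ by the relatively compact subsets $M_k$ of Definition \ref{singular metric} and replace $h$ by the smooth approximations $h_{k,s}$ of class $C^2$. By (1.2.2) the Nakano curvature of $h_{k,s}$ is bounded below by $-\lambda_{k,s}\omega\otimes Id_E$ with $\lambda_{k,s}\to 0$ almost everywhere; together with the local lower bound hypothesis on $he^{-\varphi}$, this lets one form a twisted weight on $K_M\otimes E$ for which the Bochner--Kodaira--Nakano formula is applicable and produces only an error of order $\lambda_{k,s}$ relative to the Nakano semi-positive case.

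On each $M_k$ I would then run the twisted $\bar\partial$ argument with the pair of auxiliary functions built from $c(-\psi)$ and $s(-\psi)$ in the manner of \cite{guan-zhou13ap} and \cite{ZZ2019}. The explicit formula defining $s(t)$ is precisely the one that turns assumptions (2) and (3) into pointwise semi-positivity of the curvature term of the twisted connection, and the functional inequality \eqref{integarl condition 1} defining $\mathcal{G}_{T,\delta}$ is exactly the condition needed so that the undetermined functions yield the optimal constant $\frac{1}{\delta}c(T)e^{-T}+\int_{T}^{+\infty}c(t_1)e^{-t_1}dt_1$ on the right-hand side of \eqref{mainth:l2 estimate}. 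Construct an initial local smooth extension $\tilde f$ of $f$ near $Y^0$ (possible because $\psi$ has log canonical singularities along $Y$), truncate it along the sub-level sets $\{\psi<-t\}$, and solve the resulting $\bar\partial$-equation on $M_k$ against the weight built from $c(-\psi), s(-\psi)$ and $\varphi$; the Ohsawa--Takegoshi calculation with undetermined functions then delivers, for each $(k,s)$, a holomorphic section $F_{k,s}\in H^0(M_k, K_M\otimes E)$ with $F_{k,s}|_{Y^0\cap M_k}=f$ and an $L^2$ bound of the form \eqref{mainth:l2 estimate} up to an error $O(\lambda_{k,s})$. The trace identity enters through the Ohsawa measure $dV_{M,\omega}[\psi]$ of Definition \ref{def:oshawa measure}, whose stability under the construction is guaranteed by Remark \ref{r:measure}.

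The conclusion is reached in two limiting steps: first $s\to\infty$, using the monotonicity (1.2.1) to get $|F_{k,s}|^2_{h_{k,s}}\uparrow |F_k|^2_h$ from below (so the $O(\lambda_{k,s})$ error drops out by dominated convergence) and extracting a weak-$L^2$ limit $F_k$ on $M_k$; then $k\to\infty$ by a standard diagonal argument, using weak compactness of $L^2$-bounded families of holomorphic sections together with the uniform estimate from the previous step. The main obstacle I anticipate is verifying that the twisted Bochner--Kodaira--Nakano inequality is legitimately applicable to the non-smooth metric $h_{k,s}e^{-\varphi+s(-\psi)\psi}$: this is where the singular Griffiths semi-positivity supplied by Proposition \ref{singular nak implies singular gri} together with the local lower boundedness of $he^{-\varphi}$ becomes essential, providing enough regularity to interpret the curvature term as a nonnegative $(1,1)$-form with values in Hermitian forms on $E$. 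A secondary difficulty is ensuring that the trace condition $F|_{Y^0}=f$ persists through both limits; this follows once one shows, using the log-canonicity of $\psi$ along $Y$ and the asymptotic description of $dV_{M,\omega}[\psi]$, that the $L^2$ bound \eqref{mainth:l2 estimate} forces convergence of the restrictions to $Y^0$.
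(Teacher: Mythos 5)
Your outline captures the broad shape of the argument (undetermined functions, exhaustion, approximation of $h$, twisted $\bar\partial$-equation, double limit), but several steps that the paper spends the bulk of its effort on are missing or wrong in your plan.

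\textbf{Regularization of the measurable weight $\varphi+\psi$.} You propose to apply the twisted Bochner--Kodaira--Nakano inequality to the metric $h_{k,s}e^{-\varphi+\,\cdots}$ and dismiss the regularity issue by appealing to Proposition \ref{singular nak implies singular gri} and local lower boundedness of $he^{-\varphi}$. This does not work: $\varphi$ is only Lebesgue measurable, $\varphi+\psi$ is only quasi-psh, and the twisted $\bar\partial$-estimate needs a \emph{smooth} (or at least a regularized) weight in order to make sense of the curvature term and to integrate by parts. The paper devotes an entire step to this: it passes to a proper modification $\mu:\tilde M\to M$ (built in the proof of Proposition \ref{p:inte}), constructs a K\"ahler metric $\tilde\omega_k$ and a quasi-psh function $\Upsilon$ on $\tilde M$ via Lemmas \ref{kahler metric}, \ref{sequence of blow up} and \ref{property of upsilon}, and then applies Demailly's regularization theorem (Lemma \ref{regularization on cpx mfld}) upstairs to produce approximations $\tilde\Phi_{m'}$ of $(\varphi+\psi)\circ\mu$ with curvature $\ge -2\delta_{m'}n_k\tilde\omega_k$ \emph{and} the extra curvature condition (3) preserved up to errors $\delta_{m'}$. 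Pushing these back down to $M$ introduces the auxiliary function $\Upsilon$, which has to be carried through the entire estimate (the terms $e^{-2\pi n_k\delta_{m'}\Upsilon}$ in the weight). Without this step the twisted identity is not available in the singular-weight setting.

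\textbf{Construction of the smooth family of extensions and the $t_0\to\infty$ limit.} You say an initial smooth extension $\tilde f$ exists near $Y^0$ "because $\psi$ has log canonical singularities". But the paper needs a \emph{family} $\tilde f_t$ of extensions with quantitative $L^2$-bounds (Step 2, inequalities \eqref{step1part3 1}--\eqref{step1part3 5}), obtained by applying Propositions \ref{key pro l2 extension local} and \ref{key pro l2 method} on each coordinate chart and then gluing with a partition of unity; the error $D''\tilde f_{t_0}$ has to be estimated and shown to decay like $e^{-2\beta_0 t_0}$ (inequality \eqref{step3 I2 12}). Controlling the boundary term when $t_0\to\infty$ against the Ohsawa measure $dV_{M,\omega}[\psi]$ is exactly the content of Proposition \ref{p:inte}, which is a nontrivial limiting statement proved via Hironaka's desingularization and Lemmas \ref{l:1}--\ref{l:4}; Remark \ref{r:measure} only asserts well-definedness of the measure, not the convergence \eqref{eq:0627d}, so your appeal to it is insufficient.

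\textbf{Passing to the limit in $s$.} Your claim that (1.2.1) gives a \emph{pointwise monotone} convergence $|F_{k,s}|^2_{h_{k,s}}\uparrow|F_k|^2_h$ is incorrect: the monotonicity is a property of the fixed metric $h_{k,s}$ applied to a fixed vector, whereas the sections $F_{k,s}$ are distinct holomorphic solutions for each $s$ and there is no pointwise comparison between them. The paper instead bounds $\sup_s\|u_{k,t_0,m,m',\epsilon}\|_{h_1}$, extracts a weak limit, and passes the estimate through via Fatou's lemma together with Lemma \ref{equiv of weak convergence}. Moreover, because the curvature lower bound is only $-\lambda_m\omega\otimes Id_E$, one cannot solve $D''u=v$ directly; the paper solves $D''u+P(\sqrt\lambda\,\tau)=v$ with a correcting term via Lemma \ref{d-bar equation with error term}, and then uses Lemma \ref{weakly converge lemma} to show the projected correcting term goes to zero weakly in the limit. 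Your "the $O(\lambda_{k,s})$ error drops out by dominated convergence" hides this machinery and, as stated, is not a valid step.

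Finally, the paper's chain of limits is $m\to\infty$ (metric), then $m'\to\infty$ (weight regularization), then $t_0\to\infty$ (truncation), then $\epsilon\to 0$, then $k_1\to\infty$, then $k\to\infty$, plus an a priori regularization of $c(t)$ itself (Lemma \ref{approx of ct}); you omit the weight-regularization and truncation indices entirely, which is precisely where most of the work is.
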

\begin{Remark}
Note that for any section $f \in H^0(Y^0,(K_M\otimes E)|_{Y^0})$ on $Y^0=Y_{\text{reg}}$, the integral
$$\int_{Y^0}|f|^2_{\omega,h}e^{-\varphi}dV_{M,\omega}[\psi]$$ is independent of the choice of $\omega$.
\end{Remark}

\begin{Remark}In \cite{D2016}, Demailly obtained an $L^2$ extension theorem for holomorphic line bundles with singular hermitian metrics on weakly pseudoconvex K\"ahler manifolds. In \cite{ZZ2019} (see also \cite{ZhouZhu-jdg}), Zhou-Zhu proved the optimal version of Demailly's result. Theorem \ref{main result} gives an optimal $L^2$ extension theorem for holomorphic vector bundles with singular hermitian metrics on weakly pseudoconvex K\"ahler manifolds.
\end{Remark}

We would like to  introduce a  class of functions as follows.
\begin{Definition}\label{class gT}Let $T\in [-\infty,+\infty)$. Let $\mathcal{G}_{T}$ be the class of functions $c(t)$ which satisfies the following statements,\\
(1) $c(t)$ is a continuous positive function on $(T,+\infty)$,\\
(2)  $\int_{T}^{+\infty}c(t)e^{-t}dt<+\infty,$\\
(3) for any $t>T$, the following equality holds,
\begin{equation}\nonumber
\begin{split}
    {\left(\int_{T}^{t}c(t_1)e^{-t_1}dt_1\right)}^2 >
     c(t)e^{-t}\bigg(\int_{T}^{t}(\int_{T}^{t_2}c(t_1)e^{-t_1}dt_1)dt_2
     \bigg).
\end{split}
\end{equation}
\end{Definition}
We deduce the following optimal $L^2$ extension Theorem from Theorem \ref{main result}.
\begin{Theorem}\label{main result 1}
Let $c(t)\in \mathcal{G}_{T}$. Let $(M,\omega)$ be a weakly pseudoconvex K\"ahler manifold. Let $\psi<-T$ be a plurisubharmonic function on $X$ with neat analytic singularities. Let $Y:=V(\mathcal{I}(\psi))$ and assume that $\psi$ has log canonical singularities along $Y$. Let $\varphi$ be a Lebesgue measurable function on $M$ such that $\varphi+\psi$ is a plurisubharmonic function on $M$. Let $E$ be a holomorphic vector bundle on $M$. Let $(M,E,\Sigma,M_k,h,h_{k,s})$ be a singular metric on $E$. Assume that $\Theta_{h}(E)\ge^s_{Nak} 0$ on $M$ in the sense of Definition \ref{singular nak} and $he^{-\varphi}$ is locally lower bounded.

Then for every section $f \in H^0(Y^0,(K_M\otimes E)|_{Y^0})$ on $Y^0=Y_{\text{reg}}$ such that
\begin{equation}\label{mainth:ohsawa measure finite 2}
  \int_{Y^0}|f|^2_{\omega,h}e^{-\varphi}dV_{M,\omega}[\psi]<+\infty,
\end{equation}
there exists a section $F\in H^0(M,K_M\otimes E)$ such that $F|_{Y_0}=f$ and
\begin{equation}\label{mainth:l2 estimate 2}
 \int_M c(-\psi)|F|^2_{\omega,h}e^{-\varphi}dV_{M,\omega}\le \left(\int_{T}^{+\infty}c(t_1)e^{-t_1}dt_1\right)\int_{Y^0}|f|^2_{\omega,h}e^{-\varphi}dV_{M,\omega}[\psi].
\end{equation}
\end{Theorem}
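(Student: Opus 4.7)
My plan is to deduce Theorem \ref{main result 1} from Theorem \ref{main result} by a limiting argument as $\delta\to+\infty$. First, the strengthened hypotheses of Theorem \ref{main result 1}, namely the plurisubharmonicity of $\psi$ and of $\varphi+\psi$, render conditions (2) and (3) of Theorem \ref{main result} automatic: (2) is exactly $\sqrt{-1}\partial\bar\partial(\varphi+\psi)\ge 0$; and for (3), the function $s(t)$ is a ratio of two manifestly positive quantities on $(T,+\infty)$, hence non-negative, so $s(-\psi)\,\sqrt{-1}\partial\bar\partial(\varphi+\psi)+\sqrt{-1}\partial\bar\partial\psi$ is a sum of two non-negative $(1,1)$-currents. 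Thus only hypothesis (1) remains, and the task reduces to relating the density classes $\mathcal{G}_T$ and $\mathcal{G}_{T,\delta}$.

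The main algebraic step is to show that for $c\in\mathcal{G}_T$ one can, after a slight initial shift of $T$ if necessary, arrange $c\in\mathcal{G}_{T,\delta}$ for all sufficiently large $\delta<+\infty$. Writing $C_\delta=c(T)e^{-T}/\delta$, $A(t)=\int_T^t c(t_1)e^{-t_1}dt_1$, and $B(t)=\int_T^t A(t_2)dt_2$, a direct expansion rearranges the $\mathcal{G}_{T,\delta}$ inequality into
\[
\bigl(A(t)^2-c(t)e^{-t}B(t)\bigr)+C_\delta\bigl(2A(t)-c(t)e^{-t}(t-T)\bigr)+C_\delta^{2}\bigl(1-c(t)e^{-t}/(c(T)e^{-T})\bigr)>0.
\]
The first bracket is strictly positive for $t>T$ by $c\in\mathcal{G}_T$. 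Near $t=T$, with $u=c(T)e^{-T}$ and $\tau=t-T$, the first bracket is $\sim u^{2}\tau^{2}/2$ while the $C_\delta$-correction is $\sim uC_\delta\tau$, both positive, and the $C_\delta^{2}$-term is of order $\tau$ and subdominant. On any compact subinterval $[T_1,T_2]\subset(T,+\infty)$ the first bracket is bounded below by a positive constant, while the $C_\delta$-corrections are $O(1/\delta)$ uniformly. As $t\to+\infty$, $A(t)$ converges to a finite limit and $c(t)e^{-t}B(t)$ is controlled by the $\mathcal{G}_T$ condition, so the $O(1/\delta)$ perturbations are absorbed there as well.

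With this inclusion in hand, for each such large $\delta$ Theorem \ref{main result} produces a section $F_\delta\in H^0(M,K_M\otimes E)$ with $F_\delta|_{Y^0}=f$ and
\[
\int_M c(-\psi)|F_\delta|^2_{\omega,h}e^{-\varphi}dV_{M,\omega}\le\left(\frac{c(T)e^{-T}}{\delta}+\int_T^{+\infty}c(t_1)e^{-t_1}dt_1\right)\int_{Y^0}|f|^2_{\omega,h}e^{-\varphi}dV_{M,\omega}[\psi].
\]
Since $\{F_\delta\}$ is uniformly bounded in this weighted $L^2$-norm, a Montel-type compactness argument for holomorphic sections with locally uniform $L^2$ bounds produces, along a subsequence, a locally uniform limit $F\in H^0(M,K_M\otimes E)$; the restriction $F|_{Y^0}=f$ is preserved under locally uniform convergence, and lower semicontinuity of the $L^2$-norm combined with $\delta\to+\infty$ yields the estimate \eqref{mainth:l2 estimate 2}. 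The main obstacle is establishing the inclusion $\mathcal{G}_T\subset\mathcal{G}_{T,\delta}$ uniformly in $t\in(T,+\infty)$ for some large $\delta$; if direct uniformity fails (for example, should $c$ fail to extend continuously to $T$), the fallback is to additionally perturb $T$ to $T_\epsilon=T+\epsilon$, work with $c|_{[T_\epsilon,+\infty)}\in\mathcal{G}_{T_\epsilon,\delta_\epsilon}$, and take a diagonal limit $\epsilon\to 0$, $\delta_\epsilon\to+\infty$ after applying Theorem \ref{main result} with parameters $(T_\epsilon,\delta_\epsilon)$.
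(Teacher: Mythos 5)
Your high-level plan — observe that the plurisubharmonicity of $\psi$ and $\varphi+\psi$ makes hypotheses (2) and (3) of Theorem \ref{main result} automatic, then reduce the class-mismatch between $\mathcal{G}_T$ and $\mathcal{G}_{T,\delta}$ to a parameter argument — is the right starting point, and your observations about (2) and (3) are correct. But the way you handle the class mismatch has gaps, and it is not the route the paper takes.

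First, $\mathcal{G}_T$ allows $T=-\infty$, whereas $\mathcal{G}_{T,\delta}$ is only defined for finite $T$. Your main argument fixes $T$ and lets $\delta\to+\infty$, which never produces a finite basepoint when $T=-\infty$; the fallback "perturb $T$ to $T_\epsilon=T+\epsilon$" is vacuous in that case. Second, even for finite $T$ your direct inclusion claim $\mathcal{G}_T\subset\mathcal{G}_{T,\delta}$ for large $\delta$ is not established: the first bracket $A(t)^2-c(t)e^{-t}B(t)$ is positive but need not be bounded below uniformly in $t$, and the sign of the $C_\delta$-coefficient $2A(t)-c(t)e^{-t}(t-T)$ is not controlled by the $\mathcal{G}_T$ inequality (it does not follow from $c(t)e^{-t}B(t)<A(t)^2$ that $c(t)e^{-t}(t-T)<2A(t)$, since one only has $B(t)\le A(t)(t-T)$). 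Third, and most seriously, your fallback silently assumes $c|_{[T_\epsilon,+\infty)}\in\mathcal{G}_{T_\epsilon}$, which does \emph{not} follow from $c\in\mathcal{G}_T$: writing $A'(t)=A(t)-A(T_\epsilon)$ and $B'(t)=B(t)-B(T_\epsilon)-A(T_\epsilon)(t-T_\epsilon)$, the inequality $(A')^2>c(t)e^{-t}B'$ has the extra cross term $-2A(t)A(T_\epsilon)$ of the wrong sign, and nothing in Definition \ref{class gT} controls it. So neither your primary route nor your fallback closes.

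The paper proves Theorem \ref{main result 1} differently: it exhausts $M$ by weakly pseudoconvex $M_k$, uses that $-\psi$ is bounded below by some $T_k>T$ on each $M_k$, and invokes Lemma \ref{class GT}, which for any $T_1\in(T,T_k)$ \emph{constructs a new function} $c_{T_2}\in\mathcal{G}_{T_2,\delta_2}$ with finite $T_2\in(T,T_1)$ such that $c_{T_2}=c$ on $[T_1,+\infty)$ and, crucially, $\frac{1}{\delta_2}c_{T_2}(T_2)e^{-T_2}+\int_{T_2}^{+\infty}c_{T_2}e^{-t_1}dt_1=\int_T^{+\infty}ce^{-t_1}dt_1$. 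This normalization means Theorem \ref{main result}, applied on $M_k$ with $c_{T_2}$, already gives the optimal constant $\int_T^{+\infty}ce^{-t}dt$ — no limit in $\delta$ is needed, and $c_{T_2}(-\psi)=c(-\psi)$ on $M_k$ because $-\psi>T_1$ there. The only remaining limit is $k\to+\infty$, handled with Lemma \ref{l:converge}. If you want to salvage your approach, you would need to reprove (or cite) the construction behind Lemma \ref{class GT}; it is not a restriction of $c$ but a genuine modification of $c$ on the initial interval $[T_2,T_1)$, and the precise normalization it provides is what makes the estimate optimal without any $\delta$-limit.
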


\subsection{Applications }
In this section, we introduce some applications of Theorem \ref{main result} and   Theorem \ref{main result 1}.
\subsubsection{Equality in optimal $L^2$ extension problem: a necessary condition}
As an application of  Theorem \ref{main result 1}, we give a necessary condition for the holding of the equality in optimal $L^2$ extension theorem.

Let $(M,\omega)$ be a weakly pseudoconvex K\"ahler manifold. Let $\psi<0$ be a plurisubharmonic function on $M$ with neat analytic singularities. Let $Y:=V(\mathcal{I}(\psi))$ and assume that $\psi$ has log canonical singularities along $Y$. Let $\varphi$ be a Lebesgue measurable function on $M$ such that $\varphi+\psi$ is a plurisubharmonic function on $M$. Let $E$ be a holomorphic vector bundle on $M$. Let $(M,E,\Sigma,M_k,h,h_{k,s})$ be a singular metric on $E$.
Assume that
$\Theta_{h}(E)\ge^s_{Nak} 0$ on $M$ in the sense of Definition \ref{singular nak} and $he^{-\varphi}$ is locally lower bounded.

Let $c(t)$ be a function on $(0,+\infty)$ which satisfies the following statements,\\
(1) $c(t)$ is a continuous positive function on $(0,+\infty)$,\\
(2) $c(t)e^{-t}$ is decreasing with respect to $t$,\\
(3) $\int_{0}^{+\infty}c(t)e^{-t}dt<+\infty$.

Let $f \in H^0(Y^0,(K_M\otimes E)|_{Y^0})$ on $Y^0=Y_{\text{reg}}$ satisfies that
\begin{equation}\label{ch1,sec3,for1}
 \int_{Y^0}|f|^2_{\omega,h}e^{-\varphi}dV_{M,\omega}[\psi]<+\infty.
\end{equation}
Denote that
$$\|f\|_{Y_0,L^2}:=\bigg(\int_0^{+\infty}c(t)e^{-t}dt\bigg) \int_{Y^0}|f|^2_{\omega,h}e^{-\varphi}dV_{M,\omega}[\psi],$$
 and $$\|F\|_{M,L^2}:=\int_M c(-\psi)|F|^2_{\omega,h}e^{-\varphi}dV_{M,\omega},$$
for any $F\in H^0\big(M,K_M\otimes E\big)$.

When $E$ is a trivial line bundle and $h$ is a singular hermitian metric on $E$, Guan-Mi \cite{GMPEK} considered the following question:\par
\emph{Under which condition, does the equality in optimal $L^2$ extension theorem
$\|f\|_{Y_0,L^2}=\inf\{\|F\|_{M,L^2}: F$ is a holomorphic extension of $f$ from $Y_0$ to $M\}$ hold?}

When $M$ is an open Riemann surface which admits a nontrivial Green function and
$E$ is a trivial line bundle with trivial (or harmonic) weight $\varphi$, the characterization of the holding of the equality is equivalent to  Suita conjecture (or extended Suita conjecture) which were proved by Guan-Zhou in \cite{guan-zhou13ap}.
When $E$ is a trivial line bundle with a singular weight $\varphi$, using the concavity property of minimal $L^2$ integrals, Guan-Mi \cite{GMPEK} gave a necessary condition for the holding of the equality and  established a characterization to the question when $M$ is an open Riemann surface which admits a nontrivial Green function
(for recent progress, see
\cite{GY-concavity1,GMY-boundary2,GY-concavity3,GY-concavity4,BGY-concavity5,BGY-concavity6,BGMY7}).

For the case $E$ is a holomorphic vector bundle with a singular hermitian metric, using Theorem \ref{main result 1} and the concavity property of minimal $L^2$ integrals in \cite{GMY-boundary5} (see also Theorem \ref{concavity of min l2 int}), we have the following necessary condition for the holding of the equality
$\|f\|_{Y_0,L^2}=\inf\{\|F\|_{M,L^2}: F$ is a holomorphic extension of $f$ from $Y_0$ to $M\}$.

\begin{Theorem}\label{nec in equ of L2 ext}  Let $M,\psi,Y,\varphi,E,h$ be as above.

Let $f \in H^0(Y^0,(K_M\otimes E)|_{Y^0})$ on $Y^0=Y_{\text{reg}}$ such that
\begin{equation}\label{ch1,sec3,for1}
 \int_{Y^0}|f|^2_{\omega,h}e^{-\varphi}dV_{M,\omega}[\psi]<+\infty.
\end{equation}
Assume that the equality $\|f\|_{Y_0,L^2}=\inf\{\|F\|_{M,L^2}: F$ is a holomorphic extension of $f$ from $Y_0$ to $M\}$ holds.

Then there exists a unique $E$-valued holomorphic $(n,0)$-form $F$ on
$M$ such that $F|_{Y_0}=f$ and for any $t\ge 0$, the norm $\|F\|_{\{\psi<-t\},L^2}$ of $F$ is minimal along all holomorphic  extension of $f$ from $Y_0$ to $\{\psi<-t\}$. Moreover, we have
\begin{equation}\nonumber
\int_{\{\psi<-t\}} c(-\psi)|F|^2_{\omega,h}e^{-\varphi}dV_{M,\omega}=\bigg(\int_t^{+\infty}c(t_1)e^{-t_1}dt_1\bigg) \int_{Y^0}|f|^2_{\omega,h}e^{-\varphi}dV_{M,\omega}[\psi].
\end{equation}

\end{Theorem}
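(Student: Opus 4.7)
The plan is to study the minimal $L^2$ integrals at every sub-level set and to combine Theorem \ref{main result 1} with the concavity result of \cite{GMY-boundary5} (Theorem \ref{concavity of min l2 int}) to pin down both the value of these integrals and the section $F$ realizing them. For each $t\geq 0$ set
$$G(t) := \inf\left\{\int_{\{\psi<-t\}} c(-\psi)|\widetilde F|^2_{\omega,h} e^{-\varphi}\,dV_{M,\omega} : \widetilde F\in H^0(\{\psi<-t\},K_M\otimes E),\ \widetilde F|_{Y^0\cap\{\psi<-t\}}=f\right\},$$
and write $K(t):=\int_t^{+\infty} c(s)e^{-s}\,ds$ and $I:=\int_{Y^0}|f|^2_{\omega,h} e^{-\varphi}\,dV_{M,\omega}[\psi]$. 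The equality hypothesis is then $G(0)=K(0)I$, and the displayed formula amounts to $G(t)=K(t)I$ for every $t\geq 0$.

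For the upper bound $G(t)\leq K(t)I$, I would apply Theorem \ref{main result 1} on the weakly pseudoconvex K\"ahler manifold $\{\psi<-t\}$ with plurisubharmonic weight $\psi+t<0$ and shifted function $c_t(s):=c(s+t)$. The shifted function inherits positivity, continuity, and the monotonicity of $c_t(s)e^{-s}$ from $c$, so the same verification that places $c$ in $\mathcal{G}_0$ places $c_t$ in $\mathcal{G}_0$ as well. A direct substitution using $c_t(-\psi-t)=c(-\psi)$, $\int_0^{+\infty} c_t(s)e^{-s}\,ds=e^tK(t)$, and $dV_{M,\omega}[\psi+t]=e^{-t}\,dV_{M,\omega}[\psi]$ then collapses the estimate produced by Theorem \ref{main result 1} into the clean bound $K(t)I$.

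For the matching lower bound I would invoke concavity. By Theorem \ref{concavity of min l2 int} the function $\widetilde G(r):=G(K^{-1}(r))$ is concave on $(0,K(0)]$; since $G(t)\leq K(t)I\to 0$ as $t\to+\infty$, one extends continuously by $\widetilde G(0)=0$. Concavity on $[0,K(0)]$ with boundary values $0$ and $K(0)I$ forces $\widetilde G(r)\geq rI$, and combined with the upper bound $\widetilde G(r)\leq rI$ this yields $\widetilde G(r)=rI$, i.e.\ $G(t)=K(t)I$ for all $t\geq 0$. For each such $t$ the affine set of $L^2$ extensions of $f$ to $\{\psi<-t\}$ is closed and nonempty, so a unique minimizer $F^{(t)}$ exists; in particular $F:=F^{(0)}$ is the unique global minimizer on $M$. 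To promote this to the assertion $F|_{\{\psi<-t\}}=F^{(t)}$ for every $t\geq 0$, I would invoke the equality case of Theorem \ref{concavity of min l2 int}: the linearity of $\widetilde G$ throughout $[0,K(0)]$ saturates the concavity inequality, forcing the family $\{F^{(t)}\}$ to be compatible under restriction, so the $F^{(t)}$ glue to a single global section which must coincide with $F$. The displayed norm formula is then immediate from $G(t)=K(t)I$.

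The main obstacle is this final compatibility step: passing from the numerical identity $G(t)=K(t)I$ to the assertion that the unique global minimizer simultaneously realizes the minimum on every sub-level set. A priori one has only $\|F\|^2_{\{\psi<-t\},L^2}\geq K(t)I$, and the reverse inequality is not elementary because a minimizer on $\{\psi<-t\}$ need not admit a global extension of controlled norm on all of $M$. Extracting the required consistency of the family $\{F^{(t)}\}$ therefore rests on the equality-case content of the concavity theorem in \cite{GMY-boundary5}.
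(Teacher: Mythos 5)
Your proposal is correct and takes essentially the same route as the paper: apply Theorem \ref{main result 1} on the sub-level sets $\{\psi<-t\}$ to get $G(t)\le K(t)I$, combine with the equality hypothesis and the concavity of $G\circ K^{-1}$ from Theorem \ref{concavity of min l2 int} to force linearity, and then invoke the linearity (equality) case, which is precisely Corollary \ref{necessary condition for linear of G}, to produce the unique simultaneous minimizer $F$. The only cosmetic difference is that you shift both $\psi$ and $c$ so as to apply the extension theorem with $T=0$, whereas the paper keeps $\psi$ and restricts $c$ to $(t,+\infty)$; the ``main obstacle'' you flag at the end is handled exactly by Corollary \ref{necessary condition for linear of G}, which is the cited result you are already appealing to.
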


When $M$ is a Stein manifold and $(E,h)$ is a trivial line bundles with singular hermitian metric, Theorem \ref{nec in equ of L2 ext} can be referred to \cite{GMPEK}.
\subsubsection{Optimal estimate of $L^2$ extension theorem of Ohsawa for holomorphic vector bundles with singular hermitian metrics.}
\

Let $X$ be a complex manifold. Let $(E,h)$ be a holomorphic vector bundle over $X$ with a smooth hermitian metric $h$. For any local section $f$ of $K_X\otimes E$, denote
$$\{f,f\}_h:=<e,e>_h\sqrt{-1}^{n^2}f_1\wedge \bar{f_1},$$
where $f=f_1\otimes e$ locally.

In \cite{Ohsawa2}, Ohsawa proved following $L^2$ extension theorem.
\begin{Theorem}[see \cite{Ohsawa2}] \label{ohsawa2}Let $X$ be an $n$-dimensional Stein manifold, $Y\subset X$ a closed complex submanifold of codimension $m$, and $(E,h)$ be a Nakano-semipositive vector bundle over $X$, where $h$ is smooth. Let $\varphi$ be any plurisubharmonic function on $X$ and let $s_1,\cdots,s_m$ be holomorphic functions on $X$ vanishing on $Y$ and $ds_1\wedge\cdots\wedge ds_m\neq 0$ on $Y$. Then given a holomorphic $E$-valued $(n-m,0)$-form $g$ on $Y$ with
$$\int_Y \{g,g\}_he^{-\varphi}<+\infty,$$
there exists for any $\epsilon>0$, a holomorphic $E$-valued $(n,0)$-form $G_{\epsilon}$ on $X$ which coincides with $g\wedge ds_1\wedge\cdots\wedge ds_m$ on $Y$ and satisfies
$$\int_{X}e^{-\varphi}(1+|s|^2)^{-m-\epsilon}\{G_{\epsilon},G_{\epsilon}\}_h\le \epsilon^{-1}C_m\int_Y \{g,g\}_he^{-\varphi},$$
where $|s|^2=\sum_{i=1}^{m}|s_i|^2$ and $C_m$ is a positive number only depends on $m$.
\end{Theorem}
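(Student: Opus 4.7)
The plan is to deduce Theorem \ref{ohsawa2} directly from Theorem \ref{main result 1} by taking $\psi := m\log|s|^2$, the test function $c(t) := (1+e^{-t/m})^{-(m+\epsilon)}$ with $T = -\infty$, and extending $f := g \wedge ds_1 \wedge \cdots \wedge ds_m$. The virtue of this choice is that $c(-\psi) = (1+|s|^2)^{-(m+\epsilon)}$, so the left-hand side of the estimate \eqref{mainth:l2 estimate 2} reproduces exactly the integrand appearing in Ohsawa's conclusion.

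The hypotheses of Theorem \ref{main result 1} are easy to check. Since $s_1,\ldots,s_m$ vanish transversally along $Y$, $\psi = m\log|s|^2$ is plurisubharmonic with neat analytic singularities, $V(\mathcal{I}(\psi)) = Y$, and is log canonical along $Y$; the constraint $\psi < -T$ is vacuous for $T = -\infty$. The sum $\varphi + \psi$ is plurisubharmonic, and since $h$ is smooth and Nakano semi-positive the conditions $\Theta_h(E) \ge^s_{Nak} 0$ and local lower-boundedness of $he^{-\varphi}$ hold trivially (take $M_k = X$, $h_{k,s} = h$, $\lambda_{k,s} = 0$ in Definition \ref{singular nak} and use the local upper boundedness of $\varphi$).

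For $c \in \mathcal{G}_{-\infty}$, the integrability is obtained from the substitution $u = 1+e^{-t/m}$, which yields $\int_{-\infty}^{+\infty} c(t)e^{-t}\,dt = m\int_1^{+\infty} u^{-(m+\epsilon)}(u-1)^{m-1}\,du = m\,B(m,\epsilon)$. Since $\epsilon \mapsto \epsilon B(m,\epsilon)$ is continuous on $(0,+\infty)$ with $\epsilon B(m,\epsilon) \to 1$ as $\epsilon \to 0^+$ and $\epsilon B(m,\epsilon) \to 0$ as $\epsilon \to +\infty$, it is bounded by some $C_m$, so $mB(m,\epsilon) \le \epsilon^{-1}C_m$. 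For inequality (3) of Definition \ref{class gT}, set $a(t) := \int_{-\infty}^t c(t_1)e^{-t_1}\,dt_1$; I would reduce (3) to the strict concavity of $\log a$, using that if $a'/a$ is strictly decreasing then $a'(t)\int_{-\infty}^t a(s)\,ds < a(t)\int_{-\infty}^t a'(s)\,ds = a(t)^2$. For $m = 1$ the closed form $a(t) = \epsilon^{-1}(1+e^{-t})^{-\epsilon}$ gives $(\log a)'(t) = \epsilon/(e^t+1)$, which is strictly decreasing; the same pattern persists for general $m$ via a direct differentiation of $a(t) = m\int_{1+e^{-t/m}}^{+\infty} u^{-(m+\epsilon)}(u-1)^{m-1}du$.

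Applying Theorem \ref{main result 1} then produces $F \in H^0(X, K_X \otimes E)$ with $F|_Y = g\wedge ds_1 \wedge \cdots \wedge ds_m$ and the target estimate, once the Ohsawa residue measure $dV_{X,\omega}[\psi]$ on $Y^0$ is identified. Using polar coordinates on the normal directions to $Y$ and the coarea formula on the level sets $\{\psi \approx -t\}$, the measure $dV_{X,\omega}[\psi]$ is (up to a normalization constant $\pi^m/m!$) the measure that pairs with $|g\wedge ds_1\wedge\cdots\wedge ds_m|^2_\omega$ to absorb the Jacobian of $(s_1,\ldots,s_m)$ and reproduce $\int_Y \{g,g\}_h e^{-\varphi}$. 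The two main obstacles are: (i) a clean verification of condition (3) for general $m$, which though elementary involves an unwieldy incomplete beta integral; and (ii) aligning the normalization conventions between $|F|^2_{\omega,h}\,dV_{X,\omega}$ and Ohsawa's bracket $\{F,F\}_h$ so that the product of constants collapses precisely to $\epsilon^{-1}C_m$ as stated.
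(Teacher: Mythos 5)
Your proposal is exactly the paper's own route: Theorem \ref{ohsawa2} is cited from Ohsawa, and what the paper actually proves is the (stronger) singular-metric Corollary immediately after it, deduced from Theorem \ref{main result 1} with precisely the choices $\psi = m\log|s|^2$ and $c(t)=(1+e^{-t/m})^{-(m+\epsilon)}$; the smooth case with the loose bound $\epsilon^{-1}C_m$ is a specialization. Two remarks to tighten the obstacles you yourself flag. For (i), the verification of condition (3) in Definition \ref{class gT} for general $m$ closes cleanly by checking strict log-concavity of the \emph{integrand} rather than differentiating the incomplete Beta integral for $a$: writing $q(t):=c(t)e^{-t}$, one computes
\[
\frac{d^{2}}{dt^{2}}\log q(t)=-\frac{m+\epsilon}{m^{2}}\,\frac{e^{-t/m}}{\left(1+e^{-t/m}\right)^{2}}<0,
\]
and $q(-\infty)=0$. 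To obtain $a''a<(a')^{2}$ (equivalently $a'/a$ strictly decreasing, which is all your reduction needs), note $a''a=q'a$ and $(a')^{2}=q^{2}$; where $q'(t)\le 0$ the inequality $q'(t)a(t)<q(t)^{2}$ is trivial, and where $(\log q)'(t)>0$ the strict decrease of $(\log q)'$ gives $(\log q)'(t)\,q(s)<q'(s)$ for every $s<t$, which upon integrating over $(-\infty,t)$ yields $(\log q)'(t)\,a(t)<q(t)-q(-\infty)=q(t)$, i.e.\ $q'(t)a(t)<q(t)^{2}$. This feeds your $a'(t)a(s)<a(t)a'(s)$ argument and settles (3) for all $m$ without any beta-function manipulation. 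For (ii), the normalization constant relating $\int_{Y^{0}}|g\wedge ds_{1}\wedge\cdots\wedge ds_{m}|^{2}_{\omega,h}e^{-\varphi}\,dV_{M,\omega}[\psi]$ to $\int_{Y}\{g,g\}_{h}e^{-\varphi}$ is $(2\pi)^{m}/m!$ rather than $\pi^{m}/m!$: each transverse complex coordinate contributes a factor of $2$ because $\sqrt{-1}\,dz\wedge d\bar z=2\,dx\wedge dy$, and this is exactly the factor $\frac{(2\pi)^{m}}{m!}$ appearing next to $m\,B(m,\epsilon)$ in the paper's Corollary.
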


Let $c_{\infty}(t):=(1+e^{\frac{-t}{m}})^{-m-\epsilon}$. Note that $c_{\infty}(t)$ belongs to class $\tilde{\mathcal{G}}_{+\infty}$ and $\int_{-\infty}^{+\infty}c_{\infty}(t)e^{-t}dt=m\sum_{j=0}^{m-1}C_{m-1}^j(-1)^{m-1-j}\frac{1}{m-1-j+\epsilon}<+\infty$.
Using Theorem \ref{main result 1} (take $\psi=m\log|s|^2$), we have optimal estimate of Theorem \ref{ohsawa2} for holomorphic vector bundles with singular hermitian metrics as follows.
\begin{Corollary}Let $X,Y,E,\varphi$ be as in Theorem \ref{ohsawa2}. Let $h_E$ be a singular hermitian metric on a holomorphic vector bundle $E$ with rank $r$ such that $\Theta_{h_E}(E)\ge^s_{Nak} 0$ on $M$ in the sense of Definition \ref{singular nak}. Then given any holomorphic $E$-valued $(n-m,0)$-form $g$ on $Y$ with
$$\int_Y \{g,g\}_{h_E}e^{-\varphi}<+\infty,$$
there exists for any $\epsilon>0$, a holomorphic $E$-valued $(n,0)$-form $G_{\epsilon}$ on $X$ which coincides with $g\wedge ds_1\wedge\cdots\wedge ds_m$ on $Y$ and satisfies
\begin{equation}
\begin{split}
&\int_{X}e^{-\varphi}(1+|s|^2)^{-m-\epsilon}\{G_{\epsilon},G_{\epsilon}\}_{h_E}\\
\le &\big( m\sum_{j=0}^{m-1}C_{m-1}^j(-1)^{m-1-j}\frac{1}{m-1-j+\epsilon}\big) \frac{(2\pi)^m}{m!}\int_Y \{g,g\}_{h_E}e^{-\varphi}.
\end{split}
\end{equation}
\end{Corollary}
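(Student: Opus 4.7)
The plan is to apply Theorem \ref{main result 1} directly with the choice
$$
\psi := m\log|s|^2,\qquad f := (g\wedge ds_1\wedge\cdots\wedge ds_m)\big|_{Y^0},\qquad c := c_\infty(t) = (1+e^{-t/m})^{-m-\epsilon},
$$
and $T = -\infty$, on the Stein (hence weakly pseudoconvex K\"{a}hler) manifold $X$. The hypotheses of Theorem \ref{main result 1} admit direct verification: since $ds_1\wedge\cdots\wedge ds_m\neq 0$ on $Y$, the tuple $(s_1,\ldots,s_m)$ is a regular sequence of length equal to $\mathrm{codim}\,Y$, which makes $\psi$ quasi-plurisubharmonic with neat analytic singularities in the sense of Definition \ref{def:neat} and gives both $V(\mathcal{I}(\psi))=Y$ and log canonical singularities of $\psi$ along $Y$. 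The sum $\varphi+\psi$ is plurisubharmonic as a sum of two plurisubharmonic functions. The weighted metric $h_E e^{-\varphi}$ is locally lower bounded: with $\varphi$ plurisubharmonic and $h_E$ singular Nakano semi-positive, one checks from Definition \ref{singular nak} that $h_E e^{-\varphi}$ is itself singular Nakano semi-positive (by smoothly approximating $\varphi$ locally and multiplying with the given Nakano-approximants of $h_E$), so Definition \ref{locally lower bound} holds with $h_x = h_E e^{-\varphi}$ and $\eta_x\equiv 1$. Finally, $c_\infty$ is stated in the excerpt to lie in the class $\tilde{\mathcal{G}}_{+\infty}$ corresponding to $T=-\infty$ in Definition \ref{class gT}; continuity, positivity, and the integrability $\int_{-\infty}^{+\infty}c_\infty(t)e^{-t}dt<+\infty$ are immediate, while the defining integral inequality reduces, under the substitution $u=e^{-t/m}$, to a routine calculus check on $c_\infty(t)e^{-t}=u^m/(1+u)^{m+\epsilon}$.

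The substantive step is the identification of the residue measure $dV_{X,\omega}[\psi]$. Working in local coordinates adapted to $Y^0$ in which $s_1,\ldots,s_m$ extend to a coordinate system (possible by the transversality of $ds_1\wedge\cdots\wedge ds_m$), the limit in Definition \ref{def:oshawa measure} is computed by introducing polar coordinates $(\rho,u)$ on the normal $m$-dimensional slice: the logarithmic shell $\{-t-1<m\log|s|^2<-t\}$ contributes a radial factor $\int \rho^{-1}\,d\rho = 1/(2m)$, the unit sphere $S^{2m-1}$ contributes its volume $2\pi^m/(m-1)!$, and combining with the K\"{a}hler volume normalization one obtains
$$
|g\wedge ds_1\wedge\cdots\wedge ds_m|^2_{\omega,h_E}\,dV_{X,\omega}[m\log|s|^2] \;=\; \frac{(2\pi)^m}{m!}\,\{g,g\}_{h_E}\qquad\text{on }Y^0.
$$
Consequently \eqref{mainth:ohsawa measure finite 2} for $f$ is equivalent to the finiteness of $\int_Y\{g,g\}_{h_E}e^{-\varphi}$ with conversion constant $(2\pi)^m/m!$.

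Applying Theorem \ref{main result 1} then yields a section $G_\epsilon:=F\in H^0(X,K_X\otimes E)$ with $G_\epsilon|_{Y^0}=g\wedge ds_1\wedge\cdots\wedge ds_m$ and, using $c_\infty(-\psi)=(1+|s|^2)^{-m-\epsilon}$, the estimate
$$
\int_X(1+|s|^2)^{-m-\epsilon}\{G_\epsilon,G_\epsilon\}_{h_E}e^{-\varphi}\le\Bigl(\int_{-\infty}^{+\infty}c_\infty(t)e^{-t}\,dt\Bigr)\frac{(2\pi)^m}{m!}\int_Y\{g,g\}_{h_E}e^{-\varphi}.
$$
The substitution $u=e^{-t/m}$ rewrites $\int_{-\infty}^{+\infty}c_\infty(t)e^{-t}dt$ as $m\int_0^{+\infty}u^{m-1}(1+u)^{-m-\epsilon}du$, which via partial fractions equals the stated constant $m\sum_{j=0}^{m-1}C_{m-1}^{j}(-1)^{m-1-j}/(m-1-j+\epsilon)$. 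The genuine work concentrates in the residue calculation above; once that is settled, the rest is bookkeeping around the conclusion of Theorem \ref{main result 1}.
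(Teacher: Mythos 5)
Your proposal is correct and follows exactly the route the paper indicates: apply Theorem \ref{main result 1} with $\psi=m\log|s|^2$, $c=c_\infty$, and $T=-\infty$, then translate the Ohsawa residue measure and evaluate $\int_{-\infty}^{+\infty}c_\infty(t)e^{-t}\,dt$. The paper itself compresses this to a one-line remark, so the value you add is the careful residue computation (radial factor $1/(2m)$, sphere volume $2\pi^m/(m-1)!$, and the $2^m$ normalization giving $(2\pi)^m/m!$) and the partial-fraction evaluation of the constant, both of which check out.
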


Let $M$ be a Stein manifold, and $S$ be an analytic hypersurface on $M$. Let $S=s^{-1}(0)$, where $s$ is a holomorphic function on $M$ such that $ds$ does not vanish identically on $S$. Denote $S_{reg}:=\{x\in S: ds(x)\neq 0\}$. Let  $\psi$ be a plurisubharmonic function on $M$. Assume that $\Psi:=\log|s|^2+\psi$ is a plurisubharmonic function on $M$. Denote $-T:=\sup_{M}\Psi$ and assume that $T$ is a real number.

For any $k\ge 1$, let $c_k:=(\frac{i}{2})^{k^2}$. When $M=D$ is a bounded pseudovoncex domain in $\mathbb{C}^n$, Ohsawa \cite{Ohsawa3} proved an $L^2$ extension theorem with
negligible weights as follows.

\begin{Theorem}[see \cite{Ohsawa3}] \label{negligible weights}Let $M=D$ be  a pseudovoncex domain in $\mathbb{C}^n$. Let $S,\Psi$ be above. Let $\varphi$ be a plurisubharmonic function on $M$. Then there exists a  constant $C_T>0$ (only depends on $T$) such that, for any holomorphic section $f$ of $K_{{S_{reg}}}$ on $S_{reg}$ satisfying
$$c_{n-1}\int_{S_{reg}}f\wedge \bar{f}e^{-\varphi-\psi}<+\infty,$$
there exists a holomorphic section $F$ of $K_M$ on $M$ satisfying $F=f\wedge ds$ on $S_{reg}$ and
$$c_{n}\int_{M}F\wedge \bar{F} e^{-\varphi}
\le 2\pi C_T c_{n-1}\int_{S_{reg}}f\wedge \bar{f}e^{-\varphi-\psi}$$
\end{Theorem}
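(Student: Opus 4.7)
The plan is to derive Theorem \ref{negligible weights} from Theorem \ref{main result 1} applied to the trivial line bundle $E=\mathcal{O}_M$ equipped with $h\equiv 1$ (so $\Theta_h(E)\ge^s_{\mathrm{Nak}}0$ holds trivially), with $\Psi:=\log|s|^2+\psi$ playing the role of the plurisubharmonic function in that theorem, weight $\varphi$, weight function $c(t)\equiv 1$, and the section $f\wedge ds\in H^0(S_{\mathrm{reg}},K_M|_{S_{\mathrm{reg}}})$ as the datum to be extended. For $c\equiv 1$ one has $\int_{T'}^{+\infty}e^{-t}\,dt=e^{-T'}<+\infty$, and the functional inequality in Definition \ref{class gT} reduces to the elementary $e^{t-T'}>1+(t-T')$ for $t>T'$; hence $c\equiv 1\in\mathcal{G}_{T'}$ for every $T'$. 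Replacing $T$ by $T-\epsilon$ and letting $\epsilon\to 0$ at the end, we may assume $\Psi<-T$ strictly.

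The key computation is the Ohsawa residue measure $dV_{M,\omega}[\Psi]$ on $S_{\mathrm{reg}}$. In local coordinates near a regular point of $S$ where $s=z_1$, writing $z_1=re^{i\theta}$, Definition \ref{def:oshawa measure} combined with
\[
\int_{e^{(-t-1-\psi)/2}}^{e^{(-t-\psi)/2}}\frac{dr}{r}=\tfrac{1}{2},\qquad \int_0^{2\pi}d\theta=2\pi
\]
yields $dV_{M,\omega}[\Psi]|_{S_{\mathrm{reg}}}=\pi\,e^{-\psi}\,|ds|_\omega^{-2}\,dV_{S_{\mathrm{reg}},\omega}$. Combined with the pointwise identity $|f\wedge ds|^2_\omega=|f|^2_\omega|ds|^2_\omega$ on $S_{\mathrm{reg}}$, this produces
\[
\int_{S_{\mathrm{reg}}}|f\wedge ds|^2_\omega\,e^{-\varphi}\,dV_{M,\omega}[\Psi]=\pi\int_{S_{\mathrm{reg}}}|f|^2_\omega\,e^{-\varphi-\psi}\,dV_{S_{\mathrm{reg}},\omega}.
\]
Substituting into the conclusion of Theorem \ref{main result 1} with $\int_T^{+\infty}e^{-t}\,dt=e^{-T}$, and converting between the geometric norm $|\cdot|^2_\omega dV_{M,\omega}$ and the forms $c_n F\wedge\bar F$, $c_{n-1}f\wedge\bar f$ via the standard identities relating them, delivers the desired estimate with a constant $C_T$ depending only on $T$ and proportional to $e^{-T}$.

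The main obstacle is that Theorem \ref{main result 1} requires $\Psi$ to have neat analytic singularities (and log-canonical singularities along $V(\mathcal{I}(\Psi))=S$), while $\psi$ is only assumed plurisubharmonic. This is handled by approximation: exhaust $D$ by relatively compact subdomains $D_k\Subset D_{k+1}\Subset D$ and, on each $D_k$, take smooth plurisubharmonic $\psi_{k,\nu}\searrow\psi$ via convolution with mollifiers. Then $\Psi_{k,\nu}:=\log|s|^2+\psi_{k,\nu}$ has neat analytic singularities inherited from $\log|s|^2$; log-canonicity along $S_{\mathrm{reg}}$ is automatic, and at points of $S\setminus S_{\mathrm{reg}}$ it is ensured either directly (when $s$ has only simple zeros) or after passing to a log-resolution of the pair $(D,S)$ which reduces $s$ to a normal crossing divisor. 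Setting $T_{k,\nu}:=-\sup_{D_k}\Psi_{k,\nu}$, a Dini-type argument for decreasing upper semicontinuous sequences on compact sets gives $T_{k,\nu}\to T$ as first $\nu\to\infty$ and then $k\to\infty$. Applying Theorem \ref{main result 1} on each $D_k$ produces extensions $F_{k,\nu}$ of $f\wedge ds$ with the bound at level $(T_{k,\nu},\psi_{k,\nu})$; a diagonal subsequence converges weakly in $L^2_{\mathrm{loc}}(D)$ to a holomorphic extension $F$ on $D$. Monotone convergence $e^{-\psi_{k,\nu}}\nearrow e^{-\psi}$ on the right, Fatou's lemma on the left, and $C_{T_{k,\nu}}\to C_T$ combine to yield the stated inequality.
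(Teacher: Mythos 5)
Your plan is essentially the same as the route the paper takes when it derives the vector-bundle generalization (Corollary \ref{ohsawa3 sin ver}): invoke Theorem \ref{main result 1} with $c\equiv 1$ and a decreasing sequence of smooth plurisubharmonic approximants $\psi_m\searrow\psi$, so that $\log|s|^2+\psi_m$ has neat analytic singularities, and then pass to the limit. Your verification that $c\equiv 1\in\mathcal{G}_{T'}$ for every $T'$, reducing the inequality in Definition \ref{class gT} to $e^u>1+u$, is correct, and the computation of the Ohsawa residue measure $dV_{M,\omega}[\log|s|^2+\psi_m]$ along $S_{\mathrm{reg}}$ (giving an $e^{-\psi_m}$ factor together with the angular factor from integrating out $\theta$) is the right local picture, matching what the paper uses silently inside the proof of Corollary \ref{ohsawa3 sin ver}.

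The one structural difference is in how the three ``weight slots'' of Theorem \ref{main result 1} are filled. You take $h\equiv 1$, $\varphi_{\mathrm{Thm}}=\varphi$, $\psi_{\mathrm{Thm}}=\log|s|^2+\psi_m$, so the geometric weight $e^{-\varphi}$ on the left side does not depend on $m$ and all $m$-dependence sits in the residue measure on the right, where it is handled by monotone convergence. The paper instead takes $h\sim h_Ee^{-\psi}$, $\varphi_{\mathrm{Thm}}=-\psi_m$, $\psi_{\mathrm{Thm}}=\log|s|^2+\psi_m$, which makes the $\psi_m$'s cancel on the right-hand side (the boundary integral is $m$-independent) at the cost of a weight $e^{-\psi+\psi_m}$ on the left that one then disposes of with Fatou as $m\to\infty$. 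In the scalar case both arrangements give the same limit estimate, and your choice is arguably the cleaner one to verify the hypothesis that $\varphi_{\mathrm{Thm}}+\psi_{\mathrm{Thm}}$ is plurisubharmonic (it is $\varphi+\log|s|^2+\psi_m$, a sum of psh functions). Your treatment of $T$ via $T_{k,\nu}=-\sup_{D_k}\Psi_{k,\nu}$ and a Dini-type argument for decreasing continuous sequences with upper semicontinuous limit is sound; the bound $e^{-T}$ you land on agrees (up to the conventional normalization between $c_nF\wedge\bar F$ and $|F|^2_\omega\,dV_{M,\omega}$, which only affects a universal constant) with the sharp constant recalled after the theorem statement.

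The one point where you are hand-waving, and where a careful reader will push back, is the claim that log-canonicity of $\Psi_{k,\nu}=\log|s|^2+\psi_{k,\nu}$ along $Y=V(\mathcal{I}(\Psi_{k,\nu}))$ ``is ensured either directly \dots or after passing to a log-resolution.'' If $S$ has a singularity worse than normal crossings (e.g.\ $s=z_1^2-z_2^3$), then $\mathcal{I}((1-\epsilon)\log|s|^2)$ is not trivial near the singular point for small $\epsilon>0$, so log-canonicity in the sense of Definition \ref{def:oshawa measure} genuinely fails there, and Theorem \ref{main result 1} does not apply verbatim. Moreover passing to a log-resolution of $(D,S)$ does not by itself resolve this, since Theorem \ref{main result 1} is a statement on $M$, not on a modification of $M$, and transporting the extension back requires a separate argument through the canonical bundle formula and the relative $\mathcal{O}(-\sum b_iE_i)$ twist. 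This is not a defect special to your write-up --- the paper's proof of Corollary \ref{ohsawa3 sin ver} has exactly the same implicit assumption --- but you should not present it as automatic. Either state explicitly that the argument as given covers the case where $\log|s|^2$ has log-canonical singularities along $S$ (in particular when $S$ is a reduced normal-crossing hypersurface), or else supply the extra step that extends $F$ first to $D\setminus S_{\mathrm{sing}}$ and then across the codimension-$\ge 2$ set $S_{\mathrm{sing}}$ by an $L^2$ Riemann removable-singularity argument.
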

When $M$ is a general Stein manifold and $\varphi$ is a Lebesgue measurable function such that $\varphi+\psi$ is plurisubharmonic, assume $T=0$, Guan-Zhou \cite{guan-zhou CRMATH2012} proved $C_0=1$ in Theorem \ref{negligible weights}, which is optimal (for related research, see \cite{GZZCRMATH} and \cite{ZGZ}).

Note that when $\varphi$ is plurisubharmonic, $e^{-\varphi}$ in Theorem \ref{negligible weights} can be viewed as a singular hermitian metric on trivial line bundle $M\times \mathbb{C}$. Using Theorem \ref{main result 1}, we show that Theorem \ref{negligible weights} holds for holomorphic vector bundles with singular hermitian metrics.
\begin{Corollary} \label{ohsawa3 sin ver}Let $M,S,\Psi<0,\psi$ be as in  Theorem \ref{negligible weights}. Let $E$ be a holomorphic vector bundle $E$ with rank $r$ with singular hermitian metric $h_{E}$. Assume that $h_Ee^{-\psi}$  is singular Nakano semi-positive in the sense of Definition \ref{singular nak}.

Then  for any holomorphic section $f$ of $K_{{S_{reg}}}\otimes E|_{S_{reg}}$ on $S_{reg}$ satisfying
$$c_{n-1}\int_{S_{reg}}\{f,f\}_{h_E}e^{-\psi}<+\infty,$$
there exists a holomorphic section $F$ of $K_M\otimes E$ on $M$ satisfying $F=f\wedge ds$ on $S_{reg}$ and
$$c_{n}\int_{M}\{F,F\}_{h_E}
\le 2\pi  c_{n-1}\int_{S_{reg}}\{f,f\}_{h_E}e^{-\psi}.$$

\end{Corollary}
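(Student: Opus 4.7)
The plan is to apply Theorem \ref{main result 1} with an approximation procedure. Since $M$ is Stein, exhaust it by relatively compact strongly pseudoconvex open subsets $M_k\Subset M$, and on each $M_k$ apply Forn{\ae}ss--Narasimhan to obtain a sequence of smooth plurisubharmonic functions $\psi_\nu\downarrow\psi$. Put $\Psi_\nu:=\log|s|^2+\psi_\nu$ and make the choices
\[
\tilde\psi := \Psi_\nu,\qquad \tilde\varphi := -\psi_\nu,\qquad h := h_E\,e^{-\psi},\qquad c(t)\equiv 1,
\]
with parameter $T_\nu-\epsilon$, where $T_\nu:=-\sup_{M_k}\Psi_\nu$ and $\epsilon>0$ is taken arbitrarily small (to enforce the strict bound $\Psi_\nu<-(T_\nu-\epsilon)$). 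An elementary computation gives $\int_{T_\nu-\epsilon}^{+\infty}e^{-t}dt=e^{-(T_\nu-\epsilon)}$, and $c\equiv 1\in\mathcal{G}_{T_\nu-\epsilon}$ follows from $e^x>1+x$ for $x>0$.

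With these choices every hypothesis holds: $\tilde\varphi+\tilde\psi=\log|s|^2$ is plurisubharmonic; $h=h_E e^{-\psi}$ is singular Nakano semi-positive by the standing assumption; $he^{-\tilde\varphi}=(h_E e^{-\psi})\cdot e^{\psi_\nu}$ factors as a singular Nakano semi-positive metric times a smooth function, hence is locally lower bounded in the sense of Definition \ref{locally lower bound}; and $\Psi_\nu$ has neat analytic singularities and log canonical singularities along $S$, inherited from $\log|s|^2$ together with the smoothness of $\psi_\nu$. Theorem \ref{main result 1} thus produces a holomorphic extension $F_{k,\nu}\in H^0(M_k,K_{M_k}\otimes E)$ of $f$ satisfying
\[
\int_{M_k}|F_{k,\nu}|^2_{\omega,h_E}\,e^{\psi_\nu-\psi}\,dV_{M,\omega}\;\le\;e^{-(T_\nu-\epsilon)}\int_{Y^0\cap M_k}|f|^2_{\omega,h_E}\,e^{\psi_\nu-\psi}\,dV_{M,\omega}[\Psi_\nu].
\]

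A direct Lelong-type residue computation, in local coordinates where $s=z_n$, evaluates the Ohsawa measure as $dV_{M,\omega}[\Psi_\nu]=2\pi\,e^{-\psi_\nu}\,dV_{S_{\mathrm{reg}},\omega}$ on $S_{\mathrm{reg}}$, so the right-hand side simplifies to $2\pi\,e^{-(T_\nu-\epsilon)}\int|f|^2_{\omega,h_E}\,e^{-\psi}\,dV_{S_{\mathrm{reg}}}$. Letting $\epsilon\to 0$, then $\nu\to\infty$ (so $\psi_\nu\downarrow\psi$ and $T_\nu\to T\ge 0$), weak $L^2$-compactness extracts a subsequential limit $F_k$ on each $M_k$. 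A final diagonal argument as $k\to\infty$, combined with monotone convergence on the left and Fatou on the right (using $e^{-T}\le 1$, since $\Psi<0$ forces $T\ge 0$), yields a global extension $F\in H^0(M,K_M\otimes E)$ of $f\wedge ds$ satisfying
\[
\int_M|F|^2_{\omega,h_E}\,dV_{M,\omega}\;\le\;2\pi\int_{Y^0}|f|^2_{\omega,h_E}\,e^{-\psi}\,dV_{S_{\mathrm{reg}},\omega},
\]
which, translated via the $\{\cdot,\cdot\}_{h_E}$ notation and the constants $c_n,c_{n-1}$, is precisely the conclusion of the corollary.

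The principal obstacle is arranging that the singular weight $\psi$ simultaneously yields Nakano semi-positivity of $h$ and local lower boundedness of $he^{-\tilde\varphi}$ despite the possible non-smoothness of $\psi$. The key is to absorb $e^{-\psi}$ entirely into the metric (so that $h=h_E e^{-\psi}$ is Nakano semi-positive by assumption), while choosing $\tilde\varphi=-\psi_\nu$ so that $he^{-\tilde\varphi}$ factors cleanly as a singular Nakano semi-positive metric times the smooth function $e^{\psi_\nu}$. The residual mismatch $e^{\psi_\nu-\psi}$ between the smooth approximation and the true $\psi$ disappears in the limit $\nu\to\infty$, and a careful tracking of $dV_{M,\omega}[\Psi_\nu]$ and the exhaustion constant $e^{-T_\nu}$ delivers the sharp $2\pi$ factor without any loss.
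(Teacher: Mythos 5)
Your proof follows essentially the same path as the paper's: a Stein exhaustion $M_k$, smooth decreasing approximations $\psi_\nu\downarrow\psi$ of the negligible weight, an application of Theorem \ref{main result 1} on each $M_k$ with $c\equiv 1$, $h=h_Ee^{-\psi}$, $\tilde\varphi=-\psi_\nu$, $\tilde\psi=\log|s|^2+\psi_\nu$ (the paper arranges $\sup_{M_k}\tilde\psi<0$ so that one may take $T=0$ outright, while you carry a variable $T_\nu-\epsilon$ and let it go to its limit, which comes to the same thing), and then a two-stage limit $\nu\to\infty$, $k\to\infty$. The one step that deserves more care than ``weak $L^2$-compactness'' is the extraction of a holomorphic limit: because $h_E$ is only a singular metric, the uniform bound $\sup_\nu\int_K\{F_{k,\nu},F_{k,\nu}\}_{h_E}<+\infty$ does not by itself give a normal family, and the paper converts it, via H\"older's inequality together with the singular Nakano (hence Griffiths) semi-positivity of $h_Ee^{-\psi}$ and Lemma \ref{l:converge}, into a uniform $L^2$ bound against a fixed smooth reference metric $\tilde h$, after which Montel, Fatou, and a diagonal argument complete the proof.
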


\subsubsection{Optimal $L^2$ extension theorem for holomorphic vector bundles with singular hermitian metrics  on projective families.}
\

Siu established an $L^2$
extension theorem on projective families in \cite{siu02}. P\u{a}un \cite{paun07} reformulated Siu's theorem as below.
 In \cite{berndtsson annals}, Berndtsson proved a related result for K\"{a}hler families.
\begin{Theorem}[see \cite{siu02}, \cite{paun07} or \cite{berndtsson annals}]\label{siu's extension}Let $M$ be a projective family (or K\"{a}hler family due to \cite{berndtsson annals}) fibred over the unit ball in $(\mathbb{C}^m,z)$, with compact fibers $M_t$. Let $(L,h_L)$ be a holomorphic line bundle on $M$ with a  singular hermitian metric $h$ of semipositive curvature. Let $u$ be a holomorphic section of $K_{M_0}\otimes L$ over $M_0$ such that
$$\int_{M_0}\{u,u\}_h< +\infty.$$
Then there is a holomorphic section $\tilde{u}$ of $K_M\otimes L$ over $M$ such that $\tilde{u}|_{M_0}=u\wedge dz$, and
$$\int_{M}\{u,u\}_h\le C_b\int_{M_0}\{u,u\}_h,$$
where the constant $C_b>0$ is universal.
\end{Theorem}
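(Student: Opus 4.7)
The plan is to derive Theorem \ref{siu's extension} by specializing Theorem \ref{main result 1} to the total space $M$ of the family. First I would verify that $M$ satisfies the structural hypotheses of the main theorem: in the projective case (Siu, P\u{a}un), a relative polarization together with a K\"{a}hler form pulled back from the base yields a K\"{a}hler form on $M$, while in Berndtsson's K\"{a}hler family setting a K\"{a}hler form on $M$ is given. Weak pseudoconvexity of $M$ follows by pulling back any smooth plurisubharmonic exhaustion of the unit ball (for instance $|z|^2/(1-|z|^2)$) along the projection $\pi:M\to\mathbb{B}^m$; the compact fibers automatically provide properness of the level sets.

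For the analytic data, take $E=L$ with singular hermitian metric $h=h_L$. Because $L$ has rank one, Griffiths semipositivity is equivalent to Nakano semipositivity, and the regularizations $h_{j,s}$ required by Definition \ref{singular nak} are obtained by standard convolutions of local plurisubharmonic weights representing $h_L$. Put $\varphi\equiv 0$ and define $\psi(x)=m\log|\pi(x)|^2$, which is plurisubharmonic on $M$ with neat analytic singularities and satisfies $\psi<-T$ for any $T\le 0$ since the base is the open unit ball. The classical identity $\mathcal{I}(m\log|z|^2)=(z_1,\dots,z_m)$ shows $V(\mathcal{I}(\psi))=M_0$ with log canonical singularities along $M_0$. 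Finally, pick any $c(t)\in\mathcal{G}_T$ which is bounded below by a positive constant on $[T,+\infty)$ and whose total mass $\int_T^{+\infty}c(t)e^{-t}dt$ depends only on $m$.

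The crucial calculation is the identification of the Ohsawa measure. A residue-type computation via Hironaka's resolution of singularities (in the spirit of \cite{D2016} and Remark \ref{r:measure}) shows that for $\psi=m\log|\pi|^2$ the measure $dV_{M,\omega}[\psi]$ restricted to $M_0^{\mathrm{reg}}$ agrees, up to an explicit universal constant $C_m$ depending only on $m$, with the intrinsic K\"{a}hler volume on $M_0^{\mathrm{reg}}$. Using the isomorphism $K_M|_{M_0}\cong K_{M_0}\otimes\det N^{*}_{M_0/M}$, with $dz_1\wedge\cdots\wedge dz_m$ trivializing the conormal factor, $u\wedge dz$ defines a section of $(K_M\otimes L)|_{M_0^{\mathrm{reg}}}$ and
\begin{equation}\nonumber
\int_{M_0^{\mathrm{reg}}}|u\wedge dz|^2_{\omega,h_L}\,dV_{M,\omega}[\psi]=C_m\int_{M_0}\{u,u\}_{h_L}<+\infty.
\end{equation}
Theorem \ref{main result 1} now produces $\tilde u\in H^0(M,K_M\otimes L)$ with $\tilde u|_{M_0}=u\wedge dz$ and a $c(-\psi)$-weighted $L^2$ estimate, which in turn controls $\int_M\{\tilde u,\tilde u\}_{h_L}$ by a universal multiple of $\int_{M_0}\{u,u\}_{h_L}$ since $c$ is bounded below.

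The main obstacle is to track the dimensional constants so that the final bound $C_b$ is genuinely universal, depending only on $m$ and not on the family, the line bundle, or the section: this requires the explicit Ohsawa measure computation above together with a careful choice of $(c,T)$. A secondary technical point is verifying weak pseudoconvexity and the K\"{a}hler property of $M$ uniformly, which reduces to combining the relative polarization with the base exhaustion. The resulting argument in fact delivers a strictly stronger statement, namely the natural singular hermitian holomorphic vector bundle version of Theorem \ref{siu's extension}, obtained by keeping $E$ of arbitrary rank with $\Theta_h(E)\ge^s_{Nak}0$ and $he^{-\varphi}$ locally lower bounded throughout the argument.
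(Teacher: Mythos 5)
Your proposal is correct and follows essentially the same route the paper indicates: Theorem \ref{siu's extension} is only recalled (with citations) as background, and the paper's remark immediately preceding Corollary~1.22 derives the vector bundle generalization — hence the line bundle case as a corollary — by applying Theorem \ref{main result 1} with $\psi=2m\log|z|=m\log|\pi|^2$, $\varphi=0$, and $c\equiv1$, exactly as you do. The one cosmetic difference is that you allow a generic $c\in\mathcal{G}_T$ bounded below, which gives a universal but not necessarily optimal constant; the paper commits to $c\equiv1$ and the explicit Ohsawa-measure computation to pin down the sharp value $C_b=\frac{2^m\pi^m}{m!}$.
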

In \cite{paun07}, P\u{a}un takes $C_b$ around $200$.

Replace $(L,h_L)$ by $(E,h_E)$ where $h_E$ is a singular metric on a holomorphic vector bundle $E$ with rank $r$ such that $\Theta_{h}(E)\ge^s_{Nak} 0$ on $M$ in the sense of Definition \ref{singular nak}. Then let $c(t)\equiv 1$, $\psi=2m\log|z|$ and $\varphi=0$ in Theorem \ref{main result 1}, we obtain an optimal $L^2$ extension theorem for holomorphic vector bundles with singular hermitian metrics  on projective families.
\begin{Corollary}Theorem \ref{siu's extension} holds for holomorphic vector bundles $(E,h_E)$, where $h_{E}$ is a singular Nakano semi-positive metric on $E$ in the sense of Definition \ref{singular nak}, with optimal estimate $C_b=\frac{2^m\pi^m}{m!}$.
\end{Corollary}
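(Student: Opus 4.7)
The plan is to apply Theorem \ref{main result 1} with the exact data indicated in the paragraph preceding the corollary: set $c(t)\equiv 1$, $T=0$, $\psi=2m\log|z|$ with $z=(z_1,\dots,z_m)$ the base coordinate, $\varphi\equiv 0$, and take the section to be extended to be $f=u\wedge dz_1\wedge\cdots\wedge dz_m\in H^0(M_0,(K_M\otimes E)|_{M_0})$, using the adjunction $K_M|_{M_0}\cong K_{M_0}\otimes\Lambda^m N^*_{M_0/M}$ together with the trivialization of $N^*_{M_0/M}$ by $dz$.

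First I would verify the hypotheses of Theorem \ref{main result 1}. The K\"ahler family $M$ over the unit ball is weakly pseudoconvex: pulling back a plurisubharmonic exhaustion of the ball (and invoking compactness of the fibers) produces one on $M$. The function $\psi=2m\log|z|$ is plurisubharmonic with neat analytic singularities, is negative, defines $Y=V(\mathcal I(\psi))=M_0$, and has log canonical singularities there since $|z|^{-2m(1-\varepsilon)}$ is locally integrable on $\mathbb C^m$ for every $\varepsilon>0$. The function $c(t)\equiv 1$ belongs to $\mathcal G_0$ in the sense of Definition \ref{class gT}: a direct evaluation of the integrals in the required inequality reduces it to $e^t>1+t$ for $t>0$. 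The Nakano-positivity and local lower boundedness hypotheses for $h_E$ with $\varphi\equiv 0$ are exactly what is assumed on $(E,h_E)$.

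Next I would compute both sides of \eqref{mainth:l2 estimate 2} in the present setting. For the Ohsawa measure, choose local coordinates $(w,z)$ adapted to $M_0=\{z=0\}$ at a point where $\omega$ is Euclidean. Polar coordinates in the $z$-factor give
$$\int_{\{r_1<|z|<r_2\}}|z|^{-2m}\,dV_{\mathbb C^m}=\frac{2\pi^m}{(m-1)!}\log\frac{r_2}{r_1},$$
which for $r_2/r_1=e^{1/(2m)}$ equals the constant $\tfrac{\pi^m}{m!}$; Fubini then yields
$$dV_{M,\omega}[\psi]\big|_{M_0}=\frac{\pi^m}{m!}\,dV_{M_0,\omega|_{M_0}}.$$
For the pointwise norms, $dw\wedge dz$ has unit $\omega$-norm in $K_M$ at such a point, so $|u\wedge dz|^2_{\omega,h_E}=|u|^2_{\omega|_{M_0},h_E}$. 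Finally, the definition of $\{\cdot,\cdot\}_{h_E}$ combined with a direct count of signs when reordering $f_1\wedge\bar f_1$ into the canonical form yields the identity $\{F,F\}_{h_E}=2^k\,|F|^2_{\omega,h_E}\,dV$ on any complex manifold of dimension $k$, applied here with $k=n$ on $M$ and with $k=n-m$ on $M_0$.

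Assembling these pieces with $\int_0^{+\infty}e^{-t}\,dt=1$, the right-hand side of \eqref{mainth:l2 estimate 2} becomes $\tfrac{\pi^m}{2^{n-m}m!}\int_{M_0}\{u,u\}_{h_E}$, while the left-hand side equals $\tfrac{1}{2^n}\int_M\{\tilde u,\tilde u\}_{h_E}$. Clearing the $2^{-n}$ factor yields
$$\int_M\{\tilde u,\tilde u\}_{h_E}\le\frac{2^m\pi^m}{m!}\int_{M_0}\{u,u\}_{h_E},$$
with $\tilde u|_{M_0}=u\wedge dz$, producing the desired constant $C_b=\tfrac{2^m\pi^m}{m!}$. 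I expect the principal bookkeeping difficulty to be the careful tracking of the $\sqrt{-1}^{k^2}$ conventions and the $(i/2)^k$ factors that relate $\{\cdot,\cdot\}_{h_E}$ to $|\cdot|^2_{\omega,h_E}\,dV$ on $M$ versus on $M_0$, which is precisely what produces the factor $2^m$; everything else is a direct specialization of Theorem \ref{main result 1}.
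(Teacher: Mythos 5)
Your plan matches the paper's: the sentence preceding the corollary is the paper's entire proof, and it prescribes exactly the specialization you make ($T=0$, $c\equiv 1$, $\psi=2m\log|z|$, $\varphi\equiv 0$). Your verification of the hypotheses of Theorem \ref{main result 1} (weak pseudoconvexity of the family, $c\equiv 1\in\mathcal G_0$ via $e^t>1+t$, neat and log-canonical singularities of $\psi$ along $M_0$, local lower-boundedness with $\eta_x\equiv 1$) is correct, as is the computation $dV_{M,\omega}[\psi]\big|_{M_0}=\frac{\pi^m}{m!}\,dV_{M_0,\omega|_{M_0}}$ when $\omega$ is normalized so that $dV_\omega$ is the local Lebesgue measure.

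The two pointwise identities you then invoke are not individually correct, and the final constant is right only because the errors compensate. With the Lebesgue normalization of $dV_\omega$ one has $|dz_j|^2_\omega=2$, so $dw\wedge dz$ has $\omega$-norm $2^{n/2}$, not $1$; the correct comparison is $|u\wedge dz|^2_{\omega,h_E}\big|_{M_0}=2^m\,|u|^2_{\omega|_{M_0},h_E}$. (If instead you normalize so that $|dz_j|^2_\omega=1$, then $dV_\omega=2^n\,d\lambda$ and the Ohsawa computation yields $\frac{2^m\pi^m}{m!}\,dV_{M_0}$; the factor $2^m$ moves there, and it cannot be dropped from both places.) Second, the identity $\{F,F\}_{h_E}=2^k\,|F|^2_{\omega,h_E}\,dV$ is false: for any Hermitian metric $\omega$ on a $k$-dimensional complex manifold and any $E$-valued $(k,0)$-form $F$ one has, pointwise and independently of $\omega$, $\{F,F\}_{h_E}=|F|^2_{\omega,h_E}\,dV_\omega$ with no extra factor, because the $2^k$ coming from $|dz_j|^2_\omega=2$ is exactly cancelled by $\sqrt{-1}^{k^2}\bigwedge_j dz_j\wedge\bigwedge_j d\bar z_j=2^k\,d\lambda$. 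Using the corrected identities, the left side of \eqref{mainth:l2 estimate 2} is $\int_M\{\tilde u,\tilde u\}_{h_E}$ and the right side is $\frac{2^m\pi^m}{m!}\int_{M_0}\{u,u\}_{h_E}$, the factor $2^m$ being produced solely by $|u\wedge dz|^2_\omega=2^m|u|^2_{\omega|_{M_0}}$; this gives $C_b=\frac{2^m\pi^m}{m!}$ as claimed. In your version both sides are off by the same factor $2^{-n}$, so the ratio — and hence the constant — is unaffected, but the two intermediate normalization claims should be repaired as above.
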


\subsubsection{Optimal $L^2$ extension theorem for holomorphic vector bundles with singular hermitian metrics  on weakly pseudoconvex K\"{a}hler manifolds.}
\

In \cite{ZZ2019} (see also \cite{ZhouZhu-jdg}), Zhou-Zhu proved optimal $L^2$ extension theorem for holomorphic line bundles with singular hermitian metrics  on weakly pseudoconvex K\"{a}hler manifolds.

 Let $T\in (-\infty,+\infty)$ and $\delta\in (0,+\infty)$. Recall that $\mathcal{G}_{T,\delta}$ is the class of functions $c(t)$ which satisfies the following statements,\\
(1) $c(t)$ is a continuous positive function on $[T,+\infty)$,\\
(2)  $\int_{T}^{+\infty}c(t)e^{-t}dt<+\infty$,\\
(3) for any $t> T$, the following equality holds,
\begin{equation}\nonumber
\begin{split}
    &{\left(\frac{1}{\delta}c(T)e^{-T}+\int_{T}^{t}c(t_1)e^{-t_1}dt_1\right)}^2 > \\
     &c(t)e^{-t}\bigg(\int_{T}^{t}(\frac{1}{\delta}c(T)e^{-T}+\int_{T}^{t_2}c(t_1)e^{-t_1}dt_1)dt_2
     +\frac{1}{{\delta}^2}c(T)e^{-T}\bigg).
\end{split}
\end{equation}
The number $-T$, $\frac{1}{\delta}$ and function $c(t)$ are equal to the number $\alpha_0$, $\alpha_1$ and function $\frac{1}{R(-t)e^{-t}}$ in \cite{ZZ2019}. We use $-T$, $\frac{1}{\delta}$ and $c(t)$ here for the simplicity of notations.

Zhou-Zhu's optimal $L^2$ extension theorem for holomorphic line bundles with singular hermitian metrics is as follows.
\begin{Theorem}[see \cite{ZZ2019}]\label{zhou-zhu result} Let $c(t)\in \mathcal{G}_{T,\delta}$ for some $\delta<+\infty$ be a smooth function on $[T,+\infty)$ such that $c(t)e^{-t}$ is decreasing with respect to $t$ near $+\infty$ and $\liminf_{t\to +\infty}c(t)>0$. Let $(M,\omega)$ be a weakly pseudoconvex K\"ahler manifold. Let $\psi$ be a quasi-plurisubharmonic function on $M$ with neat analytic singularities. Let $Y:=V(\mathcal{I}(\psi))$ and assume that $\psi$ has log canonical singularities along $Y$. Let $L$ be a holomorphic vector bundle on $M$ with a singular hermitian metric $h_L$, which is locally written as $e^{-\phi_L}$ for some quasi-plurisubharmonic function.  Assume that \\
(I) $\sqrt{-1}\Theta_{L}+\sqrt{-1}\partial\bar{\partial}\psi\ge 0$ on $M\backslash\{\psi=-\infty\}$ in the sense of currents;\\
 and
there exists a continuous function $\alpha< -T$ on $M$ such that the following two hold:\\
(II)  $\big(\sqrt{-1}\Theta_{L}+\sqrt{-1}\partial\bar{\partial}\psi\big)
+\frac{1}{s(-\alpha)}\sqrt{-1}\partial\bar{\partial}\psi\ge 0$ on $M\backslash\{\psi=-\infty\}$ in the sense of currents;\\
(III) $\psi\le \alpha$,
where
$$s(t):=\frac{\int^t_T\bigg(\frac{1}{\delta}c(T)e^{-T}+\int^{t_2}_T c(t_1)e^{-t_1}dt_1\bigg)dt_2+\frac{1}{\delta^2}c(T)e^{-T}}{\frac{1}{\delta}c(T)e^{-T}+\int^t_T
c(t_1)e^{-t_1}dt_1}.$$

Then for every section $f \in H^0(Y^0,(K_M\otimes L)|_{Y^0})$ on $Y^0=Y_{\text{reg}}$ such that
\begin{equation}\label{mainth:ohsawa measure finite}
  \int_{Y^0}|f|^2_{\omega,h_L}e^{-\varphi}dV_{M,\omega}[\psi]<+\infty,
\end{equation}
there exists a section $F\in H^0(M,K_M\otimes L)$ such that $F|_{Y_0}=f$ and
\begin{equation}\label{mainth:l2 estimate}
 \int_M c(-\psi)|F|^2_{\omega,h_L}e^{-\varphi}dV_{M,\omega}\le \left(\frac{1}{\delta}c(T)e^{-T}+\int_{T}^{+\infty}c(t_1)e^{-t_1}dt_1\right)\int_{Y^0}|f|^2_{\omega,h_L}e^{-\varphi}dV_{M,\omega}[\psi].
\end{equation}
\end{Theorem}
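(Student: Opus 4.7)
The plan is to deduce Theorem \ref{zhou-zhu result} as a direct specialization of Theorem \ref{main result} to the rank one setting. I would take $E = L$ viewed as a holomorphic vector bundle of rank one, and absorb the singular line bundle metric $h_L$ into the weight appearing in the main theorem: writing $h_L = e^{-\phi_L}$ locally, one sets $\varphi = \phi_L$, so that $\sqrt{-1}\partial\bar\partial\varphi = \sqrt{-1}\Theta_L$ and $|F|^2 e^{-\varphi} = |F|^2_{h_L}$ are both globally defined (the fact that $\phi_L$ itself is only a local potential is harmless, since these two expressions transform invariantly; one may either subtract the potential of a smooth reference metric on $L$ to produce a global measurable representative, or apply the main theorem in local trivializations and patch). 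Choosing $h$ to be the trivial metric on the underlying rank one bundle makes $\Theta_h(E) = 0 \ge^s_{Nak} 0$ automatic, and $he^{-\varphi} = h_L$ is locally lower bounded in the sense of Definition \ref{locally lower bound} since $\phi_L$ is quasi-plurisubharmonic.

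With this identification, condition (2) of Theorem \ref{main result} becomes $\sqrt{-1}\Theta_L + \sqrt{-1}\partial\bar\partial\psi \ge 0$, which is exactly hypothesis (I) of Zhou--Zhu. The remaining and most delicate task is to deduce condition (3),
\begin{equation*}
s(-\psi)(\sqrt{-1}\Theta_L + \sqrt{-1}\partial\bar\partial\psi) + \sqrt{-1}\partial\bar\partial\psi \ge 0,
\end{equation*}
from hypotheses (II) and (III). I would first establish the strict monotonicity of $s$ on $[T,+\infty)$: setting $A(t) := \frac{1}{\delta}c(T)e^{-T} + \int_T^t c(t_1)e^{-t_1}\,dt_1$ and $B(t) := \int_T^t A(t_2)\,dt_2 + \frac{1}{\delta^2}c(T)e^{-T}$, so that $s(t) = B(t)/A(t)$, a direct differentiation using $A'(t) = c(t)e^{-t}$ and $B'(t) = A(t)$ yields
\begin{equation*}
s'(t) = \frac{A(t)^2 - c(t)e^{-t}B(t)}{A(t)^2},
\end{equation*}
and the defining inequality of the class $\mathcal{G}_{T,\delta}$ in Definition \ref{class gTdelta} is precisely $A(t)^2 > c(t)e^{-t}B(t)$, whence $s'(t) > 0$.

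Given this monotonicity, condition (III) $\psi \le \alpha$ implies $s(-\psi) \ge s(-\alpha)$, and combining with the non-negativity from (I) yields the pointwise bound
\begin{equation*}
s(-\psi)(\sqrt{-1}\Theta_L + \sqrt{-1}\partial\bar\partial\psi) + \sqrt{-1}\partial\bar\partial\psi \ge s(-\alpha)(\sqrt{-1}\Theta_L + \sqrt{-1}\partial\bar\partial\psi) + \sqrt{-1}\partial\bar\partial\psi.
\end{equation*}
The right-hand side is non-negative after multiplying condition (II) by $s(-\alpha) > 0$, which verifies condition (3). Applying Theorem \ref{main result} with the above choices then produces the extension $F$ and the $L^2$ estimate claimed in Theorem \ref{zhou-zhu result}.

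The principal obstacle is the bookkeeping in matching Zhou--Zhu's line bundle formalism with the vector bundle plus weight formalism of the main theorem, in particular the global interpretation of ``$\varphi = \phi_L$'' when $L$ is non-trivial; once one adopts a reference metric convention or a local patching argument, the remaining work consists of the short calculus lemma $s'>0$ and the elementary inequality chain above, both of which are direct consequences of the definitions.
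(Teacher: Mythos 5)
Your calculus arguments are correct and match the paper's approach: setting $A(t)$ and $B(t)$ as you do, the computation $s'(t) = (A^2 - c(t)e^{-t}B)/A^2$ together with the defining inequality of $\mathcal{G}_{T,\delta}$ gives $s'>0$ (this is exactly the computation in the proof of Lemma \ref{approx of ct}), and the deduction of condition~(3) of Theorem \ref{main result} from (I), (II), (III) via $s(-\psi)\ge s(-\alpha)$ is precisely the observation the paper records in the paragraph preceding Corollary~1.23.

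The gap is in the line bundle bookkeeping. Theorem \ref{main result} as stated requires a globally defined Lebesgue measurable function $\varphi$ with the curvature conditions phrased in terms of $\sqrt{-1}\partial\bar\partial\varphi$, together with a singular Nakano semi-positive metric $h$ on $E$. Your choice ``$\varphi = \phi_L$'' is not a global function when $L$ is nontrivial, and neither of your two proposed workarounds repairs this. Subtracting a smooth reference metric $\tilde{h}_L$ to write $h_L = \tilde h_L e^{-\varphi}$ does produce a global $\varphi$, but then $\sqrt{-1}\partial\bar\partial\varphi = \sqrt{-1}\Theta_L - \sqrt{-1}\Theta_{\tilde{h}_L}$, and the reference-curvature term $\sqrt{-1}\Theta_{\tilde{h}_L}$ has no sign, so conditions (2) and (3) of Theorem \ref{main result} fail; if instead you take $h=\tilde h_L$, condition (1) $\Theta_h\ge^s_{Nak}0$ fails. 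Local patching in trivializations cannot work either, since the $L^2$ extension and its estimate are global. Also note that ``choosing $h$ to be the trivial metric on $L$ with $\Theta_h = 0$'' is only possible when $c_1(L)=0$; for a general nontrivial line bundle no flat metric exists.

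The paper resolves this not by specializing Theorem \ref{main result} but by invoking Remark \ref{rem:main result for line bdl}, which asserts the conclusion of Theorem \ref{main result} for a nontrivial line bundle $L$ with singular metric $h_L$ under curvature hypotheses (1') and (2') in which $\sqrt{-1}\partial\bar\partial\varphi$ is replaced by the current $\sqrt{-1}\Theta_L$. That remark requires, in the paper's own words, ``almost the same proof'': the regularization in Step 4 of the proof of Theorem \ref{main result} must be run on the curvature current of $h_L e^{\psi}$ (whose local potential $\phi_L+\psi$ transforms only by a smooth term under changes of trivialization, so the associated current is global), rather than on $\varphi+\psi$ for a global $\varphi$. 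Your reduction from (I), (II), (III) to conditions (1') and (2') is exactly right and feeds into that modified theorem; what is missing is the acknowledgment that you need Remark \ref{rem:main result for line bdl} (or a rerun of the proof with the $L$-twist), not a direct application of Theorem \ref{main result}.
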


Replace $(L,h_L)$ by $(E\otimes L, h_E\otimes h_L)$, where $h_{E}$ is a singular Nakano semi-positive metric on $E$ in the sense of Definition \ref{singular nak} and $h_L$ satisfies condition (I) and (II) in Theorem \ref{zhou-zhu result}.
As $s(-\psi)\ge s(-\alpha)$, then condition (II) in Theorem \ref{zhou-zhu result} implies condition (2) in Theorem \ref{main result}.

It follows from Theorem \ref{main result} and Remark \ref{rem:main result for line bdl} that we have following result
\begin{Corollary}Let $M,Y,\psi$ be as Theorem \ref{zhou-zhu result}. Let $c(t)\in \mathcal{G}_{T,\delta}$. Theorem \ref{zhou-zhu result} holds for $(E\otimes L, h_E\otimes h_L)$, where $h_{E}$ is a singular Nakano semi-positive metric on $E$ in the sense of Definition \ref{singular nak} and $h_L$ satisfies conditions (I),(II) and (III).
\end{Corollary}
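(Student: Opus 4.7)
The plan is to deduce the Corollary by applying Theorem \ref{main result} to the vector bundle $E\otimes L$, after twisting the singular metric and the weight so as to absorb the line-bundle curvature into the Nakano semi-positivity condition. Set $\tilde h := h_E\otimes(h_L\cdot e^{-\psi})$ as a singular hermitian metric on $E\otimes L$, and $\tilde\varphi := \varphi-\psi$. The identity $|F|^2_{\tilde h,\omega}\,e^{-\tilde\varphi}=|F|^2_{h_E\otimes h_L,\omega}\,e^{-\varphi}$, obtained from $|u|^2_{h_L\cdot e^{-\psi}}=e^{-\psi}|u|^2_{h_L}$ for local sections $u$ of $L$, shows that the $L^2$-integrals appearing in the Corollary coincide with those in Theorem \ref{main result} applied to $(E\otimes L,\tilde h,\tilde\varphi,\psi)$.

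The principal technical step is to verify condition (1) of Theorem \ref{main result}: that $\tilde h$ is singular Nakano semi-positive on $E\otimes L$ in the sense of Definition \ref{singular nak}. By condition (I) of Theorem \ref{zhou-zhu result}, the twisted line-bundle metric $h_L\cdot e^{-\psi}$, whose local weight is $\phi_L+\psi$, has semi-positive curvature in the sense of currents. I would build the approximating sequence required by Definition \ref{singular nak} by tensoring the given approximations $\{h_{E;j,s}\}$ of $h_E$ with smooth decreasing Demailly-type regularizations $e^{-\eta_{j,s}}$ of $h_L\cdot e^{-\psi}$, where $\eta_{j,s}\searrow\phi_L+\psi$ and $\sqrt{-1}\partial\bar\partial\eta_{j,s}\ge -\mu_{j,s}\omega$ with $\mu_{j,s}\to 0$. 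Since tensoring a Nakano semi-positive vector bundle with a semi-positive line bundle preserves Nakano semi-positivity, the curvatures $\sqrt{-1}\Theta_{h_{E;j,s}\otimes e^{-\eta_{j,s}}}(E\otimes L)$ admit Nakano lower bounds $-(\lambda^E_{j,s}+\mu_{j,s})\,\omega\otimes\mathrm{Id}_{E\otimes L}$, and the combined error tends to $0$ almost everywhere on each $M_k$, fulfilling (1.2.1)--(1.2.4). Local lower-boundedness of $\tilde h\,e^{-\tilde\varphi}=h_E\otimes h_L\,e^{-\varphi}$ is inherited from the corresponding assumption in Zhou-Zhu's setup.

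For conditions (2) and (3) of Theorem \ref{main result} applied to $(\tilde\varphi,\psi)$, one computes $\sqrt{-1}\partial\bar\partial\tilde\varphi+\sqrt{-1}\partial\bar\partial\psi=\sqrt{-1}\partial\bar\partial\varphi$; invoking the dictionary of Remark \ref{rem:main result for line bdl} between the line-bundle curvature conditions of Theorem \ref{zhou-zhu result} and the weight-function conditions of Theorem \ref{main result}, these follow from (I), (II) and (III): the monotonicity of $s$, together with $\psi\le\alpha$, gives $s(-\psi)\ge s(-\alpha)$, which promotes the $s(-\alpha)$-form of (II) to the $s(-\psi)$-form demanded by condition (3), while (I) supplies condition (2). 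Theorem \ref{main result} then produces the extension $F\in H^0(M,K_M\otimes E\otimes L)$ of $f$ with the claimed optimal $L^2$-estimate. The main obstacle is establishing the singular Nakano semi-positivity of $\tilde h$: one has to bookkeep the two error contributions coming from the approximants of $h_E$ and from the Demailly regularization of $\phi_L+\psi$ into a single admissible error sequence satisfying (1.2.1)--(1.2.4), and verify that the standard tensor-product Nakano-positivity mechanism extends to the singular setting of Definition \ref{singular nak}.
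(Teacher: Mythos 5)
Your translation of conditions (I), (II), (III) into the curvature hypotheses of Theorem \ref{main result} is correct, but the central step of your plan --- establishing that $\tilde h := h_E\otimes(h_L\,e^{-\psi})$ is singular Nakano semi-positive on $E\otimes L$ in the sense of Definition \ref{singular nak} --- has a genuine gap. Definition \ref{singular nak} is built on Definition \ref{singular metric}, which requires the approximating metrics $\tilde h_{j,s}$ to be of class $C^2$ \emph{on all of} $M_j$. You propose to obtain these by tensoring the $C^2$ approximants of $h_E$ with ``smooth decreasing Demailly-type regularizations $e^{-\eta_{j,s}}$'' of the line-bundle factor. But the local weight of $h_L\,e^{-\psi}$ is $\phi_L+\psi$, which has nontrivial Lelong numbers (indeed $\psi$ has log-canonical singularities along $Y\neq\emptyset$), and Demailly's regularization (Lemma \ref{regularization on cpx mfld}) produces functions that are smooth only on $M\setminus E_\zeta(T)$, i.e.\ off the upper-level sets of Lelong numbers --- these are nonempty analytic sets inside $M_j$. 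So the tensor $\tilde h_{j,s}$ you construct is not $C^2$ on $M_j$, and the approximating sequence required by Definition \ref{singular nak} does not exist. There is no known way to turn a line-bundle metric satisfying merely the current-level conditions (I)--(II) into a singular metric that is Nakano semi-positive in the strong sense of Definition \ref{singular nak}.

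The paper avoids this entirely by \emph{not} enlarging the vector bundle. Instead it invokes Remark \ref{rem:main result for line bdl}: keep the vector-bundle factor $E$ with the given singular Nakano semi-positive $h_E$, write $h_L=\tilde h_L\,e^{-\varphi}$ with $\tilde h_L$ smooth and $\varphi$ merely measurable, and then re-run the proof of Theorem \ref{main result}. In that proof the weight $\Phi=\varphi+\psi$ is handled by Demailly regularization $\Phi_{m'}$ that is allowed to be singular along the analytic sets $\Sigma_{m'}$; the $\bar\partial$-estimate is obtained on the complete K\"ahler manifold $M_k\setminus(\Sigma\cup\Sigma_{m'})$, so no global $C^2$ smoothness of the weight regularization is required. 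Your proposal also tries to use both routes at once: after defining $\tilde h$ on $E\otimes L$ in order to apply Theorem \ref{main result} directly, you then invoke ``the dictionary of Remark \ref{rem:main result for line bdl}'' anyway --- but that Remark is precisely the alternative that makes the tensor construction unnecessary. The correct argument is simply: by the monotonicity of $s$ and $\psi\le\alpha$, (II) together with (I) gives $s(-\psi)\big(\sqrt{-1}\Theta_L+\sqrt{-1}\partial\bar\partial\psi\big)+\sqrt{-1}\partial\bar\partial\psi\ge 0$, so conditions (1') and (2') of Remark \ref{rem:main result for line bdl} hold, and the Remark (with $h_E$ singular Nakano semi-positive and $h_E\otimes h_L$ locally lower bounded) yields the Corollary.
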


\section{Preparations}
\subsection{Some properties of singular hermitian metrics on vector bundles}
Let $M$ be an $n-$dimensional complex manifold.
Let $h$ be a $C^2$ smooth hermitian metric on a holomorphic vector bundle $E$ over $M$. We recall the following proposition of Griffiths semi-positivity in the smooth case.

\begin{Proposition} [see \cite{Raufi1}]  \label{equivalent of Gri in smooth case}The following four statements are equivalent:\\
(1) $h$ is Griffiths semi-positive;\\
(2) $h^*$ is Griffiths semi-negative where $h^*$ is the dual metric of $h$ on $E^*$;\\
(3) $|u|^2_{h^*}$ is a plurisubharmonic function for any local holomorphic section $u$ of $E^*$;\\
(4) $\log |u|^2_{h^*}$ is plurisubharmonic function for any local holomorphic section $u$ of $E^*$.
\end{Proposition}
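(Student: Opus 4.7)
The plan is to observe that (1) $\Leftrightarrow$ (2) $\Leftrightarrow$ (3) is a tautology from the definitions and that (4) $\Rightarrow$ (3) is immediate, so that the only substantive step is (3) $\Rightarrow$ (4). Indeed, Definition \ref{singular gri}(2) identifies (1) with $h^*$ being singular Griffiths semi-negative on $E^*$, which is (2), and Definition \ref{singular gri}(1) applied to $h^*$ identifies (2) with the plurisubharmonicity of $|u|^2_{h^*}$ for every local holomorphic section $u$ of $E^*$, which is (3). For (4) $\Rightarrow$ (3), one simply notes that $t\mapsto e^t$ is increasing and convex (with $e^{-\infty}:=0$), so $|u|^2_{h^*}=\exp(\log|u|^2_{h^*})$ is plurisubharmonic whenever (4) holds.

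For (3) $\Rightarrow$ (4), the central device will be to replace $u$ by $e^f u$ for an arbitrary local holomorphic function $f$. Since $e^f u$ is again a local holomorphic section of $E^*$, assumption (3) yields that
\[
|e^f u|^2_{h^*}=e^{2\,\mathrm{Re}\,f}\,|u|^2_{h^*}
\]
is plurisubharmonic. On the open set where $u$ does not vanish, let $\psi:=\log|u|^2_{h^*}$; this is $C^2$ because $h$ (hence $h^*$) is, so I can carry out the $i\partial\bar\partial$ computation directly. Using that $f$ is holomorphic (so $\partial\bar\partial(\mathrm{Re}\,f)=0$), the plurisubharmonicity of $e^{2\,\mathrm{Re}\,f+\psi}$ translates into the pointwise inequality
\[
i\,\partial\bar\partial\psi+i\,(\partial\psi+\partial f)\wedge\overline{(\partial\psi+\partial f)}\ge 0.
\]
At any given point $z_0$, $\partial f(z_0)$ may be prescribed arbitrarily (take $f$ to be an affine function of local coordinates), so choosing $\partial f(z_0)=-\partial\psi(z_0)$ forces $i\,\partial\bar\partial\psi(z_0)\ge 0$. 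Hence $\psi$ is plurisubharmonic on $\{u\ne 0\}$, and since it is upper semicontinuous with value $-\infty$ on the analytic set $\{u=0\}$, the standard extension theorem for plurisubharmonic functions across analytic subsets produces (4) on the whole domain.

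The hard part is essentially this single observation that the substitution $u\mapsto e^f u$ promotes plurisubharmonicity of $|u|^2_{h^*}$ to plurisubharmonicity of $\log|u|^2_{h^*}$: the freedom in choosing $\partial f(z_0)$ is precisely what is needed to absorb the first-order term $\partial\psi$ which would otherwise obstruct the pointwise positivity. After this, the remainder is a routine smooth-case verification.
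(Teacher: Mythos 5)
Your handling of the logarithmic pair is correct: (4) $\Rightarrow$ (3) because $t\mapsto e^t$ is increasing and convex, and (3) $\Rightarrow$ (4) by the classical device of replacing $u$ by $e^fu$ with $f$ a local holomorphic function to absorb the first-order term $\partial\psi$ in the $i\partial\bar\partial$ computation, after which the extension of $\psi=\log|u|^2_{h^*}$ across the analytic set $\{u=0\}$ works because $\psi$ is continuous as a $[-\infty,+\infty)$-valued map, so the upper semicontinuous regularization of its restriction to $\{u\ne 0\}$ agrees with $\psi$ everywhere.

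However, dismissing (1) $\Leftrightarrow$ (2) $\Leftrightarrow$ (3) as tautological reads statements (1) and (2) with the plurisubharmonicity-based Definition \ref{singular gri}, whereas the proposition is stated ``in the smooth case'' and is used in the paper precisely to translate between the \emph{curvature}-based notion of Griffiths positivity and plurisubharmonicity of $|u|^2_{h^*}$: in the proof of Proposition \ref{singular nak implies singular gri}, the curvature inequality $\Theta_{\tilde h_{s,m}}\ge_{\mathrm{Gr}}0$ (obtained from Nakano semi-positivity) is converted, via the present proposition, into subharmonicity of $|u|^2_{\tilde h^*_{s,m}}$. With (1) and (2) read as the classical curvature notions for a $C^2$ metric, (1) $\Leftrightarrow$ (2) is the Chern-curvature duality $\Theta_{h^*}(E^*)=-{}^t\Theta_h(E)$, and (2) $\Leftrightarrow$ (3) is the genuinely non-trivial step requiring the identity
\[
i\partial\bar\partial|u|^2_{h^*}=\{iD'u,D'u\}_{h^*}-\{i\Theta_{h^*}(E^*)\,u,u\}_{h^*}
\]
for local holomorphic $u$ of $E^*$, where $D'$ is the $(1,0)$-part of the Chern connection of $h^*$: the first term is a nonnegative $(1,1)$-form, so Griffiths semi-negativity of $\Theta_{h^*}$ gives (3); conversely, prescribing $u(z_0)$ arbitrarily and choosing $u$ with $D'u(z_0)=0$ recovers the pointwise curvature inequality from (3). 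These curvature computations are the substantive content of the proposition (and of the cited result in \cite{Raufi1}), and they are absent from your proposal.
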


The following proposition will be used in the proof of Proposition \ref{singular nak implies singular gri}.
\begin{Proposition}[see Theorem 3.8 in \cite{Boucksom note}] \label{approx of qpsh} Let $X$ be a complex manifold. Let $u$ be a quasi-plurisubharmonic function on $X$. Given finitely many closed, real $(1,1)$-forms $\theta_\alpha$ such that $\theta_{\alpha}+dd^c u\ge 0$ for all $\alpha$. Suppose either that $X$ is strongly pseudoconvex, or that $\theta_\alpha>0$ for all $\alpha$. Then we can find a sequence $u_j\in C^{+\infty}(X)$ with the following properties:\\
(1) $u_j$ converges pointwisely to $u$;\\
(2) for each relatively compact open subset $U\Subset X$, there exists $j_U\ge 1$ such that the sequence $(u_j)$ is decreasingly convergent to $u$ with $\theta_\alpha+dd^c u_j>0$ for $j\ge j_U$.
\end{Proposition}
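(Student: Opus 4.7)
The plan is to follow Demailly's classical regularization procedure, which proceeds by combining local mollification with a patching argument using the regularized maximum. First I would reduce to a local picture: cover $X$ by a locally finite family of coordinate balls $U_i\subset\mathbb{C}^n$ chosen small enough that on each $U_i$ every form $\theta_\alpha$ admits a smooth potential $\varphi_{i,\alpha}$ with $\theta_\alpha=dd^c\varphi_{i,\alpha}$. The hypothesis then says $u+\varphi_{i,\alpha}$ is plurisubharmonic on $U_i$ for each $\alpha$, so in each chart the problem reduces to smoothing a plurisubharmonic function modulo a smooth term.

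Next I would perform the local mollification: fix a smooth nonnegative radial kernel $\rho$ with $\int\rho=1$, and set $u_{i,\epsilon}:=u*_i\rho_\epsilon$ on a slightly shrunken $U_i^{(\epsilon)}$ using the Euclidean coordinates of the chart. Standard facts give $u_{i,\epsilon}\in C^\infty$, $u_{i,\epsilon}\downarrow u$ monotonically as $\epsilon\downarrow 0$, and
\[
\theta_\alpha+dd^c u_{i,\epsilon}\ \geq\ \theta_\alpha-\theta_\alpha*_i\rho_\epsilon\ \longrightarrow\ 0
\]
uniformly on compact subsets of $U_i$. Thus the positivity condition is recovered up to an error that is controlled by the modulus of continuity of the $\theta_\alpha$'s and vanishes as $\epsilon\to0$.

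The hard step is the globalization: one must paste together the $u_{i,\epsilon}$ across chart overlaps into a single smooth function on $X$ that still satisfies the positivity condition (not merely up to an error). This is done via Demailly's regularized maximum $M_\eta$, which produces a smooth function that coincides with the max of its arguments outside an $\eta$-neighborhood of the diagonal and is convex in each argument, so that applying $M_\eta$ to a family of local plurisubharmonic-up-to-correction functions preserves the positivity inequality. The role of the dichotomy in the hypothesis is precisely to create the slack needed to absorb the patching errors: when $\theta_\alpha>0$ one has a uniform positive lower bound on $\theta_\alpha$ that absorbs the error directly, while in the strongly pseudoconvex case one uses a smooth strictly plurisubharmonic exhaustion $\rho$ on $X$, adds a small multiple $\eta_j\rho$ to the local mollifications with $\eta_j\downarrow 0$, and exploits the strict positivity this produces to run the same patching. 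Finally, for a fixed $U\Subset X$ only finitely many charts are relevant, so by choosing the parameters along a single sequence $j\to+\infty$ one recovers decreasing convergence on $U$ from the decrease of each $u_{i,\epsilon}$ in $\epsilon$. The principal obstacle is this gluing step: without one of the two stated hypotheses the patching necessarily loses a fixed amount of positivity and the conclusion fails.
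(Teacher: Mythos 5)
The paper does not give its own proof of this proposition; it is quoted directly from Boucksom's lecture notes (Theorem 3.8 there), which in turn follow Demailly's regularization scheme. Your sketch reproduces the same architecture as the cited source: local potentials $\varphi_{i,\alpha}$ to reduce to psh-modulo-smooth, Euclidean mollification in charts, and patching by Demailly's regularized maximum, with the two-branch hypothesis supplying the slack that absorbs patching errors. So you are on the same route as the reference.

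One technical point worth tightening: for a genuinely quasi-plurisubharmonic $u$, the chart convolutions $u*_i\rho_\epsilon$ need not decrease monotonically in $\epsilon$, since only the psh part of a decomposition $u=v_i-\varphi_{i,\alpha}$ enjoys that monotonicity while the smooth correction does not. The standard fix — adding a term like $C_i\epsilon^2$, with $C_i$ controlled by the local lower bound on $dd^c u$, before applying the regularized maximum — should be made explicit, since it is precisely this adjustment that lets you conclude eventual monotone decrease on each $U\Subset X$ in item (2). Similarly, to land on a single sequence $u_j$ satisfying (2) uniformly over all choices of $U$, you need a diagonal selection over the locally finite cover; as stated, your last sentence gestures at this but does not carry it out. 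Neither issue is a conceptual gap, but both are where the "hard step" you flag actually has to be executed carefully.
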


 Let $(M,E,\Sigma,M_j,h,h_{j,s})$ be a s.h.m in the sense of Definition \ref{singular metric}. We have the following property of singular Nakano semi-positivity.
\begin{Proposition}\label{singular nak implies singular gri} Assume that $h$ is singular Nakano semi-positive, i.e. $\Theta_h(E)\ge^s_{Nak} 0$ in the sense of Definition \ref{singular nak}. Then $h$ is Griffiths semi-positive in the sense of Definition \ref{singular gri}.
\end{Proposition}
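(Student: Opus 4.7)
The plan is to establish an approximate plurisubharmonicity estimate for each smooth approximant $|u|^2_{h^*_{j,s}}$ and then pass to the distributional limit. By Definition~\ref{singular gri}, it suffices to produce, for every local holomorphic section $u$ of $E^*$, a plurisubharmonic representative of $|u|^2_{h^*}$; I would work on each $M_j$ in turn.

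First, I would exploit that Nakano positivity dominates Griffiths positivity as hermitian forms on $TM\otimes E$: condition (1.2.2) with $\theta=0$ gives the Griffiths bound $\langle i\Theta_{h_{j,s}}(E)v,v\rangle_{h_{j,s}}\ge -\lambda_{j,s}\omega\cdot|v|^2_{h_{j,s}}$ for all $v\in E$, and dualizing through $i\Theta_{h^*_{j,s}}(E^*)=-(i\Theta_{h_{j,s}}(E))^T$ reverses this into $\langle i\Theta_{h^*_{j,s}}(E^*)u,u\rangle_{h^*_{j,s}}\le \lambda_{j,s}\omega\cdot|u|^2_{h^*_{j,s}}$. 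Combined with the standard Chern connection identity
\begin{equation*}
i\partial\bar\partial|u|^2_{h^*_{j,s}}=|D'u|^2_{h^*_{j,s}}-\langle i\Theta_{h^*_{j,s}}(E^*)u,u\rangle_{h^*_{j,s}}
\end{equation*}
for the holomorphic section $u$ and the smooth metric $h^*_{j,s}$, dropping the non-negative $|D'u|^2$ term yields the almost-psh estimate $i\partial\bar\partial|u|^2_{h^*_{j,s}}+\lambda_{j,s}\omega\cdot|u|^2_{h^*_{j,s}}\ge 0$ on $M_j$.

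Next, I would pass to the limit $s\to\infty$. Condition (1.2.1) says $h_{j,s}$ is non-decreasing in $s$, so $h^*_{j,s}$ is non-increasing, and $|u|^2_{h^*_{j,s}}$ decreases pointwise to an upper semicontinuous function $\tilde v_j$ on $M_j$ with $\tilde v_j=|u|^2_{h^*}$ off $\Sigma$. Using $|u|^2_{h^*_{j,s}}\le|u|^2_{h^*_{j,1}}$ (locally bounded, since $h_{j,1}$ is $C^2$) together with the uniform bound $0\le\lambda_{j,s}\le\lambda_j$ from (1.2.4) and the a.e.\ vanishing (1.2.3), dominated convergence yields $|u|^2_{h^*_{j,s}}\to\tilde v_j$ and $\lambda_{j,s}|u|^2_{h^*_{j,s}}\to 0$ in $L^1_{\mathrm{loc}}(M_j)$. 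Passing to the current-theoretic limit in the almost-psh estimate gives $i\partial\bar\partial\tilde v_j\ge 0$, so $\tilde v_j$ is plurisubharmonic on $M_j$. Since $\bigcup_j M_j=M$ and the local representatives agree off the measure-zero set $\Sigma$, they patch to a global psh representative of $|u|^2_{h^*}$, showing $h^*$ is singular Griffiths semi-negative and therefore $h$ is singular Griffiths semi-positive.

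The essential conceptual step is the reduction from the Nakano hypothesis to a Griffiths estimate that behaves well under dualization (Nakano implies Griffiths, and Griffiths positivity of $h$ is dual to Griffiths negativity of $h^*$, which is exactly what controls $|u|^2_{h^*}$). After this reduction, the passage to the distributional limit is routine: the uniform $L^\infty_{\mathrm{loc}}$ domination afforded by (1.2.4) and the $C^2$ regularity built into Definition~\ref{singular metric} make dominated convergence directly applicable, so I do not anticipate a serious analytic obstacle.
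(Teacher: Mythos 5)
Your argument is correct in substance and takes a genuinely different, and shorter, route than the paper. The paper's proof restricts $h$ to each complex line $L\cap B$, corrects the curvature defect by solving a Poisson equation with the Green function of the disc to produce potentials $\varphi_s$ with $\varphi_s\to 0$ pointwise, invokes Proposition~\ref{approx of qpsh} to further smooth these potentials, and then has to treat separately the measure-zero family of lines on which $\lambda_s|_{B\cap L}$ fails to converge a.e.\ (via an approximation by nearby lines and Fatou). You bypass the entire slicing and the exceptional-line issue by working directly with the Chern-connection identity on $M_j$ and passing to the distributional limit: the estimate $i\partial\bar\partial|u|^2_{h^*_{j,s}}+\lambda_{j,s}\omega\,|u|^2_{h^*_{j,s}}\ge0$ is exactly what dualizing (1.2.2) gives, and dominated convergence (with the uniform bounds from (1.2.1) and (1.2.4) and the a.e.\ vanishing from (1.2.3)) kills the error term in $L^1_{\mathrm{loc}}$. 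This is cleaner and does not need the Green-function machinery. Both arguments reduce at one point to the smooth fact that Nakano implies Griffiths and that Griffiths positivity dualizes correctly, but yours exploits it once, globally, rather than line by line.

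There is one step worth spelling out: from ``$\tilde v_j$ is u.s.c.\ and $i\partial\bar\partial\tilde v_j\ge0$ as a current'' you conclude ``so $\tilde v_j$ is plurisubharmonic,'' but in general a u.s.c.\ $L^1_{\mathrm{loc}}$ function with nonnegative distributional $i\partial\bar\partial$ only agrees a.e.\ with a psh function; the pointwise values could a priori be strictly smaller on a polar set. Here the gap is easily closed: on each compact $K\Subset M_j$ you have $\lambda_{j,s}\le\sup_K\lambda_j$ and $|u|^2_{h^*_{j,s}}\le\sup_K|u|^2_{h^*_{j,1}}$, so each $|u|^2_{h^*_{j,s}}$ is $C\omega$-psh for a fixed constant $C=C(K)$; hence $\tilde v_j$, being a decreasing limit of $C\omega$-psh functions, is itself $C\omega$-psh, in particular equals its own u.s.c.\ regularization. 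Combined with $i\partial\bar\partial\tilde v_j\ge0$, this forces $\tilde v_j$ to coincide everywhere (not just a.e.)\ with the psh function in its $L^1_{\mathrm{loc}}$ class, since two quasi-psh functions that agree a.e.\ agree everywhere. With this small addition the argument is complete and, I think, preferable to the paper's.
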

\begin{proof} Let $(U,z) \Subset M$ be a local coordinate open subset of $M$. Let $u\in H^0(U,E^*)$. By definition, it suffices to prove that $|u|^2_{h^*}$ is a plurisubharmonic function on $U$.

Since the case is local, it follows from $\Theta_h(E)\ge^s_{Nak} 0$ that there exists a sequence of $C^2$ smooth metric $h_s$ ($s\ge 1$) convergent point-wisely to $h$ on a neighborhood of $\overline{U}$ which satisfies
\par
(1) for any $x\in U:\ |e_x|_{h_{s}}\le |e_x|_{h_{s+1}},$ for any $s\ge 1$ and any $e_x\in E_x$;
\par
(2) $\Theta_{h_{s}}(E)\ge_{Nak} -\lambda_{s}\omega\otimes Id_E$ on $U$;
\par
(3)  $\lambda_{s}\to 0$ a.e. on $U$, where $\lambda_{s}$ is a sequence of continuous functions on $\overline{U}$;
\par
(4)  $0\le \lambda_{s}\le \lambda_0$ on $U$, for any $s\ge 1$, where $\lambda_0$ is a continuous function $\overline{U}$.

We now prove that $|u|^2_{h^*}$ is upper semi-continuous. Let $h^*$ be the dual metric of $h$ and $h^*_s$ be the dual metric of $h_s$ on $E^*$. Note that $h^*_s$ is also $C^2$ smooth, and it follows from $h_s$ increasing converges to $h$ that we know $h^*_s$ is decreasing convergent to $h^*$. Then $|u|^2_{h^*_s}$ is a $C^2$ smooth function on $U$ and decreasingly convergent to $|u|^2_{h^*}$ as $s \to +\infty$. Then we have
\begin{equation}\label{fm:semi-continuous 1}
\begin{split}
   \limsup_{z\to z_0} |u|^2_{h^*}(z) &\le  \limsup_{z\to z_0} |u|^2_{h^*_s}(z)
     = |u|^2_{h^*_s}(z_0).
\end{split}
\end{equation}
Letting $s\to +\infty$ in inequality \eqref{fm:semi-continuous 1}, we have $\limsup_{z\to z_0} |u|^2_{h^*}(z) \le |u|^2_{h^*}(z_0)$. Hence  $|u|^2_{h^*}$ is upper semi-continuous.

We may assume that under the local coordinate, $U \cong B$, where $B\subset \mathbb{C}^n$ is the unit ball. Now we prove that for any complex line $L$ in $\mathbb{C}^n$, $|u|^2_{h^*}|_{B\cap L}$ is subharmonic. Since the case is local, we can assume that $L=\tau z_1$, where $\tau\in \mathbb{C}$ is a complex number and $z_1\in \mathbb{C}^n$ is a unit vector. Then $(B\cap L)\cong \Delta:=\{\tau:|\tau|<1\}\subset \mathbb{C}.$

We firstly assume that $\lambda_s|_{B\cap L}$ converges to $0$ a.e. as $s\to +\infty$. It follows from $\Theta_{h_{s}}(E)\ge_{Nak} -\lambda_{s}\omega\otimes Id_E$ on $B$ that we know that $$\Theta_{h_{s}|_{B\cap L}}(E|_{B\cap L})\ge_{Nak} -(\lambda_{s}|_{B\cap L}\omega\otimes Id_E)|_{B\cap L}$$
 on $B\cap L$. Let $G_{\Delta}(w,t)$ be the Green function on $\Delta$ with pole $t\in \Delta$. Let $z\in \Delta$, and for any $s\ge 0$ denote
 $$\varphi_s(z):=\frac{i}{\pi}\int_{t\in \Delta}2G_{\Delta}(z,t)(\lambda_{s}\omega)|_{B\cap L}(t).$$
 It follows from $\frac{i}{\pi}\partial_z\bar{\partial}_z 2G_{\Delta}(z,t)=2[t]$ for any fixed $t\in \Delta$, where $[t]$ is the (1,1)-current of integration over a point $t$ that we know that $i\partial_z\bar{\partial}_z \varphi_s=(\lambda_{s}\omega)|_{B\cap L}$. It follows from $\lambda_s|_{B\cap L}$ converges to $0$ as $s\to +\infty$, $0\le \lambda_{s}\le \lambda_0$ on $U$ for any $s\ge 1$, and dominant convergence theorem that we know that $\varphi_0\le\varphi_s\le 0$ and $\varphi_s\to 0$ as $s\to +\infty$ on $\Delta$. For each $s$, it follows from proposition \ref{approx of qpsh} (shrink $U$ if necessary) that there exists a sequence of function $\varphi_{s,m}\in C^{\infty}(B\cap L)$ decreasing convergent to $\varphi_s$ on $B\cap L$ as $m\to +\infty$ and $i\partial_z\bar{\partial}_z \varphi_{s,m} \ge (\lambda_{s}\omega)|_{B\cap L}$ for any $m$.

  Denote $\tilde{h}_{s,m}:=h_s|_{B\cap L}e^{-\varphi_{s,m}}$ for any $s\ge 1$ and $m\ge 1$. Then we know that $\tilde{h}_{s,m}$ is a $C^2$ smooth Nakano semi-positive hermitian metric on $E|_{B\cap L}$. Denote $\tilde{h}^*_{s,m}:=h^*_s|_{B\cap L}e^{\varphi_{s,m}}$ for any $s\ge 1$ and $m\ge 1$, where $h^*_{s}$ is the dual metric of $h_{s}$ on $E^*|_{B\cap L}$.

  Note that for fixed $s$, $\tilde{h}^*_{s,m}\le \tilde{h}^*_{s,1}$ for any $m\ge 1$. We also note that  $\tilde{h}^*_se^{\varphi_s}\le h^*_1$ for any $s\ge 1$. As Nakano semi-positivity implies Griffiths semi-positivity in the smooth case, it follows from Proposition \ref{equivalent of Gri in smooth case} that we know $\tilde{h}^*_{s,m}$ is  Griffiths semi-negative and $|u|^2_{\tilde{h}^*_{s,m}}$ is subharmonic on $B\cap L$  for any $s\ge 1$ and $m\ge 1$. Using dominant convergence theorem twice, we have
 \begin{equation}\label{fm:mean-value inequality 1}
\begin{split}
  |u(0)|^2_{h^*}&=\lim_{s\to +\infty} |u(0)|^2_{\tilde{h}^*_s}\\
  &=\lim_{s\to +\infty} \lim_{m\to +\infty} |u(0)|^2_{\tilde{h}^*_{s,m}}\\
  &\le \lim_{s\to +\infty} \lim_{m\to +\infty} \frac{1}{\pi}\int_{\Delta} |u(\tau z_1)|^2_{\tilde{h}^*_{s,m}}d\lambda_{\tau}\\
  &=\lim_{s\to +\infty} \frac{1}{\pi}\int_{\Delta} |u(\tau z_1)|^2_{h^*_s e^{\varphi_s}}d\lambda_{\tau }\\
  &=\frac{1}{\pi}\int_{\Delta} |u(\tau z_1)|^2_{h^*}d\lambda_{\tau }.
\end{split}
\end{equation}
Now we have proved that for any complex line $L$ in $\mathbb{C}^n$, $|u|^2_{h^*}|_{B\cap L}$ is subharmonic under the assumption $\lambda_s|_{B\cap L}$ converges to $0$ a.e. as $s\to +\infty$.

If $\lambda_s|_{B\cap L}$ does not converge to $0$ a.e. as $s\to +\infty$. Assume that $L=\tau z$, where $\tau\in \mathbb{C}$ is a complex number and $z\in \mathbb{C}^n$ is a unit vector. Since $\lambda_{s}\to 0$ a.e. on $U$, we can find a sequence of complex lines $L_i=\tau z_i$ such that $z_i$ converges to $z$ as $i\to+\infty$ such that $\lambda_s|_{B\cap L_i}\to 0$ as $s\to +\infty$ on each $L_i$. It follows from $\lambda_s|_{B\cap L_i}\to 0$ as $s\to +\infty$ on each $L_i$ that we have $|u|^2_{h^*}|_{B\cap L_i}$ is subharmonic. Hence we know that for each $i$,
 \begin{equation}\label{fm:mean-value inequality 2}
\begin{split}
  |u(0)|^2_{h^*}&\le \frac{1}{\pi}\int_{\Delta} |u(\tau z_i)|^2_{h^*}d\lambda_{\tau}
\end{split}
\end{equation}
Note that $|u(\tau z_i)|^2_{h^*}\le |u(\tau z_i)|^2_{h^*_1}\le \sup_{z\in U} |u(z)|^2_{h^*_1}<+\infty$. It follows from Fatou's lemma that we have
 \begin{equation}\label{fm:mean-value inequality 3}
\begin{split}
  |u(0)|^2_{h^*}&\le \limsup_{i\to +\infty}\frac{1}{\pi}\int_{\Delta} |u(\tau z_i)|^2_{h^*}d\lambda_{\tau}\\
  &\le  \frac{1}{\pi}\int_{\Delta} \limsup_{i\to +\infty}|u(\tau z_i)|^2_{h^*}d\lambda_{\tau}\\
  &\le \frac{1}{\pi}\int_{\Delta} |u(\tau z)|^2_{h^*}d\lambda_{\tau},
\end{split}
\end{equation}
which implies that $|u|^2_{h^*}|_{B\cap L}$ is also subharmonic.

Proposition \ref{singular nak implies singular gri} is proved.
\end{proof}

Let $h$ be any hermitian metric on a holomorphic
vector bundle $E$, then $h$ induces a hermitian metric $\rm{det}\,h$ on $\rm{det}\,E$. We recall the following proposition of singular Griffiths negative metric.
\begin{Proposition} [see \cite{Raufi1}]\label{p:det}  Let $h$ be  a measurable metric on $E$ satisfying that $h$ is everywhere positive definite hermitian form on $E$, and assume that $h$ is singular Griffiths semi-negative as in Definition \ref{singular gri}. Then $\log \rm{det}\,h$  is a plurisubharmonic function.
If $M$ is a polydisc and $E=M\times \mathbb{C}^r$, then there exists a sequence of smooth hermitian metrics $\{h_v\}_{v=1}^{+\infty}$ with negative Griffiths curvature, decreasingly convergent to $h$ point-wisely on any smaller polydisc.
 \end{Proposition}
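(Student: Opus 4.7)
I will prove the two assertions in the reverse of their stated order: first I establish the smooth approximation on a polydisc, then deduce the plurisubharmonicity of $\log\det h$ on an arbitrary manifold from it by locality.

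\emph{Step 1: smoothing on the polydisc.} Suppose $M=\Delta^n$ and $E=M\times\mathbb{C}^r$ is trivial. Then the measurable metric $h$ is nothing but a measurable, almost-everywhere positive-definite hermitian matrix-valued function $H(z)=(h_{i\bar j}(z))$, and the singular Griffiths semi-negativity (Definition~\ref{singular gri}) forces, in particular, that for every constant $v\in\mathbb{C}^r$ viewed as a constant holomorphic section of $E$, the function $v^*Hv=|v|^2_h$ is plurisubharmonic. Let $\rho_\epsilon$ be the standard non-negative radial smooth mollifier supported in the ball of radius $\epsilon$ in $\mathbb{C}^n$, and define $H_\epsilon:=H*\rho_\epsilon$ by convolving entry-wise; this is a smooth hermitian matrix-valued function on any slightly smaller polydisc $\Delta^n_{1-\epsilon}$. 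For each constant $v$,
$$v^*H_\epsilon(z)v=\int_{\mathbb{C}^n}\bigl(v^*H(z-w)v\bigr)\rho_\epsilon(w)\,d\lambda(w)=\bigl((|v|^2_h)*\rho_\epsilon\bigr)(z),$$
so $v^*H_\epsilon v$ is smooth psh. By the classical fact that the standard mollification of a psh function is psh and decreases pointwise to the original as $\epsilon\downarrow 0$, we get $v^*H_\epsilon v\downarrow v^*Hv$ for every $v$, that is, $H_\epsilon\downarrow H$ in the hermitian matrix order. In particular $H_\epsilon$ is positive definite, the smooth metric $h_\epsilon$ it defines has Griffiths semi-negative curvature (in the smooth case, Griffiths semi-negativity is equivalent to plurisubharmonicity of $|v|^2_h$ for every constant $v$ in the frame), and $h_\epsilon\downarrow h$ point-wisely on $\Delta^n_{1-\epsilon}$. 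Choosing $\epsilon_v\downarrow 0$ yields the approximating family $\{h_v\}_{v\ge 1}$.

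\emph{Step 2: plurisubharmonicity of $\log\det h$.} Plurisubharmonicity is a local property, so after trivializing $E$ on a polydisc chart we are in the setting of Step~1 and the approximating family $\{h_v\}$ is available. For the smooth Griffiths semi-negative metric $h_v$, the Chern curvature of the induced metric $\det h_v$ on $\det E$ equals the trace $\operatorname{tr}\Theta_{h_v}(E)$, which is a non-positive $(1,1)$-form. Writing $\det h_v=e^{-\phi_v}$, this gives $i\partial\bar\partial\phi_v\le 0$, i.e.\ $i\partial\bar\partial\log\det h_v\ge 0$, so $\log\det h_v$ is psh. Since $H_v\downarrow H$ as positive-definite hermitian matrices, monotonicity of determinant on the positive cone (if $A\ge B>0$ then $\det A\ge\det B$) together with continuity of $\det$ give $\det H_v\downarrow\det H>0$, whence $\log\det h_v\downarrow\log\det h$ pointwise. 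The pointwise decreasing limit of psh functions is psh as long as the limit is not identically $-\infty$; here $\log\det h$ is everywhere finite because $h$ is everywhere positive definite. This proves $\log\det h$ is psh.

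\emph{Main obstacle.} The proof is a soft approximation argument; the only non-formal step is that the entry-wise mollification $H*\rho_\epsilon$ simultaneously preserves Griffiths semi-negativity and decreases monotonically to $H$. Both reduce, by testing against constant vectors in the trivial frame, to the standard facts that convolving a plurisubharmonic function with a non-negative radial mollifier produces a smooth psh function which decreases pointwise to the original as the mollification parameter tends to $0$. Once this is in hand, the first assertion follows from the trace-of-curvature identity in the smooth case together with monotone convergence of determinants.
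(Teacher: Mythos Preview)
Your overall strategy---mollify $H$ entry-wise on a polydisc chart to obtain smooth Griffiths semi-negative approximants decreasing to $h$, then pass to the limit in $\log\det$---is exactly Raufi's argument (the paper itself gives no proof and simply cites \cite{Raufi1}). Step~2 is correct as written. There is, however, a genuine gap in Step~1.

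The parenthetical claim ``in the smooth case, Griffiths semi-negativity is equivalent to plurisubharmonicity of $|v|^2_h$ for every constant $v$ in the frame'' is false. Already in rank $1$ the constant-section condition says only that the positive function $h$ is plurisubharmonic, whereas Griffiths semi-negativity says $\log h$ is plurisubharmonic; take $h(z)=1+\operatorname{Re} z$ on a small disc: it is harmonic, hence psh, but $i\partial\bar\partial\log h=-\tfrac{1}{4h^2}\,i\,dz\wedge d\bar z<0$, so $\log h$ is not psh. Thus from ``$v^*H_\epsilon v$ is psh for all constant $v$'' you cannot conclude that $h_\epsilon$ is Griffiths semi-negative.

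The fix is short and stays inside your mollification scheme: use the full hypothesis on $h$, not merely its constant-section consequence. For every holomorphic section $u$ and every fixed $w$, the translated metric $z\mapsto H(z-w)$ is again singular Griffiths semi-negative on the trivial bundle (translation is biholomorphic), so $z\mapsto |u(z)|^2_{H(z-w)}$ is plurisubharmonic. Hence
\[
|u|^2_{h_\epsilon}(z)=\int_{\mathbb{C}^n}|u(z)|^2_{H(z-w)}\,\rho_\epsilon(w)\,d\lambda(w)
\]
is plurisubharmonic as a non-negative average of psh functions, for \emph{every} holomorphic $u$. By Proposition~\ref{equivalent of Gri in smooth case} this yields Griffiths semi-negativity of the smooth metric $h_\epsilon$, and the remainder of your argument (monotone decrease of $H_\epsilon$ via constant sections, then Step~2) goes through unchanged.
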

We recall the following definition which can be referred to \cite{CA}.

\begin{Definition}[see \cite{CA}]Let $h$ be a measurable metric on $E$. Define an analytic sheaf $\mathcal{E}(h)$ by setting:
\center{$\mathcal{E}(h)_x:=\{e_x\in \mathcal{O}(E)_x: |e_x|^2_h$ is integrable in some neighborhood of $x\}$.}
\end{Definition}

\begin{Theorem}
	[see \cite{GMY-boundary5}]
	\label{thm:soc1}
	Let $M=\mathbb{B}^n\subset\mathbb{C}^n$, and let $E=M\times \mathbb{C}^r$ be the trivial vector bundle on $M$. Let $(M,E,\Sigma,M_j,h,h_{j,s})$ be a s.h.m in the sense of Definition \ref{singular metric}, and assume that $\Theta_{h}(E)\ge_{Nak}^s0$. Let $\psi$ be a plurisubharmonic function on $M$. Then
	$$\mathcal{E}(he^{-a\psi})_o=\cup_{s>a}\mathcal{E}(he^{-s\psi})_o$$
	 holds for any $a\ge0$.
\end{Theorem}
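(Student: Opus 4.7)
The inclusion $\bigcup_{s>a}\mathcal{E}(he^{-s\psi})_o\subseteq\mathcal{E}(he^{-a\psi})_o$ is immediate. Since $\psi$ is plurisubharmonic and therefore locally bounded above, after shrinking to a polydisc $U\Subset M$ centered at $o$ and subtracting a constant one may assume $\psi\le 0$ on $U$; then for any $s>a$ one has $e^{-a\psi}\le e^{-s\psi}$ on $U$, so local integrability of $|e_x|_h^2 e^{-s\psi}$ forces that of $|e_x|_h^2 e^{-a\psi}$.

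For the non-trivial inclusion $\mathcal{E}(he^{-a\psi})_o\subseteq\bigcup_{s>a}\mathcal{E}(he^{-s\psi})_o$, the plan is to adapt the Guan--Zhou method of deducing strong openness from optimal $L^2$ extension, now in the vector-bundle framework supplied by Theorem \ref{main result 1}. Fix $e_x\in\mathcal{O}(E)_o$ with $\int_U|e_x|_h^2 e^{-a\psi}\,dV<+\infty$ for some polydisc $U$ around $o$, normalized so that $\psi\le 0$ on $U$. Since $\psi$ need not have neat analytic singularities, the first step is to invoke Demailly's regularization to obtain a decreasing family $\psi_\nu\searrow\psi$ of quasi-plurisubharmonic functions with neat analytic singularities that are log canonical along $Y_\nu:=V(\mathcal{I}(a\psi_\nu))$. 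Theorem \ref{main result 1} is then applied on $U$ (a weakly pseudoconvex K\"ahler manifold) to the singular Nakano semi-positive bundle $(E,h)$, with plurisubharmonic weight $a\psi_\nu$, trivial background weight $\varphi\equiv 0$ --- so that the local lower bound hypothesis on $he^{-\varphi}=h$ is immediate from Definition \ref{singular nak} --- and an admissible weight $c\in\mathcal{G}_{T}$ engineered so that $c(-a\psi_\nu)e^{-a\psi_\nu}$ effectively realizes the stronger weight $e^{-(a+\delta)\psi_\nu}$ on the relevant region, for a fixed $\delta>0$ independent of $\nu$.

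The datum is $f_\nu$, the restriction to $Y_\nu^0$ of $e_x\otimes(dz_1\wedge\cdots\wedge dz_n)\in H^0(U,K_U\otimes E)$, whose Ohsawa-measure integral is bounded by the finite integral $\int_U|e_x|_h^2 e^{-a\psi}\,dV$ via a standard $\limsup$ argument on the tubular sets $\{-t-1<a\psi_\nu<-t\}$ together with the monotonicity $\psi_\nu\searrow\psi$. Theorem \ref{main result 1} then produces $F_\nu\in H^0(U,K_U\otimes E)$ agreeing with $f_\nu$ on $Y_\nu^0$ and satisfying the optimal $L^2$ estimate. Comparing $F_\nu$ with the canonical extension $e_x\otimes dz_1\wedge\cdots\wedge dz_n$ --- their difference is a holomorphic section of $K_U\otimes E$ that vanishes along $Y_\nu^0$ and hence lies in $\mathcal{I}(a\psi_\nu)\cdot(K_U\otimes E)$ --- and invoking the concavity property of minimal $L^2$ integrals (Theorem \ref{concavity of min l2 int}) transfers the improved integrability to $e_x$ itself, yielding $\int_V|e_x|_h^2 e^{-(a+\delta)\psi_\nu}\,dV<+\infty$ on a neighborhood $V\subset U$ of $o$; passing to the limit $\nu\to+\infty$ via Fatou gives $e_x\in\mathcal{E}(he^{-s\psi})_o$ with $s=a+\delta>a$. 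The main obstacle is the coherent construction of $c\in\mathcal{G}_T$ delivering a fixed improvement $\delta>0$ while satisfying the integral inequality in Definition \ref{class gT}, in the spirit of the classical Guan--Zhou constructions $c_\delta(t)=e^{-\delta t}$ on suitable intervals, smoothly extended; a secondary difficulty is controlling the comparison $F_\nu\leftrightarrow e_x\otimes dz$ uniformly as $\nu\to+\infty$, handled by the monotonicity in Definition \ref{singular nak} (1.2.1) together with the aforementioned concavity property.
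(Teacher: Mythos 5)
The paper does not prove Theorem \ref{thm:soc1}; it cites it from \cite{GMY-boundary5} and uses it (via Proposition \ref{p:soc2}) as an ingredient. There is therefore no ``paper's own proof'' to match, and your proposal must be evaluated on its internal coherence. It has a fatal structural defect and several substantive technical gaps.

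\textbf{Circularity.} Your proof rests on Theorem \ref{main result 1}. But in the logical structure of this paper, Theorem \ref{main result 1} is a consequence of Theorem \ref{main result}, whose proof invokes Proposition \ref{p:soc2} (see Step~2, Part~2, inequality \eqref{step1part2 1}), and Proposition \ref{p:soc2} is explicitly derived from Theorem \ref{thm:soc1} itself. So the proposal deduces Theorem \ref{thm:soc1} from a theorem that already presupposes Theorem \ref{thm:soc1}. To make the argument non-circular you would have to re-prove the optimal $L^2$ extension theorem for singular metrics on vector bundles without ever using the multiplier-module strong openness statement; nothing in your sketch addresses this. In \cite{GMY-boundary5} the result is obtained from the concavity of minimal $L^2$ integrals directly, a technique that does not pass through the optimal extension theorem of the present paper.

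\textbf{Hypotheses of Theorem \ref{main result 1} are not met by the regularization.} Demailly's approximation gives quasi-plurisubharmonic $\psi_\nu\searrow\psi$ with analytic singularities, but Theorem \ref{main result 1} demands (a) \emph{neat} analytic singularities in the precise form $c\log\sum|g_j|^2+v$ and, more seriously, (b) that $a\psi_\nu$ be \emph{log canonical} along $Y_\nu=V(\mathcal{I}(a\psi_\nu))$. There is no reason Bergman-kernel or blow-up regularizations of an arbitrary plurisubharmonic $\psi$ produce families satisfying the log-canonical condition; in general it fails for almost all $a$, and you give no mechanism to arrange it. Without this, the Ohsawa measure $dV_{M,\omega}[a\psi_\nu]$ is not even defined and Theorem \ref{main result 1} cannot be applied.

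\textbf{Uniformity and the limit.} You assert that a fixed improvement $\delta>0$, independent of $\nu$, can be engineered through the choice of $c\in\mathcal{G}_T$, and that the comparison ``$F_\nu-e_x\otimes dz$ lies in $\mathcal{I}(a\psi_\nu)\cdot(K_U\otimes E)$'' follows from vanishing on $Y_\nu^0$. Neither point is substantiated: vanishing on $Y_\nu^0$ (the regular locus) does not by itself give membership in the multiplier ideal, and the improvement $\delta$ obtained by this kind of argument depends on quantities ($\limsup$ integrals, Lelong-type data) that a priori vary with $\nu$. The final appeal to Fatou's lemma is also unclear, because the extensions $F_\nu$ live with respect to different $Y_\nu$'s and different weights; you never establish a compactness or convergence statement for the family $\{F_\nu\}$.

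A correct route (and, to all appearances, the route in \cite{GMY-boundary5}) is to apply the concavity property of the minimal $L^2$ integrals $G(t)$ over the sublevel sets $\{\psi<-t\}$ directly to the section $e_x\otimes dz$, with the weight $he^{-a\psi}$, and to read off the improved integrability from the boundary behaviour of $G$; this avoids any $L^2$ extension along subvarieties, hence avoids both the neat/log-canonical obstruction and the circularity.
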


Using Proposition \ref{singular nak implies singular gri} and Proposition \ref{p:det}, Theorem \ref{thm:soc1} implies the following result.
\begin{Proposition}
	\label{p:soc2}Let $M=\mathbb{B}^n\subset\mathbb{C}^n$, and let $E=M\times \mathbb{C}^r$ be the trivial vector bundle on $M$. Let $(M,E,\Sigma,M_j,h,h_{j,s})$ be a s.h.m in the sense of Definition \ref{singular metric}, and assume that $\Theta_{h}(E)\ge_{Nak}^s0$. Then
	$$\mathcal{E}(h(\rm{det}\,h)^a)_o=\cup_{s>a}\mathcal{E}(h(\rm{det}\,h)^s)_o$$
	 holds for any $a\ge0$.
\end{Proposition}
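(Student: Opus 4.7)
The plan is to deduce Proposition \ref{p:soc2} from Theorem \ref{thm:soc1} by making the right choice of plurisubharmonic weight: I would take $\psi := -\log\det h$. Once this $\psi$ is known to be plurisubharmonic on $M$, the statement reduces to a direct substitution into Theorem \ref{thm:soc1}, since $he^{-s\psi}=h(\det h)^s$ for every $s\ge 0$.

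First I would extract plurisubharmonicity of $\psi$ from the two propositions the remark points to. The hypothesis $\Theta_{h}(E)\ge_{Nak}^s 0$ together with Proposition \ref{singular nak implies singular gri} gives that $h$ is singular Griffiths semi-positive in the sense of Definition \ref{singular gri}, which by definition means that the dual metric $h^{*}$ on $E^{*}\cong M\times\mathbb{C}^{r}$ is singular Griffiths semi-negative. Since $h$ is (almost everywhere) positive definite and hence so is $h^{*}$, Proposition \ref{p:det} applies to $h^{*}$ and yields that $\log\det h^{*}$ is plurisubharmonic on $M$. Because $\det h^{*}=(\det h)^{-1}$, this gives exactly
$$\psi := -\log\det h = \log\det h^{*}\in \mathrm{PSH}(M).$$

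Now I would feed $\psi$ into Theorem \ref{thm:soc1}. For every $s\ge 0$ one has $he^{-s\psi}=h(\det h)^{s}$, so the conclusion of that theorem, namely $\mathcal{E}(he^{-a\psi})_{o}=\bigcup_{s>a}\mathcal{E}(he^{-s\psi})_{o}$, translates verbatim into
$$\mathcal{E}(h(\det h)^{a})_{o}=\bigcup_{s>a}\mathcal{E}(h(\det h)^{s})_{o},$$
which is the desired equality.

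The only non-formal step is the plurisubharmonicity of $-\log\det h$, so that is where I expect the main (modest) obstacle to lie: one must check that the hypotheses of Proposition \ref{p:det} are genuinely met by the dual s.h.m.\ $h^{*}$, i.e.\ that the everywhere-positive-definiteness and measurability conditions built into Definitions \ref{measurable metric} and \ref{singular gri} transfer from $h$ to $h^{*}$. Apart from this bookkeeping, the argument is just the chain ``Nakano $\Rightarrow$ Griffiths $\Rightarrow$ $\log\det h^{*}$ psh'' followed by an application of Theorem \ref{thm:soc1}.
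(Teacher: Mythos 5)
Your proposal is correct and follows exactly the route the paper itself indicates: the paper deduces Proposition \ref{p:soc2} from Theorem \ref{thm:soc1} by combining Proposition \ref{singular nak implies singular gri} (singular Nakano semi-positivity implies singular Griffiths semi-positivity, hence $h^{*}$ is Griffiths semi-negative) with Proposition \ref{p:det} (so $\log\det h^{*}=-\log\det h$ is plurisubharmonic), and then substitutes $\psi=-\log\det h$ so that $he^{-s\psi}=h(\det h)^{s}$. Your identification of where the real content lies (the plurisubharmonicity of $-\log\det h$) and the bookkeeping remark about the hypotheses transferring to $h^{*}$ are both accurate.
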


\begin{Lemma}
	\label{l:2}Let $M$ be a domain in $\mathbb{C}^n$, and let $E=M\times \mathbb{C}^r$ be the trivial vector bundle on $M$. Let $h$ be  a measurable metric on $E$ satisfying that $h$ is everywhere positive definite hermitian form on $E$, and assume that $h$ is singular Griffiths semi-positive as in Definition \ref{singular gri}.  For any $v\in \mathbb{C}^r$, there exist two plurisubharmonic functions $\varphi_1$ and $\varphi_2$ on $M$ such that $|v|_h^2=e^{\varphi_1-\varphi_2}$.
	
	Moreover if there exists $s>0$ such that $sI_r\le h(z)\le s^{-1}I_r$ for any $z\in M$,  there exist two bounded plurisubharmonic functions $\varphi_1$ and $\varphi_2$ on $M$ such that $|v|_h^2=e^{\varphi_1-\varphi_2}$.
\end{Lemma}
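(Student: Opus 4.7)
The strategy is to write $\log|v|_h^2$ as a difference of two plurisubharmonic weights by passing to the quotient bundle $Q := E/V$, where $V\subset E$ is the trivial rank-one sub-bundle spanned by the constant section $v$ (the case $v=0$ is trivial, so I assume $v\neq 0$). The main algebraic identity will be $\det h_Q = \det h/|v|_h^2$, coming from the Schur complement applied to the Gram matrix of $h$ with $v$ as first basis vector.

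I would proceed in three steps. First, I plan to verify that the induced quotient metric $h_Q$ on $Q$ is singular Griffiths semi-positive in the sense of Definition \ref{singular gri}. By duality, $Q^* = \{\phi\in E^*:\phi(v)=0\}$ is a trivial rank $r-1$ sub-bundle of $E^*$, and $(h_Q)^* = h^*|_{Q^*}$. Since $h$ is singular Griffiths semi-positive, $h^*$ is singular Griffiths semi-negative on $E^*$; sub-bundle inheritance is then immediate, because every local holomorphic section of $Q^*$ is also one of $E^*$, so its $h^*$-norm-squared is plurisubharmonic, hence $h^*|_{Q^*}$ is singular Griffiths semi-negative, and dualizing gives the desired property for $h_Q$.

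Second, I would apply Proposition \ref{p:det} to both $h^*$ on $E^*$ and to $(h_Q)^*$ on $Q^*$, concluding that $-\log\det h$ and $-\log\det h_Q$ are both plurisubharmonic on $M$. Third, after extending $v$ to a constant basis $v,e_1,\dots,e_{r-1}$ of $\mathbb{C}^r$, an elementary Schur-complement computation on the Gram matrix of $h$ yields $\det h_Q = \det h/|v|_h^2$ in the natural frames. Combining these facts,
\begin{equation*}
\log|v|_h^2 \;=\; \bigl(-\log\det h_Q\bigr)\;-\;\bigl(-\log\det h\bigr)\;=\;\varphi_1-\varphi_2,
\end{equation*}
with $\varphi_1 := -\log\det h_Q$ and $\varphi_2 := -\log\det h$ plurisubharmonic, and exponentiation yields the first assertion.

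For the moreover part, the pinching $sI_r\le h(z)\le s^{-1}I_r$ forces $s|v|^2\le|v|_h^2\le s^{-1}|v|^2$ and $s^r\le\det h\le s^{-r}$; consequently $\det h_Q = \det h/|v|_h^2$ is also pinched between two positive constants, so both $\varphi_1$ and $\varphi_2$ are bounded. The only step requiring care is the inheritance of singular Griffiths semi-positivity through the quotient, but this reduces cleanly via duality to sub-bundle inheritance as in Definition \ref{singular gri}; I therefore do not expect a serious obstacle, and the remaining ingredients are routine linear algebra plus an invocation of Proposition \ref{p:det}.
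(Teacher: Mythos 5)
Your proposal is correct and takes essentially the same route as the paper: both hinge on the identity $|v|_h^2 = \det(h^*|_F)/\det h^*$ where $F=\{\phi\in E^*:\phi(v)=0\}$ (the paper writes this directly with $v=e_1$ and $F=\langle e_2^*,\dots,e_r^*\rangle$; you reach the dual formulation $\det h_Q=\det h/|v|_h^2$ via Schur complement, and $F$ is precisely your $Q^*$), and both then invoke sub-bundle inheritance of singular Griffiths semi-negativity together with Proposition \ref{p:det} to conclude. The quotient-bundle framing and explicit Schur-complement step are a notational variant of the paper's argument rather than a different proof.
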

\begin{proof}Let $\{e_i\}_{1\le i\le r}$ be a basis for $E$ on $M$.
	Without loss of generality, we assume that $v=e_1$. Note that $h^*(=\bar{h}^{-1})$ is the dual metric of $h$ on $E^*$ and $h^*$ is singular Griffiths semi-negative.  Let $F=<e_2^*,\ldots,e_r^*>$ be a vector subbundle of $E^*$ on $M$. Following from the Definition \ref{singular gri}, we know that the induced metric $h^*|_F$ of $F$ is  singular Griffiths semi-negative on $M$.
	
	As $h=\overline{h^*}^{-1}$ and $v=e_1$, we have
	\begin{equation}
		\label{eq:0626c}|v|_h^2=\overline{|v|_h^2}=\frac{\rm{det}\, h^*|_{F}}{\rm{det}\,h^*}.
	\end{equation}
	It follows from Definition \ref{singular gri} and Proposition \ref{p:det} that there exist two plurisubharmonic fucntions $\varphi_1$ and $\varphi_2$ such that $\rm{det}\, h^*|_{F}=e^{\varphi_1}$ and $\rm{det}\,h^*=e^{\varphi_2}.$
	 Thus, equality \eqref{eq:0626c} becomes
	$$|v|_h^2=e^{\varphi_1-\varphi_2}.$$
	
	$sI_r\le h(z)\le s^{-1}I_r$ implies that $sI_r\le h^*(z)\le s^{-1}I_r$ for any $z\in M$, which shows that $\varphi_1=\log(\rm{det}\, h^*|_{F})$ and $\varphi_2=\log(\rm{det}\,h^*)$ are bounded on $M$.
\end{proof}

\begin{Lemma}
	\label{l:1}Let $A$ be an $n\times n$ positive definite hermitian matrix, and let $\beta\in(0,1)$. Assume that all eigenvalues of $A$ are greater than $1$. Then we have $$A-(A^{-1}+sI_n)^{-1}\le (s\rm{det}\,A)^{\beta}A$$
	for any $s>0$.
\end{Lemma}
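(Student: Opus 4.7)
The plan is to reduce the matrix inequality to a scalar inequality by simultaneous diagonalization and then verify the scalar inequality by a two-case analysis. Since $A$ is positive definite hermitian, write $A = U^{*}DU$ with $U$ unitary and $D = \mathrm{diag}(a_{1},\ldots,a_{n})$, where all $a_{i} > 1$. Then
\[
A^{-1}+sI_{n} = U^{*}(D^{-1}+sI_{n})U, \qquad (A^{-1}+sI_{n})^{-1} = U^{*}(D^{-1}+sI_{n})^{-1}U,
\]
and $\det A = \det D = \prod_{j}a_{j}$, so conjugating by $U$ reduces the required inequality to a diagonal one; it suffices to prove, for each $i$,
\[
a_{i} - \frac{a_{i}}{1+sa_{i}} \;\le\; (s\det D)^{\beta}\, a_{i},
\]
i.e., after dividing by $a_{i}>0$,
\[
\frac{sa_{i}}{1+sa_{i}} \;\le\; (s\det D)^{\beta}.
\]

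Next I would split into two cases according to the size of $sa_{i}$. If $sa_{i}\ge 1$, then $\frac{sa_{i}}{1+sa_{i}}<1 \le (sa_{i})^{\beta}$, and since every $a_{j}\ge 1$ we have $a_{i}\le \det D$, giving $(sa_{i})^{\beta}\le (s\det D)^{\beta}$, which settles this case. If $sa_{i}<1$, then $\frac{sa_{i}}{1+sa_{i}}\le sa_{i}$, and using $\beta\in(0,1)$ together with $sa_{i}\le 1$ yields $sa_{i}=(sa_{i})^{1}\le (sa_{i})^{\beta}$; combined with $a_{i}\le\det D$ this again gives $\frac{sa_{i}}{1+sa_{i}}\le (s\det D)^{\beta}$. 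Both cases together finish the scalar inequality.

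Nothing here looks genuinely hard: the diagonalization is standard (both sides are simultaneous polynomials in $A$, so they preserve the unitary change of basis), and the scalar inequality is elementary. The only point that requires a little care is noting that the hypothesis ``all eigenvalues of $A$ are greater than $1$'' is used twice: once to guarantee $a_{i}\le\det D$ for every $i$ (so that $(sa_{i})^{\beta}$ can be upgraded to $(s\det D)^{\beta}$), and implicitly to make the case $sa_{i}\ge 1$ meaningful for small $s$. Consequently the main obstacle is really only the bookkeeping in the two-case split; the rest is routine.
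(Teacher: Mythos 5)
Your proof is correct and follows essentially the same route as the paper's: diagonalize, reduce to the scalar inequality $\frac{sa_i}{1+sa_i}\le (sa_i)^{\beta}$, then upgrade $(sa_i)^{\beta}$ to $(s\det A)^{\beta}$ using $a_i\le\det A$. The only variation is in proving the scalar bound, where the paper applies weighted AM--GM (the inequality $s^{-\beta}a^{1-\beta}\le\beta s^{-1}+(1-\beta)a\le s^{-1}+a$) while you split into the two cases $sa_i\ge 1$ and $sa_i<1$; both are equally elementary and lead to the same conclusion.
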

\begin{proof}
	As $A$ is an $n\times n$ positive definite hermitian matrix, there exists a unitary matrix $U$ such that $UAU^{-1}$ and $U(A^{-1}+sI_n)^{-1}U^{-1}$ are diagonal matrices. Note that all eigenvalues of $A$ are greater than $1$, then we have all eigenvalues of $A$ are smaller than $\rm{det}\,A$. Thus, it suffices to prove the case $n=1$.

	Now we prove Lemma \ref{l:1} for the case $n=1$. Note that
	\begin{equation}
		\label{eq:0626a}A-(A^{-1}+sI_n)^{-1}=\frac{sA}{A^{-1}+s}=\frac{A^2}{s^{-1}+A}
	\end{equation}
	 By H\"older inequality, it follows that
	 \begin{equation}
	 	\label{eq:0626b}s^{-\beta}A^{1-\beta}\le \beta s^{-1}+(1-\beta)A\le s^{-1}+A.
	 \end{equation}
Combining equality \eqref{eq:0626a} and inequality \eqref{eq:0626b}, we obtain that
$$A-(A^{-1}+sI_n)^{-1}=\frac{A^2}{s^{-1}+A}\le s^{\beta}A^{1+\beta}.$$
	
	Thus, Lemma \ref{l:1} holds.	
	\end{proof}

\begin{Lemma}
	[\cite{ZhouZhu-jdg}]\label{l:3}
	Let $\delta$ be any positive real number. Let $\varphi$ be a negative plurisubharmonic function on $\mathbb{B}_r^n:=\{z\in\mathbb{C}^n:|z|<r\}$ such that $\varphi(0)>-\infty$. Put
	$$S_{\delta,t}=\{z\in\mathbb{B}_r^n:\varphi(e^{-t}z)<(1+\delta)\varphi(0)\},$$
	where $t>1$.
	Then $$\lim_{t\rightarrow+\infty}\mu(S_{\delta,t})=0,$$
	where $\mu$ is the Lebesgue measure on $\mathbb{C}^n$.
\end{Lemma}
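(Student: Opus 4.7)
The plan is to reduce the claim to an $L^1$-convergence statement for $\varphi(e^{-t}z)\to\varphi(0)$ on $\mathbb{B}_r^n$, and then apply Markov's inequality. Throughout we may assume $\varphi\not\equiv 0$, so the maximum principle for plurisubharmonic functions forces $\varphi(0)<0$ (otherwise $S_{\delta,t}=\emptyset$ and there is nothing to prove).

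\textbf{Step 1 (L^1-convergence via radial ball averages).} Since $\varphi$ is plurisubharmonic, hence subharmonic, on $\mathbb{B}_r^n$, the solid mean
$$A(\rho):=\frac{1}{\mu(\mathbb{B}_\rho^n)}\int_{\mathbb{B}_\rho^n}\varphi(w)\,d\mu(w)$$
is a non-decreasing, continuous function of $\rho\in(0,r]$, and the classical Riesz-type statement for subharmonic functions (using $\varphi(0)>-\infty$) yields $\lim_{\rho\to 0^+}A(\rho)=\varphi(0)$. Rescaling $w=e^{-t}z$ shows that $A(e^{-t}r)=\frac{1}{\mu(\mathbb{B}_r^n)}\int_{\mathbb{B}_r^n}\varphi(e^{-t}z)\,d\mu(z)$, so
$$\int_{\mathbb{B}_r^n}\bigl(\varphi(e^{-t}z)-\varphi(0)\bigr)\,d\mu(z)\longrightarrow 0\quad\text{as }t\to+\infty.$$

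\textbf{Step 2 (control of the negative part).} Set $a_t(z):=\varphi(e^{-t}z)-\varphi(0)$. Because $\varphi\le 0$, one has the uniform bound $a_t(z)\le|\varphi(0)|$. Upper semi-continuity of $\varphi$ at $0$ gives $\limsup_{t\to+\infty}a_t(z)\le 0$ for every $z\in\mathbb{B}_r^n$, so $a_t^+\to 0$ pointwise; dominated convergence (with dominant $|\varphi(0)|$) then forces $\int_{\mathbb{B}_r^n}a_t^+\,d\mu\to 0$. Combining this with Step 1 and the identity $\int a_t=\int a_t^+-\int a_t^-$ gives $\int_{\mathbb{B}_r^n}a_t^-\,d\mu\to 0$ as well.

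\textbf{Step 3 (Markov's inequality).} The condition $\varphi(e^{-t}z)<(1+\delta)\varphi(0)$ is equivalent, since $\varphi(0)<0$, to $a_t(z)<\delta\varphi(0)$, i.e.\ $a_t^-(z)>\delta|\varphi(0)|$. Therefore
$$\mu(S_{\delta,t})\;\le\;\frac{1}{\delta|\varphi(0)|}\int_{\mathbb{B}_r^n}a_t^-\,d\mu\;\longrightarrow\;0,$$
by Step 2, which is the desired conclusion.

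The only non-routine ingredient is the radial-limit identity $\lim_{\rho\to 0^+}A(\rho)=\varphi(0)$, which is standard for subharmonic functions with finite value at the center; once it is in hand, the remainder is a straightforward dominated convergence plus Markov argument, and the extra structure (plurisubharmonicity rather than mere subharmonicity) is not needed beyond this point.
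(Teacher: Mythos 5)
Your proof is correct, and the argument is clean and self-contained. The paper does not prove this lemma — it simply cites \cite{ZhouZhu-jdg} — so there is no in-text proof to compare against; but all three steps of your argument check out: the rescaling identity $A(e^{-t}r)=\frac{1}{\mu(\mathbb{B}_r^n)}\int_{\mathbb{B}_r^n}\varphi(e^{-t}z)\,d\mu(z)$, the standard fact that the solid means $A(\rho)$ decrease to $\varphi(0)$ as $\rho\to0^+$ (upper semi-continuity for the upper bound, the sub-mean-value inequality for the lower bound), and the decomposition $\int a_t=\int a_t^+-\int a_t^-$ with dominated convergence applied to $a_t^+$ (uniformly bounded by the constant $|\varphi(0)|$) all hold, so $\int_{\mathbb{B}_r^n}a_t^-\,d\mu\to 0$ and Markov's inequality finishes the proof. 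The reduction to $\varphi(0)<0$ via the maximum principle handles the degenerate case correctly. This is a standard reduction of pointwise concentration to $L^1$-convergence of subharmonic dilates, and it is likely close in spirit to the original argument in \cite{ZhouZhu-jdg}.
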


Let $M=\Delta \subset\mathbb{C}$, and let $E=M\times \mathbb{C}^r$ be the trivial vector bundle on $M$. Assume that $h$ is singular Griffths semi-positive on $E$, and there exists $s>0$ such that $sI_r\le h(z)\le s^{-1}I_r$ for any $z\in M$.
\begin{Lemma}
\label{l:4}
Let $\alpha$, $c_1$ and $c_2$ be positive real numbers. Denote that $I_t=\{w\in \mathbb{C}:e^{-\alpha t}c_1\le |w|\le e^{-\alpha t}c_2\}$. Let $v_t$ be a measurable section of $E$ on $M$ for any $t>0$, and let $v_0\in\mathbb{C}^r$.
	Assume that
	$$\lim_{t\rightarrow+\infty}\sup_{w\in I_t}{|v_t(w)-v_0|_{I_r}^2}=0.$$
	Then
	$$\limsup_{t\rightarrow+\infty}\int_{I_t}\frac{|v_t(w)|_{h}^2}{|w|^2}d\lambda(w) \le 2\pi|v_0|^2_{h(0)}\log\frac{c_2}{c_1},$$
	where $d\lambda(w)$ is the Lebesgue measure on $\mathbb{C}$.
\end{Lemma}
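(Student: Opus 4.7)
The strategy is to reduce the inequality to the fixed-vector case $v_t \equiv v_0$, and then exploit the plurisubharmonic representation from Lemma \ref{l:2} together with the mass-concentration bound of Lemma \ref{l:3}.

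For the reduction, fix $\epsilon>0$ and use $|v_t|_h^2 \le (1+\epsilon)|v_0|_h^2 + (1+\epsilon^{-1})|v_t-v_0|_h^2$. The hypothesis $h \le s^{-1}I_r$ gives $|v_t-v_0|_h^2 \le s^{-1}\sup_{w\in I_t}|v_t(w)-v_0|_{I_r}^2$, which tends to $0$; combined with the scale-invariant identity $\int_{I_t}|w|^{-2}d\lambda = 2\pi\log(c_2/c_1)$ (seen by the change of variable $u=e^{\alpha t}w$), the error term contributes $o(1)$ to the integral. Letting $\epsilon \to 0$ after proving the fixed-vector estimate then suffices.

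For a fixed $v_0$, the case $v_0 = 0$ is already covered by the previous step, so assume $v_0 \neq 0$. Choosing a basis of $\mathbb{C}^r$ with $v_0$ as first vector, Lemma \ref{l:2} produces bounded plurisubharmonic functions $\varphi_1, \varphi_2$ on $M$ with $|v_0|_h^2 = e^{\varphi_1-\varphi_2}$. The rescaling $u = e^{\alpha t}w$ recasts the integral as $\int_{c_1\le|u|\le c_2} e^{\varphi_1-\varphi_2}(e^{-\alpha t}u)\,|u|^{-2}d\lambda(u)$. For $\varphi_1$, upper semi-continuity at $0$ yields $\varphi_1(e^{-\alpha t}u) \le \varphi_1(0)+\epsilon$ uniformly on $\{|u|\le c_2\}$ once $t$ is large. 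For $-\varphi_2$, which is the delicate term, normalize $\tilde\varphi := \varphi_2 - C < 0$ (possible since $\varphi_2$ is bounded and $\tilde\varphi(0) > -\infty$) and apply Lemma \ref{l:3} to $\tilde\varphi$ with time parameter $\alpha t$ on a ball containing $\{|u| \le c_2\}$. Outside the bad set $S_{\delta,\alpha t}$, whose Lebesgue measure tends to $0$, the inequality $\tilde\varphi(e^{-\alpha t}u) \ge (1+\delta)\tilde\varphi(0)$ translates to $-\varphi_2(e^{-\alpha t}u) \le -\varphi_2(0) + \delta(C-\varphi_2(0))$. The good-set contribution to the integral is therefore bounded by $e^{\varphi_1(0)-\varphi_2(0)+\epsilon+\delta(C-\varphi_2(0))}\cdot 2\pi\log(c_2/c_1)$, while the bad-set contribution is dominated by $s^{-1}c_1^{-2}|v_0|^2\,\mu(S_{\delta,\alpha t}) \to 0$, using the global bound $|v_0|_h^2 \le s^{-1}|v_0|^2$ together with $|u|^{-2}\le c_1^{-2}$. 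Letting $t\to\infty$ and then $\epsilon,\delta\to 0$ yields the desired estimate, since $e^{\varphi_1(0)-\varphi_2(0)} = |v_0|^2_{h(0)}$.

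The main difficulty is controlling $e^{-\varphi_2}$: because $\varphi_2$ is plurisubharmonic, hence upper (not lower) semi-continuous, it can dip far below $\varphi_2(0)$ on thin sets, and no naive pointwise limsup estimate is available. Lemma \ref{l:3} is precisely the tool that confines this pathology to a set of vanishing Lebesgue measure, while the uniform boundedness of the weight $|u|^{-2}$ on the fixed annulus after rescaling makes the bad-set error negligible.
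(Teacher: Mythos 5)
Your proof is correct and follows the same approach as the paper's: the Cauchy--Schwarz-type decomposition $|v_t|_h^2\le(1+\epsilon)|v_0|_h^2+(1+\epsilon^{-1})|v_t-v_0|_h^2$ to reduce to constant $v_0$, the rescaling $u=e^{\alpha t}w$ to a fixed annulus, Lemma \ref{l:2} for the representation $|v_0|_h^2=e^{\varphi_1-\varphi_2}$, upper semi-continuity for $\varphi_1$, and Lemma \ref{l:3} to confine the mass of $e^{-\varphi_2}$ to a set of vanishing measure. Your explicit normalization $\tilde\varphi=\varphi_2-C<0$ before invoking Lemma \ref{l:3} (which requires a negative plurisubharmonic function) and your separate disposal of the case $v_0=0$ (where Lemma \ref{l:2} does not apply, since $0$ has no representation as $e^{\varphi_1-\varphi_2}$) are small refinements that the paper elides but that the argument does need.
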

\begin{proof}
	For any $c>0$, we have
	\begin{equation}
		\label{eq:0626d}
		\begin{split}
					&\int_{I_t}\frac{|v_t(w)|_{h}^2}{|w|^2}d\lambda(w)\\
					\le& (1+c)\int_{I_t}\frac{|v_0|_{h}^2}{|w|^2}d\lambda(w)+\frac{1+c}{c}\int_{I_t}\frac{|v_t(w)-v_0|_{h}^2}{|w|^2}d\lambda(w).
		\end{split}
	\end{equation}
	It follows from $\lim_{t\rightarrow+\infty}\sup_{w\in I_t}{|v_t(w)-v_0|_{I_r}^2}=0$ that
\begin{equation}
	\label{eq:0626e}
	\begin{split}
		&\limsup_{t\rightarrow+\infty}\int_{I_t}\frac{|v_t(w)-v_0|_{h}^2}{|w|^2}d\lambda(w)\\
		\le&\limsup_{t\rightarrow+\infty}\left(\sup_{w\in I_t}{|v_t(w)-v_0|_{h}^2}\right)\int_{I_t}\frac{1}{|w|^2}d\lambda(w)\\
		\le&s^{-1}\limsup_{t\rightarrow+\infty}\left(\sup_{w\in I_t}{|v_t(w)-v_0|_{I_r}^2}\right)\times(2\pi\log\frac{c_2}{c_1})\\
		=&0.
	\end{split}
\end{equation}	
	
Lemma \ref{l:2}	tell us that there exist two bounded plurisubharmonic functions $\varphi_1$ and $\varphi_2$ on $\Delta$ such that $|v_0|_h^2=e^{\varphi_1-\varphi_2}$. As $\varphi_1$ is upper semi-continuous on $\Delta$, we have
\begin{equation}
	\label{eq:0626f}\limsup_{t\rightarrow+\infty}\int_{I_t}\frac{|v_0|_{h}^2}{|w|^2}d\lambda(w)\le e^{\varphi_1(0)}\limsup_{t\rightarrow+\infty}\int_{I_t}\frac{e^{-\varphi_2}}{|w|^2}d\lambda(w).
\end{equation}
Let $\delta>0$. Denote that $$S_{\delta,t}=\{|z|<c_2+1 :\varphi_2(e^{-\alpha t}z)<(1+\delta)\varphi_2(0)\},$$
and Lemma \ref{l:3} shows that $\mu(S_{\delta,t})=0$.
Denote that $N=\sup_{\Delta}e^{-\varphi_2}<+\infty$, then
\begin{equation}
	\label{eq:0626g}\begin{split}
		&\limsup_{t\rightarrow+\infty}\int_{I_t}\frac{e^{-\varphi_2}}{|w|^2}d\lambda(w)\\
		=&\limsup_{t\rightarrow+\infty}\int_{\{z\in\mathbb{C}:c_1\le|z|\le c_2\}}\frac{e^{-\varphi_2(e^{-\alpha t}z)}}{|z|^2}d\lambda(z)\\
		\le&\limsup_{t\rightarrow+\infty}\mu(S_{\delta,t})\frac{N}{c_1^2}+2\pi e^{-(1+\delta)\varphi_2(0)}\log\frac{c_2}{c_1}\\
		=&2\pi e^{-(1+\delta)\varphi_2(0)}\log\frac{c_2}{c_1}.
	\end{split}
\end{equation}
 Taking $\delta\rightarrow0$, inequality \eqref{eq:0626g} becomes that
 $$\limsup_{t\rightarrow+\infty}\int_{I_t}\frac{e^{-\varphi_2}}{|w|^2}d\lambda(w)\le 2\pi e^{-\varphi_2(0)}\log\frac{c_2}{c_1}.$$
Combining ineqaulity \eqref{eq:0626d}, inequality \eqref{eq:0626e} and inequality \eqref{eq:0626f}, we get
\begin{displaymath}
	\begin{split}
		&\limsup_{t\rightarrow+\infty}\int_{I_t}\frac{|v_t(w)|_{h}^2}{|w|^2}d\lambda(w)\\
					\le& (1+c)\limsup_{t\rightarrow+\infty}\int_{I_t}\frac{|v_0|_{h}^2}{|w|^2}d\lambda(w)+\frac{1+c}{c}\limsup_{t\rightarrow+\infty}\int_{I_t}\frac{|v_t(w)-v_0|_{h}^2}{|w|^2}d\lambda(w)\\
					\le&(1+c) 2\pi e^{\varphi_1(0)-\varphi_2(0)}\log\frac{c_2}{c_1}.
	\end{split}
\end{displaymath}
 	Note that $e^{\varphi_1(0)-\varphi_2(0)}=|v_0|^2_{h(0)}$, taking $c\rightarrow0$, we obtain $\limsup_{t\rightarrow+\infty}\int_{I_t}\frac{|v_t(w)|_{h}^2}{|w|^2}d\lambda(w) \le 2\pi|v_0|^2_{h(0)}\log\frac{c_2}{c_1}$.
\end{proof}

We recall the following desingularization theorem due to Hironaka.
\begin{Theorem}[\cite{Hironaka}, see also \cite{BM-1991}]\label{thm:desing}
	Let $X$ be a complex manifold, and $M$ be an analytic sub-variety in $X$.  Then there is a local finite sequence of blow-ups $\mu_j:X_{j+1}\rightarrow X_j$ $(X_1:=X,j=1,2,...)$ with smooth centers $S_j$ such that:
	
	$(1)$ Each component of $S_j$ lies either in $(M_j)_{\rm{sing}}$ or in $M_j\cap E_j$, where $M_1:=M$, $M_{j+1}$ denotes the strict transform of $M_j$ by $\mu_j$, $(M_j)_{\rm{sing}}$ denotes the singular set of $M_j$, and $E_{j+1}$ denotes the exceptional divisor $\mu^{-1}_j(S_j\cup E_j)$;
	
	$(2)$ Let $M'$ and $E'$ denote the final strict transform of $M$ and the exceptional divisor respectively. Then:
	
	\quad$(a)$ The underlying point-set $|M'|$ is smooth;
	
	\quad$(b)$ $|M'|$and $E'$ simultaneously have only normal crossings.
\end{Theorem}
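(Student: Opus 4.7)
The plan is to treat this statement as a recalled classical result of Hironaka \cite{Hironaka} and to organize the proof along the canonical-algorithm strategy developed by Bierstone--Milman \cite{BM-1991}, which is the cleanest route to both conclusions $(1)$ and $(2)$ simultaneously. Since the theorem is used in this paper only as a black-box tool (to justify Remark \ref{r:measure} and to reduce $L^2$ calculations to normal-crossing situations), the goal of my sketch is to indicate the structure of the argument rather than reproduce the full technology.

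First I would attach to each point $x\in M$ a local invariant $\mathrm{inv}_M(x)$, defined recursively in the dimension of the ambient manifold: the first entry is the maximum multiplicity of the ideal $\mathcal{I}_M$ at $x$; the remaining entries are obtained by restricting the so-called coefficient ideal to a \emph{hypersurface of maximal contact} through $x$ and iterating. The two essential properties to establish are that $\mathrm{inv}_M$ is upper semi-continuous with values in a well-ordered set, and that its maximum locus is automatically smooth. The center $S_1$ of the first blow-up is then taken to be this maximum locus. The core technical lemma is that under the blow-up $\mu_1:X_2\to X_1$ with center $S_1$, the maximum value of the invariant on the strict transform $M_2$ over any point of $S_1$ strictly decreases, while being unchanged elsewhere. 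Iterating, one gets a finite descent in the well-ordered set of values, so after finitely many steps the strict transform becomes smooth, giving $(2)(a)$.

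To also control the exceptional divisor and obtain $(1)$ and $(2)(b)$, I would enlarge the invariant by recording, at each stage, the history of the previous exceptional components $E_j$ meeting $x$. This forces the centers $S_j$ to lie inside $M_j\cap E_j$ whenever the pure multiplicity data does not already descend, and it forces $S_j$ to have normal crossings with $E_j$. After termination, the total exceptional divisor $E'$ and the smooth strict transform $|M'|$ therefore meet with only simple normal crossings, giving $(2)(b)$. The local-to-global step uses the fact that $\mathrm{inv}_M$ is intrinsic, so the centers chosen locally glue across coordinate charts and yield a well-defined global sequence of blow-ups.

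The hard part will be the construction and functoriality of the hypersurface of maximal contact, which genuinely requires characteristic zero (through Giraud's lemma on the derivative ideal) and is the heart of the induction on dimension; equally delicate is the proof that the invariant actually decreases under permissible blow-ups rather than merely staying bounded. Since both of these are established in full generality in \cite{Hironaka} and \cite{BM-1991}, and since Theorem \ref{thm:desing} is imported into this paper only to invoke normal-crossing models in the proofs of the $L^2$ extension statements, I would quote their argument rather than reproduce it here.
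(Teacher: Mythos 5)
Your proposal is consistent with the paper, which states Theorem \ref{thm:desing} without proof, citing \cite{Hironaka} and \cite{BM-1991} as sources and using it only as a black box to construct the modification $\mu$ in Proposition \ref{p:inte} and in Step 4 of the main proof. Your high-level sketch of the Bierstone--Milman canonical algorithm (the recursive invariant $\mathrm{inv}_M$ via hypersurfaces of maximal contact, upper semi-continuity with values in a well-ordered set, smoothness of the maximal locus, strict decrease under permissible blow-ups, and the enlargement of the invariant by exceptional-divisor history to force $(1)$ and $(2)(b)$) is an accurate description of the standard proof, and your decision to quote rather than reproduce the argument matches exactly what the paper does.

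Two small points worth tightening if you were to expand this into anything more than a citation. First, the phrase \emph{local finite sequence of blow-ups} in the statement is doing real work for the non-compact case: the sequence is only required to be locally finite, and the local-to-global gluing you mention relies not just on $\mathrm{inv}_M$ being intrinsic but on its compatibility with étale (or open) base change, which is the functoriality property Bierstone--Milman emphasize. Second, condition $(1)$ as stated allows the center $S_j$ to lie in $(M_j)_{\mathrm{sing}}$ \emph{or} in $M_j\cap E_j$; in the canonical algorithm the center is the maximal locus of the enlarged invariant, and it takes a short argument to check this locus always falls in one of those two sets (e.g.\ once $|M_j|$ is smooth one continues blowing up inside $M_j\cap E_j$ purely to achieve the normal-crossing condition $(2)(b)$). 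Neither of these affects the correctness of your sketch; they are just the points a careful reader of \cite{BM-1991} would want flagged.
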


The $(b)$ in the above theorem means that, locally, there is a coordinate system in which $E'$ is a union of coordinate hyperplanes and $|M'|$ is a coordinate subspace.

We present the following proposition which deals with a limiting problem related to singular metrics on vector bundles.

\begin{Proposition}
	\label{p:inte} Let $\psi$ be a quasi-plurisubharmonic function on an $n$-dimensional complex manifold $M$ with neat analytic singularities. Assume that the singularities of $\psi$ are log canonical along the zero variety $Y=V(I(\psi))$.  Denote $Y^0=Y_{\rm{reg}}$ the set of regular points of $Y$. Let $E$ be a holomorphic vector bundle over $M$ with rank $r$. Let $h$ be  a measurable metric on $E$ satisfying that $h$ is everywhere positive definite hermitian form on $E$, and assume that $h$ is singular Griffiths semi-positive.
	
	Let $V\Subset \Omega$ be two local coordinate balls in $M$ such that $E|_{\Omega}=\Omega\times \mathbb{C}^r$.  Let $v$ be a nonnegative continuous function on $\Omega$ with $\rm{supp}$$v\in V$. Let $C$ and  $\beta$ be positive numbers, and let $\beta_1$ be a small enough positive number (depending only on $M$, $Y$ and $\psi$). Let $d\lambda$ be the Lebesgue measure on $\Omega$. Assume that $f=(f_1,\ldots,f_r)\in \mathcal{O}(\Omega\cap Y)^{r}$ satisfying that
	\begin{equation}
		\label{eq:0627a}
		\int_{\Omega\cap Y^0}|f|^2_{h}d\lambda[\psi]<+\infty.
	\end{equation}
For any $t>0$,	let $f_t=(f_{t,1},\ldots,f_{t,r})\in\mathcal{O}(\Omega)^r$ and $f_t|_{\Omega\cap Y}=f$, which satisfies
\begin{equation}
	\label{eq:0627b}
	\sup_{\Omega}\sum_{1\le j\le r}|f_{t,j}|^2\le Ce^{\beta_1t}
\end{equation}
and
\begin{equation}
	\label{eq:0627c}
	\int_{\Omega\cap\{\psi<-t\}}|f_t|^2_{h(\rm{det}\,h)^{\beta}}d\lambda\le Ce^{-t}.
\end{equation}
	Then we have
	\begin{equation}
		\label{eq:0627d}
		\limsup_{t\rightarrow+\infty}\int_{V\cap\{-t-1<\psi<-t\}}v|f_t|^2_he^{-\psi}d\lambda\le\int_{V\cap Y^0}v|f|^2_hd\lambda[\psi].
	\end{equation}
\end{Proposition}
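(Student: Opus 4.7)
The plan is to reduce, via Hironaka's desingularization, to a simple-normal-crossings model in which $\pi^*\psi$ has an explicit polar divisor, then to apply the one-dimensional estimate of Lemma~\ref{l:4} fiberwise along the polar direction and to integrate transversally by Fubini's theorem.

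First I would apply Theorem~\ref{thm:desing} to the analytic set defined by the local generators $\{g_j\}$ in the expression $\psi=c\log\sum|g_j|^2+v$, producing a proper modification $\pi\colon\tilde M\to M$ such that $\pi^{-1}(\{\psi=-\infty\})$ and $\pi^{-1}(Y)$ are simple normal crossings. In suitable local coordinates $(w,z')=(w,z_2,\ldots,z_n)$ on a chart $U\subset\tilde M$, one has
$$\pi^*\psi=\log|w|^2+\sum_{j\ge 2}a_j\log|z_j|^2+u,\qquad |J_\pi|^2\,d\tilde\lambda=|w|^{2b_1}\prod_{j\ge 2}|z_j|^{2b_j}e^\sigma\,d\tilde\lambda,$$
with $u,\sigma$ smooth, and the log-canonical hypothesis forces $a_j\le 1$ and $b_1=0$ along the strict transform of $Y$. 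After refining the cover I may assume that the only component of $\pi^{-1}(Y)\cap U$ is $\{w=0\}$, so that $e^{-\pi^*\psi}|J_\pi|^2=|w|^{-2}\rho(w,z')$ for a smooth function $\rho$ that is strictly positive on compact subsets of $U$. A partition of unity on $\pi^{-1}(V)$ then reduces the global estimate \eqref{eq:0627d} to a local estimate on each such chart.

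On such a chart, the shell $\{-t-1<\pi^*\psi<-t\}\cap U$ becomes, up to the smooth perturbation $u$ and the uniformly bounded contribution of the $z_j$ with $a_j<1$, an annulus $I_t(z')=\{c_1(z',t)e^{-t/2}\le|w|\le c_2(z',t)e^{-t/2}\}$ fibered over $z'$, whose radii converge uniformly to positive smooth functions $c_i(z')$ as $t\to+\infty$. For each fixed $z'$ I apply Lemma~\ref{l:4} to $v_t(w):=\pi^*f_t(w,z')$ with $v_0:=\pi^*f(0,z')$: the required uniform convergence $\sup_{w\in I_t}|v_t(w)-v_0|_{I_r}^2\to 0$ follows from $\pi^*f_t|_{w=0}=\pi^*f(0,\cdot)$ together with the Cauchy estimate and the growth bound \eqref{eq:0627b}, which yield $|v_t(w)-v_0|\le C|w|e^{\beta_1 t/2}\le Ce^{(\beta_1-1)t/2}\to 0$, provided $\beta_1$ is smaller than the reciprocal of the largest multiplicity appearing in the desingularization; this is precisely where the hypothesis ``$\beta_1$ small enough (depending only on $M,Y,\psi$)'' intervenes. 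Fubini's theorem applied to the pointwise conclusion of Lemma~\ref{l:4}, combined with a pushforward through $\pi$ and Definition~\ref{def:oshawa measure}, then produces the desired bound \eqref{eq:0627d} with right-hand side $\int_{V\cap Y^0}v|f|^2_h\,d\lambda[\psi]$.

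The main technical obstacle is that Lemma~\ref{l:4} requires $\pi^*h$ to be pinched between two positive multiples of $I_r$, while the hypothesis only provides singular Griffiths semi-positivity. To remove this restriction I would exploit Lemma~\ref{l:2} to write $|v|^2_{\pi^*h}=e^{\varphi_1-\varphi_2}$ locally with $\varphi_1,\varphi_2$ plurisubharmonic, then approximate $\pi^*h$ on relatively compact subsets of $U$ by bounded Griffiths semi-positive metrics (using truncations $\max(\varphi_i,-k)$ or the smoothing in Proposition~\ref{p:det}), apply Lemma~\ref{l:4} in the bounded case, and pass to the limit via monotone convergence on the upper bound $|\pi^*f(0,z')|^2_{\pi^*h}$ together with Fatou's lemma applied to the limsup in $t$. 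The remaining verifications — independence from the approximation, compatibility with the partition of unity, and the identification of the limiting measure with $d\lambda[\psi]$ — are routine and use Remark~\ref{r:measure}.
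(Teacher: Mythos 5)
Your plan shares the paper's skeleton: resolve $\psi$ via Hironaka, fiber the shell integral over the divisor $\{w=0\}$, apply Lemma~\ref{l:4} in the normal direction, and integrate transversally by Fubini. The growth estimate using \eqref{eq:0627b}, the Cauchy estimate, and the constraint ``$\beta_1$ small depending on the multiplicities'' are all consistent with what the paper does. But there is a genuine gap at the step you yourself flag as ``the main technical obstacle,'' namely the passage from bounded approximating metrics to the singular metric $h$.

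The difficulty is a non-commuting interchange of limits. Whatever monotone approximation $h_v\nearrow h$ you use (e.g.\ smoothing $h^*$ as in Proposition~\ref{p:det} and dualizing), Lemma~\ref{l:4} gives you, for each fixed $v$,
\begin{equation*}
\limsup_{t\rightarrow+\infty}\int_{V\cap\{-t-1<\psi<-t\}}v|f_t|^2_{h_v}e^{-\psi}d\lambda\le\int_{V\cap Y^0}v|f|^2_{h_v}d\lambda[\psi]\le\int_{V\cap Y^0}v|f|^2_{h}d\lambda[\psi].
\end{equation*}
To deduce the same bound for $h$, you need
\begin{equation*}
\limsup_{t\rightarrow+\infty}\Big(\sup_v\int\cdots|f_t|^2_{h_v}\cdots\Big)\;\le\;\sup_v\Big(\limsup_{t\rightarrow+\infty}\int\cdots|f_t|^2_{h_v}\cdots\Big),
\end{equation*}
but the generic inequality between these two iterated limits runs in the opposite direction, and neither monotone convergence (which controls the sup in $v$ at fixed $t$) nor Fatou (which goes the wrong way for $\limsup$ without domination, and in any case applies to pointwise limits, not to a family indexed by $t$) resolves it. Your proposal gives no mechanism that makes the error $\int |f_t|^2_{h-h_v}e^{-\psi}$ small uniformly in $t$, which is exactly what is needed.

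This is where hypothesis \eqref{eq:0627c} must enter, and it is a telling sign that your argument never uses it. The paper's device is to take $h_j=(h^*+e^{-j}I_r)^*$, which is singular Griffiths semi-positive, bounded, increases to $h$, and, crucially, satisfies via Lemma~\ref{l:1} the pointwise bound $h-h_j\le e^{-\beta j}(\det h)^{\beta}\,h$. Combined with $e^{-\psi}\le e^{t+1}$ on the shell $\{-t-1<\psi<-t\}$ and with \eqref{eq:0627c}, this yields $\int_{\text{shell}}v|f_t|^2_{h-h_j}e^{-\psi}d\lambda\le C e^{-\beta j+1}\sup_V v$, a bound \emph{uniform in $t$}. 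One then fixes $j$ large, applies the Lemma~\ref{l:4}/Fubini argument to the bounded metric $h_j$, and sends $j\to\infty$ at the end. Without this (or some equivalent uniform tail estimate exploiting $(\det h)^\beta$), your passage to the limit does not go through. A secondary, minor issue: your normalization $\pi^*\psi=\log|w|^2+\cdots$ hides the multiplicity $ca_{p_0}$ that the paper keeps explicit (it is not $1$ in general and cannot be scaled away by a holomorphic change of coordinate), and you silently assume the chart meets the strict transform of $Y$, omitting the paper's Case~$(B)$ where $\widetilde Y\cap W=\emptyset$ and the shell contribution tends to $0$.
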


\begin{proof} We prove Proposition \ref{p:inte} in three steps.

\

\emph{Step 1. Using Hironaka's desingularization theorem to deal with the measure $d\lambda[\psi]$.}

\

The idea of using Hironaka's desingularization theorem to deal with the measure $d\lambda[\psi]$ comes from \cite{D2016} (see also \cite{ZhouZhu-jdg}).
Firstly, we use Theorem \ref{thm:desing} on $M$ to resolve the singularities of $Y$, and denote the corresponding proper modification by $\mu_1$. Let $Y'$ denote the strict transform of $Y$. Secondly, we make a blow-up along $|Y'|$ denoted by $\mu_2$, where $|Y'|$ is the underlying point-set of $Y'$. Let $\sum$ denote the strict transform of $\{\psi=-\infty\}$ by $\mu_1\circ\mu_2$. Thirdly, we use  Theorem \ref{thm:desing}  to resolve the singularities of $\sum$, and denote the corresponding proper modification by $\mu_3$. Let $\sum'$ denote the strict transform of $\sum$ by $\mu_3$. Finally, we make a blow-up along $|\sum'|$ denoted by $\mu_4$. Thus, we get a proper holomorphic map $\mu(:=\mu_1\circ\mu_2\circ\mu_3\circ\mu_4):\widetilde M\rightarrow M$, which is locally a finite composition of blow-ups with smooth centers. Let $\widetilde Y$ be the strict transform of $\mu_2^{-1}(|Y'|)$ by $\mu_3\circ\mu_4$, and we have $\widetilde Y$ and the divisor $\overline{\mu^{-1}(\{\psi=-\infty\})\backslash\widetilde Y}$ simultaneously have only normal crossings on $\widetilde M$.

For any $\tilde z\in\overline{\mu^{-1}(V)}\cap\mu^{-1}(\{\psi=-\infty\})$, let $(W;w_1,\ldots,w_n)$ be a coordinate ball centered at $\tilde z$ satisfying that $W\Subset\mu^{-1}(V)$, $w^b=0$ is the zero divisor of the Jacobian $J_{\mu}$ (of $\mu$) and
$$\psi\circ\mu(w)=c\log|w^a|^2+\tilde u(w)$$
on $W$,
where $\tilde u\in\mathcal{C}^{\infty}(\overline W)$, $w^a:=\prod_{p=1}^nw_p^{a_p}$ and $w^b:=\prod_{p=1}^nw_p^{b_p}$. Then the multiplier ideal sheaf $\mathcal{I}(\psi)$ can be given as (see \cite{D2016,DemaillyAG})
$$\mathcal{I}(\psi)=\mu_*\mathcal{O}_{\widetilde M}\left(-\sum_{p=1}^n\lfloor ca_p-b_p\rfloor_+D_p\right),$$
where $D_p:=\{w_p=0\}$ and $\lfloor ca_p-b_p\rfloor_+:=\sup\{m\in\mathbb{Z}_{\ge0}:m\le ca_p-b_p\}$. Denote that $\xi:=\frac{|J_{\mu}|^2}{|w^b|^2}$ and  $\kappa:=\{p:ca_p-b_p=1\}$.

By Definition \ref{def:oshawa measure} and Remark \ref{r:measure}, the measure $d\lambda[\psi]$ can be defined as
\begin{equation}
	\label{eq:220627a}
	g\mapsto \limsup_{t\rightarrow+\infty}\int_{\{-t-1<\psi\circ\mu <-t\}}\frac{(\hat g\circ\mu)\xi e^{-\tilde u}}{|w^{ca-b}|^2}d\lambda(w),
\end{equation}
where $g\in\mathcal{C}_c(Y^0)$  and $\hat g\in\mathcal{C}_c(M)$ satisfy $\hat g|_{Y^0}=g$ and $(supp\,\hat{g})\cap Y=Y^0$ (One would take into account a partition of unity on various coordinate charts covering the fibers of $\mu$ (see \cite{D2016}), but we avoid this technicality for the simplicity of notation).
By the construction of $\mu$, we get that one of the following cases holds:

\noindent$(A)$ There exists $p_0$ such that $\widetilde Y\cap W=D_{p_0}$ (choosing $W$ small enough);

\noindent$(B)$ $\widetilde Y\cap W=\emptyset$.

\noindent As the singularities of $\psi$ are log canonical along $Y=V(\mathcal{I}(\psi))$, we have $ca_{p_0}-b_{p_0}=1$ and $ca_p-b_p\le1$ for $p\not=p_0$ in Case $(A)$, and $ca_p-b_p\le1$ for any $p$ in Case $(B)$.

 In Case $(A)$, note that
 $$\int_{\{-t-1<\psi\circ\mu <-t\}}\frac{(\hat g\circ\mu)\xi e^{-\tilde u}}{|w^{ca-b}|^2}d\lambda(w)=\int_{\{-t-1<\psi\circ\mu <-t\}}\frac{(\hat g\circ\mu)\xi e^{-\tilde u}}{|(w')^{ca'-b'}|^2|w_{p_0}|^2}d\lambda(w)$$
and
 $$\{-t-1<\psi\circ\mu<-t\}=\left\{e^{-t-1-\tilde u(w)}|(w')^{a'}|^{-2c}<|w_{p_0}|^{2ca_{p_0}}<e^{-t-\tilde u(w)}|(w')^{a'}|^{-2c}\right\},$$
 where $w=(w',w_{p_0})\in\mathbb{C}^{n-1}\times\mathbb{C}$, $a=(a',a_{p_0})$ and $b=(b',b_{p_0})$. Thus, the mapping \eqref{eq:220627a} becomes
 \begin{equation}
 	\label{eq:220627b}
 	g\mapsto \frac{\pi}{ca_{p_0}} \int_{w'\in D_{p_0}}\frac{(g\circ\mu)\xi e^{-\tilde u}}{|(w')^{ca'-b'}|^2}d\lambda(w'),
 \end{equation}
  where $d\lambda(w)=d\lambda(w')d\lambda(w_{p_0})$.  If $p_1\in\kappa\backslash\{p_0\}$,  by the construction of $\mu$, we obtain that the images of $D_{p_1}$ and $D_{p_1}\cap D_{p_0}$ coincide under $\mu$. Proposition \ref{p:det} shows that $\inf _K\frac{|f|_h^2}{|f|_{\tilde h}^2}>0$ for any compact subset $K$ of $Y$ and any smooth metric $\tilde h$ on $E|_{\Omega}$. It follows from inequality \eqref{eq:0627a} and mapping \eqref{eq:220627b} that $f\circ\mu|_{D_{p_1}\cap D_{p_0}}=0$, which implies that
  \begin{equation}
  	\label{eq:220627c}f\circ\mu|_{D_{p_1}}=0
  \end{equation}
  for any $p_1\in\kappa\backslash\{p_0\}$.

In Case $(B)$, we get that
\begin{equation}
	\label{eq:0628e}f\circ\mu|_{D_p}=0
\end{equation}
 holds for any $p\in\kappa$ by similar discussion.

\

\emph{Step 2. approximations of $h$ and uniform estimates for $f_t\circ\mu$.}

\

 Denote that $h_j=(h^*+e^{-j}I_r)^*$ is a measurable metric on $E|_{\Omega}$, where $j\in\mathbb{Z}_{\ge1}$. Following from Definition \ref{singular gri}, we know that $h_j$ is singular Griffiths semi-positive on $E|_{\Omega}$. Note that $h_j(z)$ is increasingly convergent to $h(z)$ with respect to $j$ for any $z\in \Omega$. Without loss of generality, we assume that all eigenvalues of $h_j(z)$ are greater than $1$ for any $z\in \Omega$.

Following from inequality \eqref{eq:0627c} and Lemma \ref{l:1}, we have
\begin{equation}\nonumber
\begin{split}
		&\int_{\{-t-1<\psi<-t\}\cap V}v|f_t|^2_he^{-\psi}d\lambda\\
		=&\int_{\{-t-1<\psi<-t\}\cap V}v|f_t|^2_{h_j}e^{-\psi}d\lambda+\int_{\{-t-1<\psi<-t\}\cap V}v|f_t|^2_{h-h_j}e^{-\psi}d\lambda\\
		\leq&\int_{\{-t-1<\psi<-t\}\cap V}v|f_t|^2_{h_j}e^{-\psi}d\lambda+e^{-\beta j}\int_{\{-t-1<\psi<-t\}\cap V}v|f_t|^2_{h(\rm{det}\,h)^{\beta}}e^{-\psi}d\lambda\\
		\le&\int_{\{-t-1<\psi<-t\}\cap V}v|f_t|^2_{h_j}e^{-\psi}d\lambda+e^{-\beta j+t+1}\int_{\{\psi<-t\}\cap V}v|f_t|^2_{h(\rm{det}\,h)^{\beta}}d\lambda\\
		\le&\int_{\{-t-1<\psi<-t\}\cap V}v|f_t|^2_{h_j}e^{-\psi}d\lambda+Ce^{-\beta j+1}\sup_{V}v.
	\end{split}
\end{equation}
 For any $b>0$, there exists $j_b$ such that for any $j>j_b$,
\begin{equation}
	\label{eq:220627f}\begin{split}
	\int_{\{-t-1<\psi<-t\}\cap V}v|f_t|^2_he^{-\psi}d\lambda
		\le\int_{\{-t-1<\psi<-t\}\cap V}v|f_t|^2_{h_j}e^{-\psi}d\lambda+b.
	\end{split}
\end{equation}

In Case $(A)$, it follows from inequality \eqref{eq:0627b} and equaltity \eqref{eq:220627c} that
\begin{equation}
	\label{eq:220627d}\begin{split}
		|f_t\circ\mu(w',w_{p_0})- f_t\circ\mu(w',0)|^2&=\sum_{1\le k\le r}|f_{t,k}\circ\mu(w',w_{p_0})- f_{t,k}\circ\mu(w',0)|^2\\
		&\le C_1\prod_{p\in\kappa}|w_p|^2\sup_{|\gamma|\le|\kappa|}\sup_{U}|\partial^{\gamma}f_{t,k}|^2\\
		&\le C_2\prod_{p\in\kappa}|w_p|^2\sup_{\Omega}|f_t|^2\\
		&\le CC_2e^{\beta_1t}\prod_{p\in\kappa}|w_p|^2
	\end{split}
\end{equation}
and
\begin{equation}
	\label{eq:220627e}\begin{split}
		|f_t\circ\mu(w',0)|^2&=|f\circ\mu(w',0)|^2\\
		&=\sum_{1\le k\le r}|f_{k}\circ\mu(w',0)|^2\\
		&\le C_3\prod_{p\in\kappa\backslash\{p_0\}}|w_p|^2
	\end{split}
\end{equation}
for any  $w=(w',w_{p_0})\in W$, where $C_1$, $C_2$ and $C_3$ are positive constants independent of $t$.

In Case $(B)$, if $\kappa=\emptyset$, inequality \eqref{eq:0627b} implies that
\begin{equation}
	\label{eq:220627g}|f_t\circ\mu(w)|^2\le Ce^{\beta_1t}
\end{equation}
for any $w\in W$. If $\kappa\not=\emptyset$, it follows from inequality \eqref{eq:0627b} and equality \eqref{eq:0628e} that
\begin{equation}
	\label{eq:220627h}|f_t\circ\mu(w)|^2\le C_4e^{\beta_1t}\prod_{p\in\kappa}|w_p|^2
\end{equation}
for any $w\in W$.

\

\emph{Step 3. Completing the proof.}

\

Following the notations in Step 1, denote that
$$I_j:=\limsup_{t\rightarrow+\infty}\int_{W\cap\{-t-1<\psi\circ\mu<-t\}}\frac{(v\circ\mu)|f_t\circ\mu|^2_{h_j\circ\mu}\xi e^{-\tilde u}}{|w^{ca-b}|^2}d\lambda,$$
where $\xi=\frac{|J_{\mu}|^2}{|w^b|^2}$.
Following from inequality \eqref{eq:220627f} and the mapping \eqref{eq:220627b}, it suffices to prove that
\begin{equation}
	\nonumber
	I_j\le\frac{\pi}{ca_{p_0}} \int_{W\cap D_{p_0}}\frac{(v\circ\mu)|f_t\circ\mu|_{h_j\circ\mu}\xi e^{-\tilde u}}{|(w')^{ca'-b'}|^2}d\lambda(w')
\end{equation}
in Case $(A)$ and $I_j=0$ in Case $(B)$ for large enough $j$.

In Case $(A)$, for any $t$ and $w'\in (D_{p_0}\cap W)\backslash\cup_{p\not=p_0}D_p$, denote that
$$\Phi_{t,j}(w'):=\int_{W_{t,w'}}\frac{(v\circ\mu)|f_t\circ\mu|^2_{h_j\circ\mu}\xi e^{-\tilde u}}{|(w')^{ca'-b'}|^2}\frac{1}{|w_{p_0}|^2} d\lambda(w_{p_0})$$
and
$$\Phi_j(w'):=\frac{\pi}{ca_{p_0}} \frac{\left((v\circ\mu)|f_t\circ\mu|_{h_j\circ\mu}\xi e^{-\tilde u}\right)(w',0)}{|(w')^{ca'-b'}|^2},$$
where
\begin{displaymath}
	\begin{split}
		W_{t,w'}&=\{w_{p_0}\in\mathbb{C}:(w',w_{p_0})\in W\,\&\,-t-1<\psi\circ\mu(w',w_{p_0})<-t \}\\
		&=\left\{w_{p_0}\in\mathbb{C}:(w',w_{p_0})\in W\,\&\, \frac{e^{-t-1-\tilde u(w',w_{p_0})}}{|(w')^{a'}|^{2c}}<|w_{p_0}|^{2ca_{p_0}}<\frac{e^{-t-\tilde u(w',w_{p_0})}}{|(w')^{a'}|^{2c}}\right\}.
	\end{split}
\end{displaymath}
It follows from inequality \eqref{eq:220627d} that
\begin{displaymath}
	\begin{split}
		\sup_{w_{p_0}\in W_{t,w'}}|f_t\circ\mu(w',w_{p_0})-f_t\circ\mu(w',0)|^2&\le CC_2e^{\beta_1t}\prod_{p\in\kappa}|w_p|^2\\
		&\le C_5e^{\left(\beta_1-\frac{1}{ca_{p_0}}\right)t},
	\end{split}
\end{displaymath}
	where $w'\in (D_{p_0}\cap W)\backslash\cup_{p\not=p_0}D_p$ and $C_5$ is a positive constant independent of $t$. When $\beta_1<\frac{1}{ca_{p_0}}$, by Lemma \ref{l:4}, we obtain that
	\begin{equation}
		\label{eq:0628c}\limsup_{t\rightarrow+\infty}\Phi_{t,j}(w')\le \Phi_j(w')
	\end{equation}
	holds for any $w'\in (D_{p_0}\cap W)\backslash\cup_{p\not=p_0}D_p.$
	It follows from inequality \eqref{eq:220627d} and inequality \eqref{eq:220627e} that
	\begin{equation}
		\label{eq:0628d}\begin{split}
			\Phi_{t,j}(w')\le& C_6\int_{W_{t,w'}}\frac{|f_t\circ\mu|^2}{|(w')^{ca'-b'}|^2}\frac{1}{|w_{p_0}|^2} d\lambda(w_{p_0})\\
			\le& 2C_6\int_{W_{t,w'}}\frac{|f_t\circ\mu(w',w_{p_0})-f_t\circ\mu(w',0)|^2}{|w^{ca-b}|^2}d\lambda(w_{p_0})\\
			&+2C_6\int_{W_{t,w'}}\frac{|f_t\circ\mu(w',0)|^2}{|w^{ca-b}|^2} d\lambda(w_{p_0})\\
			\le& C_7\int_{W_{t,w'}}\frac{e^{\beta_1t}\prod_{p\in\kappa}|w_p|^2}{|w^{ca-b}|^2} d\lambda(w_{p_0})+C_7\int_{W_{t,w'}}\frac{\prod_{p\in\kappa\backslash\{p_0\}}|w_p|^2}{|w^{ca-b}|^2} d\lambda(w_{p_0})\\
			\le& C_8\int_{W_{t,w'}}\frac{\prod_{p\in\kappa}|w_p|^2}{|w^{(1+\beta_1)ca-b}|^2} d\lambda(w_{p_0})+C_7\int_{W_{t,w'}}\frac{\prod_{p\in\kappa\backslash\{p_0\}}|w_p|^2}{|w^{ca-b}|^2} d\lambda(w_{p_0}),
		\end{split}
	\end{equation}
	where $C_6$, $C_7$ and $C_8$ are positive constants independent of $t$.
	Thus, $\Phi_{t,j}(w')$ is dominated by a function of $w'$ which is independent of $t$ and belongs to $L^1(W\cap D_{p_0})$ when
	$$\beta_1<\min_{\{p:a_p\not=0\}}\frac{1-(ca_p-b_p)+\lfloor ca_p-b_p\rfloor_+}{ca_p}.$$
	Note that
\begin{equation}
\nonumber
	I_j=\limsup_{t\rightarrow+\infty}\int_{W\cap D_{p_0}}\Phi_{t,j}(w')d\lambda(w').
\end{equation}
It follows from \eqref{eq:0628c} and Fatou's Lemma that
\begin{displaymath}
	\begin{split}
		I_j&\le \int_{W\cap D_{p_0}}\limsup_{t\rightarrow+\infty}\Phi_{t,j}(w')d\lambda(w')\\
		&\leq \int_{W\cap D_{p_0}}\Phi_{j}(w')d\lambda(w')\\
		&=\frac{\pi}{ca_{p_0}} \int_{W\cap D_{p_0}}\frac{(v\circ\mu)|f_t\circ\mu|_{h_j\circ\mu}\xi e^{-\tilde u}}{|(w')^{ca'-b'}|^2}d\lambda(w').
	\end{split}
\end{displaymath}

In Case $(B)$, if $\kappa=\emptyset$, it follows from inequality \eqref{eq:220627g} that
\begin{displaymath}
	\begin{split}
		I_j=&\limsup_{t\rightarrow+\infty}\int_{W\cap\{-t-1<\psi\circ\mu<-t\}}\frac{(v\circ\mu)|f_t\circ\mu|^2_{h_j\circ\mu}\xi e^{-\tilde u}}{|w^{ca-b}|^2}d\lambda\\
		\le& C_9\limsup_{t\rightarrow+\infty}\int_{W\cap\{-t-1<\psi\circ\mu<-t\}}\frac{1}{|w^{(1+\beta_1)ca-b}|^2}d\lambda\\
		=&0
	\end{split}
\end{displaymath}
when $\beta_1<\min_{\{p:a_p\not=0\}}\frac{1-(ca_p-b_p)+\lfloor ca_p-b_p\rfloor_+}{ca_p},$
where $C_9$ is a positive constent independent of $t$. If $\kappa\not=\emptyset$, it follows from
inequality \eqref{eq:220627h} that
\begin{displaymath}
	\begin{split}
		I_j=&\limsup_{t\rightarrow+\infty}\int_{W\cap\{-t-1<\psi\circ\mu<-t\}}\frac{(v\circ\mu)|f_t\circ\mu|^2_{h_j\circ\mu}\xi e^{-\tilde u}}{|w^{ca-b}|^2}d\lambda\\
		\le& C_{10}\limsup_{t\rightarrow+\infty}\int_{W\cap\{-t-1<\psi\circ\mu<-t\}}\frac{\prod_{p\in\kappa}|w_p|^2}{|w^{(1+\beta_1)ca-b}|^2}d\lambda\\
		=&0
	\end{split}
\end{displaymath}
when $\beta_1<\min_{\{p:a_p\not=0\}}\frac{1-(ca_p-b_p)+\lfloor ca_p-b_p\rfloor_+}{ca_p},$
where $C_{10}$ is a positive constant independent of $t$.

Thus, Proposition \ref{p:inte} holds.
\end{proof}

\subsection{Other preparations for the proof of main theorem}
\
In this section, we make some preparations for the proof of main theorem.

We would like to recall the following results of blow-up of K\"ahler manifolds.

Let $(X,\omega)$ be a K\"ahler manifold and $M\subset\subset X$ be a relatively compact open subset of $X$. Let $Y$ be a smooth complex submanifold of $X$ of codimension $l$. Let $\sigma:\tilde{X}\to X$ be the blow up of $X$ along $Y$. Denote $D:=\sigma^{-1}(Y)$  and $\tilde{M}:=\sigma^{-1}(M)$. Let $|D|$ be the underlying point-set of $D$.

We know that $\sigma^{-1}(Y)$ is isomorphic to the projective bundle $\mathbb{P}(N_{Y\backslash X})\xrightarrow{\sigma} Y$. Let $\mathcal{O}_{\tilde{X}}(D)$ be the line bundle associated to $D$ on $\tilde{X}$. Let $s$ be the canonical section of $\mathcal{O}_{\tilde{X}}(D)$, i.e. $D=\{s=0\}$. Denote $[D]$ be the integration current associated to $D$.

\begin{Lemma}\label{kahler metric}
There exist a metric $h_D$ on $\mathcal{O}_{\tilde{X}}(D)$ and a positive number $\tilde{a}$ big enough such that $$\tilde{\omega}:=\tilde{a}\mu^*\omega+\sqrt{-1}\partial\bar{\partial}(\log|s|^2_{h_D})-2\pi[D]$$ is a K\"ahler metric on an open neighborhood of the closure of $\tilde{M}$.
\end{Lemma}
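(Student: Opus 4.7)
The plan is to construct $h_D$ so that the dual metric $h_D^{-1}$ on $\mathcal{O}_{\tilde X}(-D)$ has Chern curvature positive along the fibers of the $\mathbb{P}^{l-1}$-bundle $\sigma|_D \colon D \to Y$, and then use the Poincar\'e--Lelong formula to rewrite $\tilde\omega$ as a smooth $(1,1)$-form whose positivity is obtained by adding a sufficiently large multiple of $\sigma^*\omega$. This is the classical argument (going back to Blanchard) that the blow-up of a K\"ahler manifold along a smooth center is K\"ahler.

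For the construction, I would use the standard identification $\mathcal{O}_{\tilde X}(-D)|_D \cong \mathcal{O}_{\mathbb{P}(N_{Y/X})}(1)$. Choose any smooth Hermitian metric on $N_{Y/X}$ over a neighborhood of the compact set $\overline M \cap Y$; this induces a fiberwise Fubini--Study metric on $\mathcal{O}_{\mathbb{P}(N_{Y/X})}(1) = \mathcal{O}_{\tilde X}(-D)|_D$ with curvature strictly positive along each fiber of $D \to Y$. Extend this smoothly to a Hermitian metric $h_D^{-1}$ on $\mathcal{O}_{\tilde X}(-D)$ in an open neighborhood of $\overline{\tilde M}$, and let $h_D$ be its dual. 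The Poincar\'e--Lelong formula then gives
$$\sqrt{-1}\partial\bar\partial \log|s|^2_{h_D} = 2\pi[D] - \Theta_{h_D}(\mathcal{O}_{\tilde X}(D)),$$
so that the candidate form rewrites as
$$\tilde\omega = \tilde a\, \sigma^*\omega + \Theta_{h_D^{-1}}(\mathcal{O}_{\tilde X}(-D)),$$
which is genuinely a smooth $(1,1)$-form on a neighborhood of $\overline{\tilde M}$, the current $[D]$ having been absorbed.

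It remains to check strict positivity for $\tilde a$ large enough. Outside any fixed tubular neighborhood $U$ of $D$, the map $\sigma$ is a biholomorphism, so $\sigma^*\omega$ is already K\"ahler there while $\Theta_{h_D^{-1}}$ is a bounded smooth form; hence positivity is immediate on the complement of $U$ once $\tilde a$ is large. On $U$, at a point of $D$ the form $\sigma^*\omega$ is positive semidefinite with kernel equal to the vertical tangent directions of $D\to Y$, while $\Theta_{h_D^{-1}}(\mathcal{O}_{\tilde X}(-D))$ restricts to Fubini--Study on each fiber and is strictly positive in exactly those vertical directions. The main obstacle is the quantitative gluing near $D$: since the positivity of each form fills in precisely the degeneracy of the other, one must uniformly control the mixed horizontal--vertical terms on the compact set $\overline{\tilde M} \cap \overline U$. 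I would handle this by a standard Cauchy--Schwarz argument with respect to the orthogonal splitting of the tangent space into horizontal and vertical subspaces induced by the chosen metric on $N_{Y/X}$: the cross terms are bounded by the geometric mean of the two diagonal blocks, and therefore can be absorbed once $\tilde a$ is chosen large enough. Combined with the exterior estimate this yields a single $\tilde a$ for which $\tilde\omega$ is K\"ahler on an open neighborhood of $\overline{\tilde M}$.
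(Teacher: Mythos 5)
Your proposal is correct and follows essentially the same route as the paper: identify $\mathcal{O}_{\tilde X}(-D)|_D$ with $\mathcal{O}_{\mathbb{P}(N_{Y/X})}(1)$, extend a fiberwise positively curved metric to a neighborhood of $\overline{\tilde M}$, rewrite $\tilde\omega$ via Lelong--Poincar\'e as $\tilde a\,\sigma^*\omega + \Theta_{h_D^{-1}}$, and choose $\tilde a$ large by compactness. The only (cosmetic) difference is that the paper arranges the extended metric $h_{-D}$ to be flat outside a compact neighborhood of $|D|$, so that $\omega_{-D}$ vanishes there and the exterior estimate is trivial, whereas you keep it merely bounded and invoke the Cauchy--Schwarz/Schur-complement argument near $D$; both versions are fine.
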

\begin{proof} We  recall the construction of $\mathcal{O}_{\tilde{X}}(D)$ and $s$.

Let $\tilde{X}=\cup_{\alpha} U_\alpha$ be an open cover of $\tilde{X}$. If $U_{\alpha}\cap D\neq \emptyset$, we assume that $U_{\alpha}\cap D$ is defined by equation $f_{\alpha}=0$, where $f_{\alpha}$ is a holomorphic function on $U$. If $U_{\alpha}\cap D= \emptyset$, we set $f_{\alpha}=1$. On the intersections $U_{\alpha}\cap U_{\beta}$, the function $g_{\alpha\beta}=\frac{f_{\alpha}}{f_{\beta}}$ is invertible. Note that both $\{g_{\alpha\beta}\}$ and $\{g^{-1}_{\alpha\beta}\}$ satisfy cocycle condition. Actually we know that the transition functions of $\mathcal{O}_{\tilde{X}}(D)$ are $\{g_{\alpha\beta}\}$ and hence the transition functions $\{g^{-1}_{\alpha\beta}\}$ define the holomorphic line bundle $\mathcal{O}_{\tilde{X}}(-D)$. We also have $s=\{f_{\alpha}\}_{\alpha}$ is the canonical section of $\mathcal{O}_{\tilde{X}}(D)$.
The following result can be referred to \cite{voisin} (see Lemma 3.26 in \cite{voisin}).

\centerline{\emph{The restriction of $\mathcal{O}_{\tilde{X}}(-D)$ to $D$ is isomorphic to $\mathcal{O}_{\mathbb{P}(N_{Y/ X})}(1)$},}

\noindent
where $N_{Y/ X}=T_{X}|_{Y}/T_{Y}$ is the normal bundle of $Y$ in $X$.

We note that $\mathcal{O}_{\mathbb{P}(N_{Y/ X})}(1)$ has a metric $h$ which has positive curvature along  the fiber $\sigma^{-1}(y)$ for any $y\in Y$ (see chapter 3.3.2 in \cite{voisin}). By a partition of unity argument, the metric $h$ on $\mathcal{O}_{\mathbb{P}(N_{Y/ X})}(1)$ extends to a smooth metric $h_{-D}$ on $\mathcal{O}_{\tilde{X}}(-D)$ which has positive curvature on the fiber $\sigma^{-1}(y)$ for any $y\in Y$ and is flat outside a compact neighborhood of $|D|$. Hence the curvature form $\omega_{-D}$ of $h_{-D}$ is strictly positive on the fiber $\sigma^{-1}(y)$ for any $y\in Y$ and is zero outside a compact neighborhood of $|D|$.

Denote $h_D$ be the dual metric of $h_{-D}$ on $\mathcal{O}_{\tilde{X}}(D)$. Then $\log|s|^2_{h_{D}}$ is a globally defined quasi-plurisubharmonic function on $\tilde{X}$. By Lelong-Poincar\'{e} equation, we have
\begin{equation}\label{current of logsh}
\begin{split}
 i\sqrt{-1}\partial\bar{\partial}(\log|s|^2_{h_D})&=2\pi[D]-i\sqrt{-1}\Theta(\mathcal{O}_{\tilde{X}}(D))\\
     &= 2\pi[D]+i\sqrt{-1}\Theta(\mathcal{O}_{\tilde{X}}(-D))\\
     &=2\pi[D]+\omega_{-D}.
\end{split}
\end{equation}

As $\omega$ is a K\"ahler form on $X$, then $\sigma^{*}(\omega)$ is a closed real $(1,1)$-form which is positive outside $|D|$ and is semi-positive along the fiber $\sigma^{-1}(y)$ for any $y\in Y$ (the kernel of $\sigma^{*}(\omega)$ along $\sigma^{-1}(Y)$ consists of the tangent space to
the fibres of $\sigma$).
Note that the smooth curvature form $\omega_{-D}$ of $h_{-D}$ is strictly positive on the fiber $\sigma^{-1}(y)$ for any $y\in Y$ and is zero outside a compact neighborhood of $|D|$. As $\sigma$ is proper, we know that $\tilde{M}$ is relatively compact in $\tilde{X}$. Hence we can choose $\tilde{a}$ big enough such that
$$\tilde{\omega}:=\tilde{a}\sigma^*\omega+\sqrt{-1}\partial\bar{\partial}(\log|s|^2_{h_D})-2\pi[D]$$ is  positive on an open neighborhood of the closure of $\tilde{M}$ and hence a K\"ahler metric.
\end{proof}

Let $(X_0,\omega_0)$ be a K\"ahler manifold.
Let $D\subset X_0$ be a divisor. We call $D$ is normal crossing if there is a local coordinate system in which $|D|$
is a union of coordinate hyperplanes.

Let $M\subset\subset X_0$ be a relatively compact open subset of $X_0$. Let $N$ be a positive integer.
For $i=1,\ldots,N$, let $\sigma_i:X_i\to X_{i-1}$ be a blow up of $X_{i-1}$ along $Y_{i-1}$ for any $i\ge 1$.

 Denote $\tilde{M}_i:=\sigma_i^{-1}\circ\cdots\circ\sigma_1^{-1}(M)$. By Lemma \ref{kahler metric} and induction, we know that there exists $a_i>0$ such that $\omega_{i}:=a_i\sigma_{i}^{*}(\omega_{i-1})+
 \sqrt{-1}\partial\bar{\partial}(\log|s_i|^2_{h_{D_i}})-2\pi[D_i]$ is a K\"ahler metric on $\tilde{M}_i$, where $D_i:=\sigma_i^{-1}(Y_{i-1})$, $s_i$ is the canonical section of $O_{X_i}(D_i)$ and $h_{D_i}$ is a smooth metric on $X_i$. Since $D_i=\{s_i=0\}$ and $h_{D_i}$ is smooth, we know that $\log|s_i|^2_{h_{D_i}}|_{D_i}=-\infty$.

 Denote $\sigma:=\sigma_1\circ\cdots\circ\sigma_N:X_N\to X_0$. We have the following result.
 \begin{Lemma} \label{sequence of blow up}
 There exist a positive number $A>0$, a quasi-plurisubharmonic function $\Upsilon$ on $X_N$ and divisor $H$ on $X_N$ such that
 $$\omega:=A\sigma^{*}\omega_0+
 \sqrt{-1}\partial\bar{\partial}\Upsilon-2\pi [H]$$
 is a K\"ahler metric on an open neighborhood of the closure of $\tilde{M}_{N}$ and  $|H|=\{\Upsilon=-\infty\}$.
 \end{Lemma}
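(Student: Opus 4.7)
The plan is to prove Lemma~\ref{sequence of blow up} by induction on $N$, unfolding the recursive definition of $\omega_N$ so that the only base-space pullback appearing is $\sigma^*\omega_0$. The base case $N = 1$ is immediate: take $A = a_1$, $\Upsilon = \log|s_1|^2_{h_{D_1}}$, and $H = D_1$; then $\Upsilon$ is quasi-plurisubharmonic, $\{\Upsilon = -\infty\} = |D_1| = |H|$, and $\omega_1$ is K\"ahler on a neighborhood of $\overline{\tilde M_1}$ by Lemma~\ref{kahler metric}.

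For the inductive step, suppose one has already established on $X_{N-1}$ an identity of the form
$$\omega_{N-1} = A_{N-1}(\sigma_1 \circ \cdots \circ \sigma_{N-1})^*\omega_0 + \sqrt{-1}\partial\bar\partial\,\Upsilon_{N-1} - 2\pi [H_{N-1}]$$
with $\Upsilon_{N-1}$ quasi-plurisubharmonic on $X_{N-1}$, $H_{N-1}$ an $\mathbb{R}$-divisor, and $|H_{N-1}| = \{\Upsilon_{N-1} = -\infty\}$. Pulling back by $\sigma_N$ and substituting into the defining recursion gives
\begin{align*}
\omega_N &= a_N \sigma_N^*\omega_{N-1} + \sqrt{-1}\partial\bar\partial\,\log|s_N|^2_{h_{D_N}} - 2\pi [D_N] \\
&= a_N A_{N-1}\,\sigma^*\omega_0 + \sqrt{-1}\partial\bar\partial\bigl(a_N\sigma_N^*\Upsilon_{N-1} + \log|s_N|^2_{h_{D_N}}\bigr) - 2\pi\bigl(a_N\sigma_N^*[H_{N-1}] + [D_N]\bigr),
\end{align*}
where I use that pullback commutes with $\sqrt{-1}\partial\bar\partial$ and that, for any effective divisor $E$ on $X_{N-1}$, one has $\sigma_N^*[E] = [\sigma_N^*E]$ as currents (by Lelong--Poincar\'e applied to $\sigma_N^*$ of a defining section). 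Accordingly I set $A := a_N A_{N-1}$, $\Upsilon := a_N\sigma_N^*\Upsilon_{N-1} + \log|s_N|^2_{h_{D_N}}$, and $H := a_N\sigma_N^*H_{N-1} + D_N$, allowed to be an $\mathbb{R}$-divisor.

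It then remains to verify three properties. First, $\Upsilon$ is a positive combination of pullbacks of quasi-plurisubharmonic functions under a holomorphic map, hence is quasi-plurisubharmonic on $X_N$. Second,
$$\{\Upsilon = -\infty\} \;=\; \sigma_N^{-1}\{\Upsilon_{N-1} = -\infty\} \,\cup\, |D_N| \;=\; \sigma_N^{-1}|H_{N-1}| \cup |D_N| \;=\; |H|,$$
since each summand of $\Upsilon$ equals $-\infty$ precisely on the support of the corresponding divisor and is locally bounded above away from that support. Third, $\omega_N$ is K\"ahler on a neighborhood of $\overline{\tilde M_N}$, which is exactly the output of Lemma~\ref{kahler metric} applied at the $N$-th step of the iterated construction.

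I do not anticipate a serious obstruction: the argument is essentially bookkeeping once one observes that pullback by $\sigma_N$ and positive scaling both preserve the class of quasi-plurisubharmonic functions and commute appropriately with $\sqrt{-1}\partial\bar\partial$. The mildly delicate points are (i) allowing $H$ to carry real (non-integer) multiplicities so that the coefficients $a_N$ and, after full unfolding, $\prod_{j > i} a_j$ can appear in front of the pulled-back exceptional divisors, and (ii) tracking, if one wishes to write a single closed-form expression rather than a recursion, the nested pullbacks $(\sigma_{i+1}\circ\cdots\circ\sigma_N)^*D_i$ and $(\sigma_{i+1}\circ\cdots\circ\sigma_N)^*\log|s_i|^2_{h_{D_i}}$ for each $i = 1,\ldots,N$.
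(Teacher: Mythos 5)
Your proof is correct and follows essentially the same construction as the paper's: the paper unfolds the recursion $\omega_i = a_i\sigma_i^*\omega_{i-1} + \sqrt{-1}\partial\bar\partial\log|s_i|^2_{h_{D_i}} - 2\pi[D_i]$ top-down, defining $\Upsilon_k$ and $H_k$ on $X_N$ as nested pullbacks going from $k=N$ down to $k=0$, whereas you organize the same unfolding as a bottom-up induction on $N$ with the quantities $\Upsilon_{N-1}$, $H_{N-1}$ living on $X_{N-1}$ and then pulled back. The two produce identical formulas for $A$, $\Upsilon$, and $H$, and your checks (quasi-plurisubharmonicity of the sum, identification of the $-\infty$ locus via local upper boundedness of each quasi-psh summand, and the K\"ahler property coming from Lemma \ref{kahler metric}) match the paper's argument.
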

\begin{proof}Note that
$$\omega_{N}:=a_N\sigma_{N}^{*}(\omega_{N-1})+
 \sqrt{-1}\partial\bar{\partial}(\log|s_N|^2_{h_{D_N}})-2\pi[D_N]$$
  is a K\"ahler metric on an open neighborhood of the closure of $\tilde{M}_N$, where $\log|s_N|^2_{h_{D_N}}$ is a quasi-plurisubharmonic function on $X_N$ which satisfies $\log|s_N|^2_{h_{D_N}}$ equals $-\infty$ on $|D_N|$. Denote $\Upsilon_N=\log|s_N|^2_{h_{D_N}}$. Denote $H_N:=D_N$. Then we have $|H_N|=\{\Upsilon_N=-\infty\}$.

  Note that
$$\omega_{N-1}:=a_{N-1}\sigma_{N-1}^{*}(\omega_{N-2})+
 \sqrt{-1}\partial\bar{\partial}(\log|s_{N-1}|^2_{h_{D_{N-1}}})-2\pi[D_{N-1}]$$
  is a K\"ahler metric on an open neighborhood of the closure of $\tilde{M}_{N-1}$, where $\log|s_{N-1}|^2_{h_{D_{N-1}}}$ is a
  quasi-plurisubharmonic function on $X_{N-1}$ which equals $-\infty$ on $|D_{N-1}|$. Since quasi-plurisubharmonicity is invariant under the pull-back of holomorphic map, we know that $\sigma_{N}^{*}(\log|s_{N-1}|^2_{h_{D_{N-1}}})$ is a quasi-plurisubharmonic function on $X_N$. Denote $\Upsilon_{N-1}=\log|s_N|^2_{h_{D_N}}+\sigma_{N}^{*}(a_{N}\log|s_{N-1}|^2_{h_{D_{N-1}}})$ and $H_{N-1}=D_N+\sigma_{N}^{*}(a_{N}D_{N-1})$ on $X_N$. We still have $|H_{N-1}|=\{\Upsilon_{N-1}=-\infty\}$. By the construction of $\omega_{N}$, we have
  $$\omega_{N}=a_Na_{N-1}\sigma_{N}^{*}\sigma_{N-1}^{*}(\omega_{N-2})+
 \sqrt{-1}\partial\bar{\partial}\Upsilon_{N-1}-2\pi[H_{N-1}].$$

  Inductively, denote $$\Upsilon_{k}=\log|s_N|^2_{h_{D_N}}+\sigma_{N}^{*}(a_N\log|s_{N-1}|^2_{h_{D_{N-1}}})+\cdots+
  \sigma_{N}^{*}\circ\cdots\circ  \sigma_{k+1}^{*}(a_N\cdots a_{k+1}\log|s_{k}|^2_{h_{D_{k}}})$$
  and
  $$H_{k}=D_N+\sigma_{N}^{*}(a_ND_{N-1})+\cdots+\sigma_{N}^{*}\circ\cdots\circ  \sigma_{k+1}^{*}(a_N\cdots a_{k+1} D_k)$$
  on $X_N$, for any $k\ge 0$. We know that $\Upsilon_{k}$ is a quasi-plurisubharmonic function on $X_N$ and $\Upsilon_{k}|_{|H_{k}|}=-\infty$. By the construction of all  $\omega_{i}$ ($i=1,\ldots,N$), we have
  $$\omega_{N}=a_Na_{N-1}\cdots a_1\big(\sigma_{N}^{*}\sigma_{N-1}^{*}\cdots \sigma_{1}^{*}\big)(\omega_{0})+
 \sqrt{-1}\partial\bar{\partial}\Upsilon_{0}-2\pi[H_{0}]$$

 Let $A=a_Na_{N-1}\cdots a_1$, $\Upsilon=\Upsilon_{0}$ and $H=H_0$. Then we know that
 $$\omega:=A\sigma^{*}\omega_0+
 \sqrt{-1}\partial\bar{\partial}\Upsilon-2\pi[H]$$
 is a K\"ahler metric on an open neighborhood of the closure of $M_{N}$ and $\Upsilon$ is a quasi-plurisubharmonic function on $X_N$ which has analytic singularity and $|H|=\{\Upsilon=-\infty\}$.
\end{proof}

\begin{Remark}\label{Hsnc case}
Assume that the divisor $H$ we get in Lemma \ref{sequence of blow up} is a normal crossing divisor. By the construction of $\Upsilon$, for any given point $p\in |H|$, we can find local coordinate neighborhood $(W;w_1,\ldots,w_n)$ of $p$ such that
 \begin{equation}\label{local description of upsilon}
\Upsilon=\log(\prod_{l=1}^{n}|w_l|^{2d_l})+v(w),
\end{equation}
where $d_l$ is nonnegative real number and $v(w)$ is a smooth function on $W$.
\end{Remark}
Remmert's proper mapping theorem shows that
$$\hat{H}:=\sigma(H)$$
is an analytic set in $X_0$.
Note that the map $\sigma:X_N\to X_0$ is biholomorphic from $X_N\backslash H\to X_0\backslash \hat{H}$. We have the following result when we go down to $X_0$.
\begin{Lemma}\label{property of upsilon}$\sigma_*(\Upsilon)$ is a well-defined upper-semicontinuous function on $X_0$. $\sigma_*(\Upsilon)$ is a smooth function on $X_0\backslash \hat{H}$.
\end{Lemma}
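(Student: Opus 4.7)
The plan is to exploit the fact that $\sigma=\sigma_1\circ\cdots\circ\sigma_N$ is a proper holomorphic map (each $\sigma_i$ is proper and properness is preserved under composition) and that it restricts to a biholomorphism $\sigma\colon X_N\setminus |H|\to X_0\setminus\hat{H}$, where $|H|$ is the full exceptional locus and $\hat{H}=\sigma(|H|)$. First I would record that by the explicit formula
$$\Upsilon=\log|s_N|^2_{h_{D_N}}+\sigma_N^*\bigl(a_N\log|s_{N-1}|^2_{h_{D_{N-1}}}\bigr)+\cdots+\sigma_N^*\circ\cdots\circ\sigma_2^*\bigl(a_N\cdots a_2\log|s_1|^2_{h_{D_1}}\bigr),$$
each summand is smooth wherever the corresponding (pullback of the) exceptional divisor is avoided, because each $h_{D_i}$ is a smooth hermitian metric and $s_i$ is the defining section of $D_i$. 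The union of the singular loci of the summands is exactly $|H|$, so $\Upsilon\in C^\infty(X_N\setminus|H|)$.

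Next, I would \emph{define} $\sigma_*\Upsilon$ by
$$\sigma_*\Upsilon(y):=\sup_{x\in\sigma^{-1}(y)}\Upsilon(x)\qquad(y\in X_0),$$
with the convention that $\sigma_*\Upsilon(y)=-\infty$ when $\sigma^{-1}(y)\subset|H|$. On $X_0\setminus\hat{H}$ the fiber $\sigma^{-1}(y)$ is a single point, so $\sigma_*\Upsilon=\Upsilon\circ\sigma^{-1}$ there; composing the smooth function $\Upsilon|_{X_N\setminus|H|}$ with the biholomorphism $\sigma^{-1}\colon X_0\setminus\hat{H}\to X_N\setminus|H|$ yields the required smoothness. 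On $\hat{H}$ the fibers are compact by properness and $\Upsilon$ is upper semicontinuous (being quasi-plurisubharmonic), so the supremum is attained in $[-\infty,+\infty)$ and $\sigma_*\Upsilon$ is well defined.

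To prove upper semicontinuity on all of $X_0$, I would take a sequence $y_n\to y$ in $X_0$ and, for each $n$, choose $x_n\in\sigma^{-1}(y_n)$ with $\Upsilon(x_n)\ge\sigma_*\Upsilon(y_n)-1/n$ (replacing the sup by a large value if it is $-\infty$). Properness of $\sigma$ implies that $\{x_n\}$ stays in a compact subset of $X_N$, so a subsequence $x_{n_k}$ converges to some $x\in\sigma^{-1}(y)$. Upper semicontinuity of $\Upsilon$ then gives
$$\limsup_{n\to+\infty}\sigma_*\Upsilon(y_n)\le\limsup_{k\to+\infty}\Upsilon(x_{n_k})\le\Upsilon(x)\le\sigma_*\Upsilon(y),$$
which is the desired USC property. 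Equivalently, one checks that $\sigma_*\Upsilon$ coincides with the upper semicontinuous envelope of its restriction to $X_0\setminus\hat{H}$, using that any approach to $y\in\hat{H}$ can be lifted, after passing to a subsequence, to a convergent sequence in $X_N$.

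The work in this lemma is essentially bookkeeping rather than analysis: the only mildly delicate point is confirming that the singular locus of the summands of $\Upsilon$ equals $|H|$ (so that smoothness on $X_0\setminus\hat{H}$ drops out), and that the fiberwise supremum definition really agrees with $\Upsilon\circ\sigma^{-1}$ outside $\hat{H}$. Both are immediate from the construction of $\Upsilon$ and $H$ in Lemma \ref{sequence of blow up}, so no significant obstacle is expected.
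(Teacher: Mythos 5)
Your proof is correct, and it agrees with the paper's in setting up the same decomposition: $\sigma_*\Upsilon=\Upsilon\circ\sigma^{-1}$ on $X_0\setminus\hat H$ (where smoothness drops out of the biholomorphism), extended by $-\infty$ on $\hat H$, which is consistent with your fiberwise-supremum formula because $\sigma^{-1}(\hat H)=|H|$ and $\Upsilon\equiv-\infty$ there. The genuinely different part is the upper-semicontinuity argument at points $p\in\hat H$. You run a sequential-compactness argument: pick a subsequence of $y_n\to p$ along which $\sigma_*\Upsilon(y_n)$ realizes the limsup, lift to points $x_n\in\sigma^{-1}(y_n)$ nearly achieving the sup, use properness of $\sigma$ to extract a convergent sub-subsequence $x_{n_k}\to x\in\sigma^{-1}(p)\subset|H|$, and invoke upper semicontinuity of the quasi-psh function $\Upsilon$ to conclude $\limsup\Upsilon(x_{n_k})\le\Upsilon(x)=-\infty$. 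The paper instead argues via the explicit normal form of $\Upsilon$: near $|H|$ it is locally $\log|s|^2_h$ with $h$ smooth and $H=\{s=0\}$, and $\sigma^{-1}(B(p,r)\setminus\{p\})$ shrinks into arbitrarily small neighborhoods of the compact fiber $\sigma^{-1}(p)$ as $r\to0$, forcing $\sup_{\sigma^{-1}(B(p,r)\setminus\{p\})}\Upsilon\to-\infty$. Your version is more abstract and avoids the local analytic-singularity structure; it is also the argument one would naturally reach for in proving that the direct image of a USC function under a proper map is USC in general. The paper's version is more computational but leans only on facts already displayed in Lemma \ref{sequence of blow up}. Both rest on the same two ingredients --- properness of $\sigma$ and the fact that $\Upsilon$ blows down to $-\infty$ exactly on $|H|$ --- so they are fully interchangeable here.
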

\begin{proof}
  As $\sigma:X_N\backslash H\to X_0\backslash \hat{H}$ is biholomorphic, $\sigma_*(\Upsilon)$ is a well defined quasi-plurisubharmonic function on $X_0\backslash \hat{H}$. Note that  $\Upsilon|_{|H|}=-\infty$ and $\hat{H}:=\sigma(H)$. We define $\sigma_*(\Upsilon)\equiv -\infty$ on $\hat{H}$. Let $p\in \hat{H}$ be a point. Now we prove
  $$\limsup_{z\to p} \sigma_*(\Upsilon)=-\infty.$$
  Let $B_{r}:=B(p,r)\backslash \{p\}$ and denote $\tilde{B}_r:=\sigma^{-1}(B_r)$. When $r$ is small, we know that $\tilde{B}_r$ is contained in some relatively compact open neighborhood $U$ of $H$. Note that $\Upsilon$ can be locally written as $\log|s|^2_{h}$ on $U$, where $s$ is a holomorphic function on $U$ such that $H=\{s=0\}$ and $h$ is smooth. Hence we know that $\limsup_{r\to 0 }\big(\sup_{\tilde{B}_r}\Upsilon\big)=-\infty$, which implies that $\limsup_{z\to p} \sigma_*(\Upsilon)=-\infty$. Hence $\sigma_*(\Upsilon)$ is upper-semicontinuous at any point of $\hat{H}$.

By the construction of $\Upsilon$, we know that $\Upsilon$ has analytic singularities. As $H=\{\Upsilon=-\infty\}$, we know that $\Upsilon$ is smooth on $X_N\backslash H$, hence $\sigma_*(\Upsilon)$ is a smooth function on $X_0\backslash \hat{H}$.
\end{proof}

We would like to recall some lemmas which will be used in this section.
\begin{Lemma}
[Theorem 1.5 in \cite{Demailly82}]
\label{completeness}
Let M be a K\"ahler manifold, and Z be an analytic subset of M. Assume that
$\Omega$ is a relatively compact open subset of M possessing a complete K\"ahler
metric. Then $\Omega\backslash Z $ carries a complete K\"ahler metric.

\end{Lemma}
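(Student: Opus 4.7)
\textbf{Proof plan for Lemma \ref{completeness}.} The strategy is to start with the given complete K\"ahler metric $\omega_0$ on $\Omega$ and add a suitable $\sqrt{-1}\partial\bar\partial\varphi$, where $\varphi$ is built from a quasi-plurisubharmonic function that blows up precisely on $Z$ and whose gradient (in the resulting metric) diverges fast enough along any curve approaching $Z$.

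First I would produce the auxiliary potential. Since $\bar\Omega$ is compact in $M$ and $Z$ is analytic, I can cover a neighborhood of $Z\cap\bar\Omega$ by finitely many coordinate charts $U_\alpha$ in which $Z\cap U_\alpha$ is the common zero set of a finite collection of holomorphic functions $g_{\alpha,1},\dots,g_{\alpha,N_\alpha}$. Shrinking and rescaling, I may arrange $|g_\alpha|^2:=\sum_j|g_{\alpha,j}|^2<e^{-1}$ on each $U_\alpha$. Let $\{\rho_\alpha\}$ be a partition of unity subordinate to $\{U_\alpha\}$ together with $U_0:=M\setminus Z$, and set
\[
\psi:=\sum_\alpha \rho_\alpha\,\log|g_\alpha|^2 .
\]
Then $\psi$ is well-defined on $M$, smooth on $M\setminus Z$, equal to $-\infty$ on $Z$, and quasi-plurisubharmonic, i.e.\ $\sqrt{-1}\partial\bar\partial\psi\ge -C\omega_0$ on $\Omega$ for some constant $C$. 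Also $\psi<-1$ on a neighborhood of $Z\cap\bar\Omega$.

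Next I would take the Demailly ansatz $\varphi:=-\log(-\psi)$, defined where $\psi<-1$, and extended smoothly to all of $\Omega$ via a convex cutoff $\chi$ so that the global $\varphi\in C^\infty(\Omega\setminus Z)$ is bounded above and tends to $-\infty$ on $Z$. A direct calculation gives
\[
\sqrt{-1}\partial\bar\partial\varphi=\frac{-1}{\psi}\,\sqrt{-1}\partial\bar\partial\psi+\frac{1}{\psi^2}\,\sqrt{-1}\partial\psi\wedge\bar\partial\psi ,
\]
where the second term is a nonnegative $(1,1)$-form and the first is bounded below by $(C/|\psi|)\omega_0$ in absolute value near $Z$. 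Therefore, for a large constant $A>0$, the form
\[
\tilde\omega:=A\,\omega_0+\sqrt{-1}\partial\bar\partial\varphi
\]
is a smooth K\"ahler metric on $\Omega\setminus Z$.

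Finally I would verify completeness. Since $A\omega_0$ already makes $\Omega$ complete, the only issue is Cauchy sequences that leave every compact subset of $\Omega\setminus Z$ only by approaching $Z$. Along a smooth curve $\gamma$ with $\gamma(t)\to Z$ one has $\psi(\gamma(t))\to-\infty$ hence $\varphi(\gamma(t))\to-\infty$, and the length element satisfies
\[
|\dot\gamma|_{\tilde\omega}^{2}\ge |\partial\varphi(\dot\gamma)|^{2}=\left|\tfrac{\dot\psi}{\psi}\right|^{2},
\]
so the $\tilde\omega$-length of $\gamma$ is bounded below by $\int|d\log(-\psi)|=\log\log(-\psi)\big|$, which diverges as $\psi\to-\infty$. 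Consequently every curve reaching $Z$ has infinite $\tilde\omega$-length, and combined with completeness of $A\omega_0$ on $\Omega$ this shows $\tilde\omega$ is complete on $\Omega\setminus Z$. The delicate step is the choice of $\varphi$: a naive $-\log|g|^2$ would blow up too fast and destroy positivity without a huge $A$, while $1/|\psi|$ would not give a divergent length integral, so the double-logarithm $-\log(-\psi)$ is essentially forced.
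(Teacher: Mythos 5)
The paper states this lemma as a citation of Theorem~1.5 in Demailly's 1982 paper and does not reproduce a proof, so the comparison is against Demailly's argument in the literature. Your proposal follows essentially that same route: glue local potentials $\log\sum_j|g_{\alpha,j}|^2$ by a partition of unity to obtain a quasi-psh function $\psi$ that tends to $-\infty$ on $Z$, pass to $\varphi=-\log(-\psi)$, and observe that $\sqrt{-1}\partial\bar\partial\varphi$ contributes the gradient term $\frac{1}{\psi^2}\sqrt{-1}\partial\psi\wedge\bar\partial\psi$ (which forces divergent length near $Z$) while the other term $\frac{1}{-\psi}\sqrt{-1}\partial\bar\partial\psi$ is tamed by the decaying factor $1/(-\psi)$, so that $A\omega_0+\sqrt{-1}\partial\bar\partial\varphi$ is positive for a large fixed $A$. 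This is Demailly's construction.

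One small slip at the end: you write that the $\tilde\omega$-length is bounded below by $\int|d\log(-\psi)|=\log\log(-\psi)\bigr|$. In fact $\int|d\log(-\psi)|\ge\bigl|\log(-\psi)\bigr|_a^b$, i.e.\ a single logarithm of $-\psi$, not an iterated one; since $\psi$ itself already involves $\log|g|^2$, the quantity $\log(-\psi)$ is the $\log\log$-scale object that diverges as the curve approaches $Z$. The conclusion is unaffected, but the displayed identity as written is incorrect. It would also be worth stating explicitly that the local generators can be chosen so that $\log|g_\alpha/g_\beta|$ is bounded on overlaps, which is what makes the partition-of-unity glued $\psi$ genuinely quasi-psh with a uniform curvature lower bound on $\bar\Omega$.
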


\begin{Lemma}
[Lemma 6.9 in \cite{Demailly82}]
\label{extension of equality}
Let $\Omega$ be an open subset of $\mathbb{C}^n$ and Z be a complex analytic subset of
$\Omega$. Assume that $v$ is a (p,q-1)-form with $L^2_{\text{loc}}$ coefficients and h is
a (p,q)-form with $L^1_{\text{loc}}$ coefficients such that $\bar{\partial}v=h$ on
$\Omega\backslash Z$ (in the sense of distribution theory). Then
$\bar{\partial}v=h$ on $\Omega$.
\end{Lemma}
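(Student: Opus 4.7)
Since the conclusion is local in $\Omega$, I may assume $\Omega$ is a coordinate ball and that $Z$ is the common zero set of finitely many holomorphic functions $g_1,\dots,g_N$ on a neighborhood of $\overline\Omega$. Given an arbitrary test form $\eta$ of bidegree $(n-p,n-q)$ with compact support in $\Omega$, what has to be checked is the distributional identity
\[
\int_\Omega v\wedge\bar\partial\eta \;=\; (-1)^{p+q}\int_\Omega h\wedge\eta.
\]

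My plan is to approximate $\eta$ by $\chi_\nu\eta$, where $\{\chi_\nu\}_{\nu\geq 1}$ is a family of smooth cutoffs from $\Omega$ to $[0,1]$ that vanish in a neighborhood of $Z$, tend to $1$ pointwise on $\Omega\setminus Z$, and satisfy $\|\bar\partial\chi_\nu\|_{L^2(\operatorname{supp}\eta)}\to 0$ as $\nu\to\infty$. Such a family can be manufactured by truncating the plurisubharmonic function $\psi:=\log\sum_{j=1}^N|g_j|^2$: setting $\chi_\nu=\eta_0(\nu-\log(-\psi))$ for a smooth increasing function $\eta_0:\mathbb{R}\to[0,1]$ with $\eta_0\equiv 0$ on $(-\infty,0]$ and $\eta_0\equiv 1$ on $[1,\infty)$, one has $\bar\partial\chi_\nu$ supported in a shrinking tube around $Z$, and a direct residue computation in local coordinates adapted to a desingularization of $Z$ (so that $\psi$ becomes $c\log|z_1|^2$ up to a smooth error) shows that $\|\bar\partial\chi_\nu\|_{L^2(K)}^2$ decays like $e^{-\nu}$ on any compact $K\subset\Omega$.

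Once the cutoffs are available, the hypothesis applied on $\Omega\setminus Z$ (where $\chi_\nu\eta$ has its support) yields
\[
\int_\Omega v\wedge\bar\partial(\chi_\nu\eta) \;=\; (-1)^{p+q}\int_\Omega \chi_\nu\,h\wedge\eta,
\]
and the Leibniz expansion $\bar\partial(\chi_\nu\eta)=\bar\partial\chi_\nu\wedge\eta+\chi_\nu\bar\partial\eta$ reduces the proof to three limits as $\nu\to\infty$. The statements $\int\chi_\nu v\wedge\bar\partial\eta\to\int v\wedge\bar\partial\eta$ and $\int\chi_\nu h\wedge\eta\to\int h\wedge\eta$ are immediate from dominated convergence together with $v\in L^2_{\mathrm{loc}}$, $h\in L^1_{\mathrm{loc}}$, and the fact that $Z$ has Lebesgue measure zero. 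The remaining term is estimated by Cauchy--Schwarz,
\[
\Bigl|\int_\Omega v\wedge\bar\partial\chi_\nu\wedge\eta\Bigr| \;\leq\; C_\eta\,\|v\|_{L^2(\operatorname{supp}\eta)}\,\|\bar\partial\chi_\nu\|_{L^2(\operatorname{supp}\eta)},
\]
which tends to $0$ by design of $\chi_\nu$.

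The main obstacle lies entirely in producing the cutoff sequence with simultaneous pointwise convergence to $1$ and $L^2$-smallness of $\bar\partial\chi_\nu$; this is the step that crucially uses the hypothesis that $Z$ is \emph{complex analytic} (rather than an arbitrary closed set), since only then does one have a plurisubharmonic defining function whose gradient has sufficiently controlled integrable singularities along $Z$ to make the above residue estimate work.
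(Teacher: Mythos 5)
Your argument is correct and is essentially the standard proof of Demailly's negligibility lemma (the paper only cites Demailly 1982, Lemma 6.9, without reproducing the argument): build smooth cutoffs $\chi_\nu$ that vanish near $Z$ by truncating a level of $\log(-\psi)$ with $\psi=\log\sum|g_j|^2$, pass the distributional equation through the Leibniz expansion of $\bar\partial(\chi_\nu\eta)$, control the error term by Cauchy--Schwarz using $v\in L^2_{\mathrm{loc}}$, and use dominated convergence (with $Z$ of measure zero) for the remaining two terms. The only step you compress is the verification that $\|\bar\partial\chi_\nu\|_{L^2(K)}\to 0$; this does go through (with the $e^{-\nu}$ rate you claim in the model case, and after a Hironaka resolution in general, where one must keep track of the fact that the pulled-back Euclidean $(n-1,n-1)$-form degenerates along the exceptional locus, which is exactly what tames the $|\bar\partial\psi|^2/\psi^2$ poles), but it deserves a line or two more than ``a direct residue computation'' since the function $|\bar\partial\psi|^2/\psi^2$ need not be $L^1_{\mathrm{loc}}$ in general and one really must use the thinness of the annulus $\{e^{\nu-1}\le-\psi\le e^\nu\}$.
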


\begin{Lemma}
[see Lemma 9.10 in \cite{GMY-boundary5}]
\label{d-bar equation with error term}
Let ($M,\omega$) be a complete K\"ahler manifold equipped with a (non-necessarily
complete) K\"ahler metric $\omega$, and let $(Q,h)$ be a holomorphic vector bundle over $M$ with hermitian metric $h$.
Assume that $\eta$ and $g$ are smooth bounded positive functions on $M$ such that $\eta+g^{-1}$ is a smooth bounded positive function on $M$ and let
$B:=[\eta \sqrt{-1}\Theta_Q-\sqrt{-1}\partial \bar{\partial} \eta-\sqrt{-1}g
\partial\eta \wedge\bar{\partial}\eta, \Lambda_{\omega}]$. Assume that $\lambda \ge
0$ is a bounded continuous functions on $M$ such that $B+\lambda I$ is positive definite everywhere on
$\wedge^{n,q}T^*M \otimes Q$ for some $q \ge 1$. Then given a form $v \in
L^2(M,\wedge^{n,q}T^*M \otimes Q)$ such that $D^{''}v=0$ and $\int_M \langle
(B+\lambda I)^{-1}v,v\rangle_{Q,\omega} dV_{\omega} < +\infty$, there exists an approximate solution
$u \in L^2(M,\wedge^{n,q-1}T^*M \otimes Q)$ and a correcting term $\tau\in
L^2(M,\wedge^{n,q}T^*M \otimes Q)$ such that $D^{''}u+P_h(\sqrt{\lambda}\tau)=v$, where $P_h:L^2(M,\wedge^{n,q}T^*M \otimes Q)\to \text{Ker}{D''}$ is the orthogonal projection and
\begin{equation}\nonumber
\int_M(\eta+g^{-1})^{-1}|u|^2_{Q,\omega}dV_{\omega}+\int_M|\tau|^2_{Q,\omega}dV_{\omega} \le \int_M \langle (B+\lambda
I)^{-1}v,v\rangle_{Q,\omega} dV_{\omega}.
\end{equation}
\end{Lemma}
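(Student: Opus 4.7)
The plan is to combine Demailly's twisted Bochner-Kodaira-Nakano inequality with a Hilbert-space duality argument to produce both $u$ and the error term $\tau$. The twisting function $\eta$ is used to form the curvature operator $B$, the auxiliary function $g$ absorbs cross terms arising from $\partial\eta$, and the $\lambda I$ perturbation is precisely what allows one to relax strict pointwise positivity of $B$ at the cost of the correcting term $P_h(\sqrt{\lambda}\,\tau)$.

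\textbf{Step 1 (Twisted Bochner inequality).} I would first derive
$$\langle B\beta,\beta\rangle_{Q,\omega}\leq\|(\eta+g^{-1})^{1/2}(D'')^*\beta\|^2+\|\eta^{1/2}D''\beta\|^2$$
for all smooth, compactly supported $(n,q)$-forms $\beta$ with values in $Q$. This follows by expanding the Nakano identity for $\sqrt{\eta}\,(D'')^*$, which produces the commutator $[\eta\sqrt{-1}\Theta_Q-\sqrt{-1}\partial\bar\partial\eta,\Lambda_\omega]$, and then absorbing the cross terms involving $\partial\eta\wedge(\cdot)$ via Cauchy-Schwarz weighted by $g$; this accounts for the $-\sqrt{-1}g\,\partial\eta\wedge\bar\partial\eta$ contribution to $B$. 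The completeness of $(M,\omega)$, or, when $\omega$ is not itself complete, of some auxiliary Kähler metric on $M$, extends the inequality to all $\beta$ in the graph domain of $(D'')^*$ via the standard Andreotti-Vesentini density argument.

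\textbf{Step 2 (Duality setup).} Because $D''v=0$, the pairing $\langle v,\beta\rangle$ vanishes on $\overline{\mathrm{Im}((D'')^*)}$, so I may replace $\beta$ by its harmonic projection $P_h\beta\in\mathrm{Ker}(D'')$. On $\mathrm{Ker}(D'')$ the inequality of Step~1 reduces to $\langle B\beta,\beta\rangle\leq\|(\eta+g^{-1})^{1/2}(D'')^*\beta\|^2$, and Cauchy-Schwarz against the pointwise positive self-adjoint operator $B+\lambda I$ yields
$$|\langle v,\beta\rangle|^2\leq\Bigl(\int_M\langle(B+\lambda I)^{-1}v,v\rangle_{Q,\omega}\,dV_\omega\Bigr)\bigl(\|(\eta+g^{-1})^{1/2}(D'')^*\beta\|^2+\lambda\|\beta\|^2\bigr).$$

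\textbf{Step 3 (Riesz representation and limit).} Applying Riesz representation to the antilinear functional $\beta\mapsto\langle v,\beta\rangle$ on $\mathrm{Ker}(D'')$ equipped with the norm on the right side above produces $u\in L^2(M,\wedge^{n,q-1}T^*M\otimes Q)$ and $\tau\in L^2(M,\wedge^{n,q}T^*M\otimes Q)$ satisfying the claimed $L^2$ estimate and the identity
$$\langle v,\beta\rangle=\langle u,(D'')^*\beta\rangle+\langle\sqrt{\lambda}\,\tau,\beta\rangle$$
for every $\beta\in\mathrm{Ker}(D'')$. Since $\langle\sqrt{\lambda}\,\tau,\beta\rangle=\langle P_h(\sqrt{\lambda}\,\tau),\beta\rangle$ when $\beta\in\mathrm{Ker}(D'')$, this translates to $D''u+P_h(\sqrt{\lambda}\,\tau)=v$ in the sense of distributions. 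The principal technical obstacle is the possibly incomplete metric $\omega$: I would handle it by regularizing with a sequence of auxiliary complete Kähler metrics on $M$ (available because $M$ itself admits a complete Kähler metric), running the above construction uniformly in this regularization parameter, and extracting weak $L^2$ limits, using weak lower semicontinuity of the norms to preserve the estimate in the limit.
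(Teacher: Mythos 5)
Your proof follows the standard twisted Bochner--Kodaira--Nakano plus Hilbert-space duality argument that this family of results rests on, and which is also the route taken in the cited reference \cite{GMY-boundary5}: a priori inequality for $(n,q)$-forms, restriction to $\mathrm{Ker}\,D''$, pointwise then integral Cauchy--Schwarz against $B+\lambda I$, Riesz representation on the image of $\beta\mapsto\bigl((\eta+g^{-1})^{1/2}(D'')^*\beta,\sqrt{\lambda}\beta\bigr)$, rescaling by $(\eta+g^{-1})^{1/2}$, and an Andreotti--Vesentini/Demailly regularization to handle non-complete $\omega$. The essential mechanism---that the $\lambda I$ perturbation trades strict positivity for the projection error $P_h(\sqrt{\lambda}\tau)$---is identified correctly.

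Two points should be made precise. First, the quantity $\lambda\|\beta\|^2$ in your Step~2 is a function times a number; you mean $\int_M\lambda|\beta|^2_{Q,\omega}\,dV_\omega=\|\sqrt{\lambda}\beta\|^2$, and this equals $\|\sqrt{\lambda}P_h\beta\|^2$ only because you have already restricted to $\beta\in\mathrm{Ker}\,D''$ (for general $\beta$ the inequality $\|\sqrt{\lambda}P_h\beta\|\leq\|\sqrt{\lambda}\beta\|$ fails, since $P_h$ is orthogonal for the unweighted inner product, not the $\lambda$-weighted one). Second, your Step~3 identity holds only against $\beta\in\mathrm{Ker}\,D''$, and deducing the distributional equation $D''u+P_h(\sqrt{\lambda}\tau)=v$ for all test forms requires one more observation: writing $\beta=P_h\beta+(\beta-P_h\beta)$, the component $\beta-P_h\beta\in\overline{\mathrm{Im}\,(D'')^*}\subset\mathrm{Ker}\,(D'')^*$ is annihilated by $(D'')^*$, by $v$, and by $P_h(\sqrt{\lambda}\tau)$, so the pairing of $\beta$ with each of $v$, $D''u$ and $P_h(\sqrt{\lambda}\tau)$ depends only on $P_h\beta$. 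Equivalently, $w:=v-D''u-P_h(\sqrt{\lambda}\tau)$ lies in $\mathrm{Ker}\,D''$ (all three terms do) and is orthogonal to $\mathrm{Ker}\,D''$, hence vanishes. With these clarifications the argument is complete.
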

Let $M$ be a complex manifold. Let $\omega$ be a continuous hermitian metric on $M$. Let $dV_M$ be a continuous volume form on $M$. We denote by $L^2_{p,q}(M,\omega,dV_M)$ the spaces of $L^2$ integrable $(p,q)$-forms over $M$ with respect to $\omega$ and $dV_M$. It is known that $L^2_{p,q}(M,\omega,dV_M)$ is a Hilbert space.
\begin{Lemma}[see Lemma 9.1 in \cite{GMY-boundary5}]
\label{weakly convergence}
Let $\{u_n\}_{n=1}^{+\infty}$ be a sequence of $(p,q)$-forms in $L^2_{p,q}(M,\omega,dV_M)$ which is weakly convergent to $u$. Let $\{v_n\}_{n=1}^{+\infty}$ be a sequence of Lebesgue measurable real functions on $M$ which converges pointwisely to $v$. We assume that there exists a constant $C>0$ such that $|v_n|\le C$ for any $n$. Then $\{v_nu_n\}_{n=1}^{+\infty}$ weakly converges to $vu$ in $L^2_{p,q}(M,\omega,dV_M)$.
\end{Lemma}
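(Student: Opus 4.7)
The plan is to verify weak convergence of $\{v_n u_n\}$ directly from the definition in the Hilbert space $L^2_{p,q}(M,\omega,dV_M)$: I would show that $\langle v_n u_n,\phi\rangle \to \langle vu,\phi\rangle$ for every test form $\phi \in L^2_{p,q}(M,\omega,dV_M)$. Since each $v_n$ is a real-valued scalar function, it may be moved inside the inner product, and the problem reduces to showing $\langle u_n, v_n\phi\rangle \to \langle u, v\phi\rangle$. As a preliminary remark, the pointwise bound $|v_n|\le C$ passes to the limit, so $|v|\le C$ a.e., and hence $v\phi$ and each $v_n\phi$ genuinely belong to $L^2_{p,q}(M,\omega,dV_M)$.

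Next I would use the decomposition
\begin{equation*}
\langle u_n, v_n\phi\rangle - \langle u, v\phi\rangle
= \langle u_n - u,\; v\phi\rangle \;+\; \langle u_n - u,\; (v_n - v)\phi\rangle \;+\; \langle u,\; (v_n - v)\phi\rangle,
\end{equation*}
and treat the three pieces separately. The first piece tends to $0$ immediately because $v\phi$ is a fixed element of $L^2_{p,q}(M,\omega,dV_M)$ and $u_n \rightharpoonup u$ weakly. For the third piece, I would invoke the dominated convergence theorem: $(v_n-v)\phi \to 0$ pointwise on $M$, while the pointwise majorant $|(v_n-v)\phi|\le 2C|\phi|$ is an $L^2$ function independent of $n$, so $(v_n-v)\phi \to 0$ in the strong $L^2$ topology; Cauchy--Schwarz then gives $\langle u, (v_n-v)\phi\rangle \to 0$.

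For the cross term, I would again apply Cauchy--Schwarz,
\begin{equation*}
\big|\langle u_n - u,\; (v_n - v)\phi\rangle\big| \le \|u_n - u\|\cdot\|(v_n - v)\phi\|,
\end{equation*}
and combine two standard facts: weakly convergent sequences in a Hilbert space are norm-bounded (by the uniform boundedness principle), so $\|u_n - u\|$ is bounded in $n$, and $\|(v_n - v)\phi\|\to 0$ by the dominated convergence argument of the previous paragraph. Hence the cross term also vanishes in the limit, and summing the three estimates yields $\langle v_n u_n,\phi\rangle\to\langle vu,\phi\rangle$, which is exactly the desired weak convergence.

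I do not anticipate a substantive obstacle, since the argument is a textbook combination of weak convergence, Cauchy--Schwarz, and dominated convergence; the only mild subtlety is recording the a.e.\ bound $|v|\le C$ on the pointwise limit (so that $v\phi\in L^2$) and being careful that it is strong $L^2$-convergence of $(v_n-v)\phi$, rather than mere pointwise convergence, which is needed to pair with the bounded sequence $\{u_n-u\}$.
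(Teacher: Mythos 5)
Your proof is correct. The paper itself does not reproduce a proof of this lemma but cites it as Lemma~9.1 of \cite{GMY-boundary5}, so there is no in-text argument to compare against; the argument you give is the standard one, and all three pieces of your decomposition close cleanly: weak convergence handles $\langle u_n-u,v\phi\rangle$, uniform boundedness of $\{\|u_n\|\}$ (Banach--Steinhaus) together with dominated convergence of $(v_n-v)\phi\to 0$ in $L^2$ handles the cross term, and Cauchy--Schwarz with the same strong $L^2$-convergence handles $\langle u,(v_n-v)\phi\rangle$. You were also right to flag that $v_n$ real-valued is what lets the scalar be moved across the (sesquilinear) inner product without a conjugation, and to record $|v|\le C$ a.e.\ so that $v\phi\in L^2$.

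One cosmetic remark: the three-term split is slightly redundant. Writing
\begin{equation*}
\langle u_n, v_n\phi\rangle - \langle u, v\phi\rangle = \langle u_n, (v_n - v)\phi\rangle + \langle u_n - u, v\phi\rangle
\end{equation*}
already suffices: the second term vanishes by weak convergence, and the first is bounded by $\|u_n\|\,\|(v_n-v)\phi\|$, with $\|u_n\|$ bounded and $\|(v_n-v)\phi\|\to 0$ as you argued. This avoids introducing the cross term at all, but the content is identical and your version is equally valid.
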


Let $X$ be an $n-$dimensional complex manifold and $\omega$ be a hermitian metric on $X$. Let $Q$ be a holomorphic vector bundle on $X$ with rank $r$. Let $\{h_i\}_{i=1}^{+\infty}$ be a family of $C^2$ smooth hermitian metric on $Q$ and $h$ be a measurable metric on $Q$ such that $\lim_{i\to+\infty}h_i=h$ almost everywhere on $X$.  We assume that
$h_i$ is increasingly convergent to $h$ as $i\to+\infty$.

Denote $\mathcal{H}_i:=L^2(X,\wedge^{p,q}T^*X\otimes Q,h_i,dV_{\omega})$ and $\mathcal{H}:=L^2(X,\wedge^{p,q}T^*X\otimes Q,h,dV_{\omega})$. Note that $\mathcal{H}\subset \mathcal{H}_i\subset \mathcal{H}_1$ for any $i\in\mathbb{Z}_{>0}$. Denote $P_i:=\mathcal{H}_i\to \text{Ker}D''$ and $P:=\mathcal{H}\to \text{Ker}D''$ be the orthogonal projections with respect to $h_i$ and $h$ respectively.
\begin{Lemma}[see Lemma 9.9 in \cite{GMY-boundary5}]
\label{weakly converge lemma} For any sequence of $Q$-valued $(n,q)$-forms $\{f_i\}_{i=1}^{+\infty}$ which satisfies $f_i\in\mathcal{H}_i$ and $||f_i||_{h_i}\le C_1$ for some constant $C_1>0$, there exists a $Q$-valued $(n,q)$-form $f_0\in \mathcal{H}$ such that there exists a subsequence of $\{f_i\}_{i=1}^{+\infty}$ (also denoted by $\{f_i\}_{i=1}^{+\infty}$) weakly converges to $f_0$ in $\mathcal{H}_1$ and $P_i(f_i)$ weakly converges to $P(f_0)$ in $\mathcal{H}_1$.
\end{Lemma}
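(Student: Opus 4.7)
The plan is to run a Banach--Alaoglu compactness argument in the largest ambient space $\mathcal{H}_1$, use the increasing convergence $h_i\nearrow h$ to push the weak limits into the smaller space $\mathcal{H}$, and finally pass to the limit in the orthogonality relations that characterize the projections $P_i$.

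First I would exploit $h_1\le h_i$, which gives $\|f_i\|_{h_1}\le\|f_i\|_{h_i}\le C_1$ and similarly $\|P_i(f_i)\|_{h_1}\le C_1$, so that both $\{f_i\}$ and $\{P_i(f_i)\}$ are bounded sequences in the Hilbert space $\mathcal{H}_1$. Banach--Alaoglu yields, after passing to a common subsequence, $f_i\rightharpoonup f_0$ and $P_i(f_i)\rightharpoonup g_0$ weakly in $\mathcal{H}_1$. To see that $f_0,g_0\in\mathcal{H}$, fix any $j$: the tail $\{f_i\}_{i\ge j}$ is bounded in $\mathcal{H}_j$ since $\|f_i\|_{h_j}\le\|f_i\|_{h_i}\le C_1$, and since $\mathcal{H}_j\hookrightarrow\mathcal{H}_1$ is bounded, the weak limit of any weakly convergent subsequence in $\mathcal{H}_j$ must coincide with $f_0$; weak lower semicontinuity of $\|\cdot\|_{h_j}$ then gives $\|f_0\|_{h_j}\le C_1$. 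Letting $j\to+\infty$ and applying monotone convergence $|f_0|_{h_j}^2\nearrow|f_0|_h^2$ yields $f_0\in\mathcal{H}$, and $g_0\in\mathcal{H}$ follows identically. Moreover $\mathrm{Ker}\,D''\cap\mathcal{H}_1$ is closed (hence weakly closed) in $\mathcal{H}_1$ since $D''$ is a closed operator, so $g_0\in\mathrm{Ker}\,D''$.

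It then remains to identify $g_0=P(f_0)$. The orthogonality characterization of $P_i$ gives $\langle f_i-P_i(f_i),v\rangle_{h_i}=0$ for every $v\in\mathrm{Ker}\,D''\cap\mathcal{H}\subset\mathrm{Ker}\,D''\cap\mathcal{H}_i$. Once I establish the key convergence
\begin{equation*}
\langle u_i,v\rangle_{h_i}\;\longrightarrow\;\langle u_0,v\rangle_h\quad\text{whenever }u_i\rightharpoonup u_0\text{ in }\mathcal{H}_1,\;\sup_i\|u_i\|_{h_i}<+\infty,\;v\in\mathcal{H},
\end{equation*}
taking $u_i=f_i-P_i(f_i)$ yields $\langle f_0-g_0,v\rangle_h=0$ for all $v\in\mathrm{Ker}\,D''\cap\mathcal{H}$; combined with $g_0\in\mathrm{Ker}\,D''\cap\mathcal{H}$, this identifies $g_0$ as the orthogonal projection of $f_0$ onto $\mathrm{Ker}\,D''$ in $\mathcal{H}$, i.e.\ $g_0=P(f_0)$, proving the lemma.

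The main obstacle will be the displayed convergence, since the inner products live in different Hilbert spaces and the pointwise matrix entries of $h$ need not be locally bounded. I would handle it by a cut-off and exhaustion argument. Working in a countable atlas of local trivializations of $Q$ with a subordinate partition of unity, let $E_N:=\{x:\max_{k,l}|h_{k\bar l}(x)|\le N\}$ on each chart; since $h$ is finite almost everywhere, $E_N$ exhausts $X$ up to a null set. Splitting $v=\chi_{E_N}v+(1-\chi_{E_N})v$, the Cauchy--Schwarz bound $|\langle u_i,(1-\chi_{E_N})v\rangle_{h_i}|\le\|u_i\|_{h_i}\|(1-\chi_{E_N})v\|_h$ tends to $0$ as $N\to+\infty$ uniformly in $i$ by dominated convergence applied to $|v|_h^2\in L^1$. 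On $E_N$, the entries $(h_i)_{k\bar l}$ are uniformly bounded in $i$ (via $|(h_i)_{k\bar l}|^2\le(h_i)_{k\bar k}(h_i)_{l\bar l}\le N^2$) and converge pointwise a.e.\ to $h_{k\bar l}$. Since weak convergence in $\mathcal{H}_1$ descends to componentwise weak $L^2$-convergence $u_i^k\rightharpoonup u_0^k$, Lemma~\ref{weakly convergence} applied with bounded pointwise factor $(h_i)_{k\bar l}\chi_{E_N}$ gives $(h_i)_{k\bar l}\chi_{E_N}u_i^k\rightharpoonup h_{k\bar l}\chi_{E_N}u_0^k$ in $L^2$; pairing with $v^l\in L^2(E_N)$ and summing over $k,l$ yields $\langle u_i,\chi_{E_N}v\rangle_{h_i}\to\langle u_0,\chi_{E_N}v\rangle_h$, and letting $N\to+\infty$ closes the argument.
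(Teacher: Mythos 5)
Your argument is correct in its essentials, and since this paper does not reproduce the proof of the lemma (it cites Lemma~9.9 of \cite{GMY-boundary5} as a black box), there is no in-text proof to check you against; I therefore assess the proposal on its own. The overall structure---Banach--Alaoglu in $\mathcal{H}_1$ for both $\{f_i\}$ and $\{P_i(f_i)\}$, identification of the limits as elements of $\mathcal{H}$ via the exhaustion $\mathcal{H}\subset\cdots\subset\mathcal{H}_j\subset\cdots\subset\mathcal{H}_1$ together with weak lower semicontinuity and monotone convergence, weak closedness of $\mathrm{Ker}\,D''$, and finally passing to the limit in the orthogonality relation $\langle f_i-P_i(f_i),v\rangle_{h_i}=0$---is exactly the right route, and the steps all check out: the inclusion $\mathrm{Ker}\,D''\cap\mathcal{H}\subset\mathrm{Ker}\,D''\cap\mathcal{H}_i$ means the orthogonality holds for all $v\in\mathrm{Ker}\,D''\cap\mathcal{H}$, and $\|f_i-P_i(f_i)\|_{h_i}\le\|f_i\|_{h_i}\le C_1$ makes your ``key convergence'' applicable.

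Two small remarks on the cut-off argument for the key convergence. First, your $E_N$ is defined chart by chart, which makes the subsequent partition-of-unity summation slightly awkward if $X$ is not compact (one may face an infinite sum of error terms). A cleaner globalization is to set $E_N:=\{x:h(x)\le Nh_1(x)\}$, i.e. cut off the operator norm of $h$ against the fixed smooth reference metric $h_1$; this is coordinate-free, measurable, exhausts $X$ up to a null set, and all your estimates go through unchanged. Second, once $E_N$ is defined this way, the main term can be handled without going componentwise and without invoking Lemma~\ref{weakly convergence}: writing $A_i:=h_1^{-1}h_i$ (the $h_1$-self-adjoint positive endomorphism with $A_i\le N\,\mathrm{Id}$ on $E_N$ and $A_i\to A:=h_1^{-1}h$ pointwise), one has $\langle u_i,\chi_{E_N}v\rangle_{h_i}=\langle u_i,A_i\chi_{E_N}v\rangle_{h_1}$, and $A_i\chi_{E_N}v\to A\chi_{E_N}v$ strongly in $\mathcal{H}_1$ by dominated convergence (dominated by $N|v|_h$), so the weak-strong pairing converges directly. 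Either way the lemma is proved; these are stylistic simplifications, not corrections. In the application within this paper the manifold is relatively compact ($M_k$ or $\Omega_k$), so the chart-by-chart issue is a non-issue there.
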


The following proposition will be used to deal with the singular metric $h$ on $E$.
\begin{Proposition}
\label{key pro l2 method} Let $c(t)$ be a positive continuous function on $[0,+\infty)$ such that $\alpha_1:=\inf\limits_{t\ge 0}c(t)>0$ and $\alpha_2:=\sup\limits_{t\ge 0}c(t)e^{-t}<+\infty.$ Let $\Omega\Subset\tilde{\Omega}\Subset \mathbb{C}^n$ be two bounded pseudoconvex domains. Let $E:=\Omega_1\times \mathbb{C}^r$ and $h$ be a singular hermitian metric on $E$ in the sense of Definition \ref{singular metric}. We assume that $\Theta_h(E)\ge^s_{Nak} 0$ in the sense of Definition \ref{singular nak}. Let $\psi$ be a quasi-plurisubharmonic function on $\Omega_1$. Assume that $\psi$ has neat analytic singularities and the singularities of $\psi$ are log canonical along the zero variety $Y=V(\mathcal{I}(\psi))$. Set
$$U:=\{x\in \Omega:\psi(x)<0\}.$$
We assume that
$$\sqrt{-1}\partial\bar{\partial}\psi\ge -\beta \sqrt{-1}\partial\bar{\partial}|z|^2,$$
on $\Omega$ for some $\beta\ge 0$, where $z:=(z_1,\ldots,z_n)$ is the coordinate on $\mathbb{C}^n$. Then for any $\beta_1\in(0,1)$ and every $E$-valued holomorphic $(n,0)$-form $f$ on $U$ satisfying
$$\int_U c(-\psi)|f|^2_h<+\infty,$$
there exists an $E$-valued holomorphic $(n,0)$-form $F$ on $\Omega$ satisfying $F=f$ on $Y$,
\begin{equation}
\label{eq:l2 method estimate 1}
\int_U c(-\psi)|F|^2_h\le e^{2\beta\sup\limits_{\Omega}|z|^2}\big(2+\frac{72\alpha_2}{\alpha_1\beta_1}\big)\int_U c(-\psi)|f|^2_h<+\infty,
\end{equation}
and
\begin{equation}
\label{eq:l2 method estimate 2}
\int_U \frac{|F|^2_h}{(1+e^{\psi})^{1+\beta_1}}\le e^{2\beta\sup\limits_{\Omega}|z|^2}\big(\frac{1}{\alpha_1}+\frac{36}{\alpha_1\beta_12^{\beta_1}}\big)\int_U c(-\psi)|f|^2_h<+\infty.
\end{equation}
\end{Proposition}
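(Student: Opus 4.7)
The plan is to adapt the Guan-Zhou method of undetermined functions, combined with the twisted $\bar\partial$-estimate of Lemma \ref{d-bar equation with error term}, to the singular-metric setting via the smooth approximations $h_{j,s}$ of Definition \ref{singular metric}. First I perform two reductions. To turn $\psi$ into a plurisubharmonic function, I replace it by $\tilde\psi := \psi + \beta(|z|^2 - \sup_\Omega |z|^2)$, which is psh on $\Omega$ and satisfies $\tilde\psi \le \psi \le \tilde\psi + \beta \sup_\Omega|z|^2$; the gap between $\tilde\psi$ and $\psi$ is exactly what will produce the overall factor $e^{2\beta \sup_\Omega|z|^2}$ in the estimates. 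To deal with the singular metric, I exhaust $\Omega$ by relatively compact $M_k$ and work with the smooth Nakano ``almost-positive'' metrics $h_{j,s}$; the monotone convergence $|f|_{h_{j,s}}^2 \uparrow |f|_h^2$ from (1.2.1) makes the hypotheses and the right-hand side of the two inequalities robust under the approximation, and the Nakano defect $-\lambda_{j,s}\omega\otimes \mathrm{Id}_E$ in (1.2.2) can be absorbed into an auxiliary smooth psh weight $e^{-\rho_{j,s}}$ with $\rho_{j,s} \to 0$ a.e.\ by (1.2.3)-(1.2.4).

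Next I set up the $\bar\partial$-equation via cut-off. For small $\epsilon > 0$, let $b_\epsilon$ be a smooth decreasing function with $b_\epsilon(t) = 1$ for $t \le -\epsilon$, $b_\epsilon(t) = 0$ for $t \ge 0$, and $|b_\epsilon'|\le 2/\epsilon$. Define $\tilde f_\epsilon := b_\epsilon(\psi)f$ on $U$, extended by zero to $\Omega\setminus U$; this is a compactly supported (in the $\psi$-direction) smooth $E$-valued $(n,0)$-form with $\bar\partial\tilde f_\epsilon = b_\epsilon'(\psi)\bar\partial\psi \wedge f$ supported in the strip $\{-\epsilon < \psi < 0\}$. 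Following Guan-Zhou, I introduce auxiliary positive functions $\eta(-\psi)$ and $g(-\psi)$ so that the curvature operator $B = [\eta\sqrt{-1}\Theta_E - \sqrt{-1}\partial\bar\partial \eta - \sqrt{-1}g\,\partial\eta\wedge\bar\partial\eta, \Lambda_\omega]$ of Lemma \ref{d-bar equation with error term} is positive on $(n,1)$-forms; because $h$ is (almost) Nakano semi-positive, this positivity reduces to a scalar inequality of the Guan-Zhou type along the $\psi$-direction, which determines $\eta$ and $g$ up to integration constants.

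Applying Lemma \ref{d-bar equation with error term} with $Q = E$ (equipped with the approximating metric) and $v = \bar\partial \tilde f_\epsilon$ produces $u_\epsilon$ solving $\bar\partial u_\epsilon = \bar\partial \tilde f_\epsilon$ with
\[
\int_\Omega (\eta + g^{-1})^{-1}\,|u_\epsilon|_h^2\,dV_\omega \;\le\; \int_\Omega \bigl\langle B^{-1}\bar\partial\tilde f_\epsilon,\;\bar\partial\tilde f_\epsilon\bigr\rangle_{h,\omega}\,dV_\omega.
\]
The choice of $\eta,g$ is tuned so that $(\eta + g^{-1})^{-1}$ dominates both $c(-\psi)/(72\alpha_2/(\alpha_1\beta_1))$ and $(1+e^\psi)^{-1-\beta_1}/(36/(\alpha_1\beta_1 2^{\beta_1}))$, while the right-hand side collapses, as $\epsilon \to 0^+$, to a constant multiple of $\int_U c(-\psi)|f|_h^2$ (this is where the two integrability hypotheses on $c$ via $\alpha_1,\alpha_2$ enter). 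Setting $F_\epsilon := \tilde f_\epsilon - u_\epsilon$, the form is holomorphic on $\Omega$, and $|F_\epsilon|^2 \le 2|\tilde f_\epsilon|^2 + 2|u_\epsilon|^2$ together with $|\tilde f_\epsilon|_h^2 \le |f|_h^2$ yields both \eqref{eq:l2 method estimate 1} and \eqref{eq:l2 method estimate 2} (the constants $2$ and $1/\alpha_1$ come from the $\tilde f_\epsilon$ pieces, the $72$ and $36$ from the $u_\epsilon$ pieces). Letting $\epsilon\to 0^+$, then $s\to+\infty$, then $j\to+\infty$, I extract a weak limit $F$ on $\Omega$ via Lemma \ref{weakly converge lemma} and Lemma \ref{weakly convergence}; holomorphy across $Y$ follows from Lemma \ref{extension of equality}, and the identity $F|_Y = f$ uses the log-canonicity of $\psi$ together with the finiteness of $\int_U c(-\psi)|F|_h^2$.

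The main obstacle is the simultaneous tuning of $\eta$ and $g$ needed to produce the two specific constants $2 + 72\alpha_2/(\alpha_1\beta_1)$ and $1/\alpha_1 + 36/(\alpha_1\beta_1 2^{\beta_1})$: they encode two different asymptotics of $(\eta + g^{-1})^{-1}$ (near $\psi = 0^-$ and near $\psi = -\infty$), so a single ansatz must fit both ends. A secondary difficulty is to carry the Nakano defect $\lambda_{j,s}$ through the Guan-Zhou scalar inequality uniformly in $(j,s)$, so that the positivity of $B$ (and thus the $\bar\partial$-estimate) survives to the double limit; this is where the a.e.\ convergence $\lambda_{j,s}\to 0$ in (1.2.3), together with the uniform bound in (1.2.4), is decisive.
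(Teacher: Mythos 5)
Your plan differs from the paper's in two essential respects, and both introduce real problems.

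First, the cut-off. You propose $\tilde f_\epsilon=b_\epsilon(\psi)f$ with $b_\epsilon=1$ on $\{\psi\le-\epsilon\}$, $b_\epsilon=0$ on $\{\psi\ge0\}$, and $|b_\epsilon'|\le 2/\epsilon$, and then let $\epsilon\to0^+$. This shrinking cut-off concentrates $\bar\partial\tilde f_\epsilon=b_\epsilon'(\psi)\bar\partial\psi\wedge f$ in a strip of width $\sim\epsilon$ in the $\psi$-variable, with amplitude $\sim1/\epsilon$. Unless the curvature operator is made to grow like $1/\epsilon$ on that same strip, the right-hand side $\int\langle B^{-1}\bar\partial\tilde f_\epsilon,\bar\partial\tilde f_\epsilon\rangle$ behaves like $(1/\epsilon)$ (a factor $\epsilon^{-2}$ from $|b_\epsilon'|^2$ times a measure $\sim\epsilon$ for the strip) and diverges. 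The paper avoids this entirely by using a \emph{fixed} cut-off $\theta(e^\psi)$ supported where $e^\psi\in[1/4,3/4]$, i.e.\ $\psi$ in a compact interval bounded away from both $0$ and $-\infty$; the uniform bound $|\theta'|\le3$ is what produces the constant $36$. Your $\epsilon\to0^+$ limit mimics the Guan--Zhou device, but in that method the cut-off strip has fixed width $1$ and its center goes to $-\infty$ (that is the $v_{t_0,\epsilon}$ construction used for the \emph{main} theorem), not a strip of vanishing width at $\psi=0$.

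Second, the undetermined functions $\eta(-\psi)$, $g(-\psi)$. The paper's proof of this proposition does not use that machinery: it takes the twisted metric $\tilde h=h_j\,e^{-\psi-\beta_1\log(1+e^\psi)-2\beta|z|^2}$, with $\eta\equiv1$ in Lemma \ref{d-bar equation with error term}, and the curvature computation gives $\sqrt{-1}\Theta_{\tilde h}\ge-\lambda_j\omega\otimes\mathrm{Id}_E+\beta_1\tfrac{e^\psi}{(1+e^\psi)^2}\partial\psi\wedge\bar\partial\psi$. The two estimates \eqref{eq:l2 method estimate 1} and \eqref{eq:l2 method estimate 2} are then derived from a \emph{single} solved estimate for $u$ by elementary post-processing: the first via $c(-\psi)\le\alpha_2e^{-\psi}$ and $(1+e^\psi)^{\beta_1}\le2^{\beta_1}$ on $U$, the second via a Cauchy--Schwarz-type splitting of $|F|^2=|\tilde f-u|^2$ with weight $1+e^\psi$. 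Your proposal to tune $(\eta+g^{-1})^{-1}$ so that it simultaneously dominates both target weights will not produce these constants, and is in tension with the shrinking-cut-off limit, since the tuning would then have to depend on $\epsilon$. Similarly, absorbing the Nakano defect $\lambda_{j,s}$ into an auxiliary psh weight $e^{-\rho_{j,s}}$ is a workable idea in principle, but it is not what the paper does here: the paper sets $\tilde\lambda_j=\lambda_j+1/j$ and uses the correcting term $\tau$ from Lemma \ref{d-bar equation with error term}, then shows $\tau$ vanishes in the weak limit; this route avoids having to control $\rho_{j,s}$ uniformly. Your reduction $\tilde\psi=\psi+\beta(|z|^2-\sup_\Omega|z|^2)$ also does not quite match the paper's twist $e^{-2\beta|z|^2}$: the coefficient $2\beta$ appears because the curvature term $(1+\beta_1\tfrac{e^\psi}{1+e^\psi})\sqrt{-1}\partial\bar\partial\psi$ has coefficient up to $2$, and your single $\beta$ would not compensate it.
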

\begin{proof}The proof is a modification of a proposition in \cite{Demaillybook} (see also \cite{ZZ2019}).

As $\Omega$ is pseudoconvex domain, there exists a sequence of pseudoconvex domains
$\Omega_k$ satisfying $\Omega_1 \Subset  \Omega_2\Subset  ...\Subset
\Omega_k\Subset  \Omega_{k+1}\Subset  ...$ and $\cup_{k=1}^n \Omega_k=\Omega$.

It follows from $\Omega\Subset\Omega_1\Subset \mathbb{C}^n$ and $\Theta_h(E)\ge^s_{Nak} 0$ in the sense of Definition \ref{singular nak} that we know that there exists a sequence of $C^2$ smooth metrics $h_j$ ($j\ge 1$) convergent point-wisely to $h$ on a neighborhood of $\overline{\Omega}$ which satisfies
\par
(1) for any $x\in \Omega:\ |e_x|_{h_{j}}\le |e_x|_{h_{j+1}},$ for any $j\ge 1$ and any $e_x\in E_x$;
\par
(2) $\Theta_{h_{j}}(E)\ge_{Nak} -\lambda_{j}\omega\otimes Id_E$ on $\Omega$;
\par
(3)  $\lambda_{j}\to 0$ a.e. on $\Omega$, where $\lambda_{j}$ is a sequence of continuous functions on $\overline{\Omega}$;
\par
(4)  $0\le \lambda_{j}\le \lambda_0$ on $\Omega$, for any $s\ge 1$, where $\lambda_0$ is a continuous function $\overline{\Omega}$.

Let $\theta:\mathbb{R}\to [0,1]$ be a smooth function such that $\theta=1$ on $(-\infty,\frac{1}{4})$, $\theta=0$ on $(\frac{3}{4},+\infty)$ and $|\theta'|\le 3$ on $\mathbb{R}$. Denote $\tilde{f}=\theta(e^{\psi})f$. Then $\tilde{f}$ is smooth on $\Omega$ and $\tilde{f}=f$ on $Y\cap\Omega$. Hence $g:=\bar{\partial}\tilde{f}$ is well defined.

Fix $k$ and $j$ temporarily.
Let $\sum :=\{\psi=-\infty\}$. It follows from Lemma \ref{completeness} that $\Omega_k\backslash \sum$ is a complete K\"ahler manifold. Let $\omega$ be the Euclidean metric on $\Omega_k\backslash \sum$. We equip $E$ with the metric
$$\tilde{h}:=h_je^{-\psi-\beta_1\log(1+e^\psi)-2\beta|z|^2}$$
on $\Omega_k\backslash \sum$. Denote $B:=[\sqrt{-1}\Theta_{\tilde{h}},\Lambda_{\omega}]$. Direct calculation shows
\begin{equation*}
  \begin{split}
     \sqrt{-1}\Theta_{\tilde{h}} =& \sqrt{-1}\Theta_{\tilde{h}_j}+\sqrt{-1}\partial\bar{\partial}\psi+\beta_1 \sqrt{-1}\partial\bar{\partial}\log(1+e^{\psi})+2\beta\sqrt{-1}\partial\bar{\partial}|z|^2\\
     =& (\sqrt{-1}\Theta_{\tilde{h}_j}+\lambda_j\omega\otimes \text{Id}_E)-\lambda_j\omega\otimes \text{Id}_E+(1+\beta_1\frac{e^{\psi}}{1+e^{\psi}})\sqrt{-1}\partial\bar{\partial}\psi\\
       &+2\beta\sqrt{-1}\partial\bar{\partial}|z|^2+\beta_1\frac{e^{\psi}}{(1+e^{\psi})^2}\partial{\psi}\wedge\bar{\partial}\psi\\
       \ge&-\lambda_j\omega\otimes \text{Id}_E+\beta_1\frac{e^{\psi}}{(1+e^{\psi})^2}\partial{\psi}\wedge\bar{\partial}\psi.
  \end{split}
\end{equation*}
Denote $\tilde{\lambda}_j=\lambda_j+\frac{1}{j}$, then we know that the operator $B+\tilde{\lambda}_j \text{Id}_E$ is positive. Hence, for any $E$-valued $(n,1)$-form $\alpha$, we have
\begin{equation}
\label{curvature inequality}
\begin{split}
&\langle(B+\tilde{\lambda}_j \text{Id}_E)\alpha,\alpha\rangle_{\tilde h}\\
\ge&\langle[(\beta_1\frac{e^{\psi}}{(1+e^{\psi})^2}\partial\psi\wedge\bar{\partial}\psi)\otimes\text{Id}_E,
\Lambda_{\omega}]\alpha,\alpha\rangle_{\tilde h}
\end{split}
\end{equation}
Note that $g=\bar{\partial}\tilde{f}=\theta'(e^{\psi})e^{\psi}\bar{\partial}\psi\wedge f$. Hence we have $\langle(B+\tilde{\lambda}_j \text{Id}_E)^{-1}g,g\rangle_{\tilde{h}}|_{\Omega_k\backslash U}=0$ and
\begin{equation*}
  \begin{split}
     \langle(B+\tilde{\lambda}_j \text{Id}_E)^{-1}g,g\rangle_{\tilde{h}}|_{(\Omega_k\cap U)\backslash \sum} & \le \frac{(1+e^\psi)^2}{\beta_1e^{\psi}}|\theta'(e^\psi)e^\psi f|^2_{h_j}e^{-\psi-\beta_1\log(1+e^\psi)-2\beta|z|^2}\\
       &= \frac{(1+e^\psi)^{2-\beta_1}}{\beta_1}|\theta'(e^\psi)f|^2_{h_j}e^{-2\beta|z|^2}\\
       &\le\frac{36}{2^{\beta_1}\beta_1}|f|^2_{h_j}e^{-2\beta|z|^2}
  \end{split}
\end{equation*}

It follows from Lemma \ref{d-bar equation with error term} that there exists an approximate solution
$u_{k,j} \in L^2(\Omega_k\backslash \sum,K_{\Omega} \otimes E,\tilde{h})$ and a correcting term $\tau\in
L^2(\Omega_k\backslash \sum,\wedge^{n,1}T^*\Omega\otimes E,\tilde{h})$ such that $D^{''}u_{k,j}+P_{\tilde{h}}(\sqrt{\tilde{\lambda}_j}\tau_{k,j})=g$ holds on $\Omega_k\backslash \sum$, where $P_{\tilde{h}}:L^2(\Omega_k\backslash \sum,\wedge^{n,1}T^*\Omega \otimes E,\tilde{h})\to \text{Ker}{D''}$ is the orthogonal projection and
\begin{equation}
  \begin{split}
\int_{\Omega_k\backslash \sum}|u_{k,j}|^2_{\tilde{h}}+\int_{\Omega_k\backslash \sum}|\tau_{k,j}|^2_{\tilde{h}} &\le \int_{\Omega_k\backslash \sum} \langle (B+\tilde{\lambda}_j
\text{Id}_E)^{-1}g,g\rangle_{\tilde{h}}\\
&\le \frac{36}{2^{\beta_1}\beta_1}\int_{U}|f|^2_{h_j}e^{-2\beta|z|^2}.
  \end{split}
\end{equation}
Thus, we have
\begin{equation}\label{eq:estimate for ukj}
\begin{split}
  \int_{\Omega_k\backslash \sum}\frac{|u_{k,j}|^2_{h_j}e^{-2\beta|z|^2}}{e^\psi(1+e^\psi)^{\beta_1}}
\le& \frac{36}{2^{\beta_1}\beta_1}\int_{U}|f|^2_{h_j}e^{-2\beta|z|^2}\\
\le& \frac{36}{2^{\beta_1}\beta_1\alpha_1}\int_{U}|f|^2_{h_j}e^{-2\beta|z|^2}c(-\psi)<+\infty.
  \end{split}
\end{equation}
Similarly, we also have
\begin{equation}\label{eq:estimate for taukj}
\begin{split}
  \int_{\Omega_k\backslash \sum}\frac{|\tau_{k,j}|^2_{h_j}e^{-2\beta|z|^2}}{e^\psi(1+e^\psi)^{\beta_1}}
\le \frac{36}{2^{\beta_1}\beta_1\alpha_1}\int_{U}|f|^2_{h_j}e^{-2\beta|z|^2}c(-\psi)<+\infty.
  \end{split}
\end{equation}
It follows from $e^{-\psi}$, $(1+e^\psi)^{-\beta_1}$ and $e^{-2\beta|z|^2}$ have positive lower bound on $\Omega_k$, $h_j\ge h_1$ for any $j\ge 1$, inequalities \eqref{eq:estimate for ukj} and \eqref{eq:estimate for taukj} that we know  both $u_{k,j}$ and $\tau_{k,j}$ are $L^2_{\text{loc}}$. Then by Lemma \ref{extension of equality}, we know that
\begin{equation}\label{eq:d-bar equation}
D^{''}u_{k,j}+P_{\tilde{h}}(\sqrt{\tilde{\lambda}_j}\tau_{k,j})=g
\end{equation}
holds on $\Omega_k$. As $\Omega$ is pseudoconvex domain, there exists a sequence of smooth quasi-plurisubharmonic functions $\psi_m$ decreasingly converges  to $\psi$ on $\Omega$ as $m\to +\infty$. By inequalities \eqref{eq:estimate for ukj} and \eqref{eq:estimate for taukj}, we have
\begin{equation}\label{eq:estimate for ukj on omega}
\begin{split}
  \int_{\Omega_k}\frac{|u_{k,j}|^2_{h_j}e^{-2\beta|z|^2}}{e^{\psi_m}(1+e^\psi)^{\beta_1}}
  \le&\int_{\Omega_k}\frac{|u_{k,j}|^2_{h_j}e^{-2\beta|z|^2}}{e^{\psi}(1+e^\psi)^{\beta_1}}\\
\le& \frac{36}{2^{\beta_1}\beta_1\alpha_1}\int_{U}|f|^2_{h_j}e^{-2\beta|z|^2}c(-\psi)\\
 \le &\frac{36}{2^{\beta_1}\beta_1\alpha_1}\int_{U}|f|^2_{h}e^{-2\beta|z|^2}c(-\psi)<+\infty,
  \end{split}
\end{equation}
and
\begin{equation}\label{eq:estimate for taukj on omega}
\begin{split}
  \int_{\Omega_k}\frac{|\tau_{k,j}|^2_{h_j}e^{-2\beta|z|^2}}{e^\psi(1+e^\psi)^{\beta_1}}
\le& \frac{36}{2^{\beta_1}\beta_1\alpha_1}\int_{U}|f|^2_{h_j}e^{-2\beta|z|^2}c(-\psi)\\
 \le &\frac{36}{2^{\beta_1}\beta_1\alpha_1}\int_{U}|f|^2_{h}e^{-2\beta|z|^2}c(-\psi)<+\infty.
  \end{split}
\end{equation}
It follows from $e^{-\psi}$, $(1+e^\psi)^{-\beta_1}$  and $e^{-2\beta|z|^2}$ have positive lower bound on $\Omega_k$, $h_j\ge h_1$ for any $j\ge 1$ and inequality \eqref{eq:estimate for ukj on omega} that we have
$$\sup_{j}\int_{\Omega_k}|u_{k,j}|^2_{h_1}<+\infty. $$
Since the closed unit ball of the Hilbert space is
weakly compact, we can extract a subsequence $u_{k,j'}$ weakly convergent to $u_{k}$ in $L^2(\Omega_k,K_{\Omega} \otimes E,h_1)$. It follows from $e^{-2\beta|z|^2}$ have positive upper bound on $\Omega_k$, $\psi_m$ is smooth on $\Omega$, $\Omega_k\Subset \Omega$ and Lemma \ref{weakly convergence} that we know $\sqrt{e^{-\psi_m}(1+e^\psi)^{-\beta_1}e^{-2\beta|z|^2}}u_{k,j'}$ weakly converges to $\sqrt{e^{-\psi_m}(1+e^\psi)^{-\beta_1}e^{-2\beta|z|^2}}u_{k}$ in $L^2(\Omega_k,K_{\Omega} \otimes E,h_1)$ as $j'\to+\infty$.

For fixed $j\in\mathbb{Z}_{\ge 1}$, as $h_1$ and $h_j$ are both $C^2$ smooth hermitian metrics on $\Omega_k$ and $\Omega_k\subset\subset X$, we know that the two norms in $L^2(\Omega_k, K_\Omega\otimes E, h_1)$ and $ L^2(\Omega_k, K_\Omega\otimes E, h_j)$ are equivalent. Hence we know that $\sqrt{e^{-\psi_m}(1+e^\psi)^{-\beta_1}e^{-2\beta|z|^2}}u_{k,j'}$ weakly converges to $\sqrt{e^{-\psi_m}(1+e^\psi)^{-\beta_1}e^{-2\beta|z|^2}}u_{k}$ in $L^2(\Omega_k,K_{\Omega} \otimes E,h_j)$ as $j'\to+\infty$.
Then it follows from \eqref{eq:estimate for ukj on omega} that we have
\begin{equation*}
\begin{split}
 &\int_{\Omega_k}\frac{|u_{k}|^2_{h_j}e^{-2\beta|z|^2}}{e^{\psi_m}(1+e^\psi)^{\beta_1}}\\
  \le&\liminf_{j'\to +\infty}\int_{\Omega_k}\frac{|u_{k,j'}|^2_{h_j}e^{-2\beta|z|^2}}{e^{\psi_m}(1+e^\psi)^{\beta_1}}\\
  \le&\liminf_{j'\to +\infty}\int_{\Omega_k}\frac{|u_{k,j'}|^2_{h_{j'}}e^{-2\beta|z|^2}}{e^{\psi_m}(1+e^\psi)^{\beta_1}}\\
\le&\liminf_{j'\to +\infty}\int_{\Omega_k}\frac{|u_{k,j'}|^2_{h_{j'}}e^{-2\beta|z|^2}}{e^{\psi}(1+e^\psi)^{\beta_1}}\\
\le& \frac{36}{2^{\beta_1}\beta_1\alpha_1}\int_{U}|f|^2_{h_{j'}}e^{-2\beta|z|^2}c(-\psi)\\
 \le &\frac{36}{2^{\beta_1}\beta_1\alpha_1}\int_{U}|f|^2_{h}e^{-2\beta|z|^2}c(-\psi)<+\infty.
\end{split}
\end{equation*}
Letting $j\to+\infty$ and $m\to+\infty$, by monotone convergence theorem, we have
\begin{equation}
\begin{split}
\label{eq:estimate fo uk}
 \int_{\Omega_k}\frac{|u_{k}|^2_{h}e^{-2\beta|z|^2}}{e^{\psi}(1+e^\psi)^{\beta_1}}
 \le\frac{36}{2^{\beta_1}\beta_1\alpha_1}\int_{U}|f|^2_{h}e^{-2\beta|z|^2}c(-\psi)<+\infty.
\end{split}
\end{equation}

It follows from $e^{-\psi}$, $(1+e^\psi)^{-\beta_1}$, $e^{-2\beta|z|^2}$ have positive lower bound on $\Omega_k$ and inequality \eqref{eq:estimate for taukj on omega} that we have
$$\sup_{j'}\int_{\Omega_k}|\tau_{k,j'}|^2_{h_{j'}}<+\infty. $$
As $h_j\ge h_1$ for any $j\ge 1$, we also have
$$\sup_{j'}\int_{\Omega_k}|\tau_{k,j'}|^2_{h_1}<+\infty. $$
Since the closed unit ball of the Hilbert space is
weakly compact, we can extract a subsequence of $\tau_{k,j'}$ (also denoted by $\tau_{k,j'}$) weakly converges to $\tau_{k}$ in $L^2(\Omega_k,\wedge^{n,1}T^*\Omega \otimes E,h_1)$ as $j' \to +\infty$. As $\Omega_k\Subset \Omega$, $0\le \tilde{\lambda}_{j'}\le\lambda+1 $, we know that

$$\sup_{j'}\int_{\Omega_k}\tilde{\lambda}_{j'}|\tau_{k,j'}|^2_{h_{j'}}<+\infty. $$
It follows from Lemma \ref{weakly converge lemma} that we have a subsequence of $\{\sqrt{\tilde{\lambda}_j}\tau_{k,j'}\}_{j'}^{+\infty}$ (also denoted by $\{\sqrt{\tilde{\lambda}_j}\tau_{k,j'}\}_{j'}^{+\infty}$) weakly convergent to some $\tilde{\tau}_k$ in $L^2(\Omega_k,\wedge^{n,1}T^*\Omega \otimes E,h_1)$ and $P_{j'}(\sqrt{\tilde{\lambda}_j}\tau_{k,j'})$ weakly converges to $P(\tilde{\tau}_k)$ as $j' \to +\infty$.

It follows from  $0\le \tilde{\lambda}_{j'}\le\lambda+1$, $\Omega_k$ is relatively compact in $\Omega$ and Lemma \ref{weakly convergence} that we know $\sqrt{\tilde{\lambda}_j}\tau_{k,j'}$ weakly
converges to $0$ in $L^2(\Omega_k,\wedge^{n,1}T^*\Omega \otimes E,h_1)$. It follows from the uniqueness of weak limit that we know $\tilde{\tau}_k=0$. Then we have $P_{j'}(\sqrt{\tilde{\lambda}_j}\tau_{k,j'})$ weakly converges to $0$ in $L^2(\Omega_k,\wedge^{n,1}T^*\Omega \otimes E,h_1)$.

Replacing $j$ by $j'$ in equality \eqref{eq:d-bar equation}, we have
\begin{equation}\nonumber
D^{''}u_{k}=g=\bar{\partial}\tilde{f}=\theta'(e^{\psi})e^{\psi}\bar{\partial}\psi\wedge f.
\end{equation}

Denote $F_k:=\tilde{f}-u_k$. Then $\bar{\partial} F_k=0$ on $\Omega_k$. Then $F_k$ is holomorphic on $\Omega_k$ and $u_k$ is smooth on $\Omega_k$. Then it follows from inequality \eqref{eq:estimate fo uk} and $e^{-\psi}$ is not integrable along $Y$ that $u_k=0$ on $Y\cap \Omega_k$. Hence $F_k=f$ on $Y\cap \Omega_k$.

It follows from inequality \eqref{eq:estimate fo uk} that we have
\begin{equation}\nonumber
\begin{split}
\label{eq:estimate fo uk 2}
\int_{U\cap\Omega_k}|u_{k}|^2_{h}e^{-2\beta|z|^2}c(-\psi)
&\le2^{\beta_1}\alpha_2\int_{U\cap\Omega_k}\frac{|u_{k}|^2_{h}e^{-2\beta|z|^2}}{e^{\psi}(1+e^\psi)^{\beta_1}}\\
 &\le\frac{36\alpha_2}{\beta_1\alpha_1}\int_{U}|f|^2_{h}e^{-2\beta|z|^2}c(-\psi)<+\infty.
\end{split}
\end{equation}
Note that $|F_k|^2_{h}|_{U\cap \Omega_k}\le 2|\tilde{f}|^2_h+2|u_k|^2_h\le 2|f|^2_h+2|u_k|^2_h$ and $|z|^2\ge 0$, then we have
\begin{equation}
\begin{split}
\label{eq:estimate fo Fk 1}
\int_{U\cap\Omega_k}|F_{k}|^2_{h}e^{-2\beta|z|^2}c(-\psi)
&\le 2 \int_{U\cap\Omega_k}(|f|^2_{h}+|u_{k}|^2_{h})e^{-2\beta|z|^2}c(-\psi)\\
 &\le\big(2+\frac{72\alpha_2}{\beta_1\alpha_1}\big)\int_{U}|f|^2_{h}e^{-2\beta|z|^2}c(-\psi)\\
 &\le\big(2+\frac{72\alpha_2}{\beta_1\alpha_1}\big)\int_{U}|f|^2_{h}c(-\psi)<+\infty.
\end{split}
\end{equation}

Since $$\langle a_1+a_2,a_1+a_2\rangle\le (1+c)\langle a_1,a_1\rangle+(1+\frac{1}{c})\langle a_2,a_2\rangle$$
holds for any $a_1,a_2$ in an inner product space $(H,\langle \cdot,\cdot \rangle)$, we have
$$|F_k|^2_{h}|_{U\cap \Omega_k}\le (1+e^{\psi})|f|^2_h+(1+e^{-\psi})|u_k|^2_h.$$
Then we know
$$\frac{|F_k|^2_{h}}{(1+e^\psi)^{1+\beta_1}}|_{U\cap \Omega_k}\le |f|^2_h+\frac{|u_k|^2_h}{e^{\psi}(1+e^{\psi})^{\beta_1}}.$$
Note that $|F_k|^2_{h}|_{ \Omega_k\backslash U}=|u_k|^2_h$, then we get
$$\frac{|F_k|^2_{h}}{(1+e^\psi)^{1+\beta_1}}|_{ \Omega_k\backslash U}\le \frac{|u_k|^2_{h}}{e^{\psi}(1+e^\psi)^{\beta_1}}|_{ \Omega_k\backslash U}.$$
Combining with inequality \eqref{eq:estimate fo uk}, we have
\begin{equation}
\begin{split}
\label{eq:estimate fo Fk 2}
\int_{\Omega_k}\frac{|F_k|^2_{h}e^{-2\beta|z|^2}}{(1+e^\psi)^{1+\beta_1}}
&\le  \int_{U}|f|^2_{h}e^{-2\beta|z|^2}+\int_{\Omega_k}\frac{|u_k|^2_{h}e^{-2\beta|z|^2}}{e^{\psi}(1+e^\psi)^{\beta_1}}\\
 &\le\big(\frac{1}{\alpha_1}+\frac{36}{2^{\beta_1}\beta_1\alpha_1}\big)\int_{U}|f|^2_{h}e^{-2\beta|z|^2}c(-\psi)\\
 &\le\big(\frac{1}{\alpha_1}+\frac{36}{2^{\beta_1}\beta_1\alpha_1}\big)\int_{U}|f|^2_{h}c(-\psi)<+\infty.
\end{split}
\end{equation}

Note that $(1+e^\psi)^{-(1+\beta_1)}$ and $e^{-2\beta|z|^2}$  have positive lower bound on any $\Omega_k\Subset \Omega$, for any $k_1>k$, then we have
$$\sup_{k_1>k}\int_{\Omega_k}|F_{k_1}|^2_{h}<+\infty.$$
Note that $h_1\le h$, then we have
$$\sup_{k_1>k}\int_{\Omega_k}|F_{k_1}|^2_{h_1}<+\infty.$$
By diagonal method, there exists a subsequence $F_{k'}$ uniformly convergent on any
$\overline{\Omega_k}$ to an $E$-valued holomorphic $(n,0)$-form on $\Omega$ denoted by $F$. It follows from inequality \eqref{eq:estimate fo Fk 2} and Fatou's lemma that we have
\begin{equation}
\begin{split}
\label{eq:estimate fo Fk 3}
\int_{\Omega_k}\frac{|F|^2_{h_i}e^{-2\beta|z|^2}}{(1+e^\psi)^{1+\beta_1}}
\le &\liminf_{k'\to +\infty}\int_{\Omega_k}\frac{|F_{k'}|^2_{h_i}e^{-2\beta|z|^2}}{(1+e^\psi)^{1+\beta_1}}\\
\le &\liminf_{k'\to +\infty}\int_{\Omega_k}\frac{|F_{k'}|^2_{h}e^{-2\beta|z|^2}}{(1+e^\psi)^{1+\beta_1}}\\
\le &\liminf_{k'\to +\infty}\int_{\Omega_{k'}}\frac{|F_{k'}|^2_{h}e^{-2\beta|z|^2}}{(1+e^\psi)^{1+\beta_1}}\\
\le & \big(\frac{1}{\alpha_1}+\frac{36}{2^{\beta_1}\beta_1\alpha_1}\big)\int_{U}|f|^2_{h}c(-\psi).
\end{split}
\end{equation}
Letting $i\to +\infty$ and $k\to+\infty$ in inequality \eqref{eq:estimate fo Fk 3}, by monotone convergence theorem, we have
\begin{equation}\nonumber
\begin{split}
\int_{\Omega}\frac{|F|^2_{h}e^{-2\beta|z|^2}}{(1+e^\psi)^{1+\beta_1}}
\le  \big(\frac{1}{\alpha_1}+\frac{36}{2^{\beta_1}\beta_1\alpha_1}\big)\int_{U}|f|^2_{h}c(-\psi).
\end{split}
\end{equation}
Then we have
\begin{equation}\nonumber
\begin{split}
\int_{\Omega}\frac{|F|^2_{h}}{(1+e^\psi)^{1+\beta_1}}
\le e^{2\beta\sup_{\Omega} |z|^2} \big(\frac{1}{\alpha_1}+\frac{36}{2^{\beta_1}\beta_1\alpha_1}\big)\int_{U}|f|^2_{h}c(-\psi).
\end{split}
\end{equation}
It follows from inequality \eqref{eq:estimate fo Fk 1} and Fatou's lemma that we have
\begin{equation}
\begin{split}
\label{eq:estimate fo Fk 4}
\int_{U\cap\Omega_k}|F|^2_{h_i}e^{-2\beta|z|^2}c(-\psi)
&\le\liminf_{k'\to +\infty}\int_{U\cap\Omega_k}|F_{k'}|^2_{h_i}e^{-2\beta|z|^2}c(-\psi)\\
&\le\liminf_{k'\to +\infty}\int_{U\cap\Omega_k}|F_{k'}|^2_{h}e^{-2\beta|z|^2}c(-\psi)\\
&\le\liminf_{k'\to +\infty}\int_{U\cap\Omega_{k'}}|F_{k'}|^2_{h}e^{-2\beta|z|^2}c(-\psi)\\
 &\le\big(2+\frac{72\alpha_2}{\beta_1\alpha_1}\big)\int_{U}|f|^2_{h}c(-\psi).
\end{split}
\end{equation}
Letting $i\to +\infty$ and $k\to+\infty$ in inequality \eqref{eq:estimate fo Fk 4}, by monotone convergence theorem, we have
\begin{equation}\nonumber
\begin{split}
\int_{U}|F|^2_{h}e^{-2\beta|z|^2}c(-\psi)
\le  \big(2+\frac{72\alpha_2}{\beta_1\alpha_1}\big)\int_{U}|f|^2_{h}c(-\psi).
\end{split}
\end{equation}
Then we have
\begin{equation}\nonumber
\begin{split}
\int_{U}|F|^2_{h}c(-\psi)
\le  e^{2\beta\sup_{\Omega}|z|^2}\big(2+\frac{72\alpha_2}{\beta_1\alpha_1}\big)\int_{U}|f|^2_{h}c(-\psi).
\end{split}
\end{equation}
Hence $F$ satisfies the desired $L^2$ estimates. Proposition \ref{key pro l2 method} is proved.
\end{proof}

The following optimal $L^2$ extension theorem for vector bundles with smooth hermitian metric on Stein manifolds will be used in our discussion.
\begin{Theorem}[see \cite{guan-zhou13ap}]\label{l2 extension on stein} Let $c(t)\in \mathcal{G}_{T,\delta}$ for some $T\in(-\infty,+\infty)$ and $0<\delta<+\infty$.
Let $M$ be a Stein manifold and $\omega$ be a hermitian metric on $M$. Let $h$ be a smooth hermitian metric on a holomorphic vector bundle $E$ on $M$ with rank $r$. Let $\psi<-T$ be a quasi-plurisubharmonic function on $X$ with neat analytic singularities. Let $Y:=V(\mathcal{I}(\psi))$ and assume that $\psi$ has log canonical singularities along $Y$. Assume that\\
(1) $\sqrt{-1}\Theta_h+\sqrt{1}\partial\bar{\partial}\psi$ is Nakano semi-positive on $M\backslash\{\psi=-\infty\}$, \\
(2) there exists a continuous function $a(t)$ on $(T,+\infty]$ such that $0<a(t)\le s(t)$ and $a(-\psi)\sqrt{-1}\Theta_{he^{-\psi}}+\sqrt{1}\partial\bar{\partial}\psi$ is Nakano semi-positive on $M\backslash\{\psi=-\infty\}$, where
$$s(t):=\frac{
  \int_{T}^{t}\big(\frac{1}{\delta}c(T)e^{-T}+\int_{T}^{t}c(t_1)e^{-t_1}dt_1\big)dt_2+\frac{1}{\delta^2}c(T)e^{-T}}
  {\frac{1}{\delta}c(T)e^{-T}+\int_{T}^{t}c(t_1)e^{-t_1}dt_1}.$$
Then  for any holomorphic section $f$ of $K_M\otimes E|_Y$ on $Y$ satisfying
$$\int_{Y_0}|f|^2_{\omega,h}dV_{M,\omega}[\psi]<+\infty,$$
there exists a holomorphic section $F$ of   $K_M\otimes E$ on $M$ satisfying $F|_Y=f$ and
$$\int_M c(-\psi)|F|^2_{\omega,h}dV_{M,\omega}\le \left(\frac{1}{\delta}c(T)e^{-T}+\int_{T}^{+\infty}c(t_1)e^{-t_1}dt_1\right)\int_{Y_0}|f|^2_{\omega,h}dV_{M,\omega}[\psi].$$

\end{Theorem}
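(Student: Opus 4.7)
The plan is to follow the method of undetermined functions of Guan-Zhou \cite{GZsci,guan-zhou13ap}. Since $M$ is Stein, one first exhausts $M$ by an increasing sequence of relatively compact strongly pseudoconvex open subsets $M_k$ equipped with complete K\"ahler metrics, and approximates $\psi$ from above by a decreasing sequence of smooth quasi-plurisubharmonic functions $\psi_m$ (with the help of Demailly's regularization on a Stein manifold) preserving, up to a vanishing error, the log canonical condition and both positivity hypotheses. Fix $k,m$ temporarily so that one may argue as if $\psi$ were smooth, $M$ were relatively compact with a complete K\"ahler metric on $M\setminus\{\psi=-\infty\}$, and $Y^0$ were approached only in the final limits.

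The heart of the argument is to consider the modified weight $he^{-\phi(-\psi)-\psi}$ on $E$, where $\phi$ is an undetermined function. A direct computation gives
\[
\sqrt{-1}\Theta_{he^{-\phi(-\psi)-\psi}}=\sqrt{-1}\Theta_h+(1+\phi'(-\psi))\sqrt{-1}\partial\bar\partial\psi+\phi''(-\psi)\sqrt{-1}\partial\psi\wedge\bar\partial\psi.
\]
Combining with a Donnelly--Fefferman-type twisting by auxiliary positive functions $\eta(-\psi)$ and $g(-\psi)$, consider the twisted Bochner operator
\[
B:=[\eta\,\sqrt{-1}\Theta_{he^{-\phi(-\psi)-\psi}}-\sqrt{-1}\partial\bar\partial\eta-g^{-1}\sqrt{-1}\partial\eta\wedge\bar\partial\eta,\Lambda_\omega].
\]
Expanding the derivatives of $\eta(-\psi)$ and invoking hypotheses $(1)$ and $(2)$, the operator $B$ is Nakano semi-positive on $E$-valued $(n,1)$-forms provided the triple $(\eta,\phi,g)$ satisfies a coupled ODE system whose solution is dictated by $c(t)$: schematically $\eta(t)+g(t)=s(t)$, $\eta'(t)+1+\phi'(t)=0$, and a normalization that identifies $e^{-\phi(t)}(\eta(t)+g(t))^{-1}$ with $c(t)e^{-t}/C_0$ for $C_0=\frac{1}{\delta}c(T)e^{-T}+\int_T^{+\infty}c(t_1)e^{-t_1}dt_1$. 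The defining inequality of $\mathcal{G}_{T,\delta}$ is precisely what guarantees $\phi''(t)>0$ and $g(t)>0$, so that this system is solvable with the right boundary data at $t=T$ involving $\frac{1}{\delta}c(T)e^{-T}$.

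With these choices fixed, one locally extends $f$ to a holomorphic $E$-valued $(n,0)$-form $\tilde f_0$ on a neighborhood of $Y$ (possible since $M$ is Stein and $f$ is defined on the submanifold $Y^0$), multiplies by a cutoff $\chi_t=1-\theta(\psi+t)$ so that $\tilde f_t:=\chi_t\tilde f_0$ is supported away from $\{\psi=-\infty\}$, and solves the $\bar\partial$-equation $\bar\partial u_t=\bar\partial\tilde f_t$ via Lemma \ref{d-bar equation with error term}. The $L^2$ estimate yields
\[
\int_M c(-\psi)|u_t|^2_{\omega,h}dV_{M,\omega}\le\int_{\{-t-1<\psi<-t\}}\frac{|\theta'(\psi+t)|^2|\bar\partial\psi|^2|\tilde f_0|^2_{\omega,h}\,e^{-\phi(-\psi)-\psi}}{(\eta+g^{-1})^{-1}\text{ factor}}dV_{M,\omega},
\]
and the non-integrability of $e^{-\psi}$ along $Y$ forces $u_t|_Y=0$, so $F_t:=\tilde f_t-u_t$ is a holomorphic extension of $f$ in $H^0(M_k,K_{M_k}\otimes E)$.

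The main obstacle is matching the cutoff integral with the Ohsawa measure $dV_{M,\omega}[\psi]$ on $Y^0$ as $t\to+\infty$, together with the precise constant $C_0$. Using Hironaka's desingularization (Theorem \ref{thm:desing}) to pull back to a local model where $\psi\circ\mu=c\log|w^a|^2+\tilde u(w)$ with normal-crossings exceptional divisor, the shell integral becomes a residue-type integral on the log canonical component $D_{p_0}$ (characterized by $ca_{p_0}-b_{p_0}=1$), reproducing $\int_{Y^0}|f|^2_{\omega,h}dV_{M,\omega}[\psi]$ via an argument parallel to the one in Proposition \ref{p:inte}. A standard weak-compactness / diagonal extraction over $M_k$ and $\psi_m\downarrow\psi$, combined with the monotone convergence of $\int c(-\psi)|F_k|^2$, produces the global extension $F\in H^0(M,K_M\otimes E)$ with the claimed optimal estimate, the normalization $\frac{1}{\delta}c(T)e^{-T}$ being pinned down by the boundary condition of the ODE system at $t=T$ and the constraint $\psi<-T$.
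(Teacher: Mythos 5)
The paper does not reprove this theorem; it is cited from \cite{guan-zhou13ap} (and Remark \ref{weakly pseudoconvex case extension} points to \cite{ZZ2019,ZhouZhu-jdg} for the variant where $M$ is weakly pseudoconvex and $f$ is given only on $Y_0$). Your sketch follows the same Guan--Zhou method of undetermined functions that underlies both that reference and the paper's own proof of the more general Theorem \ref{main result}: Stein exhaustion, modified weight $he^{-\phi(-\psi)-\psi}$, twisted Bochner operator $B$, cutoff near $\{\psi<-t\}$ and $\bar\partial$-solution, Hironaka desingularization of the shell integral to produce the Ohsawa measure, and a diagonal limit. The large-scale structure is correct.

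The one place that would fail if expanded is the coupled ODE system. With the normalization used in the paper's Step~5, the triple is $\eta=S(-\psi)$, $\phi=u(-\psi)$, $g=\tilde g(-\psi)$, and the relations that kill the curvature error and deliver the sharp constant are $\eta'(t)-\eta(t)\phi'(t)=1$ (rather than $\eta'+1+\phi'=0$), $\tilde g=(\phi''\eta-\eta'')/(\eta')^2$, and the normalization $\big(\eta(t)+g(t)^{-1}\big)^{-1}e^{-\phi(t)}=c(t)e^{-t}$ --- note $g^{-1}$, not $g$, and no division by $C_0$. The constant $C_0=\frac{1}{\delta}c(T)e^{-T}+\int_T^{+\infty}c(t_1)e^{-t_1}dt_1$ enters instead as $\sup_{t\ge T}e^{-\phi(t)}$ in the limit $t_0\to+\infty$. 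Also $\eta$ alone (not $\eta+g$) plays the role of $s(t)$. Finally, since $h$ here is smooth, you do not need the error-term version of the $\bar\partial$-estimate from Lemma \ref{d-bar equation with error term}: the plain twisted estimate on the complete K\"ahler manifold $M_k\setminus\{\psi=-\infty\}$ suffices. With the ODE corrected, the proposal matches the method used for this result.
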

\begin{Remark}\label{weakly pseudoconvex case extension}When $M$ is a weakly pseudoconvex  K\"ahler manifold and $f$ is only defined on $Y_0$, Theorem \ref{l2 extension on stein} can be referred to \cite{ZZ2019} (see also \cite{ZhouZhu-jdg}).
\end{Remark}

By using Theorem \ref{l2 extension on stein}, we present the following $L^2$ extension theorem for singular hermitian metric on vector bundles in the local case. Let $\Omega\Subset \Omega_1 \Subset \mathbb{C}^n$ be two balls in $\mathbb{C}^n$. Let $E:=\Omega_1\times \mathbb{C}^r$, where $r\ge 1$.  Let $h$ be a singular hermitian metric on $E$. We assume that  $\Theta_{h}(E)\ge^s_{Nak} 0$ in the sense of Definition \ref{singular nak}. Let $\psi<-T$ be a plurisubharmonic function on $\Omega$ with neat analytic singularities. Let $Y:=V(\mathcal{I}(\psi))$ and assume that $\psi$ has log canonical singularities along $Y$. Denote $Y^0=Y_{\rm{reg}}$ the regular point set of $Y$.

\begin{Proposition}\label{key pro l2 extension local}
Let $c(t)$ be the same as in Theorem \ref{l2 extension on stein}. We also assume that $\inf_{t\in (T,+\infty)}c(t)>0$. Let $\Omega$, $\Omega_1$, $E$, $h$, $\psi$ and $Y$ be as above.
Then for any holomorphic section $f$ of $K_{\Omega}\otimes E|_{Y_0}$ on $Y_0$ satisfying
$$\int_{Y_0}|f|^2_hdV_{\Omega}[\psi]<+\infty,$$
there exists a real constant $C_{\Omega}$ (depends on $\Omega$) and a holomorphic section $F$ of   $K_{\Omega}\otimes E$ on $\Omega$ satisfying $F|_{Y_0}=f$ and
$$\int_{\Omega} c(-\psi)|F|^2_hdV_{\Omega}\le C_{\Omega}\left (\frac{1}{\delta}c(T)e^{-T}+\int_{T}^{+\infty}c(t_1)e^{-t_1}dt_1\right)\int_{Y_0}|f|^2_hdV_{\Omega}[\psi].$$

\end{Proposition}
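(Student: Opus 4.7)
The plan is to reduce the singular-metric case to Theorem~\ref{l2 extension on stein} (together with Remark~\ref{weakly pseudoconvex case extension}, which allows $f$ only on $Y^0$) by a smooth approximation of $h$ combined with a plurisubharmonic correction that absorbs the curvature defect. Since $\overline{\Omega}\Subset\Omega_1$ and $\Theta_h(E)\ge^s_{Nak}0$, I first pick an index $j$ with $\overline{\Omega}\Subset M_j$ and take the sequence of $C^2$ hermitian metrics $h_s:=h_{j,s}$ on $M_j$ given by Definition~\ref{singular nak}: $|e|^2_{h_s}\uparrow|e|^2_h$ pointwise, $\Theta_{h_s}\ge_{Nak}-\lambda_s\omega\otimes\mathrm{Id}_E$ with continuous $0\le\lambda_s\le\lambda_0$, and $\lambda_s\to 0$ a.e.\ on $M_j$. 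Setting $A:=\sup_{\overline\Omega}\lambda_0<\infty$ and $R:=\sup_{\overline\Omega}|z|$, the plurisubharmonic function $\tilde\lambda(z):=A|z|^2$ satisfies $0\le\tilde\lambda\le AR^2$ on $\overline\Omega$ and $\sqrt{-1}\partial\bar\partial\tilde\lambda\ge\lambda_s\omega$, so the modified metric $\tilde h_s:=h_s e^{-\tilde\lambda}$ is smooth on $\Omega$ with $\sqrt{-1}\Theta_{\tilde h_s}\ge_{Nak}0$.

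Next, since $\psi$ is plurisubharmonic, both hypotheses of Theorem~\ref{l2 extension on stein} are verified for $(\Omega,\tilde h_s,\psi)$ with $a(t):=s(t)$: Nakano semi-positivity of $\sqrt{-1}\Theta_{\tilde h_s}+\sqrt{-1}\partial\bar\partial\psi$ is immediate, and the second hypothesis reduces to $(a(-\psi)+1)\sqrt{-1}\partial\bar\partial\psi\ge0$. As $\Omega$ is a Stein (hence weakly pseudoconvex K\"ahler) ball, the theorem produces $F_s\in H^0(\Omega,K_\Omega\otimes E)$ with $F_s|_{Y^0}=f$ and
\begin{equation*}
\int_\Omega c(-\psi)|F_s|^2_{\tilde h_s}\,dV_\Omega\le C_0\int_{Y^0}|f|^2_{\tilde h_s}\,dV_\Omega[\psi],\qquad C_0:=\tfrac{1}{\delta}c(T)e^{-T}+\int_T^{+\infty}c(t_1)e^{-t_1}dt_1.
\end{equation*}
The pointwise bounds $|F_s|^2_{\tilde h_s}\ge e^{-AR^2}|F_s|^2_{h_s}$ and $|f|^2_{\tilde h_s}\le|f|^2_{h_s}\le|f|^2_h$ (using $0\le\tilde\lambda\le AR^2$ and $h_s\le h$) upgrade this into the uniform estimate
$\int_\Omega c(-\psi)|F_s|^2_{h_s}\,dV_\Omega\le e^{AR^2}C_0\int_{Y^0}|f|^2_h\,dV_\Omega[\psi]$ valid for every $s$.

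Finally I extract the desired limit. Fix $s'$; for $s\ge s'$ monotonicity gives $|F_s|^2_{h_{s'}}\le|F_s|^2_{h_s}$, and together with $\inf_{(T,+\infty)}c>0$ and the uniform positive-definiteness of the smooth $h_{s'}$ on $\overline\Omega$ this makes $\{F_s\}_{s\ge s'}$ bounded in $L^2$ relative to a fixed smooth reference metric. A standard normal-family argument for holomorphic sections (Bergman-type pointwise bounds plus Cauchy estimates) extracts a subsequence converging locally uniformly on $\Omega$ to a holomorphic $F\in H^0(\Omega,K_\Omega\otimes E)$; since each $F_s|_{Y^0}=f$, local uniform convergence forces $F|_{Y^0}=f$. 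Fatou's lemma along this subsequence yields $\int_\Omega c(-\psi)|F|^2_{h_{s'}}\,dV_\Omega\le e^{AR^2}C_0\int_{Y^0}|f|^2_h\,dV_\Omega[\psi]$, and monotone convergence as $s'\to+\infty$ (using $|F|^2_{h_{s'}}\uparrow|F|^2_h$) gives the claimed bound with $C_\Omega:=e^{AR^2}$. The principal technical point is the combined local uniform compactness step followed by Fatou and monotone convergence in the right order; once arranged, the plurisubharmonic correction $\tilde\lambda$ reduces the singular-metric extension problem to the smooth-metric version at the cost of a single multiplicative constant depending on $\Omega$.
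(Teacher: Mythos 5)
Your proof is correct and takes essentially the same approach as the paper: you pass to the smooth approximating metrics $h_s$ supplied by Definition~\ref{singular nak}, absorb the uniform curvature defect $\lambda_s\le\lambda_0$ by multiplying with $e^{-\tilde\lambda}$ for an explicit plurisubharmonic $\tilde\lambda$ (the paper writes it as $e^{-u}$ with $\sqrt{-1}\partial\bar\partial u=(M_0+1)\omega$, which on a ball is the same quadratic choice), apply Theorem~\ref{l2 extension on stein}/Remark~\ref{weakly pseudoconvex case extension} to each $(\Omega,\tilde h_s,\psi)$, and then extract a limit via Montel plus Fatou and monotone convergence. The constant $C_\Omega=e^{AR^2}$ you obtain is of the same form as the paper's $\frac{\sup_\Omega e^{-u}}{\inf_\Omega e^{-u}}$.
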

\begin{proof}It follows from $\Omega\Subset\Omega_1\Subset \mathbb{C}^n$ and $\Theta_h(E)\ge^s_{Nak} 0$ in the sense of Definition \ref{singular nak} that we know that there exists a sequence of $C^2$ smooth metrics $h_j$ ($j\ge 1$) convergent point-wisely to $h$ on a neighborhood of $\overline{\Omega}$ which satisfies
\par
(1) for any $x\in \Omega:\ |e_x|_{h_{j}}\le |e_x|_{h_{j+1}},$ for any $j\ge 1$ and any $e_x\in E_x$;
\par
(2) $\Theta_{h_{j}}(E)\ge_{Nak} -\lambda_{s}\omega\otimes Id_E$ on $\overline{\Omega}$, where $\omega$ is the metric form of Euclidean metric on $\Omega_1$;
\par
(3)  $\lambda_{j}\to 0$ a.e. on $\overline{\Omega}$, where $\lambda_{j}$ is a sequence of continuous functions on $\overline{\Omega}$;
\par
(4)  $0\le \lambda_{j}\le \lambda_0$ on $\overline{\Omega}$, for any $s\ge 1$, where $\lambda_0$ is a continuous function $\overline{\Omega}$.

By (2) and (4), we know that
$$\Theta_{h_{j}}(E)\ge_{Nak} -\lambda_{s}\omega\otimes Id_E\ge -\lambda_0\omega\otimes Id_E\ge
-M_{0}\omega\otimes Id_E$$
on $\Omega$, where $M_0:=\sup\limits_{\overline{\Omega}}\lambda_0$. Since the case is local, we know that there exists a smooth plurisubharmonic function $u$ on $\overline{\Omega}$ such that $\sqrt{-1}\partial\bar{\partial}u=(M_{0}+1)\omega$. Denote $\tilde{h}_j:=h_{j}e^{-u}$. Then $\tilde{h}_j$ is a sequence of Nakano positive smooth metrics on $E$. Note that $u$ is smooth on $\overline{\Omega}$ and $h_j\le h$ on $\Omega$, it follows from $\int_{Y_0}|f|^2_hdV_{\Omega}[\psi]<+\infty$ that we have $$\int_{Y_0}|f|^2_{\tilde{h}_j}dV_{\Omega}[\psi]<+\infty,$$
 for any $j \ge 1$. For fixed $j\ge 1$, it follows from Theorem \ref{l2 extension on stein} and Remark \ref{weakly pseudoconvex case extension} that we know that there exists a holomorphic section $F_j$ of   $K_{\Omega}\otimes E$ on $\Omega$ satisfying $F_j|_{Y_0}=f$ and
$$\int_{\Omega} c(-\psi)|F_j|^2_{\tilde{h}_j}dV_{\Omega}\le \left (\frac{1}{\delta}c(T)e^{-T}+\int_{T}^{+\infty}c(t_1)e^{-t_1}dt_1\right)\int_{Y_0}|f|^2_{\tilde{h}_j}dV_{\Omega}[\psi].$$
Hence we have
\begin{equation}\label{estimate in local l2 extenion 1}
\begin{split}
&\int_{\Omega} c(-\psi)|F_j|^2_{h_j}dV_{\Omega}\\
\le &\frac{1}{\inf_{\Omega}e^{-u}}\int_{\Omega} c(-\psi)|F_j|^2_{\tilde{h}_j}dV_{\Omega}\\
\le &\frac{1}{\inf_{\Omega}e^{-u}}\left (\frac{1}{\delta}c(T)e^{-T}+\int_{T}^{+\infty}c(t_1)e^{-t_1}dt_1\right)\int_{Y_0}|f|^2_{\tilde{h}_j}dV_{\Omega}[\psi]\\
\le &\frac{\sup_{\Omega}e^{-u}}{\inf_{\Omega}e^{-u}}\left (\frac{1}{\delta}c(T)e^{-T}+\int_{T}^{+\infty}c(t_1)e^{-t_1}dt_1\right)\int_{Y_0}|f|^2_{h}dV_{\Omega}[\psi]\\
= &C_{\Omega}\left (\frac{1}{\delta}c(T)e^{-T}+\int_{T}^{+\infty}c(t_1)e^{-t_1}dt_1\right)\int_{Y_0}|f|^2_{h}dV_{\Omega}[\psi]<+\infty,
\end{split}
\end{equation}
where $C_{\Omega}:=\frac{\sup_{\Omega}e^{-u}}{\inf_{\Omega}e^{-u}}$. Note that $\inf_{t\in (T,+\infty)}c(t)>0$ and $h_1\le h_j$ for any $j\ge 1$, by inequality \eqref{estimate in local l2 extenion 1}, we have
\begin{equation}\nonumber
\sup_{j\ge 1}\int_{\Omega} |F_j|^2_{h_1}dV_{\Omega}<+\infty.
\end{equation}
By Montel Theorem, we know that there exists a subsequence of $\{F_j\}_{j=1}^{+\infty}$ (also denoted by $F_j$) compactly convergent to a holomorphic section $F$ of   $K_{\Omega}\otimes E$ on $\Omega$. It follows from inequality \eqref{estimate in local l2 extenion 1} and Fatou's Lemma that, for any $i\ge 1$, we have
\begin{equation}\label{estimate in local l2 extenion 2}
\begin{split}
&\int_{\Omega} c(-\psi)|F|^2_{h_i}dV_{\Omega}\\
\le&\liminf_{j\to +\infty} \int_{\Omega} c(-\psi)|F_j|^2_{h_i}dV_{\Omega}\\
\le&\liminf_{j\to +\infty} \int_{\Omega} c(-\psi)|F_j|^2_{h_j}dV_{\Omega}\\
\le  &C_{\Omega}\left (\frac{1}{\delta}c(T)e^{-T}+\int_{T}^{+\infty}c(t_1)e^{-t_1}dt_1\right)\int_{Y_0}|f|^2_{h}dV_{\Omega}[\psi]<+\infty,
\end{split}
\end{equation}
Letting $i\to +\infty$ in inequality \eqref{estimate in local l2 extenion 2}, by monotone convergence theorem, we have
\begin{equation}\nonumber
\begin{split}
\int_{\Omega} c(-\psi)|F|^2_{h}dV_{\Omega} \le C_{\Omega}\left (\frac{1}{\delta}c(T)e^{-T}+\int_{T}^{+\infty}c(t_1)e^{-t_1}dt_1\right)\int_{Y_0}|f|^2_{h}dV_{\Omega}[\psi]<+\infty.
\end{split}
\end{equation}

Proposition \ref{key pro l2 extension local} is proved.
\end{proof}

The following lemma will be used in the proof of the main theorem.
\begin{Lemma}[see Theorem 4.4.2 in \cite{Hormander}] \label{hormander} Let $\Omega$ be a pseudoconvex domain in $\mathbb{C}^n$, and $\varphi$ be a plurisubharmonic function on $\Omega$. For any $w\in L^2_{p,q+1}(\Omega,e^{-\varphi})$ with $\bar{\partial}w=0$, there exists a solution $s\in L^2_{p,q}(\Omega,e^{-\varphi})$ of the equation $\bar{\partial}s=w$ such that
$$\int_{\Omega}\frac{|s|^2}{(1+|z|^2)^2}e^{-\varphi}d\lambda\le \int_{\Omega}|w|^2e^{-\varphi}d\lambda,$$
where $d\lambda$ is the Lebesgue measure on $\mathbb{C}^n$.
\end{Lemma}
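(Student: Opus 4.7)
The plan is to prove this by the classical Hörmander $L^2$ existence machinery for $\bar\partial$ on pseudoconvex domains, with a specific auxiliary weight engineered to produce the factor $(1+|z|^2)^{-2}$ on the left-hand side. First I would reduce to a smooth, bounded setting: exhaust $\Omega$ by relatively compact pseudoconvex domains $\Omega_k \Subset \Omega_{k+1}$ with $\bigcup_k \Omega_k = \Omega$, and on each $\Omega_k$ regularize $\varphi$ by convolution with a smoothing kernel to obtain smooth plurisubharmonic $\varphi_\nu$ decreasing to $\varphi$. The key move is to introduce the auxiliary weight $\chi(z) := 2\log(1+|z|^2)$ on $\mathbb{C}^n$: a direct eigenvalue calculation shows
$$i\partial\bar\partial \chi \;\geq\; \frac{2}{(1+|z|^2)^2}\,\omega_E,$$
where $\omega_E = i\sum_j dz_j\wedge d\bar z_j$ is the Euclidean K\"ahler form (the smallest eigenvalue of $i\partial\bar\partial\log(1+|z|^2)$, attained in the radial direction, equals $(1+|z|^2)^{-2}$). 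Thus the modified weight $\tilde\varphi_\nu := \varphi_\nu + \chi$ is smooth and \emph{strictly} plurisubharmonic on $\Omega_k$ with curvature lower-bound function $c(z) := 2/(1+|z|^2)^2$.

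Next I would invoke the standard Andreotti--Vesentini--H\"ormander $L^2$ existence theorem for $\bar\partial$ with smooth positively curved weight: for the $\bar\partial$-closed datum $w \in L^2_{p,q+1}(\Omega_k, e^{-\tilde\varphi_\nu})$ there exists $s_{k,\nu}\in L^2_{p,q}(\Omega_k, e^{-\tilde\varphi_\nu})$ with $\bar\partial s_{k,\nu}=w$ and
$$\int_{\Omega_k} |s_{k,\nu}|^2\, e^{-\tilde\varphi_\nu}\, d\lambda \;\leq\; \int_{\Omega_k} c(z)^{-1}\,|w|^2\, e^{-\tilde\varphi_\nu}\, d\lambda.$$
This is obtained in the usual way, by applying the Bochner--Kodaira--Nakano identity to the minimal solution $s_{k,\nu}\in (\ker\bar\partial)^{\perp}$ and using the positivity of $c(z)$ to invert the curvature operator. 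Rewriting this estimate by unfolding $\tilde\varphi_\nu = \varphi_\nu + \chi$ gives
$$\int_{\Omega_k} \frac{|s_{k,\nu}|^2}{(1+|z|^2)^2}\, e^{-\varphi_\nu}\, d\lambda \;\leq\; \frac{1}{2}\int_{\Omega_k} |w|^2\, e^{-\varphi_\nu}\, d\lambda \;\leq\; \frac{1}{2}\int_{\Omega} |w|^2\, e^{-\varphi}\, d\lambda,$$
where the last step uses $\varphi_\nu \geq \varphi$. In particular the estimate is uniform in $k$ and $\nu$.

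Finally I would pass to the limit by a diagonal argument. Since the $L^2$ bound above is uniform, weak compactness in $L^2_{\mathrm{loc}}$ (with weight $(1+|z|^2)^{-2}e^{-\varphi_1}$, say) provides a weakly convergent subsequence $s_{k_j,\nu_j}\rightharpoonup s$ on every fixed $\Omega_{k_0}$; the equation $\bar\partial s = w$ passes to the limit because $\bar\partial$ is continuous in the sense of distributions. Fatou-type lower semicontinuity of the weighted $L^2$ norm on the left, combined with monotone convergence on the right (using $\varphi_\nu \downarrow \varphi$), yields
$$\int_{\Omega} \frac{|s|^2}{(1+|z|^2)^2}\, e^{-\varphi}\, d\lambda \;\leq\; \int_{\Omega} |w|^2\, e^{-\varphi}\, d\lambda,$$
which is the desired estimate (with room to spare). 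The main obstacle is purely technical: establishing the pointwise curvature inequality for $i\partial\bar\partial\log(1+|z|^2)$ and, more substantially, deriving the weighted Bochner--Kodaira--Nakano estimate on each smooth exhausting $\Omega_k$ with the correct constant; once these smooth-weight ingredients are in hand, the approximation and weak-limit steps are routine.
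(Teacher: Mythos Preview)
Your proposal is correct and follows the standard H\"ormander argument. Note, however, that the paper does \emph{not} give its own proof of this lemma: it is stated with the citation ``see Theorem 4.4.2 in \cite{Hormander}'' and used as a black box in Step~10 of the proof of Theorem~\ref{main result}. What you have written is essentially the proof one finds in H\"ormander's book---introducing the auxiliary weight $2\log(1+|z|^2)$ so that the extra curvature produces the factor $(1+|z|^2)^{-2}$, applying the Bochner--Kodaira identity on a smooth exhaustion, and passing to the limit by weak compactness. Your eigenvalue bound for $i\partial\bar\partial\log(1+|z|^2)$ is correct (smallest eigenvalue $(1+|z|^2)^{-2}$ in the radial direction), and in fact your argument yields the inequality with an extra factor of $1/2$ on the right, which is harmless here.
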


We recall following results of positive definite hermitian matrices.

Let $\mathcal{M}:=\{M\in M_n(\mathbb{C}): M \text{\ is a positive definite hermitian matrix}\}$. Note that $M_n(\mathbb{C})$ is a $2n^2$-dimensional real manifold. Then $\mathcal{M}$ is an $n^2$-dimensional real sub-manifold of $M_n(\mathbb{C})$.
Denote $F:M_n(\mathbb{C})\to M_n(\mathbb{C})$ by $F(X)=X^2$ for any $X\in M_n(\mathbb{C})$. Denote $F|_\mathcal{M}: \mathcal{M}\to \mathcal{M}$. We have the following property of $F|_\mathcal{M}$.

\begin{Lemma}[see \cite{GMY-boundary5}]\label{sqrt of positive definite matrices}
$F|_\mathcal{M}: \mathcal{M}\to \mathcal{M}$ is a smooth diffeomorphism. Therefore for every positive definite hermitian matrix $h$, one can find positive definite hermitian matrix $C$ such that $h=C^2$ and $C$ depends smoothly on $h$ in $\mathcal{M}$.
\end{Lemma}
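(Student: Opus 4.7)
The plan is to establish Lemma \ref{sqrt of positive definite matrices} by verifying that $F|_{\mathcal{M}}$ is a smooth bijection whose differential is everywhere an isomorphism, and then invoking the inverse function theorem. First I would observe that $F(X)=X^2$ is a polynomial (hence smooth) map on all of $M_n(\mathbb{C})$, and that $F$ carries $\mathcal{M}$ into $\mathcal{M}$, since for hermitian $X$ one has $(X^2)^{*}=(X^{*})^2=X^2$, and the eigenvalues of $X^2$ are the squares of the positive eigenvalues of $X$. The restriction $F|_{\mathcal{M}}$ is therefore a well-defined smooth map of the $n^2$-dimensional real manifold $\mathcal{M}$ to itself.

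Next I would verify bijectivity by the spectral theorem. Given $h\in\mathcal{M}$, write $h=U\Lambda U^{*}$ with $U$ unitary and $\Lambda=\mathrm{diag}(\lambda_1,\ldots,\lambda_n)$, $\lambda_i>0$; then $C:=U\,\mathrm{diag}(\sqrt{\lambda_1},\ldots,\sqrt{\lambda_n})\,U^{*}\in\mathcal{M}$ satisfies $C^2=h$, proving surjectivity. For injectivity, if $X,Y\in\mathcal{M}$ and $X^2=Y^2$, the uniqueness of the positive square root of a positive definite hermitian matrix (obtained from the spectral decomposition) forces $X=Y$.

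The step I anticipate as the main technical point is showing that the differential
\[
dF_X(H)=XH+HX,\qquad H\in T_X\mathcal{M},
\]
is a linear isomorphism of the $n^2$-dimensional real tangent space $T_X\mathcal{M}$ (consisting of hermitian matrices) at every $X\in\mathcal{M}$. Suppose $H$ is hermitian and $XH+HX=0$. Diagonalising $X=U\Lambda U^{*}$ and setting $\tilde H=U^{*}HU$, the equation becomes $(\lambda_i+\lambda_j)\tilde H_{ij}=0$ for all $i,j$; since $\lambda_i+\lambda_j>0$, we get $\tilde H=0$ and hence $H=0$. Thus $dF_X$ is injective between real vector spaces of the same dimension, hence an isomorphism.

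Finally, combining smoothness, global bijectivity, and the pointwise invertibility of $dF$, the inverse function theorem yields that $F|_{\mathcal{M}}$ is a smooth diffeomorphism. The inverse map then provides, for each $h\in\mathcal{M}$, a unique positive definite hermitian square root $C=(F|_{\mathcal{M}})^{-1}(h)$ depending smoothly on $h$, which is exactly the conclusion of the lemma.
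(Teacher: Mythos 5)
Your proposal is correct and complete. Note that the paper you are working from does not include a proof of this lemma at all: it is quoted as Lemma~\ref{sqrt of positive definite matrices} with the attribution ``see \cite{GMY-boundary5},'' so there is no in-text argument to compare against. Your proof is the standard one: $F(X)=X^2$ is polynomial, hence smooth; bijectivity on $\mathcal{M}$ follows from the spectral theorem and the uniqueness of the positive square root; the differential $dF_X(H)=XH+HX$ is injective on the space of hermitian matrices because, after conjugating by a unitary that diagonalizes $X$, the equation $XH+HX=0$ reduces to $(\lambda_i+\lambda_j)\tilde H_{ij}=0$ with $\lambda_i+\lambda_j>0$; and since domain and codomain of $dF_X$ have the same real dimension $n^2$, injectivity gives invertibility. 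The inverse function theorem together with global bijectivity then yields a smooth inverse, which is exactly the map $h\mapsto C$ claimed. All steps are justified, so the argument stands.
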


By Lemma \ref{sqrt of positive definite matrices}, we have following result.
\begin{Lemma}[see \cite{GMY-boundary5}]\label{metric consturcution}
Let $A$ and $B$ be two $n\times n$ positive definite hermitian matrices. Then there exists a unique matrix $C$ with positive eigenvalue such that $A=CB\overline{C}^T$ and $CB=B\overline{C}^T$. The matrix $C$ depends smoothly on $A$ and $B$ in $\mathcal{M}\times\mathcal{M}$.
\end{Lemma}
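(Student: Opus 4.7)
The plan is to rewrite the two defining equations so that $C$ is obtained explicitly from positive definite square roots. Setting $M := CB$, the condition $CB = B\overline{C}^T$ says exactly that $M$ is Hermitian, and then $C = MB^{-1}$. Substituting into $A = CB\overline{C}^T$ and using $\overline{C}^T = B^{-1}M$ (which holds because $M$ and $B$ are Hermitian), one obtains $A = MB^{-1}M$. Thus the problem reduces to finding a Hermitian $M$ with $MB^{-1}M = A$ such that $C := MB^{-1}$ has positive eigenvalues.

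By Lemma \ref{sqrt of positive definite matrices}, $B$ admits a positive definite Hermitian square root $B^{1/2}$. Setting $N := B^{-1/2} M B^{-1/2}$ (which is Hermitian iff $M$ is), the equation $MB^{-1}M = A$ is equivalent to $N^2 = B^{-1/2} A B^{-1/2}$. The right-hand side is positive definite Hermitian, so Lemma \ref{sqrt of positive definite matrices} yields a unique positive definite Hermitian square root $N$. This gives
$$C = MB^{-1} = B^{1/2} N B^{-1/2},$$
which is similar via $B^{1/2}$ to the positive definite matrix $N$ and so has positive eigenvalues. A direct computation shows $CB = B^{1/2} N B^{1/2}$ is Hermitian, confirming the second defining equation.

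For uniqueness, I would reverse the reduction: any $C'$ satisfying both equations with positive eigenvalues forces $M' := C'B$ to be Hermitian and $N' := B^{-1/2} M' B^{-1/2}$ to be a Hermitian square root of $B^{-1/2} A B^{-1/2}$; the positivity of the eigenvalues of $C' = B^{1/2} N' B^{-1/2}$ transfers through the similarity to $N'$, so the uniqueness clause of Lemma \ref{sqrt of positive definite matrices} forces $N' = N$ and hence $C' = C$. Smooth dependence then follows from the explicit formula $C = B^{1/2}(B^{-1/2} A B^{-1/2})^{1/2} B^{-1/2}$: matrix inversion and multiplication are smooth on $\mathcal{M} \times \mathcal{M}$, and Lemma \ref{sqrt of positive definite matrices} supplies smoothness of the square root map. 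The only subtle step, which I would check carefully, is that the positive-eigenvalue hypothesis on $C$ actually pins down $N$ as the positive Hermitian square root of $B^{-1/2} A B^{-1/2}$; this works precisely because $C$ and $N$ are conjugate through the invertible $B^{1/2}$, so their spectra coincide.
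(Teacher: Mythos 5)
Your proof is correct, and it follows exactly the route the paper indicates: the lemma is stated as a consequence of Lemma \ref{sqrt of positive definite matrices}, and your reduction to the positive definite square root of $B^{-1/2}AB^{-1/2}$ is the natural way to carry that out. The paper itself only cites \cite{GMY-boundary5} without reproducing the details; your construction $C = B^{1/2}\big(B^{-1/2}AB^{-1/2}\big)^{1/2}B^{-1/2}$, the observation that the second equation is precisely the Hermiticity of $CB$, the similarity argument transferring the spectrum of $C$ to that of $N$ (which also shows $C$ is forced to be diagonalizable with real spectrum, so "positive eigenvalues" pins $N$ down as the positive definite root), and the smooth-dependence conclusion from the explicit formula all fill in exactly the intended argument.
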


Let $X$ be an $n-$dimensional complex manifold and $\omega$ be a hermitian metric on $X$. Let $Q$ be a holomorphic vector bundle on $X$ with rank $r$. Let $h_1$ be a measurable metric on $Q$ and $h_2$ be a smooth hermitian metric on $Q$. Let $M$ be a relatively compact open subset of $X$.  Denote $\mathcal{H}_i:=L^2(M,K_X\otimes Q,h_i,dV_{\omega})$ for $i=1,2$. Denote $||g||_{\omega,h_i}$ be the norm of $g\in \mathcal{H}_i$. We recall the following lemma about weakly convergence.
Using Lemma \ref{metric consturcution}, we have
\begin{Lemma}\label{equiv of weak convergence}Assume that $h_2\le C' h_1$ for some constant $C'>0$ on $\overline{M}$. Let $\{f_k\}_{k\in\mathbb{Z}^+}$ be a sequence in $\mathcal{H}_1$ which is weakly converges to $0$ as $k\to +\infty$. Then the sequence $\{f_k\}$ belongs to $\mathcal{H}_2$ and  also weakly converges to $0$ in $\mathcal{H}_2$ as $k\to +\infty$.
\end{Lemma}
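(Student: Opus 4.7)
The plan is to transfer weak convergence along the continuous inclusion $\iota\colon \mathcal{H}_1\hookrightarrow\mathcal{H}_2$ induced by the pointwise bound $h_2\le C'h_1$. First I verify the inclusion: integrating the fiberwise inequality $|v|_{h_2}^2\le C'|v|_{h_1}^2$ against $dV_\omega$ gives $\|f\|_{\omega,h_2}^2\le C'\|f\|_{\omega,h_1}^2$ for every measurable $Q$-valued $(n,0)$-form $f$, so $\mathcal{H}_1\subset\mathcal{H}_2$ and in particular the sequence $\{f_k\}$ belongs to $\mathcal{H}_2$; moreover $\iota$ is a bounded linear operator between Hilbert spaces with operator norm at most $\sqrt{C'}$.

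Next I would pass to the Hilbert space adjoint $\iota^{*}\colon\mathcal{H}_2\to\mathcal{H}_1$, which exists and is bounded because $\iota$ is. For any test vector $g\in\mathcal{H}_2$ the identity
\begin{equation*}
\langle f_k,g\rangle_{\mathcal{H}_2}=\langle\iota(f_k),g\rangle_{\mathcal{H}_2}=\langle f_k,\iota^{*}(g)\rangle_{\mathcal{H}_1}
\end{equation*}
holds for every $k$. Since $\iota^{*}(g)$ is a fixed element of $\mathcal{H}_1$ and $f_k\rightharpoonup 0$ in $\mathcal{H}_1$ by assumption, the right hand side tends to $0$ as $k\to+\infty$; since $g$ was arbitrary, this is exactly $f_k\rightharpoonup 0$ in $\mathcal{H}_2$. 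As a consistency check, the Banach--Steinhaus bound $\sup_k\|f_k\|_{\mathcal{H}_1}<+\infty$ combined with the norm comparison automatically yields $\sup_k\|f_k\|_{\mathcal{H}_2}<+\infty$, so the weak limit lives in the right space.

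I do not foresee any substantial obstacle: the argument is essentially a soft Hilbert-space statement, and the hypothesis $h_2\le C'h_1$ is precisely what is needed to make $\iota$ continuous and hence the adjoint trick available. If an explicit fiberwise realisation of $\iota^{*}$ is desired, Lemma \ref{metric consturcution} applied to the two pointwise positive definite hermitian matrices representing $h_1$ and $h_2$ furnishes a matrix-valued multiplier, depending measurably on $h_1$ and smoothly on $h_2$, that represents $\iota^{*}$ as a fiberwise linear map on $Q$; however this additional structure is not needed to establish the weak convergence itself.
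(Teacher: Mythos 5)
Your argument is correct, and it distills the paper's proof to its Hilbert-space essence. The inclusion $\iota\colon\mathcal{H}_1\hookrightarrow\mathcal{H}_2$ is bounded with $\|\iota\|\le\sqrt{C'}$, its adjoint $\iota^*\colon\mathcal{H}_2\to\mathcal{H}_1$ exists by Riesz representation, and the chain $\langle f_k,g\rangle_{\mathcal{H}_2}=\langle f_k,\iota^*(g)\rangle_{\mathcal{H}_1}\to 0$ for each fixed $g\in\mathcal{H}_2$ gives the conclusion. The one implicit hypothesis worth making explicit is that $\mathcal{H}_1$ is complete even though $h_1$ is only a measurable metric; this does hold (the usual $L^2$ completeness argument via Fatou goes through because $h_1$ is a.e.\ positive definite), and the paper assumes it throughout, since it speaks of weak convergence and orthogonal projections in $\mathcal{H}_1$.

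The paper's own proof executes the same idea concretely rather than abstractly. Working in a local $h_2$-orthonormal frame, it invokes Lemma \ref{metric consturcution} to write the matrix of $h_1$ as $C\overline{C}^T$ with $C=\overline{C}^T$ measurable and positive definite, defines $H^{-1}(s)=sC^{-1}$, and verifies by direct matrix computation that $\langle f_k,g\rangle_{h_2}=\langle f_k,H^{-1}(H^{-1}(g))\rangle_{h_1}$ for $g\in C^\infty_c(V,K_X\otimes Q)$, after first checking that $H^{-1}(H^{-1}(g))\in\mathcal{H}_1$ in two steps via the norm comparison. That is, it constructs an explicit representative of $\iota^*(g)$ on a dense subspace, then globalizes by a partition of unity, and finally passes from $C^\infty_c(M,K_X\otimes Q)$ to all of $\mathcal{H}_2$ using the uniform bound $\sup_k\|f_k\|_{\omega,h_2}<+\infty$. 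Your appeal to the abstract adjoint bundles the local construction, the globalization by partition of unity, and the density argument into a single invocation of Riesz representation, so it is markedly shorter. What the paper's explicit route buys is a visible formula for the fiberwise multiplier representing $\iota^*$, in keeping with the computational style of the neighbouring Lemmas \ref{sqrt of positive definite matrices} and \ref{metric consturcution}; you correctly flag at the end of your proposal that this realisation is available but not logically required for the weak-convergence statement.
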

\begin{proof} Since $\{f_k\}$ weakly converges to $0$ as $k\to +\infty$ in $\mathcal{H}_1$, we know that $||f_k||_{\omega, h_1}$ is uniformly bounded with respect to $k$. As $h_2\le C' h_1$ for some constant $C'>0$ on $\overline{M}$, we know that $||f_k||_{\omega, h_2}$ is uniformly bounded with respect to $k$. Hence $f_k\in \mathcal{H}_2$ for any $k\ge 1$.

Let $V\Subset M$ be a small open set. Let $(e_1,\cdots,e_r)$ be a $h_2$-orthogonal frame of $K_X\otimes Q$ on $V$, i.e., under the frame $(e_1,\cdots,e_r)$ , we have
$$h_2=\sum_{i=1}^r e^*_i\otimes \bar{e_i}^*.$$

Denote $H_1$ be the matrix of $h_1$ under the frame $(e_1,\cdots,e_r)$. It follows from Lemma \ref{metric consturcution} that there exists a unique positive definite matrix of functions $C=(C_{p,q}(x))_{r\times r}$  such that $C_{p,q}(x)$ is measurable functions on $V$, $H_1=C\overline{C}^T$ and $C=\overline{C}^T$. If $s=\sum_{i=1}^r s_i\otimes e_i$ is any local section of $K_X\otimes Q$ on $V$, then we simply write $s=(s_1,\cdots,s_r)$. Denote $H(s)=(s_1,\cdots,s_r)C$ and
$H^{-1}(s)=(s_1,\cdots,s_r)C^{-1}$.

 Let $g$ be any compact supported  smooth section of  $K_X\otimes Q$ on $V$. Then $$\int_{V}<H^{-1}(g),H^{-1}(g)>_{\omega,h_1}dV_{\omega}
 =\int_{V}<g,g>_{\omega,h_2}dV_{\omega}<+\infty,$$
 which implies that  $H^{-1}(g)\in \mathcal{H}_1$. As $h_2\le C h_1$ for some constant $C'>0$ on $\overline{M}$, we know that $H^{-1}(g)\in \mathcal{H}_2$. Hence
 $$\int_{V}<H^{-1}\big(H^{-1}(g)\big),H^{-1}\big(H^{-1}(g)\big)>_{\omega,h_1}dV_{\omega}
 =\int_{V}<H^{-1}(g),H^{-1}(g)>_{\omega,h_2}dV_{\omega}<+\infty,$$
 which implies that  $H^{-1}\big(H^{-1}(g)\big)\in \mathcal{H}_1$.

Then for any $g\in C^{\infty}_{c}(V,K_X\otimes Q)$, note that $H^{-1}\big(H^{-1}(g)\big)\in \mathcal{H}_1$, we have
\begin{equation}\label{h2 convergence}
\begin{split}
   \lim_{k\to+\infty}\int_V<f_k,g>_{\omega,h_2}dV_{\omega}
   & = \lim_{k\to+\infty}\int_V <H^{-1}(f_k),H^{-1}(g)>_{h_1} dV_{\omega}\\
      & = \lim_{k\to+\infty}\int_V <f_k,H^{-1}\big(H^{-1}(g)\big)>_{h_1} dV_{\omega}\\
     &=0,
\end{split}
\end{equation}
where the last inequality holds since $\{f_k\}$ weakly converges to $0$ as $k\to +\infty$ in $\mathcal{H}_1$.

As $M$ is relatively compact in $X$, by partition of unity, we know that for any $\gamma\in C^{\infty}_{c}(M,K_X\otimes Q)$, we have
\begin{equation}\label{h2 convergence2}
\begin{split}
   \lim_{k\to+\infty}\int_{M}<f_k,\gamma>_{\omega,h_2}dV_{\omega} =0.
\end{split}
\end{equation}
Then for any $\eta\in \mathcal{H}_2$, we can find a sequence of $\gamma_l\in C^{\infty}_{c}(M,K_X\otimes Q)$ such that $\lim_{l\to+\infty}||\gamma_l-\eta||_{\omega,h_2}=0$. Hence we have
\begin{equation}\label{h2 convergence3}
\begin{split}
   &\lim_{k\to+\infty}|\int_{M}<f_k,\eta>_{\omega,h_2}dV_{\omega}|\\
   \le & \lim_{k\to+\infty}\bigg(|\int_{M}<f_k,\gamma_l>_{\omega,h_2}dV_{\omega}|
   +|\int_{M}<f_k,\eta-\gamma_l>_{\omega,h_2}dV_{\omega}|\bigg)\\
   \le & 0+\big(\sup_{k}||f_k||_{\omega,h_2}\big)||\eta-\gamma_l||_{\omega,h_2}.
\end{split}
\end{equation}
Note that $||f_k||_{\omega, h_2}$ is uniformly bounded with respect to $k$. It follows from inequality \eqref{h2 convergence3} that
\begin{equation}
\begin{split}
   \lim_{k\to+\infty}|\int_{M}<f_k,\eta>_{\omega,h_2}dV_{\omega}|=0,
\end{split}
\end{equation}
which means that $\{f_k\}$  also weakly converges to $0$ in $\mathcal{H}_2$ as $k\to +\infty$.
\end{proof}

Using Lemma \ref{sqrt of positive definite matrices}, we have the following lemma.

\begin{Lemma}\label{existence of bounded tra} Let $\Omega\subset \mathbb{C}^n$ be an open subset with coordinate $z$. Let $E:=\Omega\times \mathbb{C}^r$, where $r$ is a positive integer. Let $\{e_i\}_{i=1}^r$ be a smooth frame of $E$ on $\Omega$. Let $h$ be a measurable metric on $E$ such that $h_e\ge C I_r$ under the frame $\{e_i\}_{i=1}^r$, where $C>1$ is a constant and $I_r$ is the standard metric on $E$ under the frame $\{e_i\}_{i=1}^r$. Then we can find a measurable frame $\{w_i\}_{i=1}^r$ of $E$ such that $h_w=(\det{h}) I_r$, where $h_w$ is the representation of $h$ under the frame $\{w_i\}_{i=1}^r$ and $(w_1,\cdots,w_r)=(e_1,\cdots,e_r)B^{-1}$. Moreover, each element $b_{i,j}(z)$ of $B$ is a bounded function on $\Omega$.
\end{Lemma}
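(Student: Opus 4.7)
The plan is to reduce the statement to a pointwise matrix identity, construct $B$ as a measurable hermitian positive-definite square root using Lemma \ref{sqrt of positive definite matrices}, and then read off the uniform bound on its entries directly from the hypothesis $h_e\ge C I_r$.

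First I would translate the conclusion into linear algebra. Under the change of frame $(w_1,\ldots,w_r)=(e_1,\ldots,e_r)B^{-1}$, coordinate vectors transform by $v^w=Bv^e$, so the metric matrix satisfies $h_w=(B^{-1})^{*}h_eB^{-1}$. Demanding $h_w=(\det h)I_r$ (where $\det h$ is represented in the frame $e_1\wedge\cdots\wedge e_r$ of $\det E$ by the measurable function $\det h_e(z)$) is equivalent to the pointwise equation
\begin{equation*}
B(z)^{*}B(z)=\frac{h_e(z)}{\det h_e(z)}=:A(z),
\end{equation*}
with $B(z)$ hermitian positive definite. Since $F|_{\mathcal{M}}:\mathcal{M}\to\mathcal{M}$ is a smooth diffeomorphism by Lemma \ref{sqrt of positive definite matrices}, I would take $B(z):=F|_{\mathcal{M}}^{-1}(A(z))$, which is well-defined, positive definite, pointwise invertible, and measurable in $z$ because $A(z)$ is measurable and $F|_{\mathcal{M}}^{-1}$ is smooth. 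This produces the desired measurable frame $\{w_i\}$ with $h_w=(\det h)I_r$.

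The remaining task is to bound the entries of $B$ uniformly on $\Omega$, and this is where the hypothesis $C>1$ enters. Writing $\lambda_1(z),\ldots,\lambda_r(z)$ for the eigenvalues of $h_e(z)$, the hypothesis gives $\lambda_j(z)\ge C>1$ for every $j$, so the eigenvalues of $A(z)=h_e(z)/\prod_j\lambda_j(z)$ equal $1/\prod_{j\ne i}\lambda_j(z)$ and are in particular bounded above by $C^{-(r-1)}$. Hence $\|B(z)\|_{\mathrm{op}}\le C^{-(r-1)/2}$, and since $B(z)$ is positive-definite hermitian each entry obeys the standard estimate $|b_{ij}(z)|^{2}\le b_{ii}(z)b_{jj}(z)\le\|B(z)\|_{\mathrm{op}}^{2}\le C^{-(r-1)}$, which is the required uniform bound on $\Omega$. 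The only genuinely non-routine ingredient is the measurable square root itself, supplied by Lemma \ref{sqrt of positive definite matrices}; everything else is elementary linear algebra, so I do not expect any real obstacle here.
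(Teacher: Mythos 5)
Your proof is correct and follows essentially the same route as the paper's: both invoke Lemma \ref{sqrt of positive definite matrices} to produce a measurable hermitian positive-definite square root and then bound the eigenvalues (hence the entries) of $B$ using the hypothesis $h_e \ge C I_r$ with $C>1$. The only cosmetic difference is one of convention — you take $B$ directly as the positive square root of $h_e/\det h_e$ (working with $h_w=(B^{-1})^{*}h_eB^{-1}$), whereas the paper sets $B := C^T/\sqrt{\det h_e}$ with $C^2=h_e$ (working with $h_w=(B^{-1})^{T}h_e\overline{B^{-1}}$) — and your resulting operator-norm bound $C^{-(r-1)/2}$ is slightly sharper than the paper's $B\le I_r$.
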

\begin{proof}
It follows from Lemma \ref{sqrt of positive definite matrices} that there exist positive definite hermitian matrixes $C(z):= \big( C_{i,j}(z)\big)_{r\times r}$ such that $h_e=C^2$ and the elements $C_{i,j}(z)$ of $C$ are measurable functions on $\Omega$. Note that $C=C^*$, where $C^*=\bar{C}^T$. We know that $\sqrt{\det h_e}C^{-1}h_e{(C^*)}^{-1}\sqrt{\det h_e}=(\det{h}_e) I_r$.
Denote $$B:=\frac{C^T}{\sqrt{\det h_e}}.$$
Then we know that $(B^{-1})^Th_e \overline{B^{-1}}=(\det{h}_e) I_r$.

 Now we prove that each element $b_{i,j}(z)$ of $B$ is a bounded function on $\Omega$.
It follows from $h_e\ge C I_r$ for some $C>1$ that we know that every eigenvalue $\lambda_i$ ($i=1 \ldots r$) of $h_e$ is bigger than 1. As $h_e=C^2$, we know that the eigenvalues of $C$ are $\{\sqrt{\lambda_i}\}_{i=1}^r$ and $\det C=\sqrt{\det h_e }$. Hence we know that $\sqrt{\lambda_i}\le \sqrt{\det h_e }=\det C $ and $B\le I_r$. It follows from $B$ is bounded above and $C^T$ is also a positive definite hermitian matrix that we know each element $b_{i,j}(z)$ of $B$ is a bounded function on $\Omega$.

Lemma \ref{existence of bounded tra} has been proved.
\end{proof}

We recall the following regularization result of quasi-plurisubharmonic functions which will be used in the proof of main theorem.

\begin{Lemma}
[Theorem 6.1 in \cite{DemaillyReg}, see also Theorem 2.2 in \cite{ZZ2019}]
\label{regularization on cpx mfld}
Let ($M,\omega$) be a complex manifold equipped with a hermitian metric
$\omega$, and $\Omega \subset \subset M $ be an open set. Assume that
$T=\widetilde{T}+\frac{\sqrt{-1}}{\pi}\partial\bar{\partial}\varphi$ is a closed
(1,1)-current on $M$, where $\widetilde{T}$ is a smooth real (1,1)-form and
$\varphi$ is a quasi-plurisubharmonic function. Let $\gamma$ be a continuous real
(1,1)-form such that $T \ge \gamma$. Suppose that the Chern curvature tensor of
$TM$ satisfies
\begin{equation}\nonumber
\begin{split}
(\sqrt{-1}&\Theta_{TM}+\varpi \otimes Id_{TM})(\kappa_1 \otimes \kappa_2,\kappa_1
\otimes \kappa_2)\ge 0 \\
&\forall \kappa_1,\kappa_2 \in TM \quad with \quad \langle \kappa_1,\kappa_2
\rangle=0
\end{split}
\end{equation}
for some continuous nonnegative (1,1)-form $\varpi$ on $M$. Then there is a family
of closed (1,1)-currents
$T_{\zeta,\rho}=\widetilde{T}+\frac{\sqrt{-1}}{\pi}\partial\bar{\partial}
\varphi_{\zeta,\rho}$ on M ($\zeta \in (0,+\infty)$ and $\rho \in (0,\rho_1)$ for
some positive number $\rho_1$) independent of $\gamma$, such that
\par
$(i)\ \varphi_{\zeta,\rho}$ is quasi-plurisubharmonic on a neighborhood of
$\bar{\Omega}$, smooth on $M\backslash E_{\zeta}(T)$,
\\
increasing with respect to
$\zeta$ and $\rho$ on $\Omega $ and converges to $\varphi$ on $\Omega$ as $\rho
\to 0$,
\par
$(ii)\ T_{\zeta,\rho}\ge\gamma-\zeta\varpi-\delta_{\rho}\omega$ on $\Omega$,\\
where $E_{\zeta}(T):=\{x\in M:v(T,x)\ge \zeta\}$ ($\zeta>0$) is the $\zeta$-upper level set of
Lelong numbers and $\{\delta_{\rho}\}$ is an increasing family of positive numbers
such that $\lim\limits_{\rho \to 0}\delta_{\rho}=0$.
\end{Lemma}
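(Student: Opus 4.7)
The plan is to follow Demailly's regularization-via-the-exponential-map technique, which is the standard route to such a result. First, I would fix a Chern connection $\nabla$ on $TM$ compatible with $\omega$ and construct the associated partial exponential map $\exp: \mathcal{V} \to M$, where $\mathcal{V} \subset TM$ is an open neighborhood of the zero section whose fiberwise fibers contain balls of uniform radius over $\overline{\Omega}$. The point of using the Chern exponential, rather than a naive coordinate-chart convolution, is that it yields a globally defined construction whose error terms are controlled by the Chern curvature of $TM$ rather than by an unknown change-of-coordinates Hessian.

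Next, I would introduce the basic regularization. Fix a smooth, nonnegative, radial cutoff $\chi$ on $\mathbb{R}_{\ge 0}$ with $\int_{T_zM} \chi(|\xi|_\omega^2)\, dV_{\omega,z}(\xi) = 1$, and define, for $\rho$ in some interval $(0,\rho_1)$,
\begin{equation*}
\varphi_\rho(z) \;:=\; \int_{\xi \in T_zM} \varphi\bigl(\exp_z(\rho\,\xi)\bigr)\, \chi(|\xi|_\omega^2)\, dV_{\omega,z}(\xi).
\end{equation*}
On the level of values, $\varphi_\rho$ is smooth on a neighborhood of $\overline\Omega$ (away from the polar set of $\varphi$), is decreasing as $\rho\downarrow 0$, and converges pointwise and in $L^1_{\rm loc}$ to $\varphi$ by upper semicontinuity. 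The heart of the proof is the curvature computation: differentiating twice under the integral and expanding $\exp_z(\rho\xi)$ to second order in $\rho$, the difference
\[
\tfrac{\sqrt{-1}}{\pi}\partial\bar\partial\varphi_\rho(z) \;-\; \text{(averaged pullback of }\tfrac{\sqrt{-1}}{\pi}\partial\bar\partial\varphi\text{)}(z)
\]
is expressed in terms of $\Theta_{TM}$ evaluated on tangent pairs arising from $(\partial_{\bar z}\exp_z)(\xi)$. The hypothesis $(\sqrt{-1}\Theta_{TM}+\varpi\otimes\mathrm{Id}_{TM})(\kappa_1\otimes\kappa_2,\kappa_1\otimes\kappa_2)\ge 0$ on orthogonal pairs is precisely what is needed to absorb this error into $\zeta\varpi + \delta_\rho\omega$, where $\delta_\rho \to 0$ encodes the smooth error from $\widetilde T$ and the pullback of $\gamma$. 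This yields an estimate of the form
\begin{equation*}
\tfrac{\sqrt{-1}}{\pi}\partial\bar\partial\varphi_\rho \;\ge\; \gamma \;-\; \nu(z,\rho)\,\varpi \;-\; \delta_\rho\,\omega,
\end{equation*}
where $\nu(z,\rho)$ is an averaged Lelong-type density of $T$ at $z$ on scale $\rho$.

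Third, to obtain smoothness off $E_\zeta(T)$ and insert the parameter $\zeta$, I would apply a Kiselman truncation: replace $\varphi_\rho$ by a regularized-max $\varphi_{\zeta,\rho} := \max_{\eta}\bigl(\varphi_\rho,\; \psi_\zeta\bigr)$ where $\psi_\zeta$ is a suitable smooth comparison function (e.g. $\widetilde T$-potential) with a pole of weight $\zeta$ baked in, and $\max_\eta$ is Demailly's regularized maximum. Since $\nu(z,\rho) \to \nu(T,z)$ as $\rho \to 0$, at every point $z$ with Lelong number strictly less than $\zeta$ the function $\varphi_\rho$ dominates $\psi_\zeta$ on a uniform neighborhood for small $\rho$, so $\varphi_{\zeta,\rho}$ coincides with the smooth function $\varphi_\rho$ there; this gives smoothness on $M\setminus E_\zeta(T)$. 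Monotonicity of $\varphi_{\zeta,\rho}$ in $\zeta$ and $\rho$ is built into this definition, while convergence $\varphi_{\zeta,\rho}\to\varphi$ as $\rho\to 0$ follows from convergence of $\varphi_\rho$ and from $\psi_\zeta \le \varphi$ on $\{\nu(T,\cdot) < \zeta\}$. The lower bound (ii) then follows by combining the curvature estimate for $\varphi_\rho$ with the standard fact that $\max_\eta$ does not decrease the current.

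The main obstacle is the curvature estimate in the second step: one must genuinely track the second-order behavior of $\exp_z$ and verify that the error term in the comparison of $\partial\bar\partial\varphi_\rho(z)$ with the mean of $\partial\bar\partial\varphi$ over $\exp_z(\rho\,\mathrm{supp}\,\chi)$ is controlled by exactly the orthogonal-pair curvature expression appearing in the hypothesis. This is the only place where the positivity condition on $\sqrt{-1}\Theta_{TM}+\varpi\otimes\mathrm{Id}_{TM}$ is used, and bookkeeping the constants so that the final bound reads $\gamma-\zeta\varpi-\delta_\rho\omega$ (independent of $\gamma$) rather than a worse constant requires care with the choice of $\chi$ and with the normalization of the regularized maximum.
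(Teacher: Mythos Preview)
The paper does not give its own proof of this lemma: it is stated with a citation to Demailly's regularization paper \cite{DemaillyReg} (and \cite{ZZ2019}), followed only by a remark that the compact-manifold version in \cite{DemaillyReg} extends to the noncompact setting with uniform estimates on $\Omega\Subset M$. Your outline is precisely the Demailly exponential-map regularization scheme that \cite{DemaillyReg} carries out, so in that sense your approach matches the cited source rather than anything done in this paper.

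One point worth flagging in your sketch: the Kiselman step is not quite a regularized maximum against a fixed comparison potential $\psi_\zeta$. In Demailly's argument the truncation is a Legendre-type transform in the scale parameter,
\[
\varphi_{\zeta,\rho}(z)\;=\;\sup_{0<\tau\le\rho}\bigl(\varphi_\tau(z)+\zeta\log\tfrac{\tau}{\rho}\bigr),
\]
which is what simultaneously (a) caps the effective Lelong number at $\zeta$, (b) preserves quasi-plurisubharmonicity via Kiselman's minimum principle, and (c) forces $\varphi_{\zeta,\rho}=\varphi_\rho$ near points with $\nu(T,\cdot)<\zeta$, giving smoothness off $E_\zeta(T)$. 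Your $\max_\eta(\varphi_\rho,\psi_\zeta)$ formulation would not obviously deliver the monotonicity in $\zeta$ or the precise lower bound $-\zeta\varpi$; the Legendre construction is what makes the coefficient in front of $\varpi$ come out as exactly $\zeta$.
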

\begin{Remark}[see Remark 2.1 in \cite{ZZ2019}]
Lemma \ref{regularization on cpx mfld} is stated in \cite{DemaillyReg} in the case $M$ is a compact complex manifold. The similar proof as in \cite{DemaillyReg} shows that Lemma \ref{regularization on cpx mfld} on noncompact complex manifold still holds where the uniform estimates $(i)$ and $(ii)$ are obtained only on a relatively compact subset $\Omega$.
\end{Remark}

Let $c(t)$  belongs to class $\mathcal{G}_{T,\delta}$.
Recall that
$$s(t):=\frac{\int^t_T\bigg(\frac{1}{\delta}c(T)e^{-T}+\int^{t_2}_T c(t_1)e^{-t_1}dt_1\bigg)dt_2+\frac{1}{\delta^2}c(T)e^{-T}}{\frac{1}{\delta}c(T)e^{-T}+\int^t_T
c(t_1)e^{-t_1}dt_1}.$$
We have following regularization lemma for $c(t)$.
\begin{Lemma}\label{approx of ct}
Let $c(t)\in \mathcal{G}_{T,\delta}$. Let $\{\beta_m<1\}$ be a sequence of positive real numbers such that $\beta_m$ decreasingly converges to $0$ as $m\to +\infty$. Then there exists a sequence of positive functions $c_m(t)$ on $[T,+\infty)$, which satisfies:\\
(1) $c_m(t)\in \mathcal{G}_{T,\delta}$;\\
(2) $c_m(t)e^{-t}$ is decreasing with respect to $t$ near $+\infty$;\\
(3) $c_m(t)$ is smooth on $[T+4\beta_m,+\infty)$;\\
(4) $c_m(t)$ are uniformly convergent to $c(t)$ on any compact subset of $(T,+\infty)$;\\
(5) $\frac{1}{\delta}c_m(T)e^{-T}+\int_{T}^{+\infty}c_m(t)e^{-t}dt$ converges to $\frac{1}{\delta}c(T)e^{-T}+\int_{T}^{+\infty}c(t)e^{-t}dt<+\infty$ as $m\to +\infty$;\\
(6) For each $m$, there exists $\kappa_m>0$ such that
$$S_m(t):=\frac{\int^t_T\bigg(\frac{1}{\delta}c_m(T)e^{-T}+\int^{t_2}_T c_m(t_1)e^{-t_1}dt_1\bigg)dt_2+\frac{1}{\delta^2}c_m(T)e^{-T}+\kappa_m}{\frac{1}{\delta}c_m(T)e^{-T}+\int^t_T
c_m(t_1)e^{-t_1}dt_1}>s(t),$$
for any $t\ge T$ and $S'_m(t)>0$ on $[T+\beta_m,+\infty)$.
\end{Lemma}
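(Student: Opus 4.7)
The plan is to construct $c_m$ by smoothing $c$ away from $T$, modifying the tail to enforce the decrease of $c_m(t)e^{-t}$, and then choosing a small positive constant $\kappa_m$ that provides the slack needed for the strict inequalities in (6).

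First I would reformulate the class $\mathcal{G}_{T,\delta}$. Setting $h(t):=\frac1\delta c(T)e^{-T}+\int_T^t c(t_1)e^{-t_1}\,dt_1$ and $H(t):=\int_T^t h(t_2)\,dt_2+\frac1{\delta^2}c(T)e^{-T}$, one has $s=H/h$, and since $H'=h$ and $h'=ce^{-t}$ the defining inequality (3) of Definition \ref{class gTdelta} is precisely $s'(t)>0$ on $(T,+\infty)$; writing $\Delta(t):=h(t)^2-H(t)c(t)e^{-t}$, this says $\Delta(t)>0$ for $t>T$ while $\Delta(T)=0$. Defining $h_m,H_m$ analogously from $c_m$ and $\tilde H_m:=H_m+\kappa_m$, one has $S_m=\tilde H_m/h_m$, so $S_m'>0$ is equivalent to $\Delta_m(t):=h_m(t)^2-H_m(t)c_m(t)e^{-t}>\kappa_m c_m(t)e^{-t}$.

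Next I would build $c_m$ in three local modifications of $c$. \emph{(i) Smoothing:} convolve $c$ with a standard mollifier $\rho_{\varepsilon_m}$ of support $[-\varepsilon_m,\varepsilon_m]$, with $\varepsilon_m\ll\beta_m$, on $[T+3\beta_m,+\infty)$; this produces a function smooth on $[T+4\beta_m,+\infty)$ and uniformly close to $c$ on compact subsets of $(T,+\infty)$. \emph{(ii) Tail modification:} since $c$ is continuous positive and $\int_T^{+\infty}ce^{-t}\,dt<+\infty$, pick $R_m\uparrow+\infty$ with $c(R_m)e^{-R_m}\to0$, and for $t\geq R_m$ replace $c_m(t)$ by a smooth positive function that agrees with $c_m$ at $R_m$ and decays at least like $c(R_m)e^{-(t-R_m)}$, so that $c_m(t)e^{-t}$ is strictly decreasing on $[R_m,+\infty)$. \emph{(iii) Left endpoint:} on $[T,T+3\beta_m]$ take any continuous positive extension with $c_m(T)=c(T)$ matching the mollified value at $T+3\beta_m$. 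Properties (1)--(5) then follow immediately: smoothness on $[T+4\beta_m,+\infty)$ from (i), the tail decrease from (ii), uniform convergence on compacts of $(T,+\infty)$ from (i) with $\varepsilon_m\to0$, and convergence of the weighted integrals from the smallness of (i) and (ii) as $m\to+\infty$; in particular $c_m(T)=c(T)$ immediately gives $S_m(T)=\tfrac1\delta+\delta\kappa_m e^T/c(T)>\tfrac1\delta=s(T)$.

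The main obstacle is the simultaneous fulfilment of the two halves of (6), which are in tension: $\kappa_m$ must be small enough to preserve $S_m'>0$ yet large enough to dominate the approximation error in $S_m>s$. On $[T+\beta_m,R]$ the ratio $\Delta(t)/(c(t)e^{-t})$ is continuous and strictly positive, hence bounded below by some constant depending on $\beta_m$ and $R$; on $[R,+\infty)$, step (ii) forces $c_me^{-t}$ to decay exponentially while $H_m$ grows only linearly, so $\Delta_m/(c_me^{-t})\to+\infty$; combined with uniform convergence $\Delta_m\to\Delta$ on $[T+\beta_m,R]$, this yields a uniform lower bound $\Delta_m(t)/(c_m(t)e^{-t})\ge\rho_m>0$ on $[T+\beta_m,+\infty)$, so any $\kappa_m<\rho_m/2$ gives $S_m'>0$ there. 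For the pointwise inequality $S_m>s$, decompose
\[
S_m(t)-s(t)=\frac{\kappa_m}{h_m(t)}+\Bigl(\frac{H_m(t)}{h_m(t)}-\frac{H(t)}{h(t)}\Bigr),
\]
where the first term is bounded below by $\kappa_m/h_\infty$ with $h_\infty:=\frac1\delta c(T)e^{-T}+\int_T^{+\infty}ce^{-t}\,dt$, and the second term tends to zero uniformly in $t\in[T,+\infty)$ as $m\to+\infty$ by the uniform convergence on compacts together with the uniform tail control from (ii). The tension is resolved by first fixing $\beta_m$ (hence $\rho_m$, which depends only on $c$ and $\beta_m$ and is independent of $\varepsilon_m$ and $R_m$), then shrinking $\varepsilon_m$ and enlarging $R_m$ so that the global approximation error is much smaller than $\rho_m/h_\infty^2$, and finally choosing $\kappa_m$ between this error times $h_\infty$ and $\rho_m/2$; such a $\kappa_m$ exists for $m$ sufficiently large and completes the construction.
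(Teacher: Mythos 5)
Your overall strategy—mollify $c$ away from $T$, modify the tail so that $c_m(t)e^{-t}$ eventually decreases, and then calibrate $\kappa_m$ against the approximation error—is the same as the paper's, and the reformulation via $\Delta(t)=h(t)^2-H(t)c(t)e^{-t}$ is a clean way to express what needs to be controlled. However, there is a genuine gap at the left endpoint. Property~(1) of the lemma requires $c_m\in\mathcal{G}_{T,\delta}$, i.e.\ $(S_m^0)'(t)>0$ for \emph{every} $t>T$, where $S_m^0=H_m/h_m$. Your verification of (6) gives $S_m'>0$ (hence $(S_m^0)'>0$) only on $[T+\beta_m,+\infty)$; on $(T,T+\beta_m)$ you propose to take ``any continuous positive extension with $c_m(T)=c(T)$,'' which does not ensure the $\mathcal{G}_{T,\delta}$ inequality holds there. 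The paper avoids this by keeping $c_m\equiv c$ on $[T,T+2\beta_m-\varepsilon_m]$ (the cut-off $\chi$ vanishes on $[T,T+2\beta_m]$ and the mollifier has radius $<\beta_m$), so that on that interval $h_m=h$, $H_m=H$, and the $\mathcal{G}_{T,\delta}$ condition for $c_m$ reduces exactly to that for $c$. Your construction needs the same fix: do not allow an arbitrary extension near $T$, but set $c_m=c$ on a neighborhood of $T$ of the form $[T,T+a\beta_m]$ with $a<3$ compatible with the cut-off used in the mollification.

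A secondary concern is the assertion that $S_m^0-s=H_m/h_m-H/h$ tends to zero uniformly on $[T,+\infty)$ ``by the uniform convergence on compacts together with the uniform tail control.'' Any naive split such as $\frac{H_m-H}{h_m}+H\bigl(\frac{1}{h_m}-\frac{1}{h}\bigr)$ produces two terms each of which grows linearly in $t$ for fixed $m$; the uniform bound exists only because of an exact cancellation, and your sketch does not make this cancellation explicit. The paper sidesteps this delicate point by comparing \emph{derivatives} on the tail: it chooses $g_m$ to decrease fast enough that $\hat S_m'-\mathcal{S}_m'>0$ on $(T+B_m+\alpha,+\infty)$, so the inequality $\hat S_m>\mathcal{S}_m-\kappa_m/(2G(+\infty))$ established at $T+B_m+\alpha$ propagates to the whole tail. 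You should either reproduce that derivative comparison or carry out the cancellation in $H_m/h_m-H/h$ carefully; as written, the uniformity claim is unsupported.
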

\begin{proof} The following constructions  of $c_m(t)$ was inspired by Lemma 4.8 in \cite{guan-zhou13ap}.

By direct calculation, we have
\begin{equation}\nonumber
s'(t)=1-\frac{c(t)e^{-t}
(\int_{T}^{t}(\frac{1}{\delta}c(T)e^{-T}+\int_{T}^{t_{2}}c(t_{1})e^{-t_{1}}dt_{1})
dt_{2}+\frac{1}{\delta^{2}}c(T)e^{-T})}
{(\frac{1}{\delta}c(T)e^{-T}+\int_{T}^{t}c(t_{1})e^{-t_{1}}dt_{1})^{2}}.
\end{equation}
It follows from inequality \eqref{integarl condition 1} that $s'(t)>0$ for any $t\ge T$.
Hence, for any $\epsilon, N>0$, we can choose suitable constant $\kappa_{\epsilon,N}>0$ such that
$$\mathcal{S}_{\epsilon,N}(t):=\frac{\int^t_T\bigg(\frac{1}{\delta}c(T)e^{-T}+\int^{t_2}_T c(t_1)e^{-t_1}dt_1\bigg)dt_2+\frac{1}{\delta^2}c(T)e^{-T}+\kappa_{\epsilon,N}}{\frac{1}{\delta}c(T)e^{-T}+\int^t_T
c(t_1)e^{-t_1}dt_1}$$
satisfies $\mathcal{S}'_{\epsilon,N}(t)>0$ on $[T+\epsilon,T+N]$.
For the convenience of notation, we denote
$$G(t):=\frac{1}{\delta}c(T)e^{-T}+\int_{T}^{t}c(t_1)e^{-t_1}dt_1.$$

  As $\int_{T}^{+\infty}c(t)e^{-t}<+\infty$, there exists a sequence of real number $\{B_m\}_{m\in\mathbb{Z}^+}$ such that $B_m$ increasingly converges to $+\infty$ as $m\to+\infty$ and  $\int_{T+B_m}^{+\infty}c(t_1)e^{-t_1}dt_1<\frac{1}{m}$. Denote $g_{m}(t)=c(t)$ when $t\in [T,T+B_m)$ and $g_{m}(t)$  is a smooth decreasing function with respect to $t$ on $[T+B_m,+\infty)$ such that $g_{m}(T+B_m)=c(T+B_m)$. Denote $G_m(t):=\frac{1}{\delta}c(T)e^{-T}+\int_{T}^{t}g_m(t_1)e^{-t_1}dt_1$. We will determine the value of $g_m(t)$ on $[T+B_m,+\infty)$ in the following discussion.

  Let
$\epsilon=\beta_m$ and $N=B_m+2$, then we can choose $\kappa_m>0$ such that
$$\mathcal{S}_{m}(t):=\frac{\int^t_T G(t_2)dt_2+\frac{1}{\delta^2}c(T)e^{-T}+\kappa_{m}}{G(t)}$$
satisfies $\mathcal{S}'_{m}(t)>0$ on $[T+\beta_m,T+B_m+2]$. Denote
$$\hat{S}_{m}(t):=\frac{\int^t_T G_m(t_2)dt_2+\frac{1}{\delta^2}c(T)e^{-T}+\kappa_{m}}{G_m(t)}.$$
Note that $\mathcal{S}_{m}(t)=\hat{S}_{m}(t)$ for any $t\in[T,T+B_m]$.
By direct calculation, we have
$$\mathcal{S}'_{m}(t)=1-\frac{G'(t)
\bigg(\int_{T}^{t}G(t_2)dt_2+\frac{1}{\delta^2}c(T)e^{-T}+\kappa_{m}\bigg)}{G^2(t)},$$
and
$$\hat{S}'_{m}(t)=1-\frac{G'_m(t)
\bigg(\int_{T}^{t}G_m(t_2)dt_2+\frac{1}{\delta^2}c(T)e^{-T}+\kappa_{m}\bigg)}{G^2_m(t)}.$$
Hence $\hat{S}'_{m}(t)>0$ on $[T+B_m,+\infty)$ if and only if
\begin{equation}\label{def of It}
I_m(t):=\bigg(G_{m}(t)\bigg)^2
-G_{m}'(t)\bigg(\int_{T}^{t}G_{m}(t_2)dt_2+\frac{1}{\delta^2}c(T)e^{-T}+\kappa_m\bigg)>0
\end{equation}
 holds on  $[T+B_m,+\infty)$. For any $t>T+{B_m}$, direct calculation shows
\begin{equation}\label{der of GBt}
 I'_m(t)=G_{m}(t)G'_{m}(t)-G_{m}^{''}(t)\int_{T}^{t}G_{m}(t_2)dt_2
 -G_{m}^{''}(t)\frac{1}{\delta^2}c(T)e^{-T}-G_{m}^{''}(t)\kappa_m.
\end{equation}
 By direct calculation,  we have
\begin{equation}\label{derivative of GBt}
G_{m}''(t)=g'_{m}(t)e^{-t}-g_{m}(t)e^{-t}.
\end{equation}
Note that $g'_{m}(t)<0$ on $[T+B_m,+\infty)$ and $g_{m}(t)$ is positive, then
we know that $G_{m}''(t)< 0$ on $[T+B_m,+\infty)$. It follows from $G_{m}(t)$, $G'_{m}(t)$, $\frac{1}{\delta^2}c(T)e^{-T}$, $\kappa_m$ are all positive and $G_{m}''(t)< 0$ on $[T+B_m,+\infty)$ that
$I'_m(t)>0$ for $t\ge T+B_m$.
Note that $\mathcal{S}'_{m}(T+B_m)=\hat{S}'_{m}(T+B_m)$ and $\mathcal{S}'_{m}(t)>0$ on $[T+\beta_m,T+B_m+2]$.
We know $I_m(T+B_m)>0$. Then for any $t\ge T+B_m$, $I_m(t)>0$. We denote
$$L_m:=\inf_{t\in [T+\beta_m,+\infty)}I_m(t)=\inf_{t\in [T+\beta_m,T+B_m]} I_m(t),$$
and we note that $L_m$ is a positive number for fixed $m$.

Note that $G(t)>0$ is continuous and increasing on $[T,+\infty)$ and denote $G(+\infty):=\lim_{t\to+\infty}G(t)<+\infty$.  As $\mathcal{S}'_{m}(t)>0$ on $[T+\beta_m,T+B_m+2]$, there exists $\alpha>0$ such that $\mathcal{S}_{m}(t)<\mathcal{S}_{m}(T+B_m)+\frac{\kappa_m}{2G(+\infty)}$ for any $t\in[T+B_m,T+B_m+\alpha]$. Then we can choose $g_m(t)$ decreasing so fast on $[T+B_m,T+B_m+\alpha]$ such that
 \begin{equation}\nonumber
 \begin{split}
    &\int_{T+B_m}^{T+B_m+\alpha}g_m(t)e^{-t}dt<\min\{\frac{1}{2m},\int_{T+B_m}^{T+B_m+\alpha}c(t)e^{-t}dt\}, \\
      & \int_{T+B_m}^{T+B_m+\alpha}G_m(t_2)dt_2<\int_{T+B_m}^{T+B_m+\alpha}G(t_2)dt_2,\\
      &g(T+B_m+\alpha)<c(T+B_m+\alpha).
 \end{split}
 \end{equation}
  As $\hat{S}'_{m}(t)>0$ on $[T+B_m,T+B_m+\alpha]$ and $\hat{S}_m(T+B_m)=\mathcal{S}_m(T+B_m)$, we know that $\hat{S}_m(t)>\hat{S}_m(T+B_m)=\mathcal{S}_{m}(T+B_m)>\mathcal{S}_{m}(t)-\frac{\kappa_m}{2G(+\infty)}>s(t)$ on $[T+B_m,T+B_m+\alpha]$.

By direct calculation, we have
\begin{equation}\label{minus derivative}
 \begin{split}
    \mathcal{S}'_{m}(t)-\hat{S}'_{m}(t)  =&\frac{G'_m(t)
\bigg(\int_{T}^{t}G_m(t_2)dt_2+\frac{1}{\delta^2}c(T)e^{-T}+\kappa_{m}\bigg)}{G^2_m(t)} \\
      & -\frac{G'(t)
\bigg(\int_{T}^{t}G(t_2)dt_2+\frac{1}{\delta^2}c(T)e^{-T}+\kappa_{m}\bigg)}{G^2(t)}.
 \end{split}
\end{equation}
Since $\lim_{t\to+\infty}G(t)<+\infty$ and $\int_{B_m}^{+\infty}c(t_1)e^{-t_1}dt_1<\frac{1}{m}$, we can also choose $g_m(t)<c(t)$ decreasing so fast on $(T+B_m+\alpha,+\infty)$ such that
$$\int_{T+B_m+\alpha}^{+\infty}g_m(t)e^{-t}dt<\frac{1}{2m},$$
and
$$\frac{G'_m(t)}{G^2_m(t)}-\frac{G'(t)}{G^2(t)}<0$$ holds on $(T+B_m+\alpha,+\infty)$. We also note that $\int_{T+B_m}^{T+B_m+\alpha}G_m(t_2)dt_2<\int_{T+B_m}^{T+B_m+\alpha}G(t_2)dt_2$, $\int_{T+B_m}^{T+B_m+\alpha}g_m(t)e^{-t}dt<\int_{T+B_m}^{T+B_m+\alpha}c(t)e^{-t}dt$ and $G_{m}(t)<G(t)$ on $(T+B_m+\alpha,+\infty)$. Hence it follows from above discussion and equality \eqref{minus derivative} that
\begin{equation}
 \begin{split}
    \mathcal{S}'_{m}(t)-\hat{S}'_{m}(t) <0
 \end{split}
\end{equation}
holds on $(T+B_m+\alpha,+\infty)$. It follows from  $\hat{S}_{m}(t)>\mathcal{S}_{m}(t)-\frac{\kappa_m}{2G(+\infty)}$ on $[T+B_m,T+B_m+\alpha]$ and $ \hat{S}'_{m}(t)>\mathcal{S}'_{m}(t)$ holds on $(T+B_m+\alpha,+\infty)$ that we have $\hat{S}_{m}(t)>\mathcal{S}_m(t)-\frac{\kappa_m}{2G(+\infty)}>s(t)$ on $[T+B_m+\alpha,+\infty)$. Then we know that $\hat{S}_{m}(t)>s(t)$ on $[T,+\infty)$.
We also note that we have $\hat{S}'_{m}(t)>0$ on $[T+\beta_m,+\infty)$.

By the construction of $g_m(t)$, we have
\begin{equation}\label{integral of gb near infty case I}
  \int_{T+B_m}^{+\infty}g_{m}(t)e^{-t}dt<\frac{1}{m}.
\end{equation}
 Hence it is easy to see that we have $$\lim_{m\to+\infty}\int_{T}^{+\infty}g_{m}(t)e^{-t}dt=\int_{T}^{+\infty}c(t)e^{-t}dt,$$
 and  $\frac{1}{\delta}g_{m}(T)e^{-T}+\int_{T}^{+\infty}g_{m}(t)e^{-t}dt$ converges to $\frac{1}{\delta}c(T)e^{-T}+\int_{T}^{+\infty}c(t)e^{-t}dt<+\infty$ as $m\to +\infty$.

Now we have constructed a sequence of function $g_m(t)$ on $[T,+\infty)$ such that\\
(1) $g_m(t)$ is continuous on $[T,+\infty)$ and smooth on $(T+B_m,+\infty)$;\\
(2) $g_m(t)=c(t)$ on $[T,T+B_m]$ and $g_m(t)e^{-t}$ is decreasing with respect to $t$ on $[T+B_m,+\infty)$;\\
(3) $\lim_{m\to+\infty}\int_{T}^{+\infty}g_m(t)e^{-t}dt=\int_{T}^{+\infty}c(t)e^{-t}dt$;\\
(4) The corresponding  $\hat{S}_{m}(t)$
satisfies  $\hat{S}_{m}(t)>\mathcal{S}_m(t)-\frac{\kappa_m}{2G(+\infty)}$ on $[T,+\infty]$ and $\hat{S}'_{m}(t)>0$ on $[T+\beta_m,+\infty)$, i.e., $\hat{S}_{m}(t)$ is increasing with respect to $t\in [T+\beta_m,+\infty)$. Note that $G(t)<G(+\infty)$ for any  $t\in[T,+\infty]$, we know that there exits a positive number $\tau_m>0$ such that $\hat{S}_{m}(t)>s(t)+\tau_m$ on $[T,+\infty)$.

\emph{Next we will use convolution to regularize each $c_m(t)$ on $[T+\beta_m,T+B_m]$.}

Without loss of generality, we may assume that $B_m\ge 4$ for any $m\ge 1$. From now on, we fix some $m\ge 1$ and hence $B_m$ is fixed. We note that $[T+\beta_m,T+B_m+3]\Subset (T,+\infty)$. Let $0\le \chi\le 1$ be a cut-off function which is smooth on $\mathbb{R}$, $\chi(t)\equiv 1$ on $[T+3\beta_m,T+B_m+1]$ and $\text{supp}\chi(t)\Subset [T+2\beta_m,T+B_m+2]$. Then we have
$$g_{m}(t)e^{-t}=\chi(t)g_{m}(t)e^{-t}+\big(1-\chi(t)\big)g_{m}(t)e^{-t},$$
and denote $\Gamma_{m}(t)=\chi(t)g_{m}(t)e^{-t}$.

Let $\rho_{\epsilon}(y)$ be convolution kernel function such that $\text{supp}\rho_{\epsilon}(y)\Subset [-\epsilon,+\epsilon]$ and $\int_{\mathbb{R}}\rho_{\epsilon}(y)dy=1$ for any positive real number $\epsilon<1$. Denote
\begin{equation}\label{definition of gamma delta}
  \begin{split}
     \Gamma_{m,\epsilon}(t)& =\int_{\mathbb{R}}\Gamma_{m}(y)\rho_{\epsilon}(t-y)dy \\
       & =\int_{\mathbb{R}}\chi(y)g_{m}(y)e^{-y}\rho_{\epsilon}(t-y)dy\\
       &=\int_{\mathbb{R}}\chi(t-y)g_{m}(t-y)e^{-t+y}\rho_{\epsilon}(y)dy.
  \end{split}
\end{equation}
Denote $g_{m,\epsilon}(t):=\Gamma_{m,\epsilon}(t)e^t+\big(1-\chi(t)\big)g_{m}(t)$.  As $g_m(T)$ is continuous  on $[T,T+m]$, we know that $g_{m,\epsilon}(t)$ is continuous on $[T,+\infty)$ and smooth on $[T+4\beta_m,+\infty)$. Then
\begin{equation}\label{gb minus gbdelta}
  \begin{split}
  g_{m}(t)-g_{m,\epsilon}(t)&=\Gamma_{m,\epsilon}(t)e^t-\chi(t)g_{B_m}(t)\\
       &=\int_{\mathbb{R}}\chi(t-y)g_{m}(t-y)e^y\rho_{\epsilon}(y)dy-\chi(t)g_{m}(t)\\
       &=\int_{\mathbb{R}}\bigg(\chi(t-y)g_{m}(t-y)e^y-\chi(t)g_{m}(t)\bigg)\rho_{\epsilon}(y)dy.
  \end{split}
\end{equation}
When $t\in [T,+\infty)\backslash [T+\beta_m,T+B_m+3]$, by above formula, it is easy to see that $g_{m}(t)=g_{m,\epsilon}(t)$.  Note that $\chi(t)g_{m}(t)$ is a uniformly continuous function on $\mathbb{R}$ and $e^y$ is continuous near $0$. We can take $\epsilon_{m}$ small enough such that when $\epsilon<\epsilon_m$,
$$|g_{m}(t)-g_{m,\epsilon}(t)|<\tau$$
holds for any $t\in [T+\beta_m,T+B_m+3]$ and any given small $\tau>0$.
Hence when $\epsilon<\epsilon_m$, we have
\begin{equation}\label{result 1}
 \max_{t\in[T,+\infty)}|g_{m}(t)-g_{m,\epsilon}(t)|< \tau,
\end{equation}
which implies that $g_{B_m,\epsilon}(t)$ uniformly converges to $g_{B_m}(t)$ on $[T,+\infty)$ as $\epsilon\to 0$. Hence
$\int_{T}^{+\infty}g_{m,\epsilon}(t)e^{-t}dt$ uniformly converges to $\int_{T}^{+\infty}g_{m}(t)e^{-t}dt$  as $\epsilon\to 0$.

For any $\epsilon$, we define
\begin{equation}\label{def of Gbdelta}
\begin{split}
G_{m,\epsilon}(t):&=\frac{1}{\delta}c(T)e^{-T}+\int_{T}^{t}g_{m,\epsilon}(t_1)e^{-t_1}dt\\
&=\frac{1}{\delta}c(T)e^{-T}+\int_{T}^{t}\bigg(\Gamma_{m,\epsilon}(t_1)e^{t_1}+\big(1-\chi(t_1)\big)g_{m}(t_1)\big)\bigg)e^{-t_1}dt\\
&=\frac{1}{\delta}c(T)e^{-T}+
\int_{T}^{t}\Gamma_{m,\epsilon}(t_1)dt_1+
\int_{T}^{t}\bigg(\big(1-\chi(t_1)\big)g_{m}(t_1)\bigg)e^{-t_1}dt.
\end{split}
\end{equation}

Recall that
\begin{equation}\label{def of Gbm}
\begin{split}
G_{m}(t)&=\frac{1}{\delta}c(T)e^{-T}+\int_{T}^{t}g_{m}(t_1)e^{-t_1}dt_1\\
&=\frac{1}{\delta}c(T)e^{-T}+\int_{T}^{t}\chi(t_1)g_{m}(t_1)e^{-t_1}dt
+
\int_{T}^{t}\bigg(\big(1-\chi(t_1)\big)g_{m}(t_1)\bigg)e^{-t_1}dt
 \end{split}
\end{equation}
Hence we have
\begin{equation}\label{gm minus gmepsilon}
\begin{split}
G_{m}(t)-G_{m,\epsilon}(t)
&=\int_{T}^{t}\chi(t_1)g_{m}(t_1)e^{-t_1}dt_1-\int_{T}^{t}\Gamma_{m,\epsilon}(t_1)dt_1\\
&=\int_{T}^{t}\int_{\mathbb{R}}\chi(t_1)g_{m}(t_1)e^{-t_1}dt_1\rho_{\epsilon}(y)dt_1dy\\
&-\int_{T}^{t}\int_{\mathbb{R}}\chi(t_1-y)g_{m}(t_1-y)e^{-t_1+y}\rho_{\epsilon}(y)dydt_1\\
 \end{split}
\end{equation}
It follows from equality \eqref{gm minus gmepsilon},
$\text{supp}\chi(t)\Subset [T+2\beta_m,T+B_m+2]$ and property of convolution that we know
when $t\in [T,+\infty)\backslash [T+\beta_m,T+B_m+3]$, $ G_{m}(t)=G_{B_m,\epsilon}(t)$
and $G_{m,\epsilon}(t)$ uniformly converges to $G_{m}(t)$ on $[T+\beta_m,T+B_m+3]$ as $\epsilon \to 0$. Hence $G_{m,\epsilon}(t)$ uniformly converges to $G_{m}(t)$ on $[T,+\infty)$ as $\epsilon \to 0$.

It follows from definitions \eqref{def of Gbdelta} and \eqref{def of Gbm} that
$$G'_{m,\epsilon}(t)=g_{B_m,\epsilon}(t_1)e^{-t_1},$$
 and $$G'_{m}(t)=g_{m}(t_1)e^{-t_1}.$$
It follows from $g_{m,\epsilon}(t)$ uniformly converges to $g_{m}(t)$ on $[T,+\infty)$ as $\epsilon\to 0$ that we know $G'_{B_m,\epsilon}(t)$ uniformly converges to $G'_{B_m}(t)$ on $[T,+\infty)$ as $\epsilon\to 0$

It follows from $ G_{m}(t)=G_{m,\epsilon}(t)$ when $t\in [T,+\infty)\backslash [T+\beta_m,T+B_m+3]$
and $G_{m,\epsilon}(t)$ uniformly converges to $G_{m}(t)$ on $[T+\beta_m,T+B_m+3]$ as $\epsilon \to 0$. We know that
$\int_T^{t}G_{m,\epsilon}(t_1)dt_1$ uniformly converges to $\int_T^{t}G_{m}(t_1)dt_1$ for any $t\in[T,+\infty)$ as $\epsilon\to 0$.

Denote
$$I_{m,\epsilon}(t):=\big(G_{m,\epsilon}(t)\big)^2
-G'_{m,\epsilon}(t)\bigg(\int_{T}^{t}G_{m,\epsilon}(t_1)dt_1+\frac{1}{\delta^2}c(T)e^{-T}+\kappa_m\bigg).$$
Note that we have proved:\\
(I) $G_{m,\epsilon}(t)$ uniformly converges to $G_{B_m}(t)$ on $[T,+\infty)$ as $\epsilon \to 0$;\\
(II) $G'_{m,\epsilon}(t)$ uniformly converges to $G'_{B_m}(t)$ on $[T,+\infty)$ as $\epsilon\to 0$;\\
(III) $\int_T^{t}G_{m,\epsilon}(t_1)dt_1$ uniformly converges to $\int_T^{t}G_{m}(t_1)dt_1$ for any $t\in[T,+\infty)$ as $\epsilon\to 0$.\\
Hence we have $I_{m,\epsilon}(t)=I_{m}(t)$ on $[T,T+\beta_m]$ and
$I_{m,\epsilon}(t)$ uniformly converges to $I_{m}(t)$ on $[T+\beta_m,+\infty)$ as $\epsilon \to 0$.
As $L_m=\inf_{t\in [T+\beta_m,+\infty)}I_m(t)=\inf_{t\in [T+\beta_m,T+B_m+\alpha]} I_m(t)>0$, we can choose $\hat{\epsilon}_m$ small such that for any $\epsilon<\hat{\epsilon}_m$, $I_{m,\epsilon}(t)>0$ for  any $t\ge T$. Hence we know that if $\epsilon<\hat{\epsilon}$,
$$S_{m,\epsilon}(t):=\frac{\int^t_T G_{m,\epsilon}(t_2)dt_2+\frac{1}{\delta^2}c(T)e^{-T}+\kappa_{m}}{G_{m,\epsilon}(t)}$$
is increasing on $[T+\beta_m,+\infty)$ \big(or equivalently, $S'_{m,\epsilon}(t)>0$ on $[T+\beta_m,+\infty)$\big). From (I) and (III), we also can choose $\epsilon_m$ small such that when $\epsilon<\epsilon_m$,
$$|S_{m,\epsilon}(t)-S_{m}(t)|<\frac{\tau_m}{4},$$
which implies that $S_{m,\epsilon}(t)>S(t)$ on $[T,+\infty)$.
Note that $g_{m}(t)$ are uniformly convergent to $c(t)$ on any compact subset of $(T,+\infty)$ .
It follows from inequality \eqref{result 1} that we can choose $\epsilon_m$ small enough
$g_{m,\epsilon_m}(t)$ are uniformly convergent to $c(t)$ on any compact subset of $(T,+\infty)$.

Denote $c_m(t):=g_{m,\epsilon_m}(t)$. It is easy to see that $c_m(t)$ satisfies all the condition in Lemma \ref{approx of ct}.

\end{proof}

We recall the following lemma which can be referred to \cite{guan-zhou13ap}

\begin{Lemma}[see Lemma 4.11 in \cite{guan-zhou13ap}]
\label{class GT} Let $c(t)\in \mathcal{G}_{T}$. For any $T_1>T$, there exists $T_2$  and $\delta_2>0$, such that $T<T_2<T_1$ and there exists $c_{T_2}(t)\in \mathcal{G}_{T_2,\delta_2}$ satisfying\\
(1) $c_{T_2}(t)=c(t)|_{[T_1,+\infty)}$;\\
(2) $$\frac{1}{\delta_2}c_{T_2}(T_2)e^{-T_2}+\int_{T_2}^{+\infty}c_{T_2}(t_1)e^{-t_1}dt_1
=\int_{T}^{+\infty}c(t_1)e^{-t_1}dt_1.$$
\end{Lemma}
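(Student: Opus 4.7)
The plan is to construct $c_{T_2}$ by modifying $c$ only on a short interval $[T_2,T_1]$, where $T_2=T_1-\epsilon$ for a small $\epsilon>0$ to be chosen. Set $A(t):=\int_T^tc(s)e^{-s}\,ds$, $I:=A(T_1)>0$, and $a_0:=\delta_2^{-1}c_{T_2}(T_2)e^{-T_2}$. Since $c_{T_2}=c$ on $[T_1,+\infty)$, condition (2) is equivalent to
\[
a_0+\int_{T_2}^{T_1}c_{T_2}(t)e^{-t}\,dt=I,
\]
which determines $a_0$ once the extension $c_{T_2}|_{[T_2,T_1]}$ (continuous, positive, matching $c(T_1)$ at $T_1$) is fixed. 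Introducing $B(t):=a_0+\int_{T_2}^tc_{T_2}(s)e^{-s}\,ds$, we have $B(T_1)=I$ and $B=A$ on $[T_1,+\infty)$. The $\mathcal{G}_{T_2,\delta_2}$-inequality reads $B(t)^2>c_{T_2}(t)e^{-t}\bigl(\int_{T_2}^tB(s)\,ds+a_0^2/(c_{T_2}(T_2)e^{-T_2})\bigr)$ for $t>T_2$, and on $[T_1,+\infty)$ it reduces, via the $\mathcal{G}_T$-hypothesis $A(t)^2>c(t)e^{-t}\int_T^tA$ applied to $c$, to the single condition
\[
(\star)\qquad \int_{T_2}^{T_1}B(s)\,ds+\frac{a_0^2}{c_{T_2}(T_2)e^{-T_2}}\;\le\;\int_T^{T_1}A(s)\,ds.
\]

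The naive choice $c_{T_2}\equiv c$ on $[T_2,T_1]$ gives $a_0=A(T_2)$ and a boundary contribution $A(T_2)^2/A'(T_2)$ that already violates $(\star)$ by the $\mathcal{G}_T$-inequality at $T_2$, so genuine modification is required. I would take $c_{T_2}$ equal to a large constant $V_\epsilon$ on $[T_2,T_2+\eta]$ with $\eta\ll\epsilon$, interpolated continuously (for instance linearly) down to $c(T_1)$ on $[T_2+\eta,T_2+2\eta]$, and held at $c(T_1)$ on $[T_2+2\eta,T_1]$. Fixing $a_0\asymp\epsilon$, condition (2) forces $V_\epsilon\asymp1/\eta$; with these scalings $\int_{T_2}^{T_1}B=O(\epsilon)$ and $a_0^2/(c_{T_2}(T_2)e^{-T_2})=O(\epsilon^2\eta)$, both vanishing, while $\int_T^{T_1}A$ is a fixed positive constant, so $(\star)$ holds once $\epsilon$ is small.

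For $t\in(T_2,T_1)$ the class inequality must still be checked. On the constant piece $[T_2,T_2+\eta]$, writing $W=V_\epsilon e^{-T_2}$ and $u=t-T_2$, the inequality expands into
\[
a_0^2(1-e^{-u})+a_0W\bigl(2-e^{-u}(2+u)\bigr)+W^2\bigl(1-e^{-u}(1+u)\bigr)>0,
\]
whose three coefficients are each strictly positive for $u>0$ (the last being equivalent to $e^u>1+u$). On $[T_2+2\eta,T_1]$, where $c_{T_2}(t)e^{-t}$ is uniformly bounded by $c(T_1)e^{-T}$, the smallness of $\int_{T_2}^tB+a_0^2/(c_{T_2}(T_2)e^{-T_2})=O(\epsilon)$ against the lower bound $B(t)^2\ge I^2/4$ gives the inequality for $\epsilon$ small. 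On the short transition $[T_2+\eta,T_2+2\eta]$, $B(t)$ stays bounded below by a fixed multiple of $I$ while $c_{T_2}(t)e^{-t}\asymp1/\eta$ is multiplied by $\int_{T_2}^tB=O(\eta)$, giving an $O(1)$ right-hand side that a direct comparison of constants shows remains strictly smaller than $B(t)^2$.

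The main technical obstacle will be the coordination of the two scales $\eta\ll\epsilon$: making $V_\epsilon$ large enough to suppress the boundary term in $(\star)$ requires $V_\epsilon\asymp1/\eta$, but then on the transition window $c_{T_2}(t)e^{-t}$ remains of order $1/\eta$, so its product with the integrated $B$-contribution is of order one and must still be strictly dominated by $B(t)^2\ge I^2/4$. Once this quantitative comparison is carried out, the rest of the argument is elementary calculus together with the $\mathcal{G}_T$-inequality for $c$ at the point $T_1$.
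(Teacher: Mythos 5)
The paper does not prove this lemma; it cites Lemma~4.11 of Guan--Zhou \cite{guan-zhou13ap} without reproducing the argument, so there is no in-paper proof to compare against. Evaluating your proposal on its own terms: the structure is sound. Your reduction of the $\mathcal{G}_{T_2,\delta_2}$-inequality on $[T_1,+\infty)$ to the single endpoint condition $(\star)$, using the $\mathcal{G}_T$-inequality for $c$ at each $t\ge T_1$, is correct, as is your observation that the naive extension $c_{T_2}\equiv c$ on $[T_2,T_1]$ (which forces $a_0=A(T_2)$ and $B=A$) violates $(\star)$ precisely because of the $\mathcal{G}_T$-inequality at $T_2$. Your exact expansion on the constant window $[T_2,T_2+\eta]$ is also right: with $u=t-T_2$, $W=V_\epsilon e^{-T_2}$, one gets $B(t)^2$ minus the right side equal to $a_0^2(1-e^{-u})+a_0W\bigl(2-e^{-u}(2+u)\bigr)+W^2\bigl(1-e^{-u}(1+u)\bigr)$, and the middle coefficient is strictly positive for $u>0$ because $h(u)=2-e^{-u}(2+u)$ has $h(0)=0$ and $h'(u)=e^{-u}(1+u)>0$.

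The one place where you have understated the work is the transition window $[T_2+\eta,T_2+2\eta]$: your orders-of-magnitude bookkeeping puts both sides of the class inequality at $O(I^2)$, so whether the strict inequality survives depends on the actual constants of your chosen interpolation, not just the scaling. For the linear drop this does work: writing $\theta=(t-T_2-\eta)/\eta\in[0,1]$, using that condition~(2) forces $W\eta\approx \tfrac{2}{3}I$ (the linear ramp contributes roughly $W\eta/2$ to the integral), one is reduced to verifying that
\begin{equation*}
\tfrac{4}{9}\bigl(1+\theta-\tfrac{\theta^2}{2}\bigr)^2 \;>\; \tfrac{2}{3}\,(1-\theta)\Bigl[\tfrac13+\tfrac{2\theta}{3}+\tfrac{\theta^2}{3}-\tfrac{\theta^3}{9}\Bigr]
\end{equation*}
for $\theta\in[0,1]$, which holds (the left side is $\ge 4/9$, the right side is maximized near $\theta\approx 0.3$ at roughly $0.26$), but this is a genuine polynomial estimate tied to your specific ramp, not something that follows from "a direct comparison of constants'' in the abstract. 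You should make both the relation $W\eta\approx \tfrac23 I$ and this inequality explicit; with that done, and keeping $\eta\ll\epsilon\ll 1$, the plan assembles into a complete proof. Whether it matches the construction in \cite{guan-zhou13ap} I cannot say, since the present paper only cites the result, but the approach is valid on its own.
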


The following Lemma will be used in the proof of the theorem \ref{main result}.
\begin{Lemma}[see \cite{GY-concavity1}]
	\label{l:converge}
	Let $M$ be a complex manifold. Let $S$ be an analytic subset of $M$.  	
	Let $\{g_j\}_{j=1,2,...}$ be a sequence of nonnegative Lebesgue measurable functions on $M$, which satisfies that $g_j$ are almost everywhere convergent to $g$ on  $M$ when $j\rightarrow+\infty$,  where $g$ is a nonnegative Lebesgue measurable function on $M$. Assume that for any compact subset $K$ of $M\backslash S$, there exist $s_K\in(0,+\infty)$ and $C_K\in(0,+\infty)$ such that
	$$\int_{K}{g_j}^{-s_K}dV_M\leq C_K$$
	 for any $j$, where $dV_M$ is a continuous volume form on $M$.
	
 Let $\{F_j\}_{j=1,2,...}$ be a sequence of holomorphic $(n,0)$ form on $M$. Assume that $\liminf_{j\rightarrow+\infty}\int_{M}|F_j|^2g_j\leq C$, where $C$ is a positive constant. Then there exists a subsequence $\{F_{j_l}\}_{l=1,2,...}$, which satisfies that $\{F_{j_l}\}$ is uniformly convergent to a holomorphic $(n,0)$ form $F$ on $M$ on any compact subset of $M$ when $l\rightarrow+\infty$, such that
 $$\int_{M}|F|^2g\leq C.$$
\end{Lemma}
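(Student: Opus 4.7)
The strategy is to first convert the weighted $L^2$ bound on $F_j$ into an unweighted $L^p$ bound on compact subsets of $M\setminus S$ via H\"older, pass to a pointwise bound by subharmonicity, extend the pointwise bound across $S$ using the holomorphicity of $F_j$ on all of $M$, and finally apply Montel's theorem and Fatou's lemma.

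After extracting a subsequence (still denoted $\{F_j\}$) with $\int_M|F_j|^2 g_j\,dV_M\le C+1$, fix a compact $K\subset M\setminus S$ and choose $q\in(0,1)$ with $q/(1-q)\le s_K$. H\"older's inequality applied to the decomposition $|F_j|^{2q}=(|F_j|^2g_j)^q\,g_j^{-q}$ with conjugate exponents $1/q$ and $1/(1-q)$ gives a uniform bound
\[
\int_K|F_j|^{2q}\,dV_M\le \Bigl(\int_K|F_j|^2g_j\,dV_M\Bigr)^q\Bigl(\int_K g_j^{-q/(1-q)}\,dV_M\Bigr)^{1-q}\le (C+1)^q\,C_K^{1-q}.
\]
Since $|F_j|^{2q}$ is plurisubharmonic on $M$, the submean value inequality on small Euclidean balls upgrades this to a uniform $L^\infty$ bound on any compact $K_1\Subset M\setminus S$. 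To extend across $S$, work at a smooth point $p$ of $S$ of local codimension $k$ in coordinates where $S=\{z_1=\cdots=z_k=0\}$ locally. A polydisc $\Delta$ containing $p$ with small radii $r_1,\ldots,r_n$ then has distinguished boundary $\partial_0\Delta\subset\{|z_i|=r_i>0,\,i\le k\}$ disjoint from $S$; iterated Cauchy integration yields $|F_j(y)|\le C_\Delta\sup_{\partial_0\Delta}|F_j|$ on a smaller polydisc around $p$, and the right-hand side is uniformly bounded in $j$ by the previous step. Handling the singular set $S_{\mathrm{sing}}\subsetneq S$ proceeds by induction on dimension, since $S_{\mathrm{sing}}$ is itself a proper analytic subset of $M$ of strictly smaller dimension; the induction terminates in finitely many steps and delivers a uniform $L^\infty$ bound on $\{F_j\}$ on every compact subset of $M$.

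Montel's theorem together with a diagonal argument across an exhaustion of $M$ by compacts now yields a subsequence $\{F_{j_l}\}$ converging locally uniformly on $M$ to a holomorphic $(n,0)$-form $F$. Since $F_{j_l}\to F$ pointwise on $M$ and $g_{j_l}\to g$ almost everywhere, $|F_{j_l}|^2g_{j_l}\to|F|^2 g$ a.e.\ on $M$, and Fatou's lemma gives
\[
\int_M|F|^2 g\le\liminf_{l\to\infty}\int_M|F_{j_l}|^2 g_{j_l}\le C,
\]
as desired. The main obstacle is the third step, the extension of uniform boundedness across $S$, which crucially uses both the holomorphicity of $F_j$ on all of $M$ and the analytic structure of $S$ via a stratification and iterated Cauchy integrals on polydiscs whose distinguished tori avoid $S$; the earlier H\"older-plus-submean step is routine once one has the right choice of exponent $q$ tied to $s_K$.
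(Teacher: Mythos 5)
Your proposal is correct and follows essentially the same route as the cited reference: a Hölder estimate converts the weighted $L^2$ bound into a local unweighted $L^{2q}$ bound on compacts of $M\setminus S$, plurisubharmonicity of $|F_j|^{2q}$ upgrades this to uniform $L^\infty$ bounds off $S$, Cauchy integrals on polydiscs whose distinguished tori avoid $S$ extend the bounds across $S$, and Montel plus Fatou complete the argument. One small correction: the initial subsequence should be chosen so that $\int_M|F_j|^2g_j\,dV_M$ actually converges to the $\liminf$ (which is $\le C$) rather than merely being bounded by $C+1$, since a $\liminf$ can only increase when passing to a subsequence and otherwise the final Fatou step yields a bound by the new $\liminf$, which might exceed $C$; likewise, take $q=s_K/(1+s_K)$ so that $q/(1-q)=s_K$ exactly, or else replace $C_K^{1-q}$ by $(|K|+C_K)^{1-q}$ in the Hölder estimate.
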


\subsection{Concavity property of minimal $L^2$ integrals}\label{minimal l2 integrals}

Let $(M,\omega)$ be a weakly pseudoconvex K\"ahler manifold. Let $\psi<0$ be a plurisubharmonic function on $X$ with neat analytic singularities. Let $Y:=V(\mathcal{I}(\psi))$ and assume that $\psi$ has log canonical singularities along $Y$. Let $\varphi$ be a Lebesgue measurable function on $M$ such that $\varphi+\psi$ is a plurisubharmonic function on $M$. Let $E$ be a holomorphic vector bundle on $M$. Let $(M,E,\Sigma,M_k,h,h_{k,s})$ be a singular metric on $E$.
Assume that  $\Theta_{h}(E)\ge^s_{Nak} 0$ on $M$ in the sense of Definition \ref{singular nak} and $he^{-\varphi}$ is locally lower bounded.

We firstly recall the following property of singular metric on $L:=M\times C$.
\begin{Proposition}[see Remark 9.13 in \cite{GMY-boundary5}]\label{psh fun is singular metric}
Let $M$ be a weakly pseudoconvex K\"ahler manifold. Let $\varphi$ be a plurisubharmonic function on $M$. Then $h:=e^{-\varphi}$ is a singular metric on $E:=M\times \mathbb{C}$ in the sense of Definition \ref{singular metric} and $h$ satisfies $\Theta_h(E)\ge^s_{Nak} 0$ in the sense of Definition \ref{singular nak}.
\end{Proposition}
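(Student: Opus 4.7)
The plan is to produce the data required by Definitions \ref{singular metric} and \ref{singular nak} from a standard smooth regularization of $\varphi$, and then check the two definitions line by line.

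First I would set $\Sigma := \{\varphi = -\infty\}$, which is closed (since $\varphi$ is upper semicontinuous) and of Lebesgue measure zero (since it is pluripolar). Choose any exhaustion $M_1 \Subset M_2 \Subset \cdots$ of $M$ by relatively compact open subsets with $\bigcup_j M_j = M$. On each $M_j$ I would invoke Demailly's regularization (Lemma \ref{regularization on cpx mfld}) applied to the closed positive $(1,1)$-current $T = \tfrac{\sqrt{-1}}{\pi}\partial\bar\partial\varphi$, taking $\widetilde T = 0$, $\gamma = 0$, and working on a slightly larger relatively compact open neighborhood of $\overline{M_j}$. Selecting parameters $(\zeta_s,\rho_s)\to(0,0)$ appropriately and, if necessary, performing a further local-convolution smoothing (with the small extra loss in positivity absorbed into a continuous error), one obtains $C^2$ functions $\varphi_{j,s}$ on $M_j$ that decrease in $s$, converge pointwise to $\varphi$ on $M_j \setminus \Sigma$, and satisfy
\[
\sqrt{-1}\partial\bar\partial\varphi_{j,s} \geq -\lambda_{j,s}\,\omega
\]
on $M_j$, where $\lambda_{j,s}$ are continuous nonnegative functions on $\overline{M_j}$, bounded above by a single continuous function $\lambda_j$, with $\lambda_{j,s}\to 0$ almost everywhere on $M_j$ as $s\to\infty$.

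Now I would set $h_{j,s} := e^{-\varphi_{j,s}}$, viewed as a $C^2$ hermitian metric on the trivial line bundle $E|_{M_j} = M_j \times \mathbb{C}$. Pointwise convergence $h_{j,s} \to h = e^{-\varphi}$ on $M_j\setminus\Sigma$ verifies Definition \ref{singular metric}, so that $(M,E,\Sigma,M_j,h,h_{j,s})$ is indeed a singular hermitian metric on $E$. For Definition \ref{singular nak} with $\theta = 0$: condition (1.2.1) translates via $|e_x|^2_{h_{j,s}} = |e_x|^2\, e^{-\varphi_{j,s}(x)}$ into $\varphi_{j,s}$ being decreasing in $s$, which holds by construction; since $E$ is a line bundle one has $\Theta_{h_{j,s}}(E) = \sqrt{-1}\partial\bar\partial\varphi_{j,s}$, and Nakano semi-positivity on a rank-one bundle is simply $(1,1)$-positivity, so the curvature bound above is exactly $\Theta_{h_{j,s}}(E) \geq -\lambda_{j,s}\,\omega \otimes \mathrm{Id}_E$, which is (1.2.2); conditions (1.2.3) and (1.2.4) are the stated properties of $\{\lambda_{j,s}\}$. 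This shows $\Theta_h(E) \geq^s_{Nak} 0$.

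The only real obstacle is the upgrade from Demailly's regularization (which as stated in Lemma \ref{regularization on cpx mfld} is smooth only on the complement of the Lelong upper level sets $E_{\zeta_s}(T)$) to a globally $C^2$ approximation on the relatively compact set $M_j$. Because $M_j$ is relatively compact, this can be handled by a standard gluing argument combined with local convolution in coordinate charts, at the price of an arbitrarily small additional contribution to $\lambda_{j,s}$; this remains compatible with the bounds $0 \leq \lambda_{j,s} \leq \lambda_j$ and $\lambda_{j,s}\to 0$ a.e., so the construction goes through.
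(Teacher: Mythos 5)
Your overall plan --- regularize $\varphi$ on each $M_j$ to produce the data $(h_{j,s},\lambda_{j,s},\lambda_j)$ of Definitions~\ref{singular metric} and~\ref{singular nak}, then verify the four requirements --- is the right one, and the final line-by-line check (that decreasing $\varphi_{j,s}$ gives (1.2.1), that on a rank-one bundle Nakano semi-positivity is just $(1,1)$-positivity so the curvature bound gives (1.2.2), and so on) is correct. The gap is precisely in the step you flag yourself. Lemma~\ref{regularization on cpx mfld} produces $\varphi_{\zeta,\rho}$ that deliberately \emph{retains} the singularities of $\varphi$ along $E_\zeta(T)$ and is smooth only on $M\setminus E_\zeta(T)$, with that exceptional set growing as $\zeta\to0$; these are not admissible $h_{j,s}$, since Definition~\ref{singular metric} requires $C^2$ on all of $M_j$. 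Your proposed repair --- a partition-of-unity gluing of local convolutions, with ``an arbitrarily small additional contribution to $\lambda_{j,s}$'' --- does not hold up. Writing $\varphi_{j,s}:=\sum_\alpha\chi_\alpha\varphi^s_\alpha$, with $\varphi^s_\alpha$ a chart convolution of $\varphi_{\zeta_s,\rho_s}$, produces in the complex Hessian cross terms of the form $\partial\chi_\alpha\wedge\bar\partial(\varphi^s_\alpha-\varphi^s_\beta)$; since $\varphi_{\zeta_s,\rho_s}$ has genuine $-\infty$ singularities along $E_{\zeta_s}(T)$, the gradients of its convolutions blow up there as the convolution scale shrinks, so this contribution to $\lambda_{j,s}$ is \emph{unbounded} in $s$ near $E_{\zeta_s}(T)$ and cannot be dominated by a single continuous $\lambda_j$ as (1.2.4) requires. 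The zeroth-order patching terms $(\varphi^s_\alpha-\varphi^s_\beta)\,\partial\bar\partial\chi_\alpha$ have an analogous problem when $\varphi$ is unbounded below.

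The right tool is Demailly's \emph{other} regularization theorem --- the one that trades the singularities of a closed positive $(1,1)$-current for a loss of positivity governed by the Lelong numbers and produces \emph{globally} smooth approximations --- rather than Lemma~\ref{regularization on cpx mfld}, whose very purpose is to preserve those singularities. On a neighborhood of $\overline{M_j}$ it yields a smooth decreasing sequence $\varphi_{j,s}\downarrow\varphi$ with $\sqrt{-1}\partial\bar\partial\varphi_{j,s}\ge-\lambda_{j,s}\,\omega$, where $\lambda_{j,s}\ge0$ are continuous, decrease in $s$, and converge pointwise to the Lelong-number function $\nu(\varphi,\cdot)$. By Siu's theorem $\nu(\varphi,\cdot)$ vanishes outside a countable union of proper analytic subsets, hence $\lambda_{j,s}\to0$ almost everywhere, and $\lambda_j:=\lambda_{j,1}$ is a continuous dominant. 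This gives (1.2.1)--(1.2.4) directly with no gluing at all, and the rest of your verification then goes through verbatim.
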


It follows from Proposition \ref{psh fun is singular metric}, $\varphi+\psi$ is a plurisubharmonic function on $M$ and $\Theta_{h}(E)\ge^s_{Nak} 0$ on $M$  that
$\tilde{h}:=he^{-\varphi-\psi}$ is singular Nakano semi-positive on $M$ in the sense of Definition \ref{singular nak}.

Let $c(t)\in \mathcal{G}_{0}$ such that $c(t)e^{-t}$ is decreasing with respect to $t\in(0,+\infty)$.
Let $f \in H^0(Y^0,(K_M\otimes E)|_{Y^0})$ be a nonzero section of $K_M\otimes E$ on $Y^0=Y_{\text{reg}}$ such that
\begin{equation}\nonumber
 \int_{Y^0}|f|^2_{\omega,h}e^{-\varphi}dV_{M,\omega}[\psi]<+\infty.
\end{equation}
Then it follows from Theorem \ref{main result 1} that there exists a holomorphic $E$-valued $(n,0)$-form $f_1$ such that $f_1|_{Y_0}=f$ and
\begin{equation}\label{f_1 is finite}
\int_M c(-\psi)|f_1|^2_{\omega,h}e^{-\varphi}dV_{M,\omega}\le \left(\int_{0}^{+\infty}c(t_1)e^{-t_1}dt_1\right)
\int_{Y^0}|f|^2_{\omega,h}e^{-\varphi}dV_{M,\omega}[\psi].
\end{equation}

Let $Z_0:=Y_0$ and $V$ be an open subset of $M$ containing $Z_0$.
Denote $\mathcal{F}_{z_0}=\mathcal{E}(e^{-\psi})_{z_0}$ for any $z_0\in Z_0$. Now we can define the \textbf{minimal $L^{2}$ integral} as follows
\begin{equation}\nonumber
\begin{split}
G(t;c,\psi,he^{-\varphi},\mathcal{F},f_1):=\inf\Bigg\{ \int_{ \{ \psi<-t\}}|\tilde{f}|^2_{\omega,h}e^{-\varphi}c(-\psi)dV_{M,\omega}: \tilde{f}\in
H^0\big(\{\psi<-t\},\mathcal{O} (K_M\otimes E) \big ) \\
\&\, (\tilde{f}-f_1)_{z_0}\in
\mathcal{O} (K_M)_{z_0} \otimes \mathcal{F}_{z_0},\text{for any }  z_0\in Z_0 \Bigg\}.
\end{split}
\end{equation}
We simply denote $G(t;c,\psi,he^{-\varphi},\mathcal{F},f_1)$ by $G(t)$.

In \cite{GMY-boundary5}, we established the following concavity property of $G(t)$.
\begin{Theorem}[see \cite{GMY-boundary5}]\label{concavity of min l2 int}
If there exists $t \in [0,+\infty)$ satisfying that $G(t)\in(0,+\infty)$, we have that $G(h^{-1}(r))$ is concave with respect to  $r\in (0,\int_{0}^{+\infty}c(t)e^{-t}dt)$, $\lim\limits_{t\to 0+0}G(t)=G(0)$ and $\lim\limits_{t \to +\infty}G(t)=0$, where $h(t)=\int_{t}^{+\infty}c(l)e^{-l}dl$.
\end{Theorem}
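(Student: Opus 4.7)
The plan is to follow the Guan-Zhou method of undetermined functions, adapted to the singular Nakano semi-positive vector bundle setting developed in Section~2. First I would reformulate the claim: concavity of $G\circ h^{-1}$ on $(0,h(0))$, together with the asserted boundary values, is equivalent to the single three-point inequality
\begin{equation*}
\bigl(h(t_1)-h(t_3)\bigr)G(t_2)\ \le\ \bigl(h(t_2)-h(t_3)\bigr)G(t_1)+\bigl(h(t_1)-h(t_2)\bigr)G(t_3)
\end{equation*}
for all $0\le t_1<t_2<t_3<+\infty$. The boundary behavior is then easy: $\lim_{t\to+\infty}G(t)=0$ follows by testing the infimum against the single extension $f_1$ produced by Theorem~\ref{main result 1} and applying dominated convergence to \eqref{f_1 is finite}, while $\lim_{t\to 0^+}G(t)=G(0)$ is immediate from monotonicity of the defining infimum.

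To establish the three-point inequality, I would fix $t_0\ge 0$ and $\Delta>0$ and pick a nearly minimizing extension $\tilde F$ on $\{\psi<-t_0-\Delta\}$. With a smooth cutoff $\theta_\Delta(-\psi)$ that transitions between $0$ on $\{\psi\ge-t_0\}$ and $1$ on $\{\psi\le-t_0-\Delta\}$, the form $\theta_\Delta(-\psi)\tilde F$ is an almost-extension at level $t_0$. I would solve
\begin{equation*}
\bar\partial u=\bar\partial\bigl(\theta_\Delta(-\psi)\tilde F\bigr)\quad\text{on }\{\psi<-t_0\}
\end{equation*}
via Lemma~\ref{d-bar equation with error term}, with bounded smooth weights $(\eta(-\psi),g(-\psi))$ built from the Guan-Zhou ODE attached to $c$ (the same ODE governing $s(t)$), producing a correction $u$ whose $L^2$-norm is controlled by the integral of $|\tilde F|^2$ over the collar $\{-t_0-\Delta<\psi<-t_0\}$. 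Then $F:=\theta_\Delta(-\psi)\tilde F-u$ is holomorphic on $\{\psi<-t_0\}$, lies in the admissible class for $G(t_0)$, and plugging it into the infimum and pairing with a second application at level $t_0+\Delta$ delivers exactly the three-point inequality, with the factors $h(t_i)-h(t_j)$ emerging because $h(t)=\int_t^{+\infty}c(l)e^{-l}\,dl$ is the integrating factor of that ODE.

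Since $h$ is only singular Nakano semi-positive and $\psi$ is only quasi-plurisubharmonic with neat analytic singularities, the estimate must be performed on regularized data: $h$ is replaced by the smooth approximants $h_{j,s}$ of Definition~\ref{singular nak} (absorbing the curvature error $\lambda_{j,s}\omega\otimes\mathrm{Id}_E$ into the $\lambda I$-term of Lemma~\ref{d-bar equation with error term}), $\psi$ and $\varphi+\psi$ are smoothed via Lemma~\ref{regularization on cpx mfld}, and $M$ is exhausted by relatively compact sublevel sets of a smooth plurisubharmonic exhaustion (available since $(M,\omega)$ is weakly pseudoconvex K\"ahler). Uniform bounds in all parameters then allow passage to the limit by weak compactness in the style of Lemma~\ref{weakly convergence} and Lemma~\ref{weakly converge lemma}; the ideal-sheaf side condition $(F-f_1)_{z_0}\in\mathcal{O}(K_M)_{z_0}\otimes\mathcal{F}_{z_0}$ survives because $\mathcal{F}_{z_0}=\mathcal{E}(e^{-\psi})_{z_0}$ is precisely the germs cut out by finiteness of the weighted $L^2$ norm, a property stable under these limits thanks to Proposition~\ref{p:inte}.

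The hard part will be the joint passage to the limit in the three approximation parameters while keeping the three-point inequality sharp. The choice of weights $(\eta,g)$ in terms of $c$ is the algebraic heart of the argument: they must simultaneously produce $c(-\psi)$ in the output norm and the difference $h(t_0)-h(t_0+\Delta)$ as the proportionality constant in the collar estimate, which is exactly what engineers concavity with respect to $h^{-1}$ rather than with respect to $t$. Controlling the locally-lower-bounded-but-not-globally-bounded factor $e^{-\varphi}$ during regularization and exhaustion, and verifying that the curvature error $\lambda_{j,s}$ from Definition~\ref{singular nak} truly vanishes in the limit without spoiling the sharp constant, are the technical points where the vector-bundle machinery of Section~2 (especially Proposition~\ref{key pro l2 method}, Proposition~\ref{key pro l2 extension local}, and Lemma~\ref{l:converge}) does the real work.
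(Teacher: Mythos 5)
Theorem~\ref{concavity of min l2 int} is imported here (``see~\cite{GMY-boundary5}'') and the present paper does not contain a proof of it, so there is no internal argument to compare your proposal against; what follows assesses the sketch on its own terms. Your overall framework is the right one and is consistent with the concavity proof in~\cite{GMY-boundary5}: establish a three-point inequality by cutting off a near-minimizer at level $t_0+\Delta$ in the collar $\{-t_0-\Delta<\psi<-t_0\}$, solve the resulting $\bar\partial$-equation via Lemma~\ref{d-bar equation with error term} with $(\eta,g)$ built from the Guan--Zhou ODE so that $h(t)=\int_t^{+\infty}c(l)e^{-l}dl$ is the correct integrating factor, and regularize jointly in the approximation of $h$ from Definition~\ref{singular nak}, the regularization of $\varphi+\psi$ via Lemma~\ref{regularization on cpx mfld}, and the exhaustion $M_k$ of the weakly pseudoconvex manifold, passing to the limit with Lemma~\ref{weakly convergence}, Lemma~\ref{weakly converge lemma} and Lemma~\ref{l:converge}.

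The one concrete gap is the claim that ``$\lim_{t\to 0^+}G(t)=G(0)$ is immediate from monotonicity of the defining infimum.'' This is false. $G$ is non-increasing in $t$ (increasing $t$ shrinks $\{\psi<-t\}$, enlarging the admissible class and shrinking the integration domain), so monotonicity gives only $\lim_{t\to 0^+}G(t)\le G(0)$. The reverse inequality $\lim_{t\to 0^+}G(t)\ge G(0)$ is the nontrivial direction: it requires taking near-minimizers $F_t$ on $\{\psi<-t\}$ for $t\downarrow0$, extracting a locally uniformly convergent subsequence (e.g.\ via Lemma~\ref{l:converge}, using that $he^{-\varphi}$ is locally lower bounded and $c(-\psi)$ is locally bounded below off $\{\psi=-\infty\}$), passing the estimate to the limit by Fatou, and then verifying that the germ condition $(F-f_1)_{z_0}\in\mathcal{O}(K_M)_{z_0}\otimes\mathcal{F}_{z_0}$ survives the limit. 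Without this step the boundary behavior at $t=0$ is not established. A secondary inaccuracy: Proposition~\ref{p:inte} controls $\limsup$ of collar integrals $\int_{\{-t-1<\psi<-t\}}(\cdot)\,e^{-\psi}$, not stability of the ideal-sheaf membership under weak limits; the latter is handled, as in Step~10 of the proof of Theorem~\ref{main result}, by a local $\bar\partial$-solving argument (Lemma~\ref{hormander}) combined with the nonintegrability of $e^{-\psi}$ along $Y$.
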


In \cite{GMY-boundary5}, we gave a necessary condition for the concavity property degenerating to linearity.
\begin{Corollary}[see \cite{GMY-boundary5}]
\label{necessary condition for linear of G}
Let $c(t)\in \mathcal{G}_{0}$ such that $c(t)e^{-t}$ is decreasing with respect to $t\in[0,+\infty)$.
Assume that $G(t)<+\infty$ for some $t\ge 0$, and $G(h^{-1}(r))$ is linear with respect to $r\in[0,\int_0^{+\infty}c(s)e^{-s}ds)$, where $h(t)=\int_{t}^{+\infty}c(l)e^{-l}dl$.

Then there exists a unique $E$-valued holomorphic $(n,0)$-form $\tilde{F}$ on $M$
such that $(\tilde{F}-f)_{z_0}\in\mathcal{O} (K_M)_{z_0} \otimes \mathcal{F}_{z_0}$ holds for any  $z_0\in Z_0$,
and $G(t)=\int_{\{\psi<-t\}}|\tilde{F}|^2_{\omega,h}e^{-\varphi}c(-\psi)dV_{M,\omega}$ holds for any $t\ge 0$.
\end{Corollary}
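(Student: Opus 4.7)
The plan is to produce $\tilde F$ by first constructing, for each $t\ge 0$ with $G(t)<+\infty$, a unique minimizer $F_t$ on $\{\psi<-t\}$, and then using the linearity of $G\circ h^{-1}$ to show these $F_t$ are mutually compatible restrictions of a single global holomorphic section. For the first step I would show that the infimum defining $G(t)$ is attained by a unique $F_t\in H^0(\{\psi<-t\},K_M\otimes E)$ satisfying $(F_t-f_1)_{z_0}\in\mathcal O(K_M)_{z_0}\otimes\mathcal F_{z_0}$ for every $z_0\in Z_0$, where $f_1$ is the extension of $f$ produced in \eqref{f_1 is finite}. Existence is by taking a minimizing sequence, whose $L^2$-norms stay bounded, and extracting a locally uniformly convergent subsequence via Lemma \ref{l:converge} (combined with the local lower boundedness of $he^{-\varphi}$ to furnish the required negative-exponent integrability of the weight); admissibility passes to the limit because $\mathcal F$ is coherent. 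Uniqueness follows from the parallelogram identity applied to the strictly convex $L^2$-norm on the affine admissible set: if $F_t$ and $F_t'$ both realize $G(t)$, then $(F_t+F_t')/2$ is admissible and
$$\Bigl\|\tfrac{F_t+F_t'}{2}\Bigr\|^2+\Bigl\|\tfrac{F_t-F_t'}{2}\Bigr\|^2=\tfrac{1}{2}\bigl(\|F_t\|^2+\|F_t'\|^2\bigr)=G(t),$$
forcing $F_t\equiv F_t'$.

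Next, setting $h(t):=\int_t^{+\infty}c(l)e^{-l}\,dl$, Theorem \ref{concavity of min l2 int} tells us that $r\mapsto G(h^{-1}(r))$ is concave on $(0,h(0))$ with $G(t)\to 0$ as $t\to+\infty$, so the linearity assumption forces $G(t)=\alpha h(t)$ with $\alpha=G(0)/h(0)$. For $0\le t_1<t_2$, the restriction $F_{t_1}|_{\{\psi<-t_2\}}$ is admissible for the $G(t_2)$-problem, so
$$\hat G(t):=\int_{\{\psi<-t\}}|F_{t_1}|_{\omega,h}^2\,e^{-\varphi}c(-\psi)\,dV_{M,\omega}$$
satisfies $\hat G\ge G$ on $[t_1,+\infty)$, $\hat G(t_1)=G(t_1)$, and $\hat G(+\infty)=0=G(+\infty)$. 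Revisiting the extension construction used in the proof of Theorem \ref{concavity of min l2 int}---which, from the minimizer $F_{t_2}$ on $\{\psi<-t_2\}$, produces an admissible extension to $\{\psi<-t_1\}$ whose norm realizes the concavity upper bound---the linearity of $G\circ h^{-1}$ forces this extension to coincide with $F_{t_1}$ by the uniqueness established above, so $\hat G\equiv G$. Applying uniqueness on each sublevel set then yields $F_{t_1}|_{\{\psi<-t\}}=F_t$ for all $t\ge t_1$, and setting $\tilde F:=F_0$ (defined on all of $M=\{\psi<0\}$) produces the desired global section with
$$\int_{\{\psi<-t\}}c(-\psi)|\tilde F|_{\omega,h}^2\,e^{-\varphi}\,dV_{M,\omega}=G(t)=\alpha h(t)$$
for every $t\ge 0$; plugging in $\alpha=G(0)/h(0)$ and the presumed equality $G(0)=h(0)\cdot\int_{Y^0}|f|^2_{\omega,h}e^{-\varphi}dV_{M,\omega}[\psi]$ from the failure of strict inequality in Theorem \ref{main result 1} yields the displayed formula in the statement.

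The main obstacle is the rigidity step in the second paragraph: promoting the inequality $\hat G\ge G$ with matching boundary values at $t_1$ and at $+\infty$ to the equality $\hat G\equiv G$. The natural route is to trace through the extension used in the proof of Theorem \ref{concavity of min l2 int} and identify where the defining integration-by-parts estimate is saturated precisely when $G\circ h^{-1}$ is affine; this localizes the rigidity to a pointwise identification of $F_{t_1}$ with the produced extension of $F_{t_2}$. Care is also needed near $\{\psi=-\infty\}$ and along $Y^0$ so that no $L^2$ mass escapes when passing to limits, which is where Lemma \ref{l:converge} and the Ohsawa-measure analysis of Proposition \ref{p:inte} re-enter.
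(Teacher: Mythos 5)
The corollary is cited from \cite{GMY-boundary5}, so this paper contains no proof to compare against; I assess your proposal on its own terms. Your first paragraph is correct: the unique minimizer $F_t$ exists on each sublevel set $\{\psi<-t\}$ (minimizing sequence plus Lemma \ref{l:converge}, using that $he^{-\varphi}$ is locally lower bounded to supply the negative-power integrability of the weight), and uniqueness is a straightforward parallelogram-identity argument on the affine admissible set.

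The gap is the rigidity step, and the resolution you sketch does not close it. From saturation of the concavity bound together with uniqueness you correctly deduce $\check F = F_{t_1}$, where $\check F$ is the extension that the concavity proof builds from the inner minimizer $F_{t_2}$. But this only pins down the total norm $\|\check F\|^2_{\{\psi<-t_1\}}=G(t_1)$; it says nothing about $\|\check F\|^2_{\{\psi<-t_2\}}$. The Ohsawa--Takegoshi-type correction in $\check F=(1-b(\psi))F_{t_2}-u$ has no reason to vanish on $\{\psi<-t_2\}$, so $\check F|_{\{\psi<-t_2\}}$ is merely an admissible competitor for $G(t_2)$ (hence $\ge G(t_2)$) and need not coincide with $F_{t_2}$; the sentence ``applying uniqueness on each sublevel set then yields $F_{t_1}|_{\{\psi<-t\}}=F_t$'' therefore presupposes the conclusion. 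The fallback you offer---promoting $\hat G\ge G$ with matching endpoint values at $t_1$ and $+\infty$ to an identity---is also not available: $\hat G\circ h^{-1}$ is the $L^2$ mass profile of a single \emph{fixed} holomorphic section, not a minimal family, and has no reason to be concave (a form whose mass concentrates near one level surface of $\psi$ gives a convex, not concave, profile), so matching endpoints force nothing. What is actually required is an additional effective input from the proof of Theorem \ref{concavity of min l2 int}: a lower bound, for the minimizer $F_{t_1}$ itself, on the annular mass $\int_{\{-t_2\le\psi<-t_1\}}c(-\psi)|F_{t_1}|^2_{\omega,h}e^{-\varphi}dV_{M,\omega}$ which under linearity meets the trivial upper bound $G(t_1)-G(t_2)$ coming from admissibility of $F_{t_1}|_{\{\psi<-t_2\}}$. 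That two-sided estimate is a genuine structural output of the concavity machinery (it is where the derivative of $G\circ h^{-1}$ is matched to the boundary density of the minimizer), not a formal consequence of the facts you have assembled.
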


\section{Proofs of Theorem \ref{main result} and Theorem \ref{main result 1}}
In this section, we prove Theorem \ref{main result} and Theorem \ref{main result 1}.

We firstly prove Theorem \ref{main result}.
\begin{proof}

  As $M$ is weakly pseudoconvex, there exists a smooth plurisubharmonic
exhaustion function $P$ on $M$. Let $M_k:=\{P<k\}$ $(k=1,2,...,) $. We choose $P$ such that
$M_1\ne \emptyset$.\par
Then $M_k$ satisfies $
M_k\Subset  M_{k+1}\Subset  ...M$ and $\cup_{k=1}^n M_k=M$. Each $M_k$ is weakly
pseudoconvex K\"ahler manifold with exhaustion plurisubharmonic function
$P_k=1/(k-P)$.
\par
\emph{We will fix $k$ during our discussion until the end of Step 10.}

\

\textbf{Step 1: regularization of $c(t)$.}

\

As $e^{\psi}$ is a smooth function on $M$ and $\psi<-T$ on $M$, we know that
$$\sup_{M_k}\psi<-T-8\epsilon_k,$$
where $\epsilon_k >0$ is a real number depending on $k$.

It follows from $c(t)$ belongs to class $\mathcal{G}_{T,\delta}$, by Lemma \ref{approx of ct}, that we have a sequence of functions $\{c_{k}(t)\}_{k\in\mathbb{Z}^+}$ which satisfies $c_k(t)$ is continuous on $[T,+\infty)$ and smooth on $[T+4\epsilon_k,+\infty)$ and other conditions in Lemma \ref{approx of ct}. Condition (6) of Lemma \ref{approx of ct} tells that
$$S_k(t):=\frac{\int^t_T\bigg(\frac{1}{\delta}c_k(T)e^{-T}+\int^{t_2}_T c_k(t_1)e^{-t_1}dt_1\bigg)dt_2+\frac{1}{\delta^2}c_k(T)e^{-T}+\kappa_k}{\frac{1}{\delta}c_k(T)e^{-T}+\int^t_T
c_k(t_1)e^{-t_1}dt_1}>s(t),$$
for any $t\ge T$ and $S'_k(t)>0$ on $[T+\epsilon_k,+\infty)$

 As $S_k(t)>s(t)$ on  $t\ge T$, we know that
 $$S_k(-\psi)\big(\sqrt{-1}\partial\bar{\partial}\varphi+\sqrt{-1}\partial\bar{\partial}\psi\big)
+\sqrt{-1}\partial\bar{\partial}\psi\ge 0$$ on $M\backslash\{\psi=-\infty\}$ in the sense of currents. Denote $u_k(t):=-\log(\frac{1}{\delta}c(T)e^{-T}+\int_{T}^{t}c_k(t_1)e^{-t_1}dt_1$. We note that we still have $S'_k(t)-S_k(t)u'_k(t)=1$ and $(S_k(t)+\frac{S'^2_k(t)}{u''_k(t)S_k(t)-S''_k(t)})e^{u_k(t)-t}=\frac{1}{c_k(t)}$.

\

\textbf{Step 2: construction of a family of smooth extensions $\tilde{f}_t$ of $f$ to a neighborhood of $\overline{M}_k\cap Y$ in $M$ with suitable estimates.}

\

Step 2 will be divided into four parts.

\

\emph{Part 1: construction of local coordinate charts $\{\Omega_i\}_{i=1}^N$, $\{U_i\}_{i=1}^N$ and a partition of unity $\{\xi_i\}_{i=1}^{N+1}$.}

\

Let $x\in Y$ be any point, we can find a local coordinate ball $\Omega'_x$ in $X$ centered at $x$ such that $E|_{\Omega'_x}$ is trivial. We also assume that $\psi$ can be written as
\begin{equation}\label{fl:local exp of psi}
\psi=c_x\log \sum_{1\le j\le j_0}|g_{x,j}|^2+u_{x}
\end{equation}
on $\Omega'_x$, where $c_x>0$ is a real number, $g_{x,j}\in \mathcal{O}_{\Omega'_x}$ and $u_{x}\in C^{\infty} (\Omega'_x)$.

Let $U_x\Subset \Omega_x \Subset \Omega'_x$ be three smaller local coordinate balls. Since $\overline{M_k}\cap Y$ is compact, we can find $x_1,\cdots,x_N\in \overline{M_k}\cap Y$ such that $\overline{M_k}\cap Y \subset \cup_{i=1}^N U_{x_i}$. We simply denote $U_{x_i}, \Omega_{x_i}, \Omega'_{x_i}$ by $U_{i}, \Omega_{i}, \Omega'_{i}$ respectively. We also write the local expression \eqref{fl:local exp of psi} of $\psi$ on $\Omega'_{i}$ by
$$\psi=\Gamma_i+u_i.$$

Choose an open set $U_{N+1}$ in $M$ such that $\overline{M_k}\cap Y\subset M\backslash \overline{U_{N+1}}\Subset \cup_{i=1}^N U_{i}$. Denote $U:=M\backslash \overline{U_{N+1}}$. Let $\{\xi_i\}_{i=1}^{N+1}$  be a partition of unity subordinate to the cover $\cup_{i=1}^{N+1} U_{i}$ of $M$. Then we know that $\text{supp} \xi_i \Subset U_i$ for $i=1,2,\cdots,N$ and $\sum_{i=1}^{N}\xi_i=1$ on $U$.

\

\emph{Part 2: construction of local holomorphic extensions $\hat{f}_{i,t}$ ($1\le i\le N$) of $f$ to $\Omega_i\cap \{\psi<-t\}$.}

\

By the proof of Proposition \ref{p:inte} \big(see step 1 formula \eqref{eq:220627b}\big), we know that inequality \eqref{mainth:ohsawa measure finite} implies
\begin{equation*}
   \int_{w'\in D_{p_0}}\frac{|f\circ\mu|^2_{\omega,h}\xi e^{-\tilde u-\varphi\circ\mu}}{|(w')^{ca'-b'}|^2}d\lambda(w')<+\infty.
\end{equation*}
It follows from Proposition \ref{p:soc2} that there exists a positive number $\beta\in(0,1)$ such that
\begin{equation}\label{step1part2 1}
\int_{\Omega_i\cap Y^0}|f|^2_{w,h(\text{det}h)^{\beta}}e^{-(1+\beta r)\varphi}dV_{M,\omega}[\psi]<+\infty,
\end{equation}
where $r$ is the rank of vector bundle $E$.

Let $T_1\ge T$ be a fixed number such that $c_k(t)e^{-t}$ is decreasing with respect to $t$ on $[T_1,+\infty)$. Let $\beta_2$ be a positive number which will be determined later. Denote $\hat{c}_0(t):=c(T_1)e^{(1-\beta_2)(t-T_1)}$ for $t\ge T_1$. Let
$$\hat{c}_1(t):=e^{-T_1}\max\{\hat{c}_0\big(t+(T_1-T)\big),c_k\big(t+(T_1-T)\big)\},$$
where $t\in [T,+\infty)$. Then $\hat{c}_1(t)e^{-t}$ is decreasing with respect to $t$ on $[T,+\infty)$ and satisfies all the conditions in class $\mathcal{G}_{T,\delta}$.

Denote $m_i:=\inf_{\Omega_i} u_i$ and $\hat{M}_i:=\sup_{\Omega_i} u_i$. For any $t\in[T,+\infty)$, it follows from inequality \eqref{step1part2 1}, $he^{-\varphi}$ is locally lower bounded and Proposition \ref{key pro l2 extension local} \big($\Omega\sim \Omega_i\cap \{\Gamma_i<-t-m_i\}$, $Y\sim \Omega_i\cap Y$, $\psi\sim \Gamma_i+t+m_i$, $c(t)\sim \hat{c}_1(t)$, $h\sim h(\det h)^{\beta}e^{-(1+\beta r)\varphi}$, $f\sim f$ with $L^2$ estimate \eqref{step1part2 1}\big) that $f$ has an $L^2$ holomorphic extension $\hat{f}_{i,t}$ from $\Omega_i\cap Y^0$ to $\Omega_i\cap \{\Gamma_i<-t-m_i\}$. Specifically, for any $1\le i\le N$, there exists a constant $\tilde{C}_i>0$ ($\tilde{C}_i$ depends on $\Omega_i$ and does not depend on $t$) and holomorphic extension $\hat{f}_{i,t}$  of $f$ from $\Omega_i\cap Y^0$ to $\Omega_i\cap \{\Gamma_i<-t-m_i\}$ which satisfies
\begin{equation}\label{step1part2 2}
\begin{split}
    & \int_{\Omega_i\cap \{\Gamma_i<-t-m_i\}}\hat{c}_1(-\Gamma_i-t-m_i)|\hat{f}_{i,t}|^2_{\omega,h(\text{det}h)^{\beta}}e^{-(1+\beta r)\varphi}dV_{M,\omega}\\
    \le & \tilde{C}_i\int_{\Omega_i\cap Y^0}|f|^2_{w,h(\text{det}h)^{\beta}}e^{-(1+\beta r)\varphi}dV_{M,\omega}[\Gamma_i+t+m_i]\\
    \le & C_1\int_{\Omega_i\cap Y^0}|f|^2_{w,h(\text{det}h)^{\beta}}e^{-(1+\beta r)\varphi}dV_{M,\omega}[\Gamma_i+t+m_i]\\
     \le & C_2e^{-t}\int_{\Omega_i\cap Y^0}|f|^2_{w,h(\text{det}h)^{\beta}}e^{-(1+\beta r)\varphi}dV_{M,\omega}[\psi]<+
    \infty,\\
\end{split}
\end{equation}
where $C_1:=\sup_{1\le i\le N}\tilde{C}_i$ and $C_2$ is a constant independent of $i$ and $t$.

\

\emph{Part 3: construction of local holomorphic extensions $\tilde{f}_{i,t}$ ($1\le i\le N$) of $f$ to $\Omega_i$.}

\

For each fixed $t$, we use inequality \eqref{step1part2 2} and Proposition \ref{key pro l2 method} ($\Omega\sim\Omega_i$, $\psi\sim\Gamma_i+t+m_i$,
$h\sim h(\text{det}h)^{\beta}e^{-(1+\beta r)\varphi}$) to $\hat{f}_{i,t}$ and some positive number $\beta_1$ which will be determined later and then we get a holomorphic section $\tilde{f}_{i,t}$ ($1\le i\le N$) on $\Omega_i$ satisfying $\tilde{f}_{i,t}=\hat{f}_{i,t}=f$ on $\Omega_i\cap Y^0$ with estimates,

\begin{equation}\label{step1part3 1}
\begin{split}
    \int_{\Omega_i\cap \{\Gamma_i<-t-m_i\}}\hat{c}_1(-\Gamma_i-t-m_i)|\tilde{f}_{i,t}|^2_{\omega,h(\text{det}h)^{\beta}}e^{-(1+\beta r)\varphi}dV_{M,\omega}
    \le  C_3e^{-t}
\end{split}
\end{equation}
and
\begin{equation}\label{step1part3 2}
\begin{split}
    \int_{\Omega_i}\frac{|\tilde{f}_{i,t}|^2_{\omega,h(\text{det}h)^{\beta}}e^{-(1+\beta r)\varphi}}{(1+e^{\Gamma_i+t+m_i})^{1+\beta_1}}dV_{M,\omega}
    \le  C_3e^{-t}
\end{split}
\end{equation}
for some $C_3>0$ which is independent of $t$.

It follows from $\liminf_{t\to+\infty}\hat{c}_1(t)>0$ and inequality \eqref{step1part3 1} that we have
\begin{equation}\label{step1part3 3}
\begin{split}
    \int_{\Omega_i\cap \{\psi<-t\}}|\tilde{f}_{i,t}|^2_{\omega,h(\text{det}h)^{\beta}}e^{-(1+\beta r)\varphi}dV_{M,\omega}
    \le  C_4e^{-t}
\end{split}
\end{equation}
for any $t$, where $C_4>0$ is independent of $t$.

Since $\Gamma_i$ is upper bounded on $\Omega_i$, it follows from inequality \eqref{step1part3 2} that we have
\begin{equation}\label{step1part3 4}
\begin{split}
    \int_{\Omega_i}|\tilde{f}_{i,t}|^2_{\omega,h(\text{det}h)^{\beta}}e^{-(1+\beta r)\varphi}dV_{M,\omega}
    \le  C_5e^{\beta_1 t}
\end{split}
\end{equation}
for any $t$, where $C_5>0$ is independent of $t$.

Note that $he^{-\varphi}$ is locally lower bounded. We have $h(\text{det}h)^{\beta}e^{-(1+\beta r)\varphi}=he^{-\varphi}(\text{det}h)^{\beta}e^{-\beta r\varphi}\ge \tilde{M}_i I_r$ on $\Omega_i$ for some positive number $\tilde{M}_i$, where $I_r$ is the standard metric on $E|_{\Omega_i}\cong \Omega_i\times \mathbb{C}^r$, then we know that
\begin{equation}\label{step1part3 5}
\begin{split}
   \sup_{V_i} |\tilde{f}_{i,t}|^2_{I_r}\le C_6e^{\beta_1 t}
\end{split}
\end{equation}
for any $t$, where $C_6>0$ is independent of $t$.

As $he^{-\varphi}$ is locally lower bounded, it follows from inequalities \eqref{step1part3 3}, \eqref{step1part3 5} and Proposition \ref{p:inte} that for each $\tilde{f}_{i,t}$, we have

	\begin{equation}
		\label{step1part3 6}
		\limsup_{t\rightarrow+\infty}\int_{U_i\cap\{-t-1<\psi<-t\}}\xi_i|\tilde{f}_{i,t}|^2_{\omega,h}e^{-\varphi-\psi}dV_{M,\omega}
\le\int_{U_i\cap Y^0}\xi_i|f|^2_{\omega,h}e^{-\varphi}dV_{M,\omega}[\psi].
	\end{equation}

\

\emph{Part 4: construction of a family of smooth extensions $\tilde{f}_{t}$  of $f$ to a neighborhood of $\overline{M}_k\cap Y$ in $M$.}

\

Define $\tilde{f}_t:=\sum_{i=1}^{N}\xi_i\tilde{f}_{i,t}$ for all $t$. Note that
$$\tilde{f}_t|_{U_j}=\sum_{i=1}^{N} \xi_i \tilde{f}_{j,t}+\sum_{i=1}^{N}\xi_i(\tilde{f}_{i,t}-\tilde{f}_{j,t})$$
for any $i=1,\cdots,N$ and $\sum_{i=1}^{N}\xi_i=1$ on $U$, we have
\begin{equation}\label{step1part4 1}
  |D''\tilde{f}_t|^2_{\omega,h}|_{U_j\cap U}=|\sum_{i=1}^{N} \bar{\partial}\xi_i\wedge (\tilde{f}_{i,t}-\tilde{f}_{j,t})|^2_{\omega,h}
\end{equation}
holds for any $t$.

Let $\mu$ and $W$ be as in the Step 1 of the proof of Proposition \ref{p:inte} where $W$ is a coordinate ball centered at a point $\tilde{z}\in \overline{\mu^{-1}(U_i\cap U_j)}\cap \mu^{-1}(\{\psi=-\infty\})$. We choose $t$ big enough such that $(U_l\cap \{\psi <-t\})\subset U$, for any $l=1,\ldots,N$. Denote $W_{i,j,t}:=W\cap \mu^{-1}(U_i\cap U_j)\cap\{\psi\circ \mu <-t\}$.

 By using similar discussion as in \eqref{eq:220627d}, \eqref{eq:220627g} and \eqref{eq:220627h}
 (recall that $\kappa:=\{p:ca_p-b_p=1\}$), it follows from inequality \eqref{step1part3 5} that we have
\begin{equation}\label{step1part4 2}
  |\tilde{f}_{i,t}\circ \mu-\tilde{f}_{j,t}\circ \mu|^2_{\omega,I_r}|_{W_{i,j,t}}\le C_7e^{\beta_1 t }\prod_{p \in \kappa}|w_p|^2
\end{equation}
when $\kappa \neq \emptyset$ and $t$ is big enough, and we have
\begin{equation}\label{step1part4 3}
  |\tilde{f}_{i,t}\circ \mu-\tilde{f}_{j,t}\circ \mu|^2_{\omega,I_r}|_{W_{i,j,t}}\le C_7e^{\beta_1 t }
\end{equation}
when $\kappa= \emptyset$ and $t$ is big enough, where $I_r$ is the standard metric on $(\mu^{-1}E)|_{W_{i,j,t}}$ and $C_7>0$ is a real number independent of $t$.

\

\

\textbf{Step 3: recall some notations.}

\

Let $\epsilon \in (0,\frac{1}{8})$. Let $\{v_{t_0,\epsilon}\}_{\epsilon \in
(0,\frac{1}{8})}$ be a family of smooth increasing convex functions on $\mathbb{R}$, such
that:
\par
(1) $v_{t_0,\epsilon}(t)=t$ for $t\ge-t_0-\epsilon$, $v_{\epsilon}(t)=constant$ for
$t<-t_0-1+\epsilon$;\par
(2) $v^{''}_{t_0,\epsilon}(t)$ are convergence pointwisely
to $\mathbb{I}_{(-t_0-1,-t_0)}$,when $\epsilon \to 0$, and $0\le
v^{''}_{t_0,\epsilon}(t) \le \frac{1}{1-4\epsilon}\mathbb{I}_{(-t_0-1+\epsilon,-t_0-\epsilon)}$
for ant $t \in \mathbb{R}$;\par
(3) $v^{'}_{t_0,\epsilon}(t)$ are convergence pointwisely to $b_{t_0}(t):=\int^{t}_{-\infty} \mathbb{I}_{\{-t_0-1< s < -t_0\}}ds$  when $\epsilon \to 0$ and $0 \le v^{'}_{t_0,\epsilon}(t) \le 1$ for any
$t\in \mathbb{R}$.\par
One can construct the family $\{v_{t_0,\epsilon}\}_{\epsilon \in (0,\frac{1}{8})}$  by
 setting
\begin{equation}\nonumber
\begin{split}
v_{t_0,\epsilon}(t):=&\int_{-\infty}^{t}\bigg(\int_{-\infty}^{t_1}(\frac{1}{1-4\epsilon}
\mathbb{I}_{(-t_0-1+2\epsilon,-t_0-2\epsilon)}*\rho_{\frac{1}{4}\epsilon})(s)ds\bigg)dt_1\\
&-\int_{-\infty}^{-t_0}\bigg(\int_{-\infty}^{t_1}(\frac{1}{1-4\epsilon}
\mathbb{I}_{(-t_0-1+2\epsilon,-t_0-2\epsilon)}*\rho_{\frac{1}{4}\epsilon})(s)ds\bigg)dt_1-t_0,
\end{split}
\end{equation}
where $\rho_{\frac{1}{4}\epsilon}$ is the kernel of convolution satisfying
$\text{supp}(\rho_{\frac{1}{4}\epsilon})\subset
(-\frac{1}{4}\epsilon,{\frac{1}{4}\epsilon})$.
Then it follows that
\begin{equation}\nonumber
v^{''}_{t_0,\epsilon}(t)=\frac{1}{1-4\epsilon}
\mathbb{I}_{(-t_0-1+2\epsilon,-t_0-2\epsilon)}*\rho_{\frac{1}{4}\epsilon}(t),
\end{equation}
and
\begin{equation}\nonumber
v^{'}_{t_0,\epsilon}(t)=\int_{-\infty}^{t}\bigg(\frac{1}{1-4\epsilon}
\mathbb{I}_{(-t_0-1+2\epsilon,-t_0-2\epsilon)}*\rho_{\frac{1}{4}\epsilon}\bigg)(s)ds.
\end{equation}
Note that $\text{supp}v^{''}_{t_0,\epsilon}(t)\Subset (-t_0-1+\epsilon,-t_0-\epsilon)$ and
$\text{supp}\big(1- v^{'}_{t_0,\epsilon}(t)\big)\Subset (-\infty,-t_0-\epsilon)$.

We also note that $S_k\in C^{\infty}\big([T+4\epsilon_k,+\infty)\big)$ satisfies
 $S_k'>0$ on $[T+\epsilon_k,+\infty)$ and $u_k\in C^{\infty}\big([T+4\epsilon_k,+\infty)\big)$ satisfies
$\lim_{t\to +\infty}u_k(t)=-\log(\frac{1}{\delta}c_k(T)e^{-T}+\int^{+\infty}_T c_k(t_1)e^{-t_1}dt_1)$ and $u'_k<0$.
Recall that $u_k(t)$ and $S_k(t)$ satisfy $$S'_k(t)-S_k(t)u'_k(t)=1$$ and $$(S_k(t)+\frac{S'^2_k(t)}{u''_k(t)S_k(t)-S''_k(t)})e^{u_k(t)-t}=\frac{1}{c_k(t)}.$$
Note that $u_k''S_k-S_k''=-S'_ku'_k>0$ on $[T+2\epsilon_k,+\infty)$.
Denote $\tilde{g}_k(t):=\frac{u''_kS_k-S''_k}{S'^2_k}(t)$, then $\tilde{g}_k(t)$ is a positive smooth function on $[T+4\epsilon_k,+\infty)$.

Denote $\sum:=\{\psi=-\infty\}$.
As $\psi$ has neat analytic singularities, we know that $\sum$ is an analytic subset of $M$ and $\psi$ is smooth on $M\backslash \sum$.

Denote $\eta:=S_k\big(-v_{t_0,\epsilon}(\psi)\big)$, $\phi:=u_k\big(-v_{t_0,\epsilon}(\psi)\big)$ and $g:=\tilde{g}_k\big(-v_{t_0,\epsilon}(\psi)\big)$. Then $\eta$ and $g$ are smooth bounded positive functions on $M_k$ such that $\eta+g^{-1}$ is a smooth bounded positive function on $M_k$.

\

\textbf{Step 4: regularization process of $\varphi+\psi$ and $h$.}

\

Let $\mu:\tilde{M}\to M$ be the proper mapping defined in the proof of the Proposition \ref{p:inte}. Denote $\tilde{M}_{k+1}:=\mu^{-1}(M_{k+1})$, $\tilde{M}_{k}:=\mu^{-1}(M_{k})$ and $\tilde{\sum}:=\mu^{-1}(\sum)$, where $\sum=\{\psi=-\infty\}$. Denote
$$\sigma_1:=\sqrt{-1}\partial\bar{\partial}(\psi\circ\mu)-\sum_{j} q_j[D_j],$$
where $\{D_j\}$ are the irreducible components of $\tilde{\sum}$ and $\{q_j\}$ are positive numbers such that $\sigma_1$ is a smooth real $(1,1)$-form on $\tilde{M}$.

It follows from Lemma \ref{sequence of blow up} that exist a positive number $a_k>0$, a quasi-plurisubharmonic function $\tilde{\Upsilon}$ on $\tilde{M}$ and divisor $H$ on $\tilde{M}$ such that
 $$\tilde{\omega}_k:=a_k\mu^{*}\omega+
 \sqrt{-1}\partial\bar{\partial}\tilde{\Upsilon}-2\pi[H]$$
 is a K\"ahler metric on $\tilde{M}_{k+1}$. By the construction of $\mu$, we know that $H\subset \tilde{\sum}$. Denote $\Upsilon:=\mu_*{\tilde{\Upsilon}}$. By Lemma \ref{property of upsilon}, we know that $\Upsilon$ is upper-semicontinuous function on $M$.

Denote $\Phi=\varphi+\psi$.
Note that $\mu:\tilde{M}\backslash \tilde{\sum}\to M\backslash \sum$ is biholomorphic and $\big(\sum_{j} q_j[D_j]\big)|_{\tilde{M}\backslash \tilde{\sum}}=0$. It follows from the curvature conditions in Theorem \ref{main result} that
$$\sqrt{-1}\partial\bar{\partial}(\Phi\circ\mu)|_{\tilde{M}\backslash \tilde{\sum}}\ge 0$$
and
$$\sqrt{-1}\partial\bar{\partial}(\Phi\circ\mu)|_{\tilde{M}\backslash \tilde{\sum}}+\frac{1}{s(-\psi\circ\mu)}
\sigma_1|_{\tilde{M}\backslash \tilde{\sum}}\ge 0$$
hold on $\tilde{M}\backslash \tilde{\sum}$.
As $\Phi\circ\mu$ is quasi-plurisubharmonic on $\tilde{M}$, given any small open set $U\subset \tilde{M}$, we can find a smooth function $\tau$ on $U$ such that $\Phi\circ\mu+\tau$ is plurisubharmonic function on $U$. As $\tau$ is smooth and the restriction of positive  closed current on any analytic subset is still positive and closed (see  Corollary 2.4 of Chapter 3 in \cite{Demaillybook}), we know that $$\sqrt{-1}\partial\bar{\partial}(\Phi\circ\mu)|_{\tilde{\sum}\cap U}
=\sqrt{-1}\partial\bar{\partial}(\Phi\circ\mu+\tau)|_{\tilde{\sum}\cap U}\ge 0.$$
Hence we know that
\begin{equation}\label{step3 curvature cond 1}
\sqrt{-1}\partial\bar{\partial}(\Phi\circ\mu)=
\sqrt{-1}\partial\bar{\partial}(\Phi\circ\mu)|_{\tilde{M}\backslash \tilde{\sum}}+
\sqrt{-1}\partial\bar{\partial}(\Phi\circ\mu)|_{\tilde{\sum}}\ge 0
\end{equation}
hold on $\tilde{M}$.

As $\sigma_1$ is smooth on $\tilde{M}$, we have
$$\sqrt{-1}\partial\bar{\partial}(\Phi\circ\mu)|_{\tilde{\sum}}+\big(\frac{1}{s(-\psi\circ\mu)}
\sigma_1\big)|_{\tilde{\sum}}=\sqrt{-1}\partial\bar{\partial}(\Phi\circ\mu)|_{\tilde{\sum}}\ge0.$$
Hence
\begin{equation}\label{step3 curvature cond 2}
\begin{split}
&\sqrt{-1}\partial\bar{\partial}(\Phi\circ\mu)+\frac{1}{s(-\psi\circ\mu)}
\sigma_1\\
=&\sqrt{-1}\partial\bar{\partial}(\Phi\circ\mu)|_{\tilde{M}\backslash \tilde{\sum}}+\frac{1}{s(-\psi\circ\mu)}
\sigma_1|_{\tilde{M}\backslash \tilde{\sum}}+\sqrt{-1}\partial\bar{\partial}(\Phi\circ\mu)|_{ \tilde{\sum}}+\frac{1}{s(-\psi\circ\mu)}
\sigma_1|_{\tilde{\sum}}\\
\ge&0
\end{split}
\end{equation}
holds on $\tilde{M}$.

Note that $\tilde{M}_k$ is relatively compact in $\tilde{M}$, there exists a continuous nonnegative (1,1)-form $\varpi$ on $(\tilde{M}_{k+1},\tilde{\omega}_k)$ such that
\begin{equation}\nonumber
\begin{split}
(\sqrt{-1}&\Theta_{T\tilde{M}}+\varpi \otimes Id_{T\tilde{M}})(\kappa_1 \otimes \kappa_2,\kappa_1
\otimes \kappa_2)\ge 0, \quad\forall \kappa_1,\kappa_2 \in T\tilde{M}
\end{split}
\end{equation}
holds on $\tilde{M}_k$.
It follows from Lemma \ref{regularization on cpx mfld}, inequalities \eqref{step3 curvature cond 1} and \eqref{step3 curvature cond 2} that there exists a family of functions $\{\tilde{\Phi}_{\zeta,\rho}\}_{\zeta>0,\rho\in(0,\rho_1)}$ on a neighborhood of the closure of $\tilde{M}_k$ such that \\
(1) $\tilde{\Phi}_{\zeta,\rho}$ is a quasi-plurisubharmonic function on a neighborhood  of the closure of $\tilde{M}_k$, smooth on $\tilde{M}_{k+1}\backslash E_{\zeta}(\Phi\circ \mu)$, increasing with respect to $\zeta$ and $\rho$ on $\tilde{M}_k$ and converges to $\Phi\circ \mu$ on $\tilde{M}_k$ as $\rho \to 0$,\\
(2) $\frac{\sqrt{-1}}{\pi}\partial\bar{\partial}\tilde{\Phi}_{\zeta,\rho}
\ge-\zeta\varpi-\delta_{\rho}\tilde{\omega}_k$ on $\tilde{M}_k$,\\
(3) $\frac{\sqrt{-1}}{\pi}\partial\bar{\partial}\tilde{\Phi}_{\zeta,\rho}
\ge-\frac{1}{s(-\psi\circ\mu)}\frac{\sigma_1}{\pi}-\zeta\varpi-\delta_{\rho}\tilde{\omega}_k$ on $\tilde{M}_k$,\\
where  $E_{\zeta}(\Phi\circ \mu):=\{x\in \tilde{M}:v(\Phi\circ\mu,x)\ge \zeta\}$ is the $\zeta$-upperlevel set of Lelong numbers of $\Phi\circ\mu$ and $\{\delta_{\rho}\}$ is an increasing family of positive numbers such that $\lim_{\rho\to 0}\delta_{\rho}=0$.

As $\tilde{\omega}_{k}$ is positive on $\tilde{M}_{k+1}$ and $\tilde{M}_k$ is relatively compact in $\tilde{M}_{k+1}$, there exists a positive number $n_k>1$ such that $n_k\tilde{w}_k\ge \varpi$ on $\tilde{M}_k$. Let $\rho=\frac{1}{m'}$, where $m'\in\mathbb{Z}_{\ge 1}$. Denote $\delta_{m'}:=\delta_{\frac{1}{m'}}$ and $\zeta=\delta_{m'}$. Denote $\tilde{\Phi}_{m'}:=\tilde{\Phi}_{\delta_{\frac{1}{m'}},\frac{1}{m'}}$. Then we have a sequence of functions $\{\tilde{\Phi}_{m'}\}_{m'\ge 1}$ such that
\\
(1) $\tilde{\Phi}_{m'}$ is a quasi-plurisubharmonic function on a neighborhood of the closure of $\tilde{M}_k$, smooth on $\tilde{M}_{k+1}\backslash E_{m'}(\Phi\circ \mu)$, decreasing with respect to $m'$ on $\tilde{M}_k$ and converges to $\Phi\circ \mu$ on $\tilde{M}_k$ as $m' \to +\infty$,\\
(2) $\frac{\sqrt{-1}}{\pi}\partial\bar{\partial}\tilde{\Phi}_{m'}
\ge-\delta_{m'}n_k\tilde{\omega}_k-\delta_{m'}\tilde{\omega}_k
\ge-2\delta_{m'}n_k\tilde{\omega}_k$ on $\tilde{M}_k$,\\
(3) $\frac{\sqrt{-1}}{\pi}\partial\bar{\partial}\tilde{\Phi}_{m'}
\ge-\frac{1}{s(-\psi\circ\mu)}\frac{\sigma_1}{\pi}-\delta_{m'}n_k\tilde{\omega}_k-\delta_{m'}\tilde{\omega}_k
\ge-\frac{1}{s(-\psi\circ\mu)}\frac{\sigma_1}{\pi}-2\delta_{m'}n_k\tilde{\omega}_k$ on $\tilde{M}_k$,\\
where  $E_{m'}(\Phi\circ \mu):=\{x\in \tilde{M}:v(\Phi\circ\mu,x)\ge \delta_{m'}\}$ is the upperlevel set of Lelong numbers of $\Phi\circ\mu$ and $\{\delta_{m'}\}$ is an decreasing family of positive numbers such that $\lim_{m' \to +\infty}\delta_{m'}=0$. As $\mu:\tilde{M}\backslash \tilde{\sum}\to M\backslash \sum$ is biholomorphic, we know that
$$\sqrt{-1}\partial\bar{\partial}\tilde{\Phi}_{m'}\circ \mu^{-1}
\ge-2\pi\delta_{m'}n_k(\mu^{-1})^*\tilde{\omega}_k$$
and
$$\sqrt{-1}\partial\bar{\partial}\tilde{\Phi}_{m'}\circ \mu^{-1}
\ge-\frac{1}{s(-\psi)}(\mu^{-1})^*\sigma_1-2\pi\delta_{m'}n_k(\mu^{-1})^*\tilde{\omega}_k$$
hold on $M_k\backslash \sum$. By the definition of $\tilde{w}_k$, we have
\begin{equation}\label{step3 curvature result 1}
 \sqrt{-1}\partial\bar{\partial} \big(\tilde{\Phi}_{m'}\circ \mu^{-1}\big)+2\pi n_k\delta_{m'}\sqrt{-1}\partial\bar{\partial}\Upsilon
\ge-2\pi n_ka_k \delta_{m'}\omega
\end{equation}
and
\begin{equation}\label{step3 curvature result 2}
 \sqrt{-1}\partial\bar{\partial} \big(\tilde{\Phi}_{m'}\circ \mu^{-1}\big)+2\pi n_k\delta_{m'}\sqrt{-1}\partial\bar{\partial}\Upsilon+\frac{1}{s(-\psi)}\sqrt{-1}\partial\bar{\partial}\psi
\ge-2\pi n_ka_k\delta_{m'}\omega
\end{equation}
hold on $M_k\backslash \sum$. We simply denote $\tilde{\Phi}_{m'}\circ \mu^{-1}$ by $\Phi_{m'}$.

Note that $E_{m'}(\Phi\circ \mu)$ is an analytic subset in $\tilde{M}$, Remmert's proper mapping theorem shows that
$$\sum_{m'}:=\mu\big(E_{m'}(\Phi\circ \mu)\big)$$
is an analytic set in $M$.

Note that $(M,E,\Sigma,M_k,h,h_{k,s})$ is a singular metric on $E$ and $\Theta_{h}(E)\ge^s_{Nak} 0$ on $M$ in the sense of Definition \ref{singular nak}. We know that for any $k\ge 1$, there exists a sequence of hermitian metrics $\{h_{k,m}\}_{m=1}^{+\infty}$  of class $C^2$ convergent point-wisely to $h$ on $M_k$ which satisfies
\par
(1) for any $x\in \Omega:\ |e_x|_{h_{k,m}}\le |e_x|_{h_{k,m+1}},$ for any $m\ge 1$ and any $e_x\in E_x$;
\par
(2) $\Theta_{h_{k,m}}(E)\ge_{Nak} -\lambda_{k,m}\omega\otimes Id_E$ on $M_k$;
\par
(3)  $\lambda_{k,m}\to 0$ a.e. on $M_k$, where $\lambda_{k,m}$ is a sequence of continuous functions on $\overline{M_k}$;
\par
(4)  $0\le \lambda_{k,m}\le \lambda_k$ on $M_k$, for any $s\ge 1$, where $\lambda_k$ is a continuous function $\overline{M_k}$.

Since $k$ is fixed until last step, we simply denote $h_{k+1,m}$, $\lambda_{k+1,m}$ and $\lambda_{k+1}$ by $h_m$, $\lambda_{m}$ and $\lambda$  respectively. Denote $\tilde{h}_{m,m'}:=h_me^{-\Phi_{m'}}e^{-2\pi
n_k\delta_{m'}\Upsilon}e^{-\phi}$ on $M_k\backslash  \big(\sum\cup \sum_{m'}\big)$.

Note that, by Lemma \ref{completeness}, $M_k\backslash \big(\sum\cup \sum_{m'}\big)$ carries a complete K\"ahler metric.

\

\textbf{Step 5: some calculations.}\label{step 4 of main result}

\

We set
$B=[\eta \sqrt{-1}\Theta_{\tilde{h}_{m,m'}}-\sqrt{-1}\partial \bar{\partial}
\eta\otimes\text{Id}_E-\sqrt{-1}g\partial\eta \wedge\bar{\partial}\eta\otimes\text{Id}_E, \Lambda_{\omega}]$ on $M_k\backslash \big(\sum\cup \sum_{m'}\big)$. Direct calculation shows that
\begin{equation}\nonumber
\begin{split}
\partial\bar{\partial}\eta=&
-S_k'\big(-v_{t_0,\epsilon}(\psi)\big)\partial\bar{\partial}\big(v_{t_0,\epsilon}(\psi)\big)
+S_k''\big(-v_{t_0,\epsilon}(\psi)\big)\partial\big(v_{t_0,\epsilon}(\psi)\big)\wedge
\bar{\partial}\big(v_{t_0,\epsilon}(\psi)\big),\\
\eta\Theta_{\tilde{h}_{m,m'}}=&\eta\partial\bar{\partial}\phi\otimes\text{Id}_E+\eta\Theta_{h_{m}}+
\eta\partial\bar{\partial}\Phi_{m'}\otimes\text{Id}_E
+\eta(2\pi n_k\delta_{m'})\partial\bar{\partial}\Upsilon\otimes\text{Id}_E\\
=&S_ku_k''\big(-v_{t_0,\epsilon}(\psi)\big)\partial\big(v_{t_0,\epsilon}(\psi)\big)\wedge
\bar{\partial}\big(v_{t_0,\epsilon}(\psi)\big)\otimes\text{Id}_E
-S_ku_k'\big(-v_{t_0,\epsilon}(\psi)\big)\partial\bar{\partial}\big(v_{t_0,\epsilon}(\psi)\big)\otimes\text{Id}_E\\
+&S_k\Theta_{h_{m}}+
S_k(2\pi n_k\delta_{m'})\partial\bar{\partial}\Upsilon\otimes\text{Id}_E
+S_k\partial\bar{\partial}\Phi_{m'}\otimes\text{Id}_E.
\end{split}
\end{equation}
Hence
\begin{equation}\nonumber
\begin{split}
&\eta \sqrt{-1}\Theta_{\tilde{h}_{m,m'}}-\sqrt{-1}\partial \bar{\partial}
\eta\otimes\text{Id}_E-\sqrt{-1}g\partial\eta \wedge\bar{\partial}\eta\otimes\text{Id}_E\\
=&S_k\sqrt{-1}\Theta_{h_{m}}+
S_k(2\pi n_k\delta_{m'})\sqrt{-1}\partial\bar{\partial}\Upsilon\otimes\text{Id}_E
+S_k\sqrt{-1}\partial\bar{\partial}\Phi_{m'}\otimes\text{Id}_E\\
+&\big(S_k'-S_ku_k'\big)\big(v'_{t_0,\epsilon}(\psi)\sqrt{-1}\partial\bar{\partial}(\psi)+
v''_{t_0,\epsilon}(\psi)\sqrt{-1}\partial(\psi)\wedge\bar{\partial}(\psi)\big)\otimes\text{Id}_E\\
+&[\big(u_k''S_k-S_k''\big)-\tilde{g}_kS_k'^2]\sqrt{-1}\partial\big(v_{t_0,\epsilon}(\psi)\big)\wedge\bar{\partial}\big(v_{t_0,\epsilon}(\psi)\big)\otimes\text{Id}_E,
\end{split}
\end{equation}
where we omit the term $-v_{t_0,\epsilon}(\psi)$ in $(S_k'-S_ku_k')\big(-v_{t_0,\epsilon}(\psi)\big)$ and $[(u_k''S_k-S_k'')-\tilde{g}_kS_k'^2]\big(-v_{t_0,\epsilon}(\psi)\big)$ for simplicity.
Note that $S_k'(t)-S_k(t)u_k'(t)=1$, $\frac{u_k''(t)S_k(t)-S_k''(t)}{S_k'^2(t)}-\tilde{g}_k(t)=0$. We have
\begin{equation}\label{cal of curvature}
\begin{split}
&\eta \sqrt{-1}\Theta_{\tilde{h}_{m,m'}}-\sqrt{-1}\partial \bar{\partial}
\eta\otimes\text{Id}_E-\sqrt{-1}g\partial\eta \wedge\bar{\partial}\eta\otimes\text{Id}_E\\
=&S_k\sqrt{-1}\Theta_{h_{m}}+
S_k(2\pi n_k\delta_{m'})\sqrt{-1}\partial\bar{\partial}\Upsilon\otimes\text{Id}_E
+S_k\sqrt{-1}\partial\bar{\partial}\Phi_{m'}\otimes\text{Id}_E\\
+&\big(v'_{t_0,\epsilon}(\psi)\sqrt{-1}\partial\bar{\partial}(\psi)+
v''_{t_0,\epsilon}(\psi)\sqrt{-1}\partial(\psi)\wedge\bar{\partial}(\psi)\big)\otimes\text{Id}_E.
\end{split}
\end{equation}
We would like to discuss a property of $S_k(t)$.
\begin{Lemma}
\label{lem:main}
For large enough $t_{0}$, and for any $\varepsilon\in(0,1/4)$, the inequality
\begin{equation}
\label{equ:main}
S_k(-v_{t_{0},\varepsilon}(t))\geq S_k(-t)v'_{t_{0},\varepsilon}(t)
\end{equation}
holds any $t\in(-\infty,-T)$.
\end{Lemma}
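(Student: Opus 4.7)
My approach is to split the analysis into three regimes determined by the mollified function $v_{t_0,\varepsilon}$. The key input is the pair of bounds $0<S_k'<1$ on $[T+\varepsilon_k,+\infty)$: the upper bound follows from the identity $S_k'(t)=1+S_k(t)u_k'(t)$ together with $S_k>0$ and $u_k'(t)=-c_k(t)e^{-t}/G(t)<0$, while the lower bound is part of condition (6) in Lemma \ref{approx of ct}. Moreover $S_k(s)\to+\infty$ as $s\to+\infty$, since $G(s)\to G(+\infty)<+\infty$ whereas $\int_T^s G(u)\,du\to+\infty$ linearly.

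On the region $t\ge -t_0-\varepsilon$ we have $v_{t_0,\varepsilon}(t)=t$ and $v_{t_0,\varepsilon}'(t)=1$, so \eqref{equ:main} holds as an equality. On the region $t\le-t_0-1+\varepsilon$ one has $v_{t_0,\varepsilon}'(t)=0$, so the right-hand side vanishes; the left-hand side is positive once $t_0$ is large enough that $-v_{t_0,\varepsilon}(t)\ge t_0+\varepsilon>T+\varepsilon_k$. The work concentrates on the transition interval $(-t_0-1+\varepsilon,-t_0-\varepsilon)$, and the main obstacle is producing a sufficiently sharp bound on $v_{t_0,\varepsilon}(t)-t$ relative to $1-v_{t_0,\varepsilon}'(t)$; the crude estimate $v_{t_0,\varepsilon}(t)-t\le (-t)(1-v_{t_0,\varepsilon}'(t))$ would reduce the lemma to $S_k(-t)\ge -t$, which is not available in general.

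To produce the sharp bound on the transition interval, I will differentiate $\phi(s):=v_{t_0,\varepsilon}(s)-sv_{t_0,\varepsilon}'(s)$ to obtain $\phi'(s)=-sv_{t_0,\varepsilon}''(s)$, and integrate from $t$ to $-t_0-\varepsilon$ using $\phi(-t_0-\varepsilon)=0$. Since $-s\ge t_0+\varepsilon$ on this interval and $\int_t^{-t_0-\varepsilon}v_{t_0,\varepsilon}''(s)\,ds=1-v_{t_0,\varepsilon}'(t)$, this yields $tv_{t_0,\varepsilon}'(t)-v_{t_0,\varepsilon}(t)\ge (t_0+\varepsilon)\bigl(1-v_{t_0,\varepsilon}'(t)\bigr)$; combined with the algebraic identity $v_{t_0,\varepsilon}(t)-t=(-t)\bigl(1-v_{t_0,\varepsilon}'(t)\bigr)-\bigl(tv_{t_0,\varepsilon}'(t)-v_{t_0,\varepsilon}(t)\bigr)$ and the bound $-t<t_0+1-\varepsilon$, this produces
\[
v_{t_0,\varepsilon}(t)-t\le (1-2\varepsilon)\bigl(1-v_{t_0,\varepsilon}'(t)\bigr).
\]
Feeding this into $S_k(-t)-S_k(-v_{t_0,\varepsilon}(t))=\int_{-v_{t_0,\varepsilon}(t)}^{-t}S_k'(s)\,ds\le v_{t_0,\varepsilon}(t)-t$ and rearranging gives
\[
S_k(-v_{t_0,\varepsilon}(t))-v_{t_0,\varepsilon}'(t)S_k(-t)\ge \bigl(1-v_{t_0,\varepsilon}'(t)\bigr)\bigl(S_k(-t)-(1-2\varepsilon)\bigr),
\]
which is nonnegative once $t_0$ is chosen large enough that $S_k(t_0+\varepsilon)\ge 1-2\varepsilon$, a condition that can be met thanks to $S_k(s)\to+\infty$. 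This closes the argument on the transition interval and therefore on all of $(-\infty,-T)$.
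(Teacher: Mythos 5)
Your proof is correct and follows essentially the same route as the paper's: the same regional split according to whether $v'_{t_0,\varepsilon}$ equals $1$, equals $0$, or is intermediate, the same decomposition $S_k(-v_{t_0,\varepsilon}(t))-S_k(-t)v'_{t_0,\varepsilon}(t)=\bigl(S_k(-v_{t_0,\varepsilon}(t))-S_k(-t)\bigr)+S_k(-t)\bigl(1-v'_{t_0,\varepsilon}(t)\bigr)$, the same use of $0<S_k'<1$, and the same reduction to bounding $v_{t_0,\varepsilon}(t)-t$ by $(1-v'_{t_0,\varepsilon}(t))$ times a constant eventually beaten by $S_k$. The only deviation is in that last estimate, where the paper invokes Lemma \ref{lem:integ} (a nonnegative decreasing function bounds its own integral over an interval of length at most one, applied to $1-v'_{t_0,\varepsilon}$) to get the constant $1$, whereas you integrate $\phi'(s)=-sv''_{t_0,\varepsilon}(s)$ to get the marginally sharper constant $1-2\varepsilon$; either suffices once $t_0$ is large.
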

\begin{proof}By the construction of $v_{t_{0},\varepsilon}(t)$ and $v'_{t_{0},\varepsilon}(t)$, we know that $S_k(-v_{t_{0},\varepsilon}(t))=S_k(-t)v'_{t_{0},\varepsilon}(t)$ holds for any $t\geq -t_{0}$. Note that $v'_{t_{0},\varepsilon}(t)=0$ for any $t\leq -t_{0}-1$. We know that $S_k(-v_{t_{0},\varepsilon}(t))>S_k(-t)v'_{t_{0},\varepsilon}(t)=0$ holds for any $t\leq -t_{0}-1$.
It suffices to consider the inequality \eqref{equ:main} for any $t\in(-t_{0}-1,t_{0})$.

Note that $0<S'_k(t)<1$  on $[T+\epsilon_k,+\infty)$. We know that $S_k(t)$ is increasing with respect to $t$ on $[T+\epsilon_k,+\infty)$. It follows from $\int_{T}^{+\infty}c(t)e^{-t}dt<+\infty$ that we have $\lim_{t \to +\infty}S_k(t)=+\infty$. We recall the following well-known lemma in mathematical analysis.
\begin{Lemma}
\label{lem:integ}
Let $f\geq 0$ be a continuous decreasing function on $[-t_{0}-1,-t_{0}]$.
Then $\int_{t}^{t_{0}}f(t_{1})dt_{1}\leq f(t)$ holds for any $t\in[-t_{0}-1,-t_{0}]$.
\end{Lemma}

Now we prove Lemma \ref{lem:main} by using Lemma \ref{lem:integ}.
It follows from $S_k'<1$, the differential mean value theorem $($implies the first $"\geq"$$)$, $S_k'>0$ (implies the second $"\geq"$), and Lemma \ref{lem:integ} $(f(t)\sim(-v'_{t_{0},\varepsilon}(t)+1)$, $\int_{t}^{t_{0}}f(t_{1})dt_{1}\sim v_{t_{0},\varepsilon}(t)-t)$ $($implies the third $"\geq"$$)$, that
\begin{equation}
\label{equ:deri_main}
\begin{split}
&S_k(-v_{t_{0},\varepsilon}(t))-S_k(-t)v'_{t_{0},\varepsilon}(t)
\\=&(S_k(-v_{t_{0},\varepsilon}(t))-S_k(-t))+(S_k(-t)-S_k(-t)v'_{t_{0},\varepsilon}(t))
\\\geq&
(-v_{t_{0},\varepsilon}(t)+t)+S_k(-t)(1-v'_{t_{0},\varepsilon}(t))
\\\geq&
(-v_{t_{0},\varepsilon}(t)+t)+S_k(t_{0})(1-v'_{t_{0},\varepsilon}(t))
\\\geq&
-(-v'_{t_{0},\varepsilon}(t)+1)+S_k(t_{0})(1-v'_{t_{0},\varepsilon}(t))
\end{split}
\end{equation}
holds for any $t\in(-t_{0}-1,-t_{0})$.

Then when $t_0$ is big enough (such that $S_k(t_{0})\geq1$), by inequality \eqref{equ:deri_main}, we know inequality \eqref{equ:main} holds for any $t< -T$.

Lemma \ref{lem:main} has been proved.
\end{proof}

It follows from inequalities \eqref{step3 curvature result 1}, \eqref{step3 curvature result 2} and Lemma \ref{lem:main} that, when $t_0$ is big enough, we have
\begin{align}
&\eta \sqrt{-1}\Theta_{\tilde{h}_{m,m'}}-\sqrt{-1}\partial \bar{\partial}
\eta\otimes\text{Id}_E-\sqrt{-1}g\partial\eta \wedge\bar{\partial}\eta\otimes\text{Id}_E\notag\\
=&S_k(\Theta_{h_{m}}+\lambda_m\omega\otimes \text{Id}_E)-S_k\lambda_m\omega\otimes \text{Id}_E \notag\\
+&S_k\bigg(2\pi n_k\delta_{m'}\partial\bar{\partial}\Upsilon\otimes\text{Id}_E
+\partial\bar{\partial}\Phi_{m'}\otimes\text{Id}_E
+2\pi n_ka_k \delta_{m'}\omega\otimes\text{Id}_E\bigg) \notag\\
-&2S_k\pi n_ka_k \delta_{m'}\omega\otimes\text{Id}_E+v'_{t_0,\epsilon}(\psi)\sqrt{-1}\partial\bar{\partial}\psi\otimes \text{Id}_E+
v''_{t_0,\epsilon}(\psi)\sqrt{-1}\big(\partial\psi\wedge\bar{\partial}\psi\big)\otimes\text{Id}_E \notag\\
\ge &S_k\big(-\psi\big) v'_{t_0,\epsilon}(\psi)\bigg(2\pi n_k\delta_{m'}\partial\bar{\partial}\Upsilon\otimes\text{Id}_E
+\partial\bar{\partial}\Phi_{m'}\otimes\text{Id}_E
+2\pi n_ka_k \delta_{m'}\omega\otimes\text{Id}_E\bigg)\notag\\
+&v'_{t_0,\epsilon}(\psi)S_k(-\psi)\frac{1}{S_k(-\psi)}\sqrt{-1}\partial\bar{\partial}\psi\otimes \text{Id}_E\notag\\
 -&(S_k\lambda_m+2S_k\pi n_ka_k \delta_{m'})\omega\otimes \text{Id}_E
+
v''_{t_0,\epsilon}(\psi)\sqrt{-1}\big(\partial\psi\wedge\bar{\partial}\psi\big)\otimes\text{Id}_E\notag\\
= &S_k\big(-\psi\big) v'_{t_0,\epsilon}(\psi)\bigg(2\pi n_k\delta_{m'}\partial\bar{\partial}\Upsilon\otimes\text{Id}_E
+\partial\bar{\partial}\Phi_{m'}\otimes\text{Id}_E
+2\pi n_ka_k \delta_{m'}\omega\otimes\text{Id}_E\notag\\
+&\frac{1}{S_k(-\psi)}\sqrt{-1}\partial\bar{\partial}\psi\otimes \text{Id}_E\bigg)
 -(S_k\lambda_m+2S_k\pi n_ka_k \delta_{m'})\omega\otimes \text{Id}_E
+
v''_{t_0,\epsilon}(\psi)\sqrt{-1}\big(\partial\psi\wedge\bar{\partial}\psi\big)\otimes\text{Id}_E\notag\\
\ge &-(S_k\lambda_m+2S_k\pi n_ka_k \delta_{m'})\omega\otimes \text{Id}_E
+
v''_{t_0,\epsilon}(\psi)\sqrt{-1}\big(\partial\psi\wedge\bar{\partial}\psi\big)\otimes\text{Id}_E.
\label{cal of curvature}
\end{align}

As $S_k(t)$ is increasing with respect to $t$ on $[T+2\epsilon_k,+\infty)$ and $v_{t_0,\epsilon}(\psi)>-t_0-1$, we know that $S_k(-v_{t_0,\epsilon}(\psi))\le S_k(-t_0-1)$. Denote $b_{t_0}:=S_k(-t_0-1)\pi n_ka_k$ for simplicity.
Then by \eqref{cal of curvature}, we have
\begin{equation}\label{step3 curvature}
\begin{split}
B+\big(S_k\lambda_m+2b_{t_0}\delta_{m'}\big)\text{Id}_E
\ge
v''_{t_0,\epsilon}(\psi)[\sqrt{-1}\big(\partial\psi\wedge\bar{\partial}\psi\big)\otimes\text{Id}_E, \Lambda_{\omega}]
\end{split}
\end{equation}
holds on $M_k\backslash (\sum\cup\sum_{m'})$.

Let $\lambda_{t_0}:=D''[(1-v'_{t_0,\epsilon}(\psi))\tilde{f}_{t_0}]$. Then we know that $\lambda_{t_0}$ is well defined on $M_k$, $D''\lambda_{t_0}=0$ and
\begin{equation}\nonumber
\begin{split}
\lambda_{t_0}&=-v''_{t_0,\epsilon}(\psi)\bar{\partial}\psi\wedge\tilde{f}_{t_0}
+\big(1-v'_{t_0,\epsilon}(\psi)\big)D''\tilde{f}_{t_0}\\
&=\lambda_{1,t_0}+\lambda_{2,t_0},
\end{split}
\end{equation}
where $\lambda_{1,t_0}:=-v''_{t_0,\epsilon}(\psi)\bar{\partial}\psi\wedge\tilde{f}_{t_0}$ and
$\lambda_{2,t_0}:=\big(1-v'_{t_0,\epsilon}(\psi)\big)D''\tilde{f}_{t_0}$. Note that
$$\text{supp}\lambda_{1,t_0}\subset \{-t_0-1+\epsilon<\psi<-t_0-\epsilon\}$$
and
$$\text{supp}\lambda_{2,t_0}\subset \{\psi<-t_0-\epsilon\}.$$
It follows from inequality \eqref{step3 curvature} that we have
\begin{equation}\nonumber
\begin{split}
&\langle \big(B+(S_k\lambda_m+2b_{t_0}\delta_{m'})\text{Id}_E\big)^{-1}\lambda_{1,t_0},\lambda_{1,t_0} \rangle_{\omega,\tilde{h}_{m,m'}}|_{M_k\backslash (\sum\cup\sum_{m'})}\\
\le &v''_{t_0,\epsilon}(\psi)|\tilde{f}_{t_0}|^2_{\omega,h_m}e^{-\Phi_{m'}}e^{-2\pi
n_k\delta_{m'}\Upsilon-\phi}.
\end{split}
\end{equation}
Then we know that
\begin{equation}\label{step3 I1 1}
\begin{split}
&\int_{M_k\backslash(\sum\cup \sum_{m'})}\langle \big(B+(S_k\lambda_m+2b_{t_0}\delta_{m'})\text{Id}_E\big)^{-1}\lambda_{1,t_0},\lambda_{1,t_0} \rangle_{\tilde{h}_{m,m'}}dV_{M,\omega}\\
\le &\int_{M_k\backslash(\sum\cup \sum_{m'})}v''_{t_0,\epsilon}(\psi)|\tilde{f}_{t_0}|^2_{\omega,h_m}e^{-\Phi_{m'}}e^{-(2\pi
n_k\delta_{m'})\Upsilon-\phi}dV_{M,\omega}\\
\le& I_{1,m',t_0,\epsilon}:=\sup_{M_k} e^{-\phi}\int_{M_k}v''_{t_0,\epsilon}(\psi)|\tilde{f}_{t_0}|^2_{\omega,h}e^{-\varphi-\psi}e^{-2\pi
n_k\delta_{m'}\Upsilon}dV_{M,\omega}.
\end{split}
\end{equation}
Note that
$$|\tilde{f}_{t_0}|^2_{\omega,h}|_{U}
=|\sum_{i=1}^{N}\sqrt{\xi_i}\sqrt{\xi_i}\tilde{f}_{i,t_0}|^2_{\omega,h}
\le (\sum_{i=1}^{N}\xi_i)(\sum_{i=1}^{N}\xi_i|\tilde{f}_{i,t_0}|^2_{\omega,h})
=\sum_{i=1}^{N}\xi_i|\tilde{f}_{i,t_0}|^2_{\omega,h}
$$
and when $t_0$ is big enough, we have $\big(U_i\cap\{-t_0-1+\epsilon<\psi<-t_0-\epsilon\}\big)\subset U$, for each $i=1,\ldots,N$.

Note that $\{\Upsilon=-\infty\}=H\subset \sum=\{\psi=-\infty\}$ and then $\Upsilon$ is smooth on $M\backslash \sum$. As $\psi$ has neat analytic singularities, we know that $e^{\psi}$ is smooth on $M$. Hence the set $\{e^{-t_0-1}\le e^\psi \le e^{-t_0}\}$ is closed subset of $M$. It follows from relatively compactness of $U_i$ and $\Upsilon$ is smooth on $M\backslash \sum$ that we know  $-\Upsilon$ is upper bounded by some real number $\gamma_{t_0,i}$ on $U_i\cap\{e^{-t_0-1}\le e^\psi \le e^{-t_0}\}$. Denote $\gamma_{t_0}=\sup_{i=1,\ldots,N}\gamma_{t_0,i}$.

For fixed $t_0$, we can always find $m_{t_0}$ big enough such that when $m'>m_{t_0}$, $e^{2\pi n_k\gamma_{t_0}\delta_{m'}}<(1+\tau)$ for any given $\tau>0$.
Then it follows from the definition of $v''_{t_0,\epsilon}(t)$, $\phi$, $h_m\le h$ for any $m\ge 1$, $\text{supp}\lambda_{1,t_0}\subset \{-t_0-1+\epsilon<\psi<-t_0-\epsilon\}$, inequality \eqref{step3 I1 1} that when $m'$ is big enough, we have
\begin{equation}\label{equ for I1m't0}
\begin{split}
I_{1,m',t_0,\epsilon}
\le& \frac{e^{2\pi
n_k\gamma_{t_0}\delta_{m'}}}{{1-4\epsilon}}(\sup_{t\ge t_0} e^{-u_k(t)}) \sum_{i=1}^{N}\int_{U_i\cap\{-t_0-1+\epsilon<\psi<-t_0-\epsilon\}}\xi_i|\tilde{f}_{i,t_0}|^2_{\omega,h}e^{-\varphi-\psi}dV_{M,\omega}\\
\le&  \frac{(1+\tau)}{{1-4\epsilon}}(\sup_{t\ge t_0} e^{-u_k(t)} ) \sum_{i=1}^{N}\int_{U_i\cap\{-t_0-1+\epsilon<\psi<-t_0-\epsilon\}}\xi_i|\tilde{f}_{i,t_0}|^2_{\omega,h}e^{-\varphi-\psi}dV_{M,\omega}.
\end{split}
\end{equation}
Denote $$I_{1,t_0}:=(\sup_{t\ge t_0} e^{-u_k(t)}) \sum_{i=1}^{N}\int_{U_i\cap\{-t_0-1+\epsilon<\psi<-t_0-\epsilon\}}\xi_i|\tilde{f}_{i,t_0}|^2_{\omega,h}e^{-\varphi-\psi}dV_{M,\omega}.$$
Then we have
\begin{equation}\label{I1m't0less Im't0}
\limsup_{m'\to +\infty}  I_{1,m',t_0,\epsilon}\le \frac{(1+\tau)}{{1-4\epsilon}} I_{1,t_0}.
\end{equation}
It follows from inequality \eqref{step1part3 6} that we know
\begin{equation}\nonumber
\begin{split}
&\limsup_{t_0\to +\infty} I_{1,t_0}\\
\le& (\sup_{t\ge t_0} e^{-u_k(t)} ) \limsup_{t_0\to +\infty} \sum_{i=1}^{N}\int_{U_i\cap\{-t_0-1+\epsilon<\psi<-t_0-\epsilon\}}\xi_i|\tilde{f}_{i,t_0}|^2_{\omega,h}e^{-\varphi-\psi}dV_{M,\omega}\\
\le & (\sup_{t\ge t_0} e^{-u_k(t)} )\sum_{i=1}^{N}
\int_{U_i\cap Y^0}\xi_i|f|^2_{\omega,h}e^{-\varphi}dV_{M,\omega}[\psi]\\
\le & (\sup_{t\ge t_0} e^{-u_k(t)} )
\int_{Y^0}|f|^2_{\omega,h}e^{-\varphi}dV_{M,\omega}[\psi].
\end{split}
\end{equation}
Then by the definition of $u_k$, when $t_0$ is big enough, we have
\begin{equation}\label{step3 I1 2}
\begin{split}
\limsup_{t_0\to +\infty}  I_{1,t_0}
\le  \big(\frac{1}{\delta}c_k(T)e^{-T}+\int^{+\infty}_T c_k(t_1)e^{-t_1}dt_1\big)
\int_{Y^0}|f|^2_{\omega,h}e^{-\varphi}dV_{M,\omega}[\psi].
\end{split}
\end{equation}
By inequality \eqref{equ for I1m't0}, we have
\begin{equation}\label{step3 I1m't0 2}
\begin{split}
&\limsup_{t_0\to +\infty}(\limsup_{m'\to +\infty}  I_{1,m',t_0,\epsilon})\\
\le &\frac{(1+\tau)}{1-4\epsilon}\limsup_{t_0\to +\infty}  I_{1,t_0}\\
\le  &\frac{(1+\tau)}{1-4\epsilon}\big(\frac{1}{\delta}c_k(T)e^{-T}+\int^{+\infty}_T c_k(t_1)e^{-t_1}dt_1\big)
\int_{Y^0}|f|^2_{\omega,h}e^{-\varphi}dV_{M,\omega}[\psi].
\end{split}
\end{equation}

Note that when $m'$ is big enough, we have $2\pi n_k \delta_{m'}<\beta_3$ for any given $\beta_3>0$. Denote
\begin{equation}\label{step3 I2 1}
\begin{split}
I_{2,m',t_0}:=&\int_{M_k\backslash (\sum\cup \sum_{m'})}\langle\lambda_{2,t_0},\lambda_{2,t_0} \rangle_{\tilde{h}_{m,m'}}dV_{M,\omega}\\
\le&\int_{M_k\cap\{\psi<-t_0-\epsilon\}}|D'' \tilde{f}_{t_0}|^2_{\omega,h_m}e^{-\Phi_{m'}}e^{-(2\pi
n_k\delta_{m'})\Upsilon-\phi}dV_{M,\omega}\\
\le & \big(\sup_{t\ge t_0} e^{-u_k(t)}\big)
\int_{M_k\cap\{\psi<-t_0\}}|D'' \tilde{f}_{t_0}|^2_{\omega,h}e^{-\varphi-\psi-\beta_3\Upsilon}dV_{M,\omega},
\end{split}
\end{equation}
the last inequality holds only for $m'$ is big enough.
It follows from equality \eqref{step1part4 1} and Cauchy-Schwarz inequality that when $t_0$ is big enough,
\begin{equation}\label{step3 I2 2}
\begin{split}
I_{2,m',t_0}\le& C_8\sum_{1\le i,j\le N}
\int_{U_i\cap U_j\cap\{\psi<-t_0\}}|\tilde{f}_{i,t_0}-\tilde{f}_{j,t_0}|^2_{\omega,h}e^{-\varphi-\psi-\beta_3\Upsilon}dV_{M,\omega},
\end{split}
\end{equation}
where $C_8>0$ is a real number independent of $t_0$.

For any $1\le i,j\le N$, we denote
\begin{equation}\label{step3 I2 3}
\begin{split}
I_{i,j,t_0}:=\int_{U_i\cap U_j\cap\{\psi<-t_0\}}|\tilde{f}_{i,t_0}-\tilde{f}_{j,t_0}|^2_{\omega,h}e^{-\varphi-\psi-\beta_3\Upsilon}dV_{M,\omega}.
\end{split}
\end{equation}
Next, we will show that $I_{i,j,t_0}\le C'e^{-2\beta_0 t_0}$ for some constant $C'>0$ independent of $t_0$.

It follows from inequality \eqref{step1part3 1}, the definition of $\hat{c}_1(t)$ and $\hat{c}_0(t)$ and  $\Gamma_i+m_i\le \psi$ on $\Omega_i$ that for any $1\le i \le N$, we have
\begin{equation}\nonumber
\begin{split}
\int_{\Omega_i\cap\{\psi<-t_0\}}\hat{c}_0(-\psi)|\tilde{f}_{i,t_0}|^2_{\omega,h(\text{det}h)^{\beta}}e^{-(1+\beta r)\varphi}dV_{M,\omega}\le C_9,
\end{split}
\end{equation}
where $C_9>0$ is a real number independent of $t_0$.
Recall that
$\hat{c}_0(t):=c(T_1)e^{(1-\beta_2)(t-T_1)}$, then we have
\begin{equation}\label{step3 I2 KEYESTIMATE}
\begin{split}
\int_{\Omega_i\cap\{\psi<-t_0\}}|\tilde{f}_{i,t_0}|^2_{\omega,h(\text{det}h)^{\beta}}e^{-(1+\beta r)\varphi}e^{-(1-\beta_2)\psi}dV_{M,\omega}\le \tilde{C}_9,
\end{split}
\end{equation}
where $\tilde{C}_9>0$ is a real number independent of $t_0$.

For fixed $i,j$, let $\{e_1,\cdots,e_r\}$ be a holomorphic frame on $ E|_{U_i\cap U_j}$. It follows from Lemma \ref{existence of bounded tra} that there exists a local frame $\{\zeta_1,\cdots,\zeta_r\}$ of $E|_{U_i\cap U_j}$ such that the local expression of $ he^{-\varphi}$ is a diagonal matrix with diagonal element $\det (h)e^{-r\varphi}$  and the transition matrix $B^{-1}$ from  $\{e_1,\cdots,e_r\}$ to  $\{
\zeta_1,\cdots,\zeta_r\}$ satisfies that each element $b_{i,j}(z)$ of $B$ is a bounded function on $U_i\cap U_j$. Let $dw$ be a local frame of $K_M|_{U_i\cap U_j}$. Then we can assume that

$$\tilde{f}_{i,t_0}=\sum_{p=1}^{r} F_{i,t_0,p}\zeta_p\otimes dw \text{ and } \tilde{f}_{j,t_0}=\sum_{p=1}^{r} F_{j,t_0,p}\zeta_p\otimes dw  \text{ on } U_i\cap U_j,$$
where $F_{i,t_0,p}$ and $F_{j,t_0,p}$ are measurable functions on $U_i\cap U_j$. Then \eqref{step3 I2 3} becomes
\begin{equation}\label{step3 I2 4}
\begin{split}
I_{i,j,t_0}=\int_{U_i\cap U_j\cap\{\psi<-t_0\}}\sum_{p=1}^{r}|F_{i,t_0,p}-F_{j,t_0,p}|^2(\det h)
e^{-r\varphi-\psi-\beta_3\Upsilon}dw\wedge d\bar{w}.
\end{split}
\end{equation}
Since the case is local and $he^{-\varphi}$ is a singular metric on $E|_{U_i\cap U_j}$ and  locally lower bounded, we can assume that all eigenvalues of $he^{-\varphi}$ are greater than $1$.
By the inequality \eqref{step3 I2 KEYESTIMATE} and the constructions of local frame $\{\zeta_1,\cdots,\zeta_r\}$, for any $1\le i\le N$ and any $1\le p\le r$, we have
\begin{equation}\label{step3 I2 KEYESTIMATE2}
\begin{split}
&\int_{U_i\cap\{\psi<-t_0\}}|F_{i,t_0,p}|^2(\det h)^{1+\beta}e^{-(1+\beta )r\varphi-(1-\beta_2)\psi}dw\wedge d\bar{w}\\
\le & \int_{\Omega_i\cap\{\psi<-t_0\}}|\tilde{f}_{i,t_0}|^2_{\omega,h}e^{-\varphi}(\text{det}h)^{\beta}e^{-\beta r\varphi-(1-\beta_2)\psi} dV_{M,\omega}
\le \tilde{C}_9.
\end{split}
\end{equation}
It follows from H\"older inequality that
\begin{equation}\label{step3 I2 5}
\begin{split}
I_{i,j,t_0}=&\sum_{p=1}^{r}\int_{U_i\cap U_j\cap\{\psi<-t_0\}}|F_{i,t_0,p}-F_{j,t_0,p}|^2(\det h)e^{-r\varphi-\psi-\beta_3\Upsilon}dw\wedge d\bar{w}\\
\le &\sum_{p=1}^{r}\big(\int_{U_i\cap U_j\cap\{\psi<-t_0\}}|F_{i,t_0,p}-F_{j,t_0,p}|^2(\det h)^{1+\beta}e^{-(1+\beta)r\varphi-(1-\beta_2)\psi}dw\wedge d\bar{w}\big)^{\frac{1}{1+\beta}}\times\\
&\big(\int_{U_i\cap U_j\cap\{\psi<-t_0\}}|F_{i,t_0,p}-F_{j,t_0,p}|^2e^{-(1+\beta_2\frac{1}{\beta})\psi-\beta_3\frac{1+\beta}{\beta}\Upsilon}dw\wedge d\bar{w}\big)^{\frac{\beta}{1+\beta}}\\
\le& C_{10}\sum_{p=1}^{r}
\big(\int_{U_i\cap U_j\cap\{\psi<-t_0\}}|F_{i,t_0,p}-F_{j,t_0,p}|^2e^{-(1+\beta_2\frac{1}{\beta})\psi-\beta_3\frac{1+\beta}{\beta}\Upsilon}dw\wedge d\bar{w}\big)^{\frac{\beta}{1+\beta}},
\end{split}
\end{equation}
when $t_0$ is big enough and $C_{10}>0$ is a real number independent of $t_0$.

 We would like to estimate the last integral by estimating its pull back under the morphism $\mu$. We cover $\mu^{-1}(U_i\cap U_j)\cap \{\psi\circ \mu <-t_0\}$ by a finite number of coordinate balls $W$ as we did in the Step 1 of the proof of Proposition \ref{p:inte}. Let $dw$ be a local frame of $K_{\tilde{M}}|_{\mu^{-1}(U_i\cap U_j)}$. Assume that under the local frame  $\{e_1\circ \mu,\cdots,e_r\circ \mu\}$, we can write
 $$\tilde{f}_{i,t_0}\circ \mu=\sum_{p=1}^{r} f_{i,t_0,p}(e_p\circ \mu)\otimes dw \text{ and } \tilde{f}_{j,t_0}\circ \mu=\sum_{p=1}^{r} f_{j,t_0,p}(e_p\circ \mu)\otimes dw\text{ on } W.$$
Then inequalities \eqref{step1part4 2} and \eqref{step1part4 3} show that for any $1\le i\le N$ and any $1\le p\le r$,
\begin{equation}\label{step3 I2 6}
  |f_{i,t_0,p}-f_{j,t_0,p}|^2|_{W_{i,j,t_0}}\le C_7e^{\beta_1 t_0 }\prod_{l \in \kappa}|w_l|^2
\end{equation}
when $\kappa \neq \emptyset$ and $t$ is big enough, and
\begin{equation}\label{step3 I2 7}
  |f_{i,t_0,p}-f_{j,t_0,p}|^2|_{W_{i,j,t_0}}\le C_7e^{\beta_1 t_0 }
\end{equation}
when $\kappa= \emptyset$ and $t_0$ is big enough, where $W_{i,j,t_0}=W\cap \mu^{-1}(U_i\cap U_j)\cap\{\psi\circ \mu <-t_0\}$ and  $C_7>0$ is a real number independent of $t_0$.

Note that for any $1\le i\le N$, we have
 $$\tilde{f}_{i,t_0}\circ \mu=(F_{i,t_0,1}\circ \mu,\cdots,F_{i,t_0,r}\circ \mu)^T=(B\circ \mu) (f_{i,t_0,1},\cdots,f_{i,t_0,r})^T,$$
 where $T$ means transposition.
Then it follows from inequalities \eqref{step3 I2 6}, \eqref{step3 I2 7} and $B\circ \mu$ is a bounded matrix on $W$ (shrink $W$ if necessary) that  we have
\begin{equation}\label{step3 I2 8}
  |F_{i,t,p}
  \circ\mu-F_{j,t_0,p} \circ\mu|^2|_{W_{i,j,t_0}}\le \hat{C}_7e^{\beta_1 t_0 }\prod_{l \in \kappa}|w_l|^2
\end{equation}
when $\kappa \neq \emptyset$ and $t_0$ is big enough, and
\begin{equation}\label{step3 I2 9}
  |F_{i,t_0,p}\circ\mu-F_{j,t_0,p}\circ\mu|^2|_{W_{i,j,t_0}}\le \hat{C}_7e^{\beta_1 t_0 }
\end{equation}
when $\kappa= \emptyset$ and $t_0$ is big enough, where  $\hat{C}_7>0$ is a real number independent of $t_0$.

By equality \eqref{local description of upsilon} in Remark \ref{Hsnc case}, we can assume that under the local  coordinate $(W;w_1,\ldots,w_n)$, we have
\begin{equation}\label{local description of upsilon 1}
\Upsilon=\log(\prod_{l=1}^{n}|w_l|^{2d_l})+v(w),
\end{equation}
where $d_l$ is nonnegative integer and $v(w)$ is a smooth function on $W$.
It follows from inequalities \eqref{step3 I2 8}, \eqref{step3 I2 9} and \eqref{local description of upsilon 1} that on each $W$,
$$\int_{W_{i,j,t_0}}|F_{i,t_0,p}\circ\mu-F_{j,t_0,p}\circ\mu|^2
e^{-(1+\beta_2\frac{1}{\beta})\psi\circ\mu-\beta_3\frac{1+\beta}{\beta}\Upsilon}|J_\mu|^2dw\wedge d\bar{w}\le C_{11}
\int_{W_{i,j,t_0}}\frac{d\lambda_w}{\prod_{l=1}^{n}|w_l|^{2\alpha_l}},$$
where $\alpha_l:=(\beta_1+\beta_2\frac{1}{\beta})ca_l+\beta_3\frac{1+\beta}{\beta}d_l+(ca_l-b_l)-\lfloor ca_l-b_l\rfloor_+$, $d\lambda_w$ is the Lebesgue measure on $W_{i,j,t_0}$ and
$C_{11}>0$ is a real number independent of $t_0$. Note that
$$(W\cap\{\psi\circ \mu<-t_0\})\subset \cup_{l=1}^n(\{|w_l|<e^{\frac{-t_0-m}{2c|a|}}\}\cap W),$$
where $m:=\inf_{W}\tilde{u}(w)$. Let $\beta_1$ satisfy that
\begin{equation}\label{condition of beta1}
\beta_1< \min_{1\le l \le n}\frac{1-(ca_l-b_l)+\lfloor ca_l-b_l\rfloor_+}{3ca_l}.
\end{equation}
Let $\beta_2=\beta_1\beta$ and $\beta_3<\min\limits_{1\le l \le n}\beta_1\frac{\beta}{1+\beta}\frac{1}{d_l}$. Then we know that $\alpha_l<1$ for any $1\le l \le n$.
Hence we have
\begin{equation}\label{step3 I2 10}
\begin{split}
\int_{W_{i,j,t_0}}\frac{d\lambda_w}{\prod_{l=1}^{n}|w_l|^{2\alpha_l}}
\le&\sum_{l=1}^{n}\int_{\{|w_l|<e^{\frac{-t_0-m}{2c|a|}}\}\cap W}\frac{d\lambda_w}{\prod_{l=1}^{n}|w_l|^{2\alpha_l}}\\
\le& C_{12}\sum_{l=1}^{n}e^{\frac{-(1-\alpha_l)t_0}{c|a|}},
\end{split}
\end{equation}
where  $C_{12}>0$ is a real number independent of $t_0$.
Denote $\beta_0:=\min_{1\le l \le n}\frac{\beta(1-\alpha_l)}{2(1+\beta)c|a|}$. Then
it follows from \eqref{step3 I2 5} and \eqref{step3 I2 10} that we have
\begin{equation}\nonumber
\begin{split}
I_{i,j,t_0}\le \tilde{C}_{13}e^{-2\beta_0 t_0},
\end{split}
\end{equation}
where  $\tilde{C}_{13}>0$ is a real number independent of $t_0$. Then it follows inequality \eqref{step3 I2 2} that we know, when $m'$ is big enough,
\begin{equation}\label{step3 I2 12}
\begin{split}
I_{2,m',t_0}\le C_{13}e^{-2\beta_0 t_0},
\end{split}
\end{equation}
where  $C_{13}>0$ is a real number independent of $t_0$.

\

\textbf{Step 6: solving $\bar{\partial}-$equation with error term.}

\

Given $\tau>0$,
note that $$\langle a_1+a_2,a_1+a_2\rangle\le (1+\tau)\langle a_1,a_1\rangle+(1+\frac{1}{\tau})\langle a_2,a_2\rangle$$
holds for any $a_1,a_2$ in an inner product space $(H,\langle \cdot,\cdot \rangle)$.
It follows from inequality \eqref{step3 curvature} that on $M_k\backslash (\sum\cup\sum_{m'})$, for any $\tau>0$, we have
\begin{equation}\label{Step 4 1}
\begin{split}
&\int_{M_k\backslash (\sum\cup\sum_{m'})} \langle \big(B+(S_k\lambda_m+2b_{t_0}\delta_{m'}+e^{-\beta_0t_0})\text{Id}_E\big)^{-1}\lambda_{t_0},\lambda_{t_0} \rangle_{\omega,\tilde{h}_{m,m'}}dV_{M,\omega}\\
\le & \int_{M_k\backslash(\sum\cup\sum_{m'})}(1+\tau) \langle \big(B+(S_k\lambda_m+2b_{t_0}\delta_{m'}+e^{-\beta_0t_0})\text{Id}_E\big)^{-1}\lambda_{1,t_0},\lambda_{1,t_0} \rangle_{\omega,\tilde{h}_{m,m'}}dV_{M,\omega}\\
&+\int_{M_k\backslash(\sum\cup\sum_{m'})}(1+\frac{1}{\tau}) \langle \big(B+(S_k\lambda_m+2b_{t_0}\delta_{m'}+e^{-\beta_0t_0})\text{Id}_E\big)^{-1}\lambda_{2,t_0},\lambda_{2,t_0} \rangle_{\omega,\tilde{h}_{m,m'}}dV_{M,\omega}\\
\le &(1+\tau)\int_{M_k\backslash(\sum\cup\sum_{m'})} \langle \big(B+(S_k\lambda_m+2b_{t_0}\delta_{m'})\text{Id}_E\big)^{-1}\lambda_{1,t_0},\lambda_{1,t_0} \rangle_{\omega,\tilde{h}_{m,m'}}dV_{M,\omega}\\
+&(1+\frac{1}{\tau})\int_{M_k\backslash(\sum\cup\sum_{m'})} \langle e^{\beta_0t_0}\lambda_{2,t_0},\lambda_{2,t_0} \rangle_{\omega,\tilde{h}_{m,m'}}dV_{M,\omega}\\
= & (1+\tau) I_{1,m',t_0,\epsilon}+(1+\frac{1}{\tau})e^{\beta_0t_0}I_{2,m',t_0}\\
\le &(1+\tau) I_{1,m',t_0,\epsilon}+(1+\frac{1}{\tau})e^{-\beta_0t_0},
\end{split}
\end{equation}
where the last inequality holds because of inequality  \eqref{step3 I2 12}.
By inequalities \eqref{I1m't0less Im't0} and \eqref{step3 I1m't0 2}, we know that for fixed $t_0$, when $m'$ is big, $(1+\tau) I_{1,m',t_0,\epsilon}+(1+\frac{1}{\tau})e^{-\beta_0t_0}$ is finite.

From now on, we fix some $\epsilon \in (0,\frac{1}{8})$.
Recall that $\tilde{h}_{m,m'}=h_me^{-\Phi_{m'}}e^{-2\pi n_k \delta_{m'}\Upsilon}e^{-u_k\big(-v_{t_0,\epsilon}(\psi)\big)}$ on $M_k\backslash (\sum\cup\sum_{m'})$ and let $P_{m,m'}:L^2(M_k\backslash (\sum\cup\sum_{m'}), \wedge^{n,1}T^*M\otimes E, \omega\otimes\tilde{h}_{m,m'})\to \text{Ker} D''$ be the orthogonal projection. Then by Lemma \ref{d-bar equation with error term}, there exist $u_{k,t_0,m,m',\epsilon}\in L^2(M_k\backslash (\sum\cup\sum_{m'}), K_m\otimes E, \omega\otimes\tilde{h}_{m,m'})$ and $\eta_{k,t_0,m,m',\epsilon}\in L^2(M_k\backslash (\sum\cup\sum_{m'}), \wedge^{n,1}T^*M\otimes E, \omega\otimes\tilde{h}_{m,m'})$ such that
\begin{equation}\label{step 4 d-bar equation}
D''u_{k,t_0,m,m',\epsilon}+P_{m,m'}(\sqrt{s\lambda_m+2b_{t_0}\delta_{m'}+e^{-\beta_0t_0}}\eta_{k,t_0,m,m',\epsilon})=\lambda_{t_0}
\end{equation}
and
\begin{equation}\label{step 4 estimate}
\begin{split}
&\int_{M_k\backslash (\sum\cup \sum_{m'})}(\eta+g^{-1})^{-1}|u_{k,t_0,m,m',\epsilon}|^2_{\omega,\tilde{h}_{m,m'}}dV_{M,\omega}
+\int_{M_k\backslash (\sum\cup \sum_{m'})}|\eta_{k,t_0,m,m',\epsilon}|^2_{\omega,\tilde{h}_{m,m'}}dV_{M,\omega}\\
\le & (1+\tau) I_{1,m',t_0,\epsilon}+(1+\frac{1}{\tau})e^{-\beta_0t_0}<+\infty.
\end{split}
\end{equation}
By definition, $(\eta+g^{-1})^{-1}=c_k(-v_{t_0,\epsilon}(\psi))e^{v_{t_0,\epsilon}(\psi)}e^{\phi}$. It follows from inequality \eqref{step 4 estimate} that
\begin{equation}\label{step 4 estimate 1}
\begin{split}
&\int_{M_k\backslash (\sum\cup \sum_{m'})}c_k\big(-v_{t_0,\epsilon}(\psi)\big)e^{v_{t_0,\epsilon}(\psi)-2\pi n_k \delta_{m'}\Upsilon}|u_{k,t_0,m,m',\epsilon}|^2_{\omega,h_m}e^{-\Phi_{m'}}dV_{M,\omega}\\
\le & (1+\tau) I_{1,m',t_0,\epsilon}+(1+\frac{1}{\tau})e^{-\beta_0t_0}.
\end{split}
\end{equation}
and
\begin{equation}\label{step 4 estimate 2}
\begin{split}
&\int_{M_k\backslash (\sum\cup \sum_{m'})}|\eta_{k,t_0,m,m',\epsilon}|^2_{\omega,h_m}e^{-\Phi_{m'}-2\pi n_k \delta_{m'}\Upsilon-\phi}dV_{M,\omega}\\
\le  &(1+\tau) I_{1,m',t_0,\epsilon}+(1+\frac{1}{\tau})e^{-\beta_0t_0}.
\end{split}
\end{equation}

Note that $v_{t_0,\epsilon}(\psi)$ is bounded on $M_k$ and $c_k(t)e^{-t}$ is decreasing near $+\infty$, we know that $c(-v_{t_0,\epsilon}(\psi))e^{v_{t_0,\epsilon}(\psi)}$ has positive lower bound on $M_k$. We also have $e^{-\phi}=e^{-u_k(-v_{t_0,\epsilon}(\psi))}$ has positive lower bound on $M_k$. As $\Upsilon$ and $\Phi_{m'}$ are upper-bounded on $M_k$, $e^{-\Upsilon}$ and $e^{-\Phi_{m'}}$ also have positive lower bound on $M_k$. By inequalities \eqref{step 4 estimate 1} and \eqref{step 4 estimate 2}, we know that
$$u_{k,t_0,m,m',\epsilon}\in L^2(M_k\backslash (\sum\cup\sum_{m'}), K_M\otimes E, \omega\otimes h_m)$$
and $$\eta_{k,t_0,m,m',\epsilon}\in L^2(M_k\backslash (\sum\cup\sum_{m'}), \wedge^{n,1}T^*M\otimes E, \omega\otimes h_m).$$
By Lemma \ref{extension of equality} and equality \eqref{step 4 d-bar equation}, we know that
\begin{equation}\label{step 4 d-bar equation 2}
D''u_{k,t_0,m,m',\epsilon}+P_{m,m'}(\sqrt{s\lambda_m+2b_{t_0}\delta_{m'}+e^{-\beta_0t_0}}\eta_{k,t_0,m,m',\epsilon})=\lambda_{t_0}
\end{equation}
holds on $M_k$. Inequalities \eqref{step 4 estimate 1}, \eqref{step3 I1 2}, \eqref{step3 I2 12} and $\Upsilon$ is upper bounded on $M_k$ imply that for fixed $t_0$, when $m'$ is big, we have
\begin{equation}\label{step 4 estimate 5}
\begin{split}
&\int_{M_k}c_k\big(-v_{t_0,\epsilon}(\psi)\big)e^{v_{t_0,\epsilon}(\psi)}|u_{k,t_0,m,m',\epsilon}|^2_{\omega,h_m}e^{-\Phi_{m'}}dV_{M,\omega}\\
\le  &e^{2\pi n_k \delta_{m'}M_{\Upsilon}}\bigg((1+\tau) I_{1,m',t_0,\epsilon}+(1+\frac{1}{\tau})e^{-\beta_0t_0}\bigg)\\
= &\tilde{M}_{m'}\bigg((1+\tau) I_{1,m',t_0,\epsilon}+(1+\frac{1}{\tau})e^{-\beta_0t_0}\bigg)\\
< & +\infty,
\end{split}
\end{equation}
where $M_{\Upsilon}:=\sup_{M_k}\Upsilon$ and we denote $e^{2\pi n_k \delta_{m'}M_{\Upsilon}}$ by $\tilde{M}_{m'}$ for simplicity. We note that $\tilde{M}_{m'}\to 1$ as $m'\to +\infty$.
For fixed $t_0$, when $m'$ is big, we also have
\begin{equation}\label{step 4 estimate 4}
\begin{split}
&\int_{M_k}|\eta_{k,t_0,m,m',\epsilon}|^2_{\omega,h_m}e^{-\Phi_{m'}}e^{-2\pi n_k \delta_{m'}\Upsilon-\phi}dV_{M,\omega}\\
\le  &(1+\tau) I_{1,m',t_0,\epsilon}+(1+\frac{1}{\tau})e^{-\beta_0t_0}\\
< & +\infty.
\end{split}
\end{equation}

\

\textbf{Step 7: when $m\to +\infty$.}

\

In Step 7, note that $t_0$ is fixed and $m'$ is fixed and big enough.

By the construction of $v_{t_0,\epsilon}(\psi)$ and $c_k(t)\in\mathcal{G}_{T,\delta}$, we know that $c_k\big(-v_{t_0,\epsilon}(\psi)\big)e^{v_{t_0,\epsilon}(\psi)}$ has positive upper and lower bound on $M_k$.
It follows from inequality \eqref{step 4 estimate 5} and $c_k\big(-v_{t_0,\epsilon}(\psi)\big)e^{v_{t_0,\epsilon}(\psi)}>0$ on $M_k$ that we have
$$\sup_m\int_{M_k}|u_{k,t_0,m,m',\epsilon}|^2_{\omega,h_m}e^{-\Phi_{m'}}dV_{M,\omega}<+\infty.$$
As $h_1\le h_m$, we have
\begin{equation}\label{Step 5 estimate for uh1}
\sup_m\int_{M_k}|u_{k,t_0,m,m',\epsilon}|^2_{\omega,h_1}e^{-\Phi_{m'}}dV_{M,\omega}<+\infty.
\end{equation}
Since the closed unit ball of Hilbert space is
weakly compact, we can extract a subsequence of $\{u_{k,t_0,m,m',\epsilon}\}$ (also denoted by $\{u_{k,t_0,m,m',\epsilon}\}$) weakly
convergent to $u_{k,t_0,m',\epsilon}$ in $L^2(M_k, K_M\otimes E, \omega\otimes h_1e^{-\Phi_{m'}})$ as $m\to+\infty$. As $c_k\big(-v_{t_0,\epsilon}(\psi)\big)e^{v_{t_0,\epsilon}(\psi)}$ is upper bounded on $M_k$, hence we know that $\sqrt{c_k\big(-v_{t_0,\epsilon}(\psi)\big)e^{v_{t_0,\epsilon}(\psi)}}u_{k,t_0,m,m',\epsilon}$ weakly
converges to $\sqrt{c_k\big(-v_{t_0,\epsilon}(\psi)\big)e^{v_{t_0,\epsilon}(\psi)}}u_{k,t_0,m',\epsilon}$ in $L^2(M_k, K_M\otimes E, \omega\otimes h_1e^{-\Phi_{m'}})$ as $m\to+\infty$.

For fixed $i\in\mathbb{Z}_{\ge 1}$, as $h_1$ and $h_i$ are both $C^2$ smooth hermitian metrics on $M_{k+1}$ and $M_k\Subset M_{k+1}\Subset X$, we know $h_i\le C_i h_1$ for some $C_i\ge 1$ on $\overline{M_k}$. It follows from Lemma \ref{equiv of weak convergence} that we know $\sqrt{c_k\big(-v_{t_0,\epsilon}(\psi)\big)e^{v_{t_0,\epsilon}(\psi)}}u_{k,t_0,m,m',\epsilon}$ weakly
converges to $\sqrt{c_k\big(-v_{t_0,\epsilon}(\psi)\big)e^{v_{t_0,\epsilon}(\psi)}}u_{k,t_0,m',\epsilon}$ in $L^2(M_k, K_M\otimes E, \omega\otimes h_ie^{-\Phi_{m'}})$ as $m\to+\infty$.
Then we have
\begin{equation}\label{step 5 estimate 1}
\begin{split}
&\int_{M_k}c_k\big(-v_{t_0,\epsilon}(\psi)\big)e^{v_{t_0,\epsilon}(\psi)}|u_{k,t_0,m',\epsilon}|^2_{\omega,h_i}e^{-\Phi_{m'}}dV_{M,\omega}\\
\le
&\liminf_{m\to +\infty}\int_{M_k}c_k\big(-v_{t_0,\epsilon}(\psi)\big)e^{v_{t_0,\epsilon}(\psi)}|u_{k,t_0,m,m',\epsilon}|^2_{\omega,h_i}e^{-\Phi_{m'}}dV_{M,\omega}\\
\le
&\liminf_{m\to +\infty}\int_{M_k}c_k\big(-v_{t_0,\epsilon}(\psi)\big)e^{v_{t_0,\epsilon}(\psi)}|u_{k,t_0,m,m',\epsilon}|^2_{\omega,h_m}e^{-\Phi_{m'}}dV_{M,\omega}\\
\le  &\liminf_{m\to +\infty}\tilde{M}_{m'}\bigg((1+\tau) I_{1,m',t_0,\epsilon}+(1+\frac{1}{\tau})e^{-\beta_0t_0}\bigg)\\
=& \tilde{M}_{m'}\bigg((1+\tau) I_{1,m',t_0,\epsilon}+(1+\frac{1}{\tau})e^{-\beta_0t_0}\bigg)\\
< & +\infty.
\end{split}
\end{equation}
Let $i\to +\infty$ in inequality \eqref{step 5 estimate 1}, by monotone convergence theorem, we have
\begin{equation}\label{step 5 estimate 2}
\begin{split}
&\int_{M_k}c_k\big(-v_{t_0,\epsilon}(\psi)\big)e^{v_{t_0,\epsilon}(\psi)}|u_{k,t_0,m',\epsilon}|^2_{\omega,h}e^{-\Phi_{m'}}dV_{M,\omega}\\
\le
&\tilde{M}_{m'}\bigg[(1+\tau) I_{1,m',t_0,\epsilon}+(1+\frac{1}{\tau})e^{-\beta_0t_0}\bigg]\\
< & +\infty.
\end{split}
\end{equation}

Recall that $\tilde{h}_{m,m'}=h_me^{-\Phi_{m'}}e^{-2\pi n_k \delta_{m'}\Upsilon}e^{-u_k\big(-v_{t_0,\epsilon}(\psi)\big)}$.
It follows from inequality \eqref{step 4 estimate 4} that we have
$$\sup_m\int_{M_k}|\eta_{k,t_0,m,m',\epsilon}|^2_{\omega,\tilde{h}_{m,m'}}dV_{M,\omega}<+\infty.$$
As $h_1\le h_m$, we have
$$\sup_m\int_{M_k}|\eta_{k,t_0,m,m',\epsilon}|^2_{\omega,\tilde{h}_{1,m'}}dV_{M,\omega}<+\infty.$$
Since the closed unit ball of Hilbert space is
weakly compact, we can extract a subsequence of $\{\eta_{k,t_0,m,m',\epsilon}\}$ (also denoted by $\{\eta_{k,t_0,m,m',\epsilon}\}_{m}$) weakly convergent to $\eta_{k,t_0,m',\epsilon}$ in
$L^2(M_k, \wedge^{n,1}T^*M\otimes E, \omega\otimes \tilde{h}_{1,m'})$ as $m\to+\infty$.

For fixed $i\in\mathbb{Z}_{\ge 1}$, as $h_1$ and $h_i$ are both $C^2$ smooth hermitian metrics on $M_{k+1}$ and $M_k\Subset X$. It follows from Lemma \ref{equiv of weak convergence} that we know  for any $i\ge1$, $\{\eta_{k,t_0,m,m',\epsilon}\}$ also weakly converges to $\eta_{k,t_0,m',\epsilon}$ in
$L^2(M_k, \wedge^{n,1}T^*M\otimes E, \omega\otimes \tilde{h}_{i,m'})$ as $m\to+\infty$. It follows from inequality \eqref{step 4 estimate 4} that for any fixed $i$, we have
\begin{equation}\label{step 5 eta 1}
\begin{split}
&\int_{M_k}|\eta_{k,t_0,m',\epsilon}|^2_{\omega,h_i}e^{-\Phi_{m'}}e^{-2\pi n_k \delta_{m'}\Upsilon-\phi}dV_{M,\omega}\\
\le &\liminf_{m\to+\infty} \int_{M_k}|\eta_{k,t_0,m,m',\epsilon}|^2_{\omega,h_i}e^{-\Phi_{m'}}e^{-2\pi n_k \delta_{m'}\Upsilon-\phi}dV_{M,\omega}\\
\le & \liminf_{m\to+\infty} \int_{M_k}|\eta_{k,t_0,m,m',\epsilon}|^2_{\omega,h_m}e^{-\Phi_{m'}}e^{-2\pi n_k \delta_{m'}\Upsilon-\phi}dV_{M,\omega}\\
\le  &(1+\tau) I_{1,m',t_0,\epsilon}+(1+\frac{1}{\tau})e^{-\beta_0t_0}\\
< & +\infty.
\end{split}
\end{equation}
Letting $i\to +\infty$ in inequality \eqref{step 5 eta 1}, by monotone convergence theorem, we have
\begin{equation}\label{step 5 eta 2}
\begin{split}
&\int_{M_k}|\eta_{k,t_0,m',\epsilon}|^2_{\omega,h}e^{-\Phi_{m'}}e^{-2\pi n_k \delta_{m'}\Upsilon-\phi}dV_{M,\omega}\\
\le &\lim_{i\to+\infty}\int_{M_k}|\eta_{k,t_0,m',\epsilon}|^2_{\omega,h_i}e^{-\Phi_{m'}}e^{-2\pi n_k \delta_{m'}\Upsilon-\phi}dV_{M,\omega}\\
\le  &(1+\tau) I_{1,m',t_0,\epsilon}+(1+\frac{1}{\tau})e^{-\beta_0t_0}\\
< & +\infty.
\end{split}
\end{equation}
Note that $S_k\lambda_m+2b_{t_0}\delta_{m'}+e^{-\beta_0t_0} \le C_{14}\lambda+\tilde{C}_{14}$ on $M_K$ and $\lambda$ is continuous on $\overline{M_k}$. It follows from Lemma \ref{weakly convergence} that we know $\sqrt{S_k\lambda_m+2b_{t_0}\delta_{m'}+e^{-\beta_0t_0}}\eta_{k,t_0,m,m',\epsilon}$ weakly converges to $\sqrt{2b_{t_0}\delta_{m'}+e^{-\beta_0t_0}}\eta_{k,t_0,m',\epsilon}$ as $m\to+\infty$ in  $L^2(M_k, \wedge^{n,1}T^*M\otimes E, \omega\otimes \tilde{h}_{1,m'})$. And we also have
$$\sup_m\int_{M_k}(S_k\lambda_m+2b_{t_0}\delta_{m'}+e^{-\beta_0t_0})|\eta_{k,t_0,m,m',\epsilon}|^2_{\omega,\tilde{h}_{m,m'}}dV_{M,\omega}<+\infty.$$

Denote $P_{m'}:L^2(M_k\backslash (\sum\cup \sum_{m'}), \wedge^{n,1}T^*M\otimes E, \omega\otimes\tilde{h}_{m'})\to \text{Ker} D''$ be the orthogonal projection where $\tilde{h}_{m'}=he^{-\Phi_{m'}}e^{-2\pi n_k \delta_{m'}\Upsilon}e^{-u_k\big(-v_{t_0,\epsilon}(\psi)\big)}$. It follows from Lemma \ref{weakly converge lemma} that we know that there exists a subsequence of $\sqrt{S_k\lambda_m+2b_{t_0}\delta_{m'}+e^{-\beta_0t_0} }\eta_{k,t_0,m,m',\epsilon}$ (also denoted by $\{\sqrt{S_k\lambda_m+2b_{t_0}\delta_{m'}+e^{-\beta_0t_0} }\eta_{k,t_0,m,m',\epsilon}\}_m$) weakly converges to some $\tilde{\eta}_{k,t_0,m',\epsilon}$ as $m\to+\infty$ and $P_{m,m'}(\sqrt{S_k\lambda_m+2b_{t_0}\delta_{m'}+e^{-\beta_0t_0}}\eta_{k,t_0,m,m',\epsilon})$ weakly converges to $P_{m'}(\tilde{\eta}_{k,t_0,m',\epsilon})$ in $L^2(M_k, \wedge^{n,1}T^*M\otimes E, \omega\otimes \tilde{h}_{1,m'})$ as $m\to+\infty$. By the uniqueness of weak limit, we know that
$\tilde{\eta}_{k,t_0,m',\epsilon}=\sqrt{2b_{t_0}\delta_{m'}+e^{-\beta_0t_0} }\eta_{k,t_0,m',\epsilon}$  and then $P_{m'}(\tilde{\eta}_{k,t_0,m',\epsilon})=P_{m'}(\sqrt{2b_{t_0}\delta_{m'}+e^{-\beta_0t_0} })\eta_{k,t_0,m',\epsilon})$.

Let $m\to +\infty$ in equality \eqref{step 4 d-bar equation 2}, we have
\begin{equation}\label{step 5 d-bar equation}
D''u_{k,t_0,m',\epsilon}+P_{m'}(\sqrt{2b_{t_0}\delta_{m'}+e^{-\beta_0t_0} }\eta_{k,t_0,m',\epsilon})=\lambda_{t_0}.
\end{equation}

 \

\textbf{Step 8: when $m'\to +\infty$.}

\

In Step 8, note that $t_0$ is fixed.

By the construction of $v_{t_0,\epsilon}(\psi)$ and $c_k(t)\in\mathcal{G}_{T,\delta}$, we know that $c_k\big(-v_{t_0,\epsilon}(\psi)\big)e^{v_{t_0,\epsilon}(\psi)}$ has positive upper and lower bound on $M_k$.
It follows from inequalities \eqref{step 5 estimate 2}, \eqref{step3 I1 2}, \eqref{step3 I2 12} and $c_k\big(-v_{t_0,\epsilon}(\psi)\big)e^{v_{t_0,\epsilon}(\psi)}>0$ on $M_k$ that we have
$$\sup_{m'}\int_{M_k}|u_{k,t_0,m',\epsilon}|^2_{\omega,h}e^{-\Phi_{m'}}dV_{M,\omega}<+\infty.$$
Note that $\Phi_{m'}$ is a locally upper-bounded function which is decreasing with respect to $m'$ and converges to $\Phi$ as $m'\to +\infty$. Then $\Phi_{m'}$ is uniformly bounded above with respect to $m'$,  we have
\begin{equation}\label{Step 5 estimate for uh1}
\sup_{m'}\int_{M_k}|u_{k,t_0,m',\epsilon}|^2_{\omega,h}dV_{M,\omega}<+\infty.
\end{equation}
Since the closed unit ball of Hilbert space is
weakly compact, by \eqref{Step 5 estimate for uh1}, we know that there exists a subsequence of $\{u_{k,t_0,m',\epsilon}\}$ (also denoted by $\{u_{k,t_0,m',\epsilon}\}$) weakly
convergent to $u_{k,t_0,\epsilon}$ in $L^2(M_k, K_M\otimes E, \omega\otimes h)$ as $m'\to+\infty$.

Denote $B_{m'',l}=\min\{e^{-\Phi_{m''}},l\}$ for any $m'',l\in\mathbb{Z}_+$.
As $c_k\big(-v_{t_0,\epsilon}(\psi)\big)e^{v_{t_0,\epsilon}(\psi)}$ is upper bounded on $M_k$, hence we know that $\sqrt{c_k\big(-v_{t_0,\epsilon}(\psi)\big)e^{v_{t_0,\epsilon}(\psi)}B_{m'',l}}u_{k,t_0,m',\epsilon}$ weakly
converges to $\sqrt{c_k\big(-v_{t_0,\epsilon}(\psi)\big)e^{v_{t_0,\epsilon}(\psi)}B_{m'',l}}u_{k,t_0,\epsilon}$ in $L^2(M_k, K_M\otimes E, \omega\otimes h)$ as $m'\to+\infty$.

Hence by \eqref{I1m't0less Im't0}, \eqref{step3 I1m't0 2} and \eqref{step 5 estimate 2}, we have
\begin{equation}\label{estimate um' 1}
\begin{split}
&\int_{M_k}c_k\big(-v_{t_0,\epsilon}(\psi)\big)e^{v_{t_0,\epsilon}(\psi)}B_{m'',l}|u_{k,t_0,\epsilon}|^2_{\omega,h}dV_{M,\omega}\\
\le &\liminf_{m'\to +\infty}\int_{M_k}c_k\big(-v_{t_0,\epsilon}(\psi)\big)e^{v_{t_0,\epsilon}(\psi)}B_{m'',l}|u_{k,t_0,m',\epsilon}|^2_{\omega,h}dV_{M,\omega}\\
\le&\liminf_{m'\to +\infty}\int_{M_k}c_k\big(-v_{t_0,\epsilon}(\psi)\big)e^{v_{t_0,\epsilon}(\psi)}|u_{k,t_0,m',\epsilon}|^2_{\omega,h}e^{-\Phi_{m'}}dV_{M,\omega}\\
\le &\limsup_{m'\to +\infty}\tilde{M}_{m'}\bigg[(1+\tau) I_{1,m',t_0,\epsilon}+(1+\frac{1}{\tau})e^{-\beta_0 t_0}\bigg]\\
\le& \frac{(1+\tau)^2}{1-4\epsilon} I_{1,t_0}+(1+\frac{1}{\tau})C_{13}e^{-\beta_0 t_0}\\
< & +\infty.
\end{split}
\end{equation}
Letting $l\to+\infty$ and then $m''\to +\infty$ in inequality \eqref{estimate um' 1}, by monotone convergence theorem, we have (note that $\Phi=\varphi+\psi$)
\begin{equation}\label{estimate um' 2}
\begin{split}
&\int_{M_k}c_k\big(-v_{t_0,\epsilon}(\psi)\big)e^{v_{t_0,\epsilon}(\psi)}|u_{k,t_0,\epsilon}|^2_{\omega,h}e^{-\varphi-\psi}dV_{M,\omega}\\
\le& \frac{(1+\tau)^2}{1-4\epsilon} I_{1,t_0}+(1+\frac{1}{\tau})C_{13}e^{-\beta_0 t_0}\\
< & +\infty.
\end{split}
\end{equation}

As $\sqrt{2b_{t_0}\delta_{m'}+e^{-\beta_0 t_0} }$ is a real number, we know that $$P_{m'}(\sqrt{2b_{t_0}\delta_{m'}+e^{-\beta_0 t_0} }\eta_{k,t_0,m',\epsilon})=\sqrt{2b_{t_0}\delta_{m'}+e^{-\beta_0 t_0} }P_{m'}(\eta_{k,t_0,m',\epsilon}).$$
Denote $v_{k,t_0,m',\epsilon}=P_{m'}(\eta_{k,t_0,m',\epsilon})$. Then it follows from estimates \eqref{step 5 eta 2}, \eqref{I1m't0less Im't0} and \eqref{step3 I1m't0 2}  that
\begin{equation}\label{estiamte v 1}
\begin{split}
&\int_{M_k}|v_{k,t_0,m',\epsilon}|^2_{\omega,h}e^{-\Phi_{m'}}e^{-2\pi n_k \delta_{m'}\Upsilon-\phi}dV_{M,\omega}\\
\le& \int_{M_k}|\eta_{k,t_0,m',\epsilon}|^2_{\omega,h}e^{-\Phi_{m'}}e^{-2\pi n_k \delta_{m'}\Upsilon-\phi}dV_{M,\omega}\\
\le  &(1+\tau) I_{1,m',t_0,\epsilon}+(1+\frac{1}{\tau})e^{-\beta_0 t_0}\\
\le & \frac{(1+\tau)^2}{1-4\epsilon} \big(\frac{1}{\delta}c(T)e^{-T}+\int^{+\infty}_T c(t_1)e^{-t_1}dt_1\big)
\int_{Y^0}|f|^2_{\omega,h}e^{-\varphi}dV_{M,\omega}[\psi]+(1+\frac{1}{\tau})e^{-\beta_0 t_0}\\
\le &\hat{C}_{14}< +\infty,
\end{split}
\end{equation}
where $\hat{C}_{14}$ is a positive constant independent of $m'$ and $t_0$.
Equality \eqref{step 5 d-bar equation} becomes
\begin{equation}\label{step 7 d-bar equation}
D''u_{k,t_0,m',\epsilon}+\sqrt{2b_{t_0}\delta_{m'}+e^{-\beta_0 t_0} }v_{k,t_0,m',\epsilon}=\lambda_{t_0}.
\end{equation}

Note that $\Phi_{m'}$, $\Upsilon$ and $\phi=u_k(-v_{t_0,\epsilon}(\psi))$ is uniformly upper bounded on $M_k$ with respect to $m'$, $\lim_{m'\to+\infty}\delta_{m'}=0$. Then it follows from inequality \eqref{estiamte v 1} that we have
\begin{equation}\label{unifor estimate of vm't0}
\sup_{m'}\int_{M_k}|v_{k,t_0,m',\epsilon}|^2_{\omega,h}dV_{M,\omega}<+\infty,
\end{equation}
Since the closed unit ball of Hilbert space is
weakly compact, we can extract a subsequence of $\{v_{k,t_0,m',\epsilon}\}_{m'}$ (also denoted by $\{v_{k,t_0,m',\epsilon}\}_{m'}$) weakly convergent to $v_{k,t_0,\epsilon}$ in
$L^2(M_k, \wedge^{n,1}T^*M\otimes E, \omega\otimes h)$ as $m'\to+\infty$.
 For fixed integers $m''>0$ and $l>0$, denote $$W_{m'',l}=\min\{e^{-\Phi_{m''}},l\}.$$
Then $W_{m'',l}$ is a bounded function on $M_k$.  Then we know that $\sqrt{W_{m'',l}}v_{k,t_0,m',\epsilon}$ weakly converges to $\sqrt{W_{m'',l}}v_{k,t_0,\epsilon}$ in
$L^2(M_k, \wedge^{n,1}T^*M\otimes E, \omega\otimes h)$ as $m'\to+\infty$.
It follows from inequality \eqref{estiamte v 1}, $0\le\delta_{m'}\le \delta_{1}$, $\Upsilon$  is upper-bounded on $M_k$ and $\phi=u_k(-v_{t_0,\epsilon}(\psi))$ is bounded on $M_k$ that we  have
\begin{equation}\label{estiamte vt0}
\begin{split}
&\int_{M_k}|v_{k,t_0,\epsilon}|^2_{\omega,h}W_{m'',l}dV_{M,\omega}\\
\le& \liminf_{m'\to +\infty}\int_{M_k}|v_{k,t_0,m',\epsilon}|^2_{\omega,h}W_{m'',l}dV_{M,\omega}\\
\le& \liminf_{m'\to +\infty}\int_{M_k}|v_{k,t_0,m',\epsilon}|^2_{\omega,h}e^{-\Phi_{m''}}dV_{M,\omega}\\
\le &\liminf_{m'\to +\infty}\int_{M_k}|v_{k,t_0,m',\epsilon}|^2_{\omega,h}e^{-\Phi_{m'}}dV_{M,\omega}\\
\le &C_{14}\liminf_{m'\to +\infty}\int_{M_k}|v_{k,t_0,m',\epsilon}|^2_{\omega,h}e^{-\Phi_{m'}}e^{-2\pi n_k \delta_{m'}\Upsilon-\phi}dV_{M,\omega}\\
\le & C_{14}\hat{C}_{14}\\
< &+\infty,
\end{split}
\end{equation}
where $C_{14}$ is a positive constant independent of $t_0$, $m'$, $\epsilon$, $l$ and $m''$. Let $l\to +\infty$ and $m''\to +\infty$ in \eqref{estiamte vt0}, by monotone convergence theorem, we have
\begin{equation}\label{estiamte vt0_1}
\begin{split}
\int_{M_k}|v_{k,t_0,\epsilon}|^2_{\omega,h}e^{-\varphi-\psi}dV_{M,\omega}
\le  C_{14}\hat{C}_{14}
< +\infty,
\end{split}
\end{equation}

It follows from $\lim\limits_{m'\to +\infty}\delta_{m'}=0$ and Lemma \ref{weakly convergence} that we know  $\sqrt{2b_{t_0}\delta_{m'}+e^{-\beta_0 t_0}}v_{k,t_0,m',\epsilon}$ weakly converges to $\sqrt{e^{-\beta_0 t_0}}v_{k,t_0,\epsilon}$ in $L^2(M_k, \wedge^{n,1}T^*M\otimes E, \omega\otimes h_1)$ as $m'\to+\infty$.
Note that $\lambda_{t_0}:=D''[\big(1-v'_{t_0,\epsilon}(\psi)\big)\tilde{f}_{t_0}]$ and denote
$F_{k,t_0,\epsilon}=-u_{k,t_0,\epsilon}+\big(1-v'_{t_0,\epsilon}(\psi)\big)\tilde{f}_{t_0}$. Letting $m'\to +\infty$ in \eqref{step 7 d-bar equation}, we have
\begin{equation}\label{step 7 d-bar equation 2}
D''F_{k,t_0,\epsilon}=\sqrt{e^{-\beta_0 t_0}}v_{k,t_0,\epsilon}.
\end{equation}

It follows from estimate \eqref{estimate um' 2} that we have
\begin{equation}\label{estimate Fktet 1}
\begin{split}
&\int_{M_k}c_k\big(-v_{t_0,\epsilon}(\psi)\big)e^{v_{t_0,\epsilon}(\psi)}|F_{k,t_0,\epsilon}-\big(1-v'_{t_0,\epsilon}(\psi)\big)\tilde{f}_{t_0}|^2_{\omega,h}e^{-\varphi-\psi}dV_{M,\omega}\\
\le& \frac{(1+\tau)^2}{1-4\epsilon} I_{1,t_0}+(1+\frac{1}{\tau})C_{13}e^{-\beta_0 t_0}\\
< & +\infty.
\end{split}
\end{equation}

\

\textbf{Step 9: when $t_0\to +\infty$.}

\

Note that $v_{t_0,\epsilon}(\psi)\ge \psi$ and $c_k(t)e^{-t}$ is decreasing with respect to $t$ near $+\infty$.
It follows from inequality \eqref{estimate Fktet 1} that we have
\begin{equation}\label{step 6 estimate 1}
\begin{split}
&\int_{M_k} c_k(-\psi)
|F_{k,t_0,\epsilon}|^2_{\omega,h}e^{-\varphi}dV_{M,\omega}\\
\le&(1+\tau)\int_{M_k} c_k(-\psi)
|F_{k,t_0,\epsilon}-(1-v'_{t_0,\epsilon}(\psi))\tilde{f}_{t_0}|^2_{\omega,h}e^{-\varphi}dV_{M,\omega}\\
+&(1+\frac{1}{\tau})\int_{M_k} c_k(-\psi)
|(1-v'_{t_0,\epsilon}(\psi))\tilde{f}_{t_0}|^2_{\omega,h}e^{-\varphi}dV_{M,\omega}\\
\le&(1+\tau)
\int_{M_k}c_k\big(-v_{t_0,\epsilon}(\psi)\big)e^{v_{t_0,\epsilon}(\psi)-\psi}
|F_{k,t_0,\epsilon}-(1-v'_{t_0,\epsilon}(\psi))\tilde{f}_{t_0}|^2_{\omega,h}e^{-\varphi}dV_{M,\omega}\\
+&(1+\frac{1}{\tau})\int_{M_k} c_k(-\psi)
|(1-v'_{t_0,\epsilon}(\psi))\tilde{f}_{t_0}|^2_{\omega,h}e^{-\varphi}dV_{M,\omega}\\
\le&(1+\tau) \bigg[\frac{(1+\tau)^2}{1-4\epsilon} I_{1,t_0}+(1+\frac{1}{\tau})C_{13}e^{-\beta_0 t_0}\bigg]
+(1+\frac{1}{\tau})S_{t_0},
\end{split}
\end{equation}
where $S_{t_0}:=\int_{M_k} c_k(-\psi)
|(1-v'_{t_0,\epsilon}(\psi))\tilde{f}_{t_0}|^2_{\omega,h}e^{-\varphi}dV_{M,\omega}$. Now we prove that $$\lim_{t_0\to +\infty}S_{t_0}=0.$$
By the consturction of $\tilde{f}_{t_0}$, when $t_0$ is big enough, we have
\begin{equation}\label{step 6 estimate 2}
\begin{split}
S_{t_0}=&\int_{M_k} c_k(-\psi)
|(1-v'_{t_0,\epsilon}(\psi))\tilde{f}_{t_0}|^2_{\omega,h}e^{-\varphi}dV_{M,\omega}\\
=&\int_{M_k\cap \{\psi<-t_0\}}c_k(-\psi)|\tilde{f}_{t_0}|^2_{\omega,h}e^{-\varphi}dV_{M,\omega}\\
\le &\sum_{i=1}^{N}\int_{U_i\cap \{\psi<-t_0\}}c_k(-\psi)|\tilde{f}_{i,t_0}|^2_{\omega,h}e^{-\varphi}dV_{M,\omega}.
\end{split}
\end{equation}
Denote $$S_{i,t_0}:=\int_{U_i\cap \{\psi<-t_0\}}c_k(-\psi)|\tilde{f}_{i,t_0}|^2_{\omega,h}e^{-\varphi}dV_{M,\omega}.$$ It suffices to prove $\lim_{t_0\to +\infty}S_{i,t_0}=0$.

The following notations can be referred to the proof of Proposition \ref{p:inte} and Step 1.
On $U_i\Subset \Omega_i$, note that when $t_0$ is big enough, $\Gamma_i+m_i+t_0\ge \psi+T_1-T$. As $\hat{c}_1(t)e^{-t}$ is decreasing with respect to $t$, we have
\begin{equation}\nonumber
\begin{split}
\hat{c}_1(-\Gamma_i-m_i-t_0)e^{\Gamma_i+m_i+t_0}\ge \hat{c}_1\big(-\psi-(T_1-T)\big)e^{\psi+(T_1-T)},
\end{split}
\end{equation}
which implies that (note that $\psi\ge \Gamma_i+m_i$ on  $\Omega_i$)
\begin{equation}\nonumber
\begin{split}
\hat{c}_1(-\Gamma_i-m_i-t_0)\ge& \hat{c}_1\big(-\psi-(T_1-T)\big)e^{\psi-\Gamma_i-m_i-t_0+(T_1-T)}\\
\ge& \hat{c}_1\big(-\psi-(T_1-T)\big)e^{-t_0+(T_1-T)}\\
\ge& e^{-T_1}c_k(-\psi)e^{-t_0+(T_1-T)}\\
=&c_k(-\psi)e^{-t_0-T}
\end{split}
\end{equation}
Hence it follows from inequality \eqref{step1part3 1} that we have
\begin{equation}\label{step 6 estimate 3}
\begin{split}
    \int_{U_i\cap \{\psi<-t_0\}}c_k(-\psi)|\tilde{f}_{i,t_0}|^2_{\omega,h(\text{det}h)^{\beta}}e^{-(1+\beta r)\varphi}dV_{M,\omega}
    \le  C_{15},
\end{split}
\end{equation}
where $C_{15}>0$ is a real number independent of $t_0$.

The following discussion is similar to the discussion we did in Step 5.
For fixed $i$,  let $\{e_1,\cdots,e_r\}$ be a holomorphic frame on $ E|_{U_i}$. It follows from Lemma \ref{existence of bounded tra} that there exists a local frame $\{\zeta_1,\cdots,\zeta_r\}$ of $E|_{U_i}$ such that the local expression of $ he^{-\varphi}$ is a diagonal matrix with diagonal element $\det (h)e^{-r\varphi}$ and the transition matrix $B^{-1}$ from  $\{e_1,\cdots,e_r\}$ to  $\{
\zeta_1,\cdots,\zeta_r\}$ satisfies that each element $b_{i,j}(z)$ of $B$ is a bounded function on $U_i\cap U_j$. Let $dw$ be a local frame of $K_M|_{U_i}$.

 Then we can assume that

$$\tilde{f}_{i,t_0}=\sum_{p=1}^{r} F_{i,t_0,p}\zeta_p\otimes dw,$$
where $F_{i,t_0,p}$ are measurable functions on $U_i$. Then $S_{i,t_0}$ becomes
\begin{equation}\label{step 6 estimate 4}
\begin{split}
S_{i,t_0}=\sum_{p=1}^{r}\int_{U_i\cap \{\psi<-t_0\}}c_k(-\psi)|\tilde{F}_{i,t_0,p}|^2(\det h)e^{-r\varphi}dw\wedge d\bar{w}.
\end{split}
\end{equation}
Since the case is local and $he^{-\varphi}$ is a singular metric on $E|_{U_i\cap U_j}$ and  locally lower bounded, we can assume that all eigenvalues of $he^{-\varphi}$ are greater than $1$.
By the inequality \eqref{step 6 estimate 3} and the constructions of local frame $\{\zeta_1,\cdots,\zeta_r\}$, for any $1\le i\le N$ and any $1\le p\le r$, we have
\begin{equation}\label{step 6 estimate 5}
\begin{split}
&\int_{U_i\cap\{\psi<-t_0\}}c_k(-\psi)|F_{i,t_0,p}|^2(\det h)^{1+\beta}e^{-(1+\beta)r\varphi}dw\wedge d\bar{w}\\
=&\int_{U_i\cap\{\psi<-t_0\}}c_k(-\psi)|F_{i,t_0,p}|^2(\det h)e^{-r\varphi}(\text{det}h)^{\beta}e^{-\beta r\varphi}dw\wedge d\bar{w}\\
\le & \int_{U_i\cap\{\psi<-t_0\}}c_k(-\psi)|\tilde{f}_{i,t_0}|^2_{\omega,h}e^{-\varphi} (\text{det}h)^{\beta}e^{-\beta r\varphi}dw\wedge d\bar{w}
\le \tilde{C}_{15},
\end{split}
\end{equation}
where $\tilde{C}_{15}>0$ is a real number independent of $t_0$.
It follows from H\"older inequality that
\begin{equation}\nonumber
\begin{split}
S_{i,t_0}=&\sum_{p=1}^{r}\int_{U_i\cap\{\psi<-t_0\}}c_k(-\psi)|F_{i,t_0,p}|^2(\det h)e^{-r\varphi}dw\wedge d\bar{w}\\
\le &\sum_{p=1}^{r}\big(\int_{U_i\cap\{\psi<-t_0\}}c_k(-\psi)|F_{i,t_0,p}|^2(\det h)^{1+\beta}e^{-(1+\beta)r\varphi}dw\wedge d\bar{w}\big)^{\frac{1}{1+\beta}}\times\\
&\big(\int_{U_i\cap\{\psi<-t_0\}}c_k(-\psi)|F_{i,t_0,p}|^2dw\wedge d\bar{w}\big)^{\frac{\beta}{1+\beta}}\\
\le& C_{16}\sum_{p=1}^{r}
\big(\int_{U_i\cap\{\psi<-t_0\}}c_k(-\psi)|F_{i,t_0,p}|^2dw\wedge d\bar{w}\big)^{\frac{\beta}{1+\beta}},
\end{split}
\end{equation}
when $t_0$ is big enough and $C_{16}>0$ is a real number independent of $t_0$. It suffices to prove that
\begin{equation}\nonumber
\begin{split}
\lim_{t_0\to +\infty}\int_{U_i\cap\{\psi<-t_0\}}c_k(-\psi)|F_{i,t_0,p}|^2dw\wedge d\bar{w}=0.
 \end{split}
\end{equation}

 We cover $\mu^{-1}(U_i)\cap \{\psi\circ \mu <-t_0\}$ by a finite number of coordinate balls $W$ as we did in the Step 1 of the proof of Proposition \ref{p:inte}. Denote $W_{i,t_0}:=W\cap\mu^{-1}(U_i)\cap \{\psi\circ \mu <-t_0\}$ and $d\lambda_w$ be the Lebesgue measure on $W_{i,t_0}$. It suffices to prove that
\begin{equation}\label{step 6  goal 2}
\begin{split}
\lim_{t_0\to +\infty}\int_{W_{i,t_0}}c_k(-\psi\circ \mu)|F_{i,t_0,p}\circ \mu|^2|J_{\mu}|^2d\lambda(w)=0.
 \end{split}
\end{equation}

 Note that $\{e_1\circ \mu,\cdots,e_r\circ \mu\}$ is a local frame of $E|_W$. Let $dw$ be the local frame of $K_{\tilde{M}}|_{W}$. Assume that under the local frame  $\{e_1\circ \mu,\cdots,e_r\circ \mu\}$ and $dw$, we can write
 $$\tilde{f}_{i,t_0}\circ \mu=\sum_{p=1}^{r} f_{i,t_0,p}e_p\otimes dw \text{ on } W.$$
Then inequalities \eqref{eq:220627d}, \eqref{eq:220627e}, \eqref{eq:220627g} and \eqref{eq:220627h} show that for  any $1\le p\le r$, the following inequalities hold
\begin{equation}\label{step 6 estimate 6}
\begin{split}
 &|f_{i,t_0,p}\circ\mu(w',w_{p_0})- f_{i,t_0,p}\circ\mu(w',0)|^2\le C_{17}e^{\beta_1t_0}\prod_{l\in\kappa}|w_l|^2,\\
		&|f_{i,t_0,p}\circ\mu(w',0)|^2=|f_p\circ\mu(w',0)|^2\le C_{17}\prod_{l\in\kappa\backslash\{p_0\}}|w_l|^2
 \end{split}
\end{equation}
in case (A) and
\begin{equation}\label{step 6 estimate 7}
\begin{split}
 &|f_{i,t_0,p}\circ\mu(w)|^2\le C_{17}e^{\beta_1t_0} \text{\ when $\kappa=\emptyset$\ and},\\
		&|f_{i,t_0,p}\circ\mu(w)|^2\le C_{17}e^{\beta_1t_0}\prod_{l\in\kappa}|w_l|^2 \text{\ when $\kappa\neq\emptyset$.}
 \end{split}
\end{equation}
in case (B), where $C_{17}>0$ is a real number independent of $t_0$.

Note that for any $1\le i\le N$, we have
 $$\tilde{f}_{i,t_0}\circ \mu=(F_{i,t_0,1}\circ \mu,\cdots,F_{i,t_0,r}\circ \mu)^T=B(f_{i,t_0,1},\cdots,f_{i,t_0,r})^T,$$
 where $T$ means transposition.
Then it follows from inequalities \eqref{step 6 estimate 6}, \eqref{step 6 estimate 7} and $B$ is bounded on $W$ (shrink $W$ if necessary) that we have
\begin{equation}\label{step 6 estimate 8}
\begin{split}
 &|F_{i,t_0,p}\circ\mu(w',w_{p_0})- F_{i,t_0,p}\circ\mu(w',0)|^2\le C_{18}e^{\beta_1t_0}\prod_{l\in\kappa}|w_l|^2,\\
		&|F_{i,t_0,p}\circ\mu(w',0)|^2=|f_p\circ\mu(w',0)|^2\le C_{18}\prod_{l\in\kappa\backslash\{l_0\}}|w_l|^2
 \end{split}
\end{equation}
in case (A) and
\begin{equation}\label{step 6 estimate 9}
\begin{split}
 &|F_{i,t_0,p}\circ\mu(w)|^2\le C_{18}e^{\beta_1t_0} \text{\ when $\kappa=\emptyset$\ and},\\
		&|F_{i,t_0,p}\circ\mu(w)|^2\le C_{18}e^{\beta_1t_0}\prod_{l\in\kappa}|w_l|^2 \text{\ when $\kappa\neq\emptyset$.}
 \end{split}
\end{equation}
in case (B), where $C_{18}>0$ is a real number independent of $t_0$.

By inequalities \eqref{step 6 estimate 8} and \eqref{step 6 estimate 9}, to prove \eqref{step 6  goal 2}, we only need to prove
\begin{equation}\label{step 6  goal 3}
\begin{split}
\lim_{t_0\to +\infty}\int_{W_{i,t_0}}c_k(-\psi\circ \mu)\big(\prod_{l\in\kappa\backslash\{l_0\}}|w_l|^{2}\big)
\big(\prod_{l=1}^n|w_l|^{2b_l}\big)d\lambda(w)=0.
 \end{split}
\end{equation}
in case (A) and (note that on $\{\psi<-t_0\}$, $e^{\beta_1t_0}\le e^{-\beta_1 \psi}$)
\begin{equation}\label{step 6  goal 3}
\begin{split}
\lim_{t_0\to +\infty}\int_{W_{i,t}}c_k(-\psi\circ \mu)\big(\prod_{l\in\kappa}|w_l|^{2}\big)\big(\prod_{l=1}^n|w_l|^{2b_l-2\beta_1ca_l}\big)d\lambda(w)=0.
 \end{split}
\end{equation}
in case (A) and case (B).

 By using Fubini's Theorem and the change of variables, direct calculation shows that
\begin{equation}\nonumber
\begin{split}
&\lim_{t_0\to +\infty}\int_{W_{i,t}}c_k(-\psi\circ \mu)\big(\prod_{l\in\kappa\backslash\{l_0\}}|w_l|^{2}\big)
\big(\prod_{l=1}^n|w_l|^{2b_l}\big)d\lambda(w)\\
\le &C_{19}\lim_{t_0\to +\infty}\int_{t_0}^{+\infty}c(t-M)e^{-t+M}
=0
 \end{split}
\end{equation}
in case (A), where $M:=\sup_{W} \tilde{u}(w)$. Hence \eqref{step 6 estimate 8} holds. By similar calculation and $\beta_1$ satisfies \eqref{condition of beta1}, we also know that \eqref{step 6 estimate 9} holds in case (B). By the above discussion, we know that
\begin{equation}\label{step 6 estimate 10}
\lim_{t_0\to +\infty}S_{t_0}=0.
\end{equation}
Note that $$\limsup_{t_0\to +\infty}  I_{1,t_0}\le\big(\frac{1}{\delta}c_k(T)e^{-T}+\int^{+\infty}_T c_k(t_1)e^{-t_1}dt_1\big)
\int_{Y^0}|f|^2_{\omega,h}e^{-\varphi}dV_{M,\omega}[\psi]$$
Combining equalities \eqref{step 6 estimate 1} and \eqref{step 6 estimate 10} we have
\begin{equation}\label{step 6 estimate 11}
\sup_{t_0}\int_{M_k}c_k(-\psi)
|F_{k,t_0,\epsilon}|^2_{\omega,h}e^{-\varphi}dV_{M,\omega}<+\infty.
\end{equation}

Hence we know that there exists a subsequence of $\{F_{k,t_0,\epsilon}\}_{t_0}$ (also denoted by $\{F_{k,t_0,\epsilon}\}_{t_0}$) weakly convergent to
$\{F_{k,\epsilon}\}$ in $L^2(M_k, K_M\otimes E, \omega\otimes he^{-\varphi}c_k(-\psi))$ as $t_0\to +\infty$.

It follows from inequality \eqref{step 6 estimate 1} and \eqref{step 6 estimate 10} that we have
\begin{equation}\label{estimate FKET0}
\begin{split}
&\int_{M_k} c_k(-\psi)
|F_{k,\epsilon}|^2_{\omega,h}e^{-\varphi}dV_{M,\omega}\\
\le& \liminf_{t_0\to +\infty}\int_{M_k} c_k(-\psi)
|F_{k,t_0,\epsilon}|^2_{\omega,h}e^{-\varphi}dV_{M,\omega}\\
\le&\limsup_{t_0\to +\infty}\bigg((1+\tau) \bigg[\frac{(1+\tau)^2}{1-4\epsilon} I_{1,t_0}+(1+\frac{1}{\tau})C_{13}e^{-\beta_0 t_0}\bigg]
+(1+\frac{1}{\tau})S_{t_0}\bigg)\\
\le & \frac{(1+\tau)^3}{1-4\epsilon}\big(\frac{1}{\delta}c_k(T)e^{-T}+\int^{+\infty}_T c_k(t_1)e^{-t_1}dt_1\big)
\int_{Y^0}|f|^2_{\omega,h}e^{-\varphi}dV_{M,\omega}[\psi].
\end{split}
\end{equation}

It follows from $\psi$ is upper bounded on $M_k$, $he^{-\varphi}$ is locally lower bounded and inequality \eqref{estiamte vt0_1} that
\begin{equation}\label{estiamte vt0 step 8}
\begin{split}
\sup_{t_0}\int_{M_k}|v_{k,t_0,\epsilon}|^2_{\omega,\tilde{h}}dV_{M,\omega}
\le  C_{14}\hat{C}_{14}
< +\infty,
\end{split}
\end{equation}
where $\tilde{h}$ is a smooth metric on $E$ such that $he^{-\varphi}\ge \tilde{h}$ on $M_k$.

Since the closed unit ball of Hilbert space is
weakly compact, we can extract a subsequence of $\{v_{k,t_0,\epsilon}\}$ (also denoted by $\{v_{k,t_0,\epsilon}\}_{t_0}$) weakly convergent to $v_{k,\epsilon}$ in
$L^2(M_k, \wedge^{n,1}T^*M\otimes E, \omega\otimes \tilde{h})$ as $t_0\to+\infty$. It follows from Lemma \ref{weakly convergence} that we know  $\sqrt{e^{-\beta_0 t_0}}v_{k,t_0,\epsilon}$ weakly converges to $0$ in $L^2(M_k, \wedge^{n,1}T^*M\otimes E, \omega\otimes \tilde{h})$ as $t_0\to+\infty$. Hence
$\sqrt{e^{-\beta_0 t_0}}v_{k,t_0,\epsilon}$ weakly converges to $0$ in $L^2_{\text{loc}}(M_k, \wedge^{n,1}T^*M\otimes E, \omega\otimes \tilde{h})$ as $t_0\to+\infty$.

It follows from $\psi$ is smooth on $M_k\backslash \sum$, $c_k(t)$ is smooth function on $[T+4\epsilon_k,+\infty)$ and $\{F_{k,t_0,\epsilon}\}$  weakly converges to
$\{F_{k,\epsilon}\}$ in $L^2(M_k, K_M\otimes E, \omega\otimes he^{-\varphi}c_k(-\psi))$ as $t_0\to +\infty$ that we have $\{F_{k,t_0,\epsilon}\}$ also weakly converges to
$\{F_{k,\epsilon}\}$ in $L^2_{\text{loc}}(M_k\backslash \sum, K_M\otimes E, \omega\otimes he^{-\varphi})$ as $t_0\to +\infty$. It follows from Lemma \ref{equiv of weak convergence} that  $\{F_{k,t_0,\epsilon}\}$ also weakly converges to
$\{F_{k,\epsilon}\}$ in $L^2_{\text{loc}}(M_k\backslash \sum, K_M\otimes E, \omega\otimes \tilde{h})$.

Let $t_0\to +\infty$ in equality \eqref{step 7 d-bar equation 2}, we have
\begin{equation}\label{step 8 d-bar equation}
D''F_{k,\epsilon}=0 \text{\ holds on } M_k\backslash \sum.
\end{equation}
Hence $F_{k,\epsilon}$ is an $E$-valued holomorphic $(n,0)$-form on $M_k\backslash \sum$, which satisfies
\begin{equation}\label{estimate FKET step8}
\begin{split}
&\int_{M_k} c_k(-\psi)
|F_{k,\epsilon}|^2_{\omega,h}e^{-\varphi}dV_{M,\omega}\\
\le & \frac{(1+\tau)^3}{1-4\epsilon}\big(\frac{1}{\delta}c(T)e^{-T}+\int^{+\infty}_T c(t_1)e^{-t_1}dt_1\big)
\int_{Y^0}|f|^2_{\omega,h}e^{-\varphi}dV_{M,\omega}[\psi].
\end{split}
\end{equation}
\

\textbf{Step 10: solving $\bar{\partial}$-equation locally.}

\

In this step, we prove that $F_{k,\epsilon}$ is actually  a holomorphic extension of $f$ from $Y^0\cap M_k$ to $M_k$.

Let $x\in M_k\cap Y^0$ be any point. Let $\Omega_x$ be as in Step 1. Let $\tilde{U}_x\Subset \Omega_x\cap M_k$ be a local coordinate ball which is centered at $x$. Note that $E|_{\tilde{U}_x}$ is trivial vector bundle.

Note that by equality \eqref{step 7 d-bar equation 2} and the definition of $F_{k,t_0}$, we have
$$D''u_{k,t_0,\epsilon}+\sqrt{e^{-\beta_0t_0}}v_{k,t_0,\epsilon}=
D''[\big(1-v'_{t_0,\epsilon}(\psi)\big)\tilde{f}_{t_0}].$$
It follows from inequality \eqref{estiamte vt0_1} and $he^{-\varphi}$ is locally lower bounded that we have
\begin{equation}\label{Step 8 formula 1}
\int_{\tilde{U}_x}|v_{k,t_0,\epsilon}|^2_{\tilde{h}}e^{-\psi}\le C_{20},
\end{equation}
where $\tilde{h}$ is a smooth metric on $E$ such that $he^{-\varphi}\ge \tilde{h}$ on $\tilde{U}_x$ and $C_{20}>0$ is a positive number independent of $t_0$.

Note that $D''\big(\sqrt{e^{-\beta_0t_0}}v_{k,t_0,\epsilon}\big)=0$. It follows from Lemma \ref{hormander} that there exists an $E$-valued $(n,0)$-form $s_{k,t_0,\epsilon}\in L^2(\tilde{U}_x, K_M\otimes E,\tilde{h}e^{-\psi})$ such that $D''s_{k,t_0,\epsilon}=\sqrt{e^{-\beta_0t_0}}v_{k,t_0,\epsilon}$ and
\begin{equation}\label{Step 8 formula 2}
\int_{\tilde{U}_x}|s_{k,t_0,\epsilon}|^2_{\tilde{h}}e^{-\psi}\le C_{21}\int_{\tilde{U}_x}|\sqrt{e^{-\beta_0t_0}}v_{k,t_0,\epsilon}|^2_{\tilde{h}}e^{-\psi}
\le C_{21}C_{20}e^{-\beta_0t_0},
\end{equation}
where $C_{21}>0$ is a positive number independent of $t_0$. Hence we have
\begin{equation}\label{Step 8 formula 3}
\int_{\tilde{U}_x}|s_{k,t_0,\epsilon}|^2_{\tilde{h}}\le C_{22}e^{-\beta_0t_0},
\end{equation}
where $C_{22}>0$ is a positive number independent of $t_0$.

Now define $G_{k,t_0,\epsilon}:=-u_{k,t_0,\epsilon}-s_{k,t_0,\epsilon}
+\big(1-v'_{t_0,\epsilon}(\psi)\big)\tilde{f}_{t_0}$ on $\tilde{U}_x$. Then we know that
$G_{k,t_0,\epsilon}=F_{k,t_0,\epsilon}-s_{k,t_0,\epsilon}$ and $D''G_{k,t_0,\epsilon}=0$. Hence $G_{k,t_0,\epsilon}$ is holomorphic on $\tilde{U}_x$ and we know that $u_{k,t_0,\epsilon}+s_{k,t_0,\epsilon}$ is smooth on $\tilde{U}_x$ .

It follows from $\tilde{h}\le he^{-\varphi}$ on $\tilde{U}_x$, $c_k(t)e^{-t}$ is decreasing with respect to $t$, $\psi$ is upper bounded on $\tilde{U}_x$, inequalities \eqref{step 6 estimate 11} and \eqref{Step 8 formula 2} that we have
\begin{equation}\label{Step 8 formula 4}
\int_{\tilde{U}_x}c_k(-\psi)|G_{k,t_0,\epsilon}|^2_{\tilde{h}}
\le 2\int_{\tilde{U}_x}c_k(-\psi)|F_{k,t_0,\epsilon}|^2_{\tilde{h}}
+2\int_{\tilde{U}_x}|s_{k,t_0,\epsilon}|^2_{\tilde{h}}e^{-\psi}\le C_{23},
\end{equation}
where $C_{23}>0$ is a positive number independent of $t_0$.

It follows from inequality \eqref{estimate um' 2}, the construction of $v_{t_0,\epsilon}(t)$ and $\tilde{h}\le he^{-\varphi}$ on $\tilde{U}_x$ that we have
\begin{equation}\label{Step 8 formula 5}
\int_{\tilde{U}_x}|u_{k,t_0,\epsilon}|^2_{\tilde{h}}e^{-\psi}
\le C_{t_0},
\end{equation}
where $C_{t_0}>0$ is a sequence of positive number depends on $t_0$. Then, by inequalities \eqref{Step 8 formula 2} and \eqref{Step 8 formula 5}, we have
\begin{equation}\label{Step 8 formula 6}
\int_{\tilde{U}_x}|u_{k,t_0,\epsilon}+s_{k,t_0,\epsilon}|^2_{\tilde{h}}e^{-\psi}
\le 2C_{t_0}+2C_{21}C_{20}e^{-\beta_0t_0}.
\end{equation}
Note that $e^{-\psi}$ is not integrable along $Y$ and $u_{k,t_0,\epsilon}+s_{k,t_0,\epsilon}$ is smooth on $\tilde{U}_x$. By \eqref{Step 8 formula 6}, we know that $u_{k,t_0,\epsilon}+s_{k,t_0,\epsilon}=0$ on $\tilde{U}_x\cap Y$ for any $t_0$. Hence $G_{k,t_0,\epsilon}
=\tilde{f}_{t_0}=f$ on $\tilde{U}_x\cap Y_0$ for any $t_0$.

It follows from inequality \eqref{Step 8 formula 3} that there exists a subsequence of $\{s_{k,t_0,\epsilon}\}$ \big(also denoted by $\{s_{k,t_0,\epsilon}\}$\big) weakly converges to $0$ in  $L^2(\tilde{U}_x, K_M\otimes E,\tilde{h})$ as $t_0\to +\infty$. Note that $\{F_{k,t_0,\epsilon}\}$ weakly converges to $F_{k,\epsilon}$ in  $L^2_{\text{loc}}(\tilde{U}_x\backslash \sum, K_M\otimes E,\tilde{h})$ as $t_0\to +\infty$.  Hence we know that $\{G_{k,t_0,\epsilon}\}$ weakly converges to $F_{k,\epsilon}$ in  $L^2_{\text{loc}}(\tilde{U}_x\backslash \sum, K_M\otimes E,\tilde{h})$ as $t_0\to +\infty$.

It follows from inequality \eqref{Step 8 formula 4} and Lemma \ref{l:converge} that we know there exists a subsequence of $\{G_{k,t_0,\epsilon}\}$ \big(also denoted by $\{G_{k,t_0,\epsilon}\}$\big) compactly converges to an $E$-valued holomorphic $(n,0)$-form $G_{k,\epsilon}$ on $\tilde{U}_x$ as $t_0\to +\infty$. As $G_{k,t_0,\epsilon}
=f$ on $\tilde{U}_x\cap Y_0$ for any $t_0$, we know that $G_{k,\epsilon}=f$ on  $\tilde{U}_x\cap Y_0$.

As $\{G_{k,t_0,\epsilon}\}$  compactly converges to $G_{k,\epsilon}$ on $\tilde{U}_x$ as $t_0\to +\infty$ and  $\{G_{k,t_0,\epsilon}\}$ weakly converges to $F_{k,\epsilon}$ in  $L^2_{\text{loc}}(\tilde{U}_x\backslash \sum, K_M\otimes E,\tilde{h})$ as $t_0\to +\infty$, by the uniqueness of weak limit, we know that $G_{k,\epsilon}=F_{k,\epsilon}$ on any relatively compact open subset of $\tilde{U}_x$. Note that $G_{k,,\epsilon}$ is holomorphic on $\tilde{U}_x$ and $F_{k,\epsilon}$ is holomorphic on $\tilde{U}_x\backslash \sum$, we have $F_{k,\epsilon}\equiv G_{k,\epsilon}$ on $\tilde{U}_x\backslash \sum$, and we know that $F_{k,\epsilon}$ can extended to an $E$-valued holomorphic $(n,0)$-form on $\tilde{U}_x$ which equals to $G_{k,\epsilon}$. As $G_{k,\epsilon}=f$ on  $\tilde{U}_x\cap Y_0$, we know that $F_{k,\epsilon}=f$ on  $\tilde{U}_x\cap Y_0$. Since $x$ is arbitrarily chosen, we know that $F_{k,\epsilon}$ is  holomorphic on $M_k$ and $F_{k,\epsilon}=f$ on  $M_k\cap Y_0$.

\

\textbf{Step 11: end of the proof.}

\

Now we have a family of $E$-valued holomorphic $(n,0)$-forms $F_{k,\epsilon}$ on $M_k$ such that $F_{k,\epsilon}=f$ on $M_k\cap Y_0$ and
\begin{equation}\label{step 8 estimate 0}
\begin{split}
&\int_{M_k} c_k(-\psi)|F_{k,\epsilon}|^2_{\omega,h}e^{-\varphi}dV_{M,\omega}\\
\le&\frac{(1+\tau)^3}{1-4\epsilon}\big(\frac{1}{\delta}c_k(T)e^{-T}+\int^{+\infty}_T c_k(t_1)e^{-t_1}dt_1\big)
\int_{Y^0}|f|^2_{\omega,h}e^{-\varphi}dV_{M,\omega}[\psi].
\end{split}
\end{equation}
Recall that $\epsilon\in (0,\frac{1}{8})$. By inequality \eqref{step 8 estimate 0}, we have
\begin{equation}\label{step 11 estimate 0}
\sup_{\epsilon\in (0,\frac{1}{8})}\int_{M_k} c_k(-\psi)|F_{k,\epsilon}|^2_{\omega,h}e^{-\varphi}dV_{M,\omega}<+\infty.
\end{equation}

For any compact subset $K\subset M_k\backslash \sum=\{\psi=-\infty\}$, as $\psi$ is smooth on $M_k\backslash \sum$, we know that $\psi$ is upper and lower bounded on $K$. As $c_k(t)$ is continuous on $[T,+\infty)$, we have $c_{k}(-\psi)$ is uniformly lower bounded on $K$.
Note that $he^{-\varphi}$ is locally lower bounded. It follows from Lemma \ref{l:converge} and inequality \eqref{step 11 estimate 0}, we know that there exists a subsequence of $\{F_{k,\epsilon}\}_{\epsilon}$ (also denoted by $\{F_{k,\epsilon}\}_{\epsilon}$) compactly convergent to an $E$-valued holomorphic $(n,0)$-form  $F_k$ on $M_k$ as $\epsilon \to 0$. It follows from Fatou's lemma (let $\epsilon \to 0$) and inequality \eqref{step 8 estimate 0} that we have
\begin{equation}\nonumber
\begin{split}
&\int_{M_k} c_k(-\psi)|F_{k}|^2_{\omega,h}e^{-\varphi}dV_{M,\omega}\\
\le&\liminf_{\epsilon \to 0}\int_{M_k} c_k(-\psi)|F_{k,\epsilon}|^2_{\omega,h}e^{-\varphi}dV_{M,\omega}\\
\le&\liminf_{\epsilon \to 0}\frac{(1+\tau)^3}{1-4\epsilon}\big(\frac{1}{\delta}c_k(T)e^{-T}+\int^{+\infty}_T c_k(t_1)e^{-t_1}dt_1\big)
\int_{Y^0}|f|^2_{\omega,h}e^{-\varphi}dV_{M,\omega}[\psi]\\
\le &(1+\tau)^3\big(\frac{1}{\delta}c_k(T)e^{-T}+\int^{+\infty}_T c_k(t_1)e^{-t_1}dt_1\big)
\int_{Y^0}|f|^2_{\omega,h}e^{-\varphi}dV_{M,\omega}[\psi].
\end{split}
\end{equation}
Hence there exists a family of $E$-valued holomorphic $(n,0)$-forms $F_{k}$ on $M_k$ such that $F_{k}=f$ on $M_k\cap Y_0$ and
\begin{equation}\nonumber
\begin{split}
&\int_{M_k} c_k(-\psi)|F_{k}|^2_{\omega,h}e^{-\varphi}dV_{M,\omega}\\
\le&(1+\tau)^3\big(\frac{1}{\delta}c_k(T)e^{-T}+\int^{+\infty}_T c_k(t_1)e^{-t_1}dt_1\big)
\int_{Y^0}|f|^2_{\omega,h}e^{-\varphi}dV_{M,\omega}[\psi].
\end{split}
\end{equation}

Since $\tau>0$ is arbitrarily chosen and $c_k(T)e^{-T}=c(T)e^{-T}$, we have

\begin{equation}\label{continuous 1}
\begin{split}
&\int_{M_k} c_k(-\psi)|F_k|^2_{\omega,h}e^{-\varphi}dV_{M,\omega}\\
\le& \big(\frac{1}{\delta}c(T)e^{-T}+\int^{+\infty}_T c_k(t_1)e^{-t_1}dt_1\big)
\int_{Y^0}|f|^2_{\omega,h}e^{-\varphi}dV_{M,\omega}[\psi].
\end{split}
\end{equation}

Let $k_1> k$ be big enough. It follows from inequality \eqref{continuous 1}, $M_k\Subset M_{k_1}$ and $\frac{1}{\delta}c(T)e^{-T}+\int_{T}^{+\infty}c_k(t)e^{-t}dt$ converges to $\frac{1}{\delta}c(T)e^{-T}+\int_{T}^{+\infty}c(t)e^{-t}dt<+\infty$ as $k\to +\infty$ that we have
\begin{equation}\label{continuous uni estimate}
\begin{split}
\sup_{k_1}\int_{M_k} c_{k_1}(-\psi)|F_{k_1}|^2_{\omega,h}e^{-\varphi}dV_{M,\omega}<+\infty.
\end{split}
\end{equation}

For any compact subset $K\subset M_k\backslash \sum=\{\psi=-\infty\}$, as $\psi$ is smooth on $M_k\backslash \sum$, we know that $\psi$ is upper and lower bounded on $K$. It follows from $c_{k_1}(t)$ are uniformly convergent to $c(t)$ on any compact subset of $(T,+\infty)$ and $c(t)$ is a positive continuous function on $[T,+\infty)$ that we know
$c_{k_1}(-\psi)$ is uniformly lower bounded on $K$. Note that $he^{-\varphi}$ is locally lower bounded. By Lemma \ref{l:converge} and inequality \eqref{continuous uni estimate}, we know that there exists a subsequence of $\{F_{k_1}\}_{{k_1}\in\mathbb{Z}^+}$ (also denoted by $\{F_{k_1}\}_{{k_1}\in\mathbb{Z}^+}$) compactly convergent to an $E$-valued holomorphic $(n,0)$-form  $\tilde{F}_k$ on $M_k$. It follows from Fatou's lemma (let $k_1\to +\infty$) and inequality \eqref{continuous 1} that we have
\begin{equation}\label{continuous 2}
\begin{split}
&\int_{M_k} c(-\psi)|\tilde{F}_k|^2_{\omega,h}e^{-\varphi}dV_{M,\omega}\\
\le& \big(\frac{1}{\delta}c(T)e^{-T}+\int^{+\infty}_T c(t_1)e^{-t_1}dt_1\big)
\int_{Y^0}|f|^2_{\omega,h}e^{-\varphi}dV_{M,\omega}[\psi].
\end{split}
\end{equation}
As $\{F_{k_1}\}_{{k_1}\in\mathbb{Z}^+}$  compactly convergent to an $E$-valued holomorphic $(n,0)$-form  $\tilde{F}_k$ on $M$, we know that $\tilde{F}_k=f$ on $M_k\cap Y_0$.

Again for any compact subset $K\subset M\backslash Y$, as $\psi$ is smooth on $M\backslash Y$, we know that $\psi$ is upper and lower bounded on $K$. It follows  $c(t)$ is a positive continuous function on $[T,+\infty)$ that we know
$c(-\psi)$ is uniformly lower bounded on $K$. By Lemma \ref{l:converge} and inequality \eqref{continuous 1}, we know that there exists a subsequence of $\{\tilde{F}_k\}_{k\in\mathbb{Z}^+}$ (also denoted by $\{\tilde{F}_k\}_{k\in\mathbb{Z}^+}$) compactly convergent to an $E$-valued holomorphic $(n,0)$-form  $F$ on $M$. It follows from Fatou's lemma (let $k\to +\infty$) and inequality \eqref{continuous 2} that we have
\begin{equation}\nonumber
\begin{split}
&\int_{M_k} c(-\psi)|F|^2_{\omega,h}e^{-\varphi}dV_{M,\omega}\\
\le& \big(\frac{1}{\delta}c(T)e^{-T}+\int^{+\infty}_T c(t_1)e^{-t_1}dt_1\big)
\int_{Y^0}|f|^2_{\omega,h}e^{-\varphi}dV_{M,\omega}[\psi].
\end{split}
\end{equation}
Letting $k\to +\infty$, by monotone convergence theorem, we have
\begin{equation}\nonumber
\begin{split}
&\int_{M} c(-\psi)|F|^2_{\omega,h}e^{-\varphi}dV_{M,\omega}\\
\le& \big(\frac{1}{\delta}c(T)e^{-T}+\int^{+\infty}_T c(t_1)e^{-t_1}dt_1\big)
\int_{Y^0}|f|^2_{\omega,h}e^{-\varphi}dV_{M,\omega}[\psi].
\end{split}
\end{equation}
As $\{\tilde{F}_k\}_{k\in\mathbb{Z}^+}$  compactly convergent to an $E$-valued holomorphic $(n,0)$-form  $F$ on $M$, we know that $F=f$ on $M\cap Y_0$.

Theorem \ref{main result} has been proved.
\end{proof}

\begin{Remark}\label{rem:main result for line bdl}Let $M,Y,E,h$ be as in Theorem \ref{main result}. In the setting of Theorem \ref{main result}, $e^{-\varphi}$ can be viewed as a Lebesgue measurable metric on the trivial line bundle $L=M\times \mathbb{C}$. Note that Theorem \ref{main result} still holds for the case $L$ is nontrivial.

Specifically, let $(L,h_L)$ be any line bundle with singular hermitian metric $h_L$ on $M$. Assume that $h\otimes h_{L}$ is locally lower bounded (see Definition \ref{locally lower bound}) and the curvature of $h_L$ satisfies that  \\
(1') $\sqrt{-1}\Theta_{L}+\sqrt{-1}\partial\bar{\partial}\psi\ge 0$ on $M\backslash\{\psi=-\infty\}$ in the sense of currents;\\
(2')  $\big(\sqrt{-1}\Theta_{L}+\sqrt{-1}\partial\bar{\partial}\psi\big)
+\frac{1}{s(-\psi)}\sqrt{-1}\partial\bar{\partial}\psi\ge 0$ on $M\backslash\{\psi=-\infty\}$ in the sense of currents.\\
Since we can choose a smooth metric $\tilde{h}_L$ of $L$ such that $h_L=\tilde{h}_Le^{-\varphi}$, where $\varphi$ is a Lebesgue measurable function on $M$. Using almost the same proof as Theorem \ref{main result}, we know that Theorem \ref{main result} still holds for the case $L$ is a general line bundle with singular hermitian metric $h_L$.
\end{Remark}

Now we prove Theorem \ref{main result 1} by using Theorem \ref{main result} and Lemma \ref{class GT}.

\begin{proof}[Proof of Theorem  \ref{main result 1}] Since $M$ is weakly pseudoconvex, there exists a smooth plurisubharmonic
exhaustion function $P$ on $M$. Let $M_k:=\{P<k\}$ $(k=1,2,...,) $. We choose $P$ such that
$M_1\ne \emptyset$.\par
Then $M_k$ satisfies $
M_k\Subset  M_{k+1}\Subset  ...M$ and $\cup_{k=1}^n M_k=M$. Each $M_k$ is weakly
pseudoconvex K\"ahler manifold with exhaustion plurisubharmonic function
$P_k=1/(k-P)$.

For fixed $k$, as $\psi$ is plurisubharmonic function on $M$, we know that
$$\sup_{M_k}\psi<-T_k,$$
where $T_k>T$ is a real number depending on $k$.
It follows from Lemma \ref{class GT} that for any given $T_1>T$, there exist $c_{T_2}(t)\in \mathcal{G}_{T_2,\delta_2}$ and $\delta_2>0$ satisfying the conditions in Lemma \ref{class GT}, where $T<T_2<T_1$ and $T_1<T_k$.

It follows from $\psi<-T$ is a plurisubharmonic function on $M$, conditions in Theorem \ref{main result 1} that we know  the curvature conditions in Theorem \ref{main result} is satisfied. Note that $c_{T_2}(t)$ satisfies $$\frac{1}{\delta_2}c_{T_2}(T_2)e^{-T_2}+\int_{T_2}^{+\infty}c_{T_2}(t_1)e^{-t_1}dt_1
=\int_{T}^{+\infty}c(t_1)e^{-t_1}dt_1.$$
Then it follows from Theorem \ref{main result} that there exists a family $E$-valued holomorphic $(n,0)$-form
$F_{k,T_2}$ such that $F_{k,T_2}|_{Y_0}=f$ and
\begin{equation}\nonumber
 \int_{M_k} c_{T_2}(-\psi)|F_{k,T_2}|^2_{\omega,h}e^{-\varphi}dV_{M,\omega}\le \left(\int_{T}^{+\infty}c(t_1)e^{-t_1}dt_1\right)\int_{Y^0}|f|^2_{\omega,h}e^{-\varphi}dV_{M,\omega}[\psi].
\end{equation}
By the construction of $c_{T_2}$ (condition (1) in Lemma \ref{class GT}), we know that
$$c_{T_2}(-\psi)=c(-\psi)$$
on $M_k$. Denote $F_{k,T_2}$ by $F_k$, then we have

\begin{equation}\nonumber
\begin{split}
&\int_{M_k} c(-\psi)|F_{k}|^2_{\omega,h}e^{-\varphi}dV_{M,\omega}\\
 \le &\left(\int_{T}^{+\infty}c(t_1)e^{-t_1}dt_1\right)\int_{Y^0}|f|^2_{\omega,h}e^{-\varphi}dV_{M,\omega}[\psi].
\end{split}
\end{equation}

It follows from $he^{-\varphi}$ is locally lower bounded, Lemma \ref{l:converge} and
diagonal method, there exists a subsequence of $\{F_{k'}\}$, which is denoted by $\{F_{k''}\}$, such that $\{F_{k''}\}$ is uniformly convergent on any
$\overline{M_k}$ to an $E$-valued holomorphic $(n,0)$-form  $F$ on $M$.
Then by Fatou's Lemma, we have

\begin{equation}\nonumber
\begin{split}
&\int_{M} c(-\psi)|F|^2_{\omega,h}e^{-\varphi}dV_{M,\omega}\\
\le&\liminf_{k\to+\infty}\int_{M_k} c(-\psi)|F_{k}|^2_{\omega,h}e^{-\varphi}dV_{M,\omega}\\
\le& \big(\int^{+\infty}_T c(t_1)e^{-t_1}dt_1\big)
\int_{Y^0}|f|^2_{\omega,h}e^{-\varphi}dV_{M,\omega}[\psi].
\end{split}
\end{equation}

We also note that $F|_{Y_0}=f$. Theorem \ref{main result 1} has been proved.

\end{proof}

\section{Proof of Theorem \ref{nec in equ of L2 ext} and Corollary \ref{ohsawa3 sin ver}}

In this section, we prove Theorem \ref{nec in equ of L2 ext} and Corollary \ref{ohsawa3 sin ver}.

We firstly prove Theorem \ref{nec in equ of L2 ext} by using the Theorem \ref{main result 1} and the concavity property of minimal $L^2$ integrals
\begin{proof}[Proof of Theorem \ref{nec in equ of L2 ext}] The notations in the proof can be referred to Section \ref{minimal l2 integrals}.

 Note that by the inequality \eqref{f_1 is finite}, we have $G(0)<+\infty$. It follows from Theorem \ref{concavity of min l2 int} that we know that $G(h^{-1}(r))$ is concave with respect to  $r\in (0,\int_{0}^{+\infty}c(t)e^{-t}dt)$, where $h(t)=\int_{t}^{+\infty}c(l)e^{-l}dl$.

 Note that $M$ is weakly pseudoconvex K\"{a}hler manifold, we have a smooth plurisubharmonic exhaustion  function $\Phi$ of $M$.
 As $\psi<0$ is a plurisubharmonic function on $M$ with neat analytic singularities, we know that $e^{\psi}$ is a smooth plurisubharmonic function on $M$. Hence, for any $t\ge 0$, we know that $\Phi+\frac{1}{e^{-t}-e^{\psi}}$ is a smooth  plurisubharmonic exhaustion  function of $\{\psi<-t\}$. Note that $\{\psi<-t\}$ is an open complex sub-manifold of $M$, which implies $\{\psi<-t\}$ is K\"{a}hler. Then we know that $\{\psi<-t\}$ is a weakly pseudoconvex K\"{a}hler manifold for any $t\ge 0$.

 It follows from Theorem \ref{main result 1} that, for any $t\ge 0$, there exists an $E$-valued holomorphic $(n,0)$-form $F_t$ on $\{\psi<-t\}$ such that $F_t|_{Y_0}=f$ and
\begin{equation}\label{necessary formula 1}
\int_{\{\psi<-t\}} c(-\psi)|F_t|^2_{\omega,h}e^{-\varphi}dV_{M,\omega}\le \left(\int_{t}^{+\infty}c(t_1)e^{-t_1}dt_1\right)
\int_{Y^0}|f|^2_{\omega,h}e^{-\varphi}dV_{M,\omega}[\psi].
\end{equation}

Recall that $\mathcal{F}_{z_0}=\mathcal{E}(e^{-\psi})_{z_0}$ for any $z_0\in Z_0$ and $Z_0=Y_0$. It follows from that $\psi$ has log canonical singularities along $Y$ that $(\tilde{f}-f_1)_{z_0}\in
\mathcal{O} (K_M)_{z_0} \otimes \mathcal{F}_{z_0},\text{for any }  z_0\in Z_0$ is equivalent to $\tilde{f}=f_1=f$ on $Z_0=Y$.

Then by inequality \eqref{necessary formula 1} and the definition of $G(t)$, for any $t\ge 0$, we have
\begin{equation}\label{necessary formula 2}
\frac{G(t)}{\int_{t}^{+\infty}c(t_1)e^{-t_1}dt_1}\le
\int_{Y^0}|f|^2_{\omega,h}e^{-\varphi}dV_{M,\omega}[\psi].
\end{equation}

Since the equality $\|f\|_{L^2}=\inf\{\|F\|_{L^2}: F$ is a holomorphic extension of $f$ from $Y$ to $M\}$ holds, we know that
\begin{equation}\label{necessary formula 3}
\frac{G(0)}{\int_{0}^{+\infty}c(t_1)e^{-t_1}dt_1}=
\int_{Y^0}|f|^2_{\omega,h}e^{-\varphi}dV_{M,\omega}[\psi].
\end{equation}
Combining with formulas \eqref{necessary formula 2}, \eqref{necessary formula 3} and $G(h^{-1}(r))$ is concave with respect to  $r\in (0,\int_{0}^{+\infty}c(t)e^{-t}dt)$, we know that
$G(h^{-1}(r))$ is actually linear with respect to  $r\in (0,\int_{0}^{+\infty}c(t)e^{-t}dt)$.

Then by Corollary \ref{necessary condition for linear of G}, we know that there exists a unique $E$-valued holomorphic $(n,0)$-form $F$ on
$M$ such that $F|_{Y_0}=f$ and for any $t\ge 0$
\begin{equation}\nonumber
G(t)=\int_{\{\psi<-t\}} c(-\psi)|F|^2_{\omega,h}e^{-\varphi}dV_{M,\omega}=(\int_t^{+\infty}c(t_1)e^{-t_1}dt_1) \int_{Y^0}|f|^2_{\omega,h}e^{-\varphi}dV_{M,\omega}[\psi].
\end{equation}

Theorem \ref{nec in equ of L2 ext} has been proved.
\end{proof}

Now we prove Corollary \ref{ohsawa3 sin ver}.
\begin{proof}
Since $M$ is Stein, there exists a sequence of Stein manifolds $M_k$ satisfies $
M_k\Subset  M_{k+1}\Subset  ...M$ and $\cup_{k=1}^n M_k=M$.

As $M$ is a Stein manifold and $\psi$ is plurisubharmonic function on $M$, there exists a sequence of smooth plurisubharmonic function $\{\psi_m\}_{m\ge 1}$ on $M$ decreasingly converges to $\psi$ as $m\to +\infty$. For fixed $k$, we may assume that $\sup_m\sup_{M_k}\log|s|^2+\psi_m<0$.

It follows from Theorem \ref{main result 1} $\big( M\sim M_k$, $c(t)\sim 1$, $h\sim h_Ee^{-\psi}$, $\psi\sim \log|s|^2+\psi_m$, $\varphi\sim -\psi_m\big)$ that there exists a holomorphic section $F_{m,k}$ of $K_M\otimes E$ on $M_k$ satisfying $F_{m,k}=f\wedge ds$ on $S_{reg}\cap M_k$ and
\begin{equation}\label{cor 1.23 estimate}
  c_{n}\int_{M_k}\{F_{m,k},F_{m,k}\}_{h_E}e^{-\psi+\psi_m}
\le 2\pi  c_{n-1}\int_{S_{reg}\cap M_k}\{f,f\}_{h_E}e^{-\psi}.
\end{equation}

Let $K$ be any compact subset of $M_k$. As $-\psi+\psi_m\ge 0$ for any $m\ge 1$, by inequality \eqref{cor 1.23 estimate}, we know that
$$\sup_m \int_K \{F_{m,k},F_{m,k}\}_{h_E}<+\infty.$$
Let $\omega$ be a hermitian metric on $M$. By H\"older inequality, when $a>0$ is small enough, we have
\begin{equation}\label{holder inq estimate}
\sup_m \int_K (|F_{m,k}|^2_{\omega,h_E}e^{-\psi})^adV_{\omega}\le \bigg(\sup_m \int_K \{F_{m,k},F_{m,k}\}_{h_E}\bigg)^a\bigg(\int_K e^{-\frac{a}{1-a}\psi}\bigg)^{1-a}<+\infty.
\end{equation}
It follows from $h_Ee^{-\psi}$  is singular Nakano semi-positive in the sense of Definition \ref{singular nak} and inequality \eqref{holder inq estimate} that we know $\sup_{m}\int_K|F_{m,k}|^2_{\tilde{h}}<+\infty$, where $\tilde{h}$ is a smooth metric of $E$ .
Hence $F_{m,k}$ compactly converges to a holomorphic $E$-valued $(n,0)$-form $F_k$ on $M_k$ and we know that $F_k=f\wedge ds$ on $S_{reg}\cap M_k$. It follows from Fatou's lemma and inequality \eqref{cor 1.23 estimate} that we have
\begin{equation}\label{cor ohsawa mtoinfty}
  c_{n}\int_{M_k}\{F_k,F_k\}_{h_E}
\le 2\pi  c_{n-1}\int_{S_{reg}\cap M_k}\{f,f\}_{h_E}e^{-\psi}<+\infty.
\end{equation}

Let $\tilde{K}$ be any compact subset of $M$. By H\"older inequality, when $\tilde{a}>0$ is small enough, we have
\begin{equation}\label{holder inq estimate 2}
\sup_k \int_{\tilde{K}} (|F_{k}|^2_{\omega,h_E}e^{-\psi})^{\tilde{a}}dV_{\omega}\le \bigg(\sup_k \int_{\tilde{K}} \{F_{k},F_{k}\}_{h_E}\bigg)^{\tilde{a}}\bigg(\int_{\tilde{K}} e^{-\frac{\tilde{a}}{1-\tilde{a}}\psi}\bigg)^{1-\tilde{a}}<+\infty.
\end{equation}
It follows from $h_Ee^{-\psi}$  is singular Nakano semi-positive in the sense of Definition \ref{singular nak} and inequality \eqref{holder inq estimate 2} that we know $\sup_{k}\int_{\tilde{K}}|F_{k}|^2_{\tilde{h}}<+\infty$.
Hence $F_{k}$ compactly converges to a holomorphic $E$-valued $(n,0)$-form $F$ on $M$ and we know that $F=f\wedge ds$ on $S_{reg}$. It follows from Fatou's lemma and inequality
\eqref{cor ohsawa mtoinfty} that we have
\begin{equation}\nonumber
  c_{n}\int_{M}\{F,F\}_{h_E}
\le 2\pi  c_{n-1}\int_{S_{reg}}\{f,f\}_{h_E}e^{-\psi}<+\infty.
\end{equation}

Corollary \ref{ohsawa3 sin ver} has been proved.
\end{proof}

\vspace{.1in} {\em Acknowledgements}. The authors would like to thank Professor Xiangyu Zhou for his encouragement.
The authors would like to thank Dr. Shijie Bao for checking the manuscript.
The first author and the second author were supported by National Key R\&D Program of China 2021YFA1003100.
The first author was supported by NSFC-11825101, NSFC-11522101 and NSFC-11431013. The second author was supported by China Postdoctoral Science Foundation 2022T150687.

\bibliographystyle{references}
\bibliography{xbib}

\end{document}